\numberwithin{equation}{section}
\newtheorem{theorem}[equation]{Theorem}
\newtheorem{corollary}[equation]{Corollary}
\newtheorem{lemma}[equation]{Lemma}
\newtheorem{proposition}[equation]{Proposition}
\theoremstyle{remark}
\newtheorem{remark}[equation]{Remark}
\theoremstyle{definition}
\newtheorem{definition}[equation]{Definition}
\newtheorem{notation}[equation]{Notation}
\newtheorem{example}[equation]{Example}
\newcommand*{\kokoni}[1]{\noindent\makebox[0pt][l]{#1}}
\newcounter{proofsec}
\let\expandafter\oldproof\csname\string\proof\endcsname
\renewenvironment{proof}[1][\proofname]{\begin{oldproof}[#1]
    \setcounter{proofsec}{-1}}{\end{oldproof}}
\newcommand{\proofsec}{\refstepcounter{proofsec}\noindent\textbf{\theproofsec.}~}
\renewcommand{\epsilon}{\varepsilon}
\newcommand{\Alg}{\ccat{Alg}}
\newcommand*{\bigresto}[1]{\bigr|_{#1}}
\newcommand{\bigtensor}{\bigotimes}
\newcommand{\blank}{(\;)}
\newcommand{\Bord}{\ccat{Bord}}
\newcommand{\boundary}{\partial}
\newcommand{\cat}{\mathcal}
\newcommand{\Cat}{\ccat{Cat}}
\newcommand{\CAT}{\ccat{CAT}}
\newcommand{\CCat}{\closedcat{Cat}}
\newcommand{\CCAT}{\closedcat{CAT}}
\newcommand{\ccat}{\mathrm}
\newcommand{\CCocor}{\closedcat{Cocorr}}
\newcommand{\CCorr}{\closedcat{Corr}}
\newcommand{\closedcat}{\mathbf}
\DeclareMathOperator*{\colim}{colim}
\newcommand{\close}{\widebarv}
\newcommand{\Cocor}{\ccat{Cocorr}}
\newcommand{\codiag}{\nabla}
\newcommand{\Com}{\ccat{Com}}
\newcommand{\compose}{\circ}
\newcommand{\Corr}{\ccat{Corr}}
\newcommand{\diag}{\Delta}
\newcommand{\ddeloop}{\field{\deloop}}
\newcommand{\deloop}{B}
\renewcommand{\dot}{\bullet}
\DeclareMathOperator{\End}{End}
\renewcommand{\equiv}{\sim}
\newcommand{\equivwith}{\simeq}
\newcommand{\equivto}{\xrightarrow{\equiv}}
\newcommand{\field}{\mathbb}
\newcommand{\R}{\field{R}}
\newcommand{\Fin}{\ccat{Fin}}
\newcommand{\wasreteori}{\Theta}
\newcommand{\from}{\leftarrow}
\newcommand{\Fun}{\ccat{Fun}}
\newcommand{\FFun}{\closedcat{Fun}}
\newcommand{\Gpd}{\ccat{Gpd}}
\newcommand{\gy}{T}
\newcommand{\gyzen}{A}
\newcommand{\HH}{\mathrm{HH}}
\newcommand{\Hom}{\mathrm{Hom}}
\newcommand{\id}{\mathrm{id}}
\newcommand{\Ini}{\ccat{Init}}
\newcommand{\into}{\hookrightarrow}
\newcommand{\inv}{\check}
\newcommand*{\kakowanu}{}
\newcommand{\kara}{\varnothing}
\newcommand{\kocoprod}{\amalg}
\newcommand{\kore}{\textbf}
\newcommand{\lift}{\widetildev}
\newcommand{\longequivto}{\xlongrightarrow{\equiv}}
\newcommand{\longfrom}{\longleftarrow}
\newcommand{\longto}{\longrightarrow}
\newcommand*{\maekara}[2]{#1_{\rightarrow{#2}}}
\newcommand*{\maeoki}[2]{\vphantom{#2}#1\!{#2}}
\newcommand{\Map}{\mathrm{Map}}
\DeclareMathOperator{\Mul}{Mul}
\newcommand{\noloc}{\reflectsymbol{\colon}}
\DeclareMathOperator{\Ob}{Ob}
\newcommand{\op}{\mathrm{op}}
\newcommand{\Operad}{\ccat{Op}}
\newcommand{\Ord}{\ccat{Ord}}
\newcommand{\pr}{\mathrm{pr}}
\newcommand{\pt}{\mathrm{pt}}
\newcommand{\ptn}{\uparrow}
\newcommand{\ptp}{\downarrow}
\newcommand*{\reflectsymbol}[1]{\reflectbox{\ensuremath{#1}}}
\newcommand*{\resto}[1]{\mathclose{|}_{#1}}
\newcommand*{\sakihe}[2]{#1_{\leftarrow{#2}}}
\newcommand{\Set}{\ccat{Set}}
\newcommand{\Simp}{\mathbf{\Delta}}
\newcommand{\simp}{\Delta}
\DeclareMathOperator{\Spec}{Spec}
\newcommand{\sub}{\subset}
\newcommand{\ten}{*}
\newcommand{\tensor}{\otimes}
\newcommand{\Tensor}{\bigotimes}
\newcommand{\Theory}{\ccat{Th}}
\newcommand*{\tsketas}[2]
{#1\makebox[0pt][l]{$\vphantom{#1}#2$}}
\newcommand*{\keepheight}[2]{#1{#2}\vphantom{{#2}}}
\newcommand{\union}{\cup}
\newcommand{\Union}{\bigcup}
\newcommand{\unity}{{\boldsymbol{1}}}
\newcommand{\univ}{\mathbb{U}}
\newcommand{\univalg}{\univ}
\newcommand{\widebar}{\overline}
\newcommand*{\widebarv}[1]{\keepheight{\widebar}{#1}}
\newcommand*{\widetildev}[1]{\keepheight{\widetilde}{#1}}
\title{Higher theories of algebraic structures}
\author[Matsuoka, Takuo]{Takuo Matsuoka}
\email{motogeomtop@gmail.com}
\subjclass[2010]{Primary: 18D50;
Secondary:
18D05
, 18D35
, 57R56
, 18D20
, 18D10
, 18G55
}
\keywords{Theorization,
Categorification,
Lax structure,
Enrichment,
Grading,
Many objects\slash Color,
Coherence,
Higher associativity,
Multicategory%
}
\let\oldsection\section
\renewcommand*{\section}[1]{\oldsection{#1}\setcounter{subsection}{-1}\setcounter{subsubsection}{-1}\setcounter{equation}{-1}}
\let\oldsubsection\subsection
\renewcommand*{\subsection}[1]{\oldsubsection{#1}\setcounter{subsubsection}{-1}}
\begin{document}
\setcounter{section}{-1}

\begin{abstract}
The notion of (symmetric) coloured operad or ``multicategory'' can be
obtained from the notion of commutative algebra through a certain
general process which we call ``theorization'' (where our term comes
from an analogy with William Lawvere's notion of algebraic theory).
By exploiting the inductivity in the structure of higher
associativity, we obtain the notion of ``$n$-theory'' for every
integer $n\ge 0$, which inductively \emph{theorizes} $n$ times, the
notion of commutative algebra.
As a result, (coloured) morphism between $n$-theories is a ``graded''
and ``enriched'' generalization of ($n-1$)-theory.
The inductive
hierarchy of those \emph{higher theories} extends in particular, the
hierarchy of higher categories.
Indeed, theorization turns out to produce more general kinds of
structure than the process of
categorification in the sense of Louis Crane does.
In a part of low ``theoretic'' order of this hierarchy, graded and
enriched $1$- and $0$-theories vastly generalize symmetric, braided,
and many other kinds of enriched
multicategories and their algebras in various places.

We make various constructions of\slash with higher theories, and obtain some
fundamental notions and facts.
We also find iterated theorizations of more general kinds of algebraic
structure including (coloured) properad of Bruno Vallette
and various kinds of topological field theory (TFT).
We show that a ``TFT'' in the extended context can reflect a datum of
a very different type from a TFT in the conventional sense, despite
close formal similarity of the notions.

This work is intended to illustrate use of simple understanding of
higher coherence for associativity.
\end{abstract}

\maketitle 

\tableofcontents

\section{Introduction}
\label{sec:introduction}

\subsection{Higher theories}

\subsubsection{}

Among variations of the notion of operad, the symmetric, the planar
and the braided (see Fiedorowicz \cite{fiedoro}) versions are
particularly simple to describe, and are very commonly worked with.
Over an operad of one of these types, an \emph{algebra} can be
considered respectively
in a symmetric, associative and braided (see Joyal and Street
\cite{joyal-street}) monoidal category.

In general, each operad governs algebras over it, and this role is
important.
In other words, the notion of operad is important since it gives a way
to do \emph{universal algebra} in (e.g., symmetric) monoidal
categories.
For this role, we consider an operad as analogous to Lawvere's
\emph{algebraic theory} \cite{lawvere}.

By visiting the conceptual origin of the notion of operad, one finds
that the notion of symmetric operad arises naturally through a certain
process, which we call ``theorization'' (inspired by Lawvere's
notion), from the notion of commutative algebra.
Moreover, the same process can be started from the notion of
$\cat{U}$-algebra for any operad $\cat{U}$ in sets or groupoids, to
produce a new kind of algebraic structure which we call
``$\cat{U}$-graded'' operad (Section \ref{sec:graded-operad}).
It turns out that planar and braided operads are $\cat{U}$-graded
operads where $\cat{U}=E_1, E_2$ respectively.

The notion of $\cat{U}$-graded operad in groupoids has a deceptively
simple description, namely, a $\cat{U}$-graded operad in groupoids can
be described
as an operad $\cat{X}$ in groupoids equipped with a morphism
$P\colon\cat{X}\to\cat{U}$.
(This $\cat{X}$ equipped with $P$ corresponds to a $\cat{U}$-graded
operad in \emph{sets} if the maps
induced by $P$ on the groupoids of operations have everywhere,
homotopy fibre with a discrete homotopy type.
The details will be discussed in Remark \ref{rem:graded-by-groupoid}.)
This might hide the notion of theorization from a non-obsessed mind.
However, we have come to think that theorization is an important
notion.

One purpose of this long introductory section is to introduce the
notion of theorization, which will be a generalization with its own
mathematical content, of the notion of \emph{categorification} in the
sense of Crane \cite{crane-frenke,crane} (about which the most
influential pioneer may have been Grothendieck), but will be at least
as informal a notion as categorification.
Even though a very precise understanding of the notion of theorization
is not technically necessary for the body of the article, at least a
rough understanding will be essential for understanding the ideas of
our work.
In the body, we shall use the language of theorization to navigate the
reader through ideas.

Let us, however, start with a sketch of what we actually do in this
work.
After that, the main purpose of this introduction will be to introduce
the idea of theorization, and describe more topics to be covered in
the body.

\subsubsection{}

In this work, we introduce and study `higher order' generalizations of
(coloured) operads, which we call ``higher theories'' of algebras.
Higher theories will be obtained by iterating the process of
theorization starting from the notion of coloured operad.
Introduction of these objects leads to (among other things) a
framework for a natural explanation and a vast generalization of the
fact which we formulate below as Proposition
\ref{prop:enriching-multicategory}.
Let us describe it now.

The proposition will be about places where the notion of
$\cat{U}$-graded operad can be enriched, but the reader may assume
that $\cat{U}=E_1$ or $E_2$ (see above).
As another notice, colours in operads will not play an essential
role for a while, so we shall consider just uncoloured operads
everywhere till we start taking colours explicitly into
consideration.

For a symmetric monoidal category $\cat{A}$, let us denote by
$\Operad_\cat{U}(\cat{A})$ the category of $\cat{U}$-graded operads in
$\cat{A}$.
$\Operad_\cat{U}$ is a category-valued functor on the $2$-category
$\Alg_\Com(\Cat)$ of symmetric monoidal categories, where $\Cat$
denotes the $2$-category of categories (with a fixed limit for size),
equipped with the symmetric monoidal structure given by the direct
product operations.

For an operad $\cat{U}$, let us mean by an \kore{($\cat{U}\tensor
  E_1$)-monoidal} category, an associative monoidal object in the
$2$-category of $\cat{U}$-monoidal categories, or equivalently, a
$\cat{U}$-monoidal object in the $2$-category of associative monoidal
categories.
Note that there is a forgetful functor
$\Alg_\Com\bigl(\Cat\bigr)\to\Alg_\cat{U}\bigl(\Alg_\Com(\Cat)\bigr)\to\Alg_\cat{U}\bigl(\Alg_{E_1}(\Cat)\bigr)$
from the symmetric monoidal categories to ($\cat{U}\tensor
E_1$)-monoidal categories, where we have used the canonical functor
$\cat{C}\to\Alg_\cat{U}(\cat{C})$ exsisting for every
\emph{coCartesian} symmetric monoidal $2$-category $\cat{C}$ (namely,
a $2$-category $\cat{C}$ closed under the finite coproducts, made
symmetric monoidal by the finite coproduct operations) obtained by
letting every operation on a given object, say $X$, of $\cat{C}$, be
the codiagonal map of $X$.

The formulation of the proposition is as follows.
(See Remark \ref{rem:dimension-for-example} for a technical point.)

\begin{proposition}\label{prop:enriching-multicategory}
For every symmetric operad $\cat{U}$, the functor $\Operad_\cat{U}$ on
$\Alg_\Com(\Cat)$ has an extention to
$\Alg_\cat{U}\bigl(\Alg_{E_1}(\Cat)\bigr)$ (in a manner which is
functorial in $\cat{U}$).
\end{proposition}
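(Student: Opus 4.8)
The plan is to produce the extension by a direct construction: I would re-examine the definition of $\cat{U}$-graded operad in a symmetric monoidal category and isolate exactly which part of the symmetric monoidal structure of the ambient category it consumes, then show that this part is precisely a $(\cat{U}\tensor E_1)$-monoidal structure. Concretely, first I would unwind the definition from Section~\ref{sec:graded-operad}: a $\cat{U}$-graded operad in $\cat{A}$ consists of operation objects together with composition maps, a unit, and the grading and equivariance coherences that encode the map to $\cat{U}$. The goal is to rewrite each structure map and each coherence so that it refers only to operations available on a $(\cat{U}\tensor E_1)$-monoidal category.

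The key observation is a separation of roles. The composition of operations is built from the tensor product and needs only its associativity, i.e.\ the $E_1$-part of the structure; the grading data and its compatibility with composition are built from the $\cat{U}$-monoidal coherences; and the only place where the two interact is in reindexing tensor factors along the permutations that arise from $\cat{U}$, which is exactly the interchange relating the $\cat{U}$-structure and the associative tensor. By the two equivalent descriptions recalled just before the statement, these data assembled together are precisely a $(\cat{U}\tensor E_1)$-monoidal structure, and no symmetry beyond that supplied by $\cat{U}$ is ever invoked. I would make this invariant by exhibiting the composition operation whose monoids are the $\cat{U}$-graded operads as a monoidal structure on the category of $\cat{U}$-graded collections in $\cat{A}$ that depends on $\cat{A}$ only through its $(\cat{U}\tensor E_1)$-structure, so that $\Operad_\cat{U}(\cat{A})$, realized as a category of monoids, is defined for every $(\cat{U}\tensor E_1)$-monoidal $\cat{A}$.

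It then remains to check compatibility and functoriality, which I expect to be formal once the invariant formulation is in hand. Restricting the candidate along the forgetful functor $\Alg_\Com(\Cat)\to\Alg_\cat{U}\bigl(\Alg_{E_1}(\Cat)\bigr)$ recovers the original $\Operad_\cat{U}$, because the underlying $(\cat{U}\tensor E_1)$-structure of a symmetric monoidal category is exactly the structure through which the original definition factors. Functoriality in $\cat{A}$ holds because a morphism of $(\cat{U}\tensor E_1)$-monoidal categories preserves precisely the structure used in the construction and hence carries $\cat{U}$-graded operads to $\cat{U}$-graded operads; functoriality in $\cat{U}$ holds because the composition product, and therefore the whole construction, is natural in the grading operad, a map $\cat{U}\to\cat{U}'$ inducing a change of grading compatibly with the induced comparison of $(\cat{U}\tensor E_1)$- and $(\cat{U}'\tensor E_1)$-structures.

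The main obstacle will be the separation-of-roles step: proving rigorously that the equivariance coherences of a $\cat{U}$-graded operad require no more than the $\cat{U}$-worth of symmetry, i.e.\ that every permutation of tensor factors forced by the composition and unit axioms is one classified by $\cat{U}$ rather than by the full symmetric structure. Getting the bookkeeping of these permutations right---and packaging it so that the dependence on $\cat{A}$ visibly factors through $\cat{U}\tensor E_1$---is where the real content lies; after that, the identification of $\Operad_\cat{U}(\cat{A})$ with a category of monoids makes the extension, its restriction property, and both functorialities immediate.
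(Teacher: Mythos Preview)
Your approach is genuinely different from the paper's, and the difference is instructive.

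The paper does not attempt to separate roles or track which permutations are ``$\cat{U}$-permutations'' at all. Instead it uses a single conceptual move: given a $(\cat{U}\tensor E_1)$-monoidal category $\cat{A}$, it \emph{categorically deloops} $\cat{A}$ along the $E_1$-structure to obtain a one-object $2$-category $\deloop\cat{A}$, which then inherits a $\cat{U}$-monoidal structure. A $\cat{U}$-graded operad in $\cat{A}$ is then \emph{defined} as a $1$-theory in the $\cat{U}$-graded $2$-theory $\wasreteori^2_0\deloop\cat{A}$ (equivalently, a coloured lax $\cat{U}$-algebra in $\deloop\cat{A}$). The $E_1$-part of the structure is absorbed into composition of $1$-morphisms in the $2$-category, so the only monoidal structure that remains visible is the $\cat{U}$-monoidal one, and the general machinery of graded higher theories developed in the body handles that case directly. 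Functoriality in both $\cat{A}$ and $\cat{U}$ then comes for free from functoriality of $\deloop$ and $\wasreteori$. What you identify as ``the main obstacle'' --- the bookkeeping of which permutations are classified by $\cat{U}$ --- simply never arises in the paper's argument; the delooping absorbs it.

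Your route could in principle be carried out, but one step deserves scrutiny: realizing $\Operad_\cat{U}(\cat{A})$ as monoids for a composition product on $\cat{U}$-graded collections in $\cat{A}$ generally requires $\cat{A}$ to have suitable colimits compatible with the tensor, which is an extra hypothesis not present in the statement. If you drop that packaging and instead argue directly that every structure map and coherence in the definition of a $\cat{U}$-graded operad can be written using only the $(\cat{U}\tensor E_1)$-operations, your plan is sound but laborious --- and the paper's delooping trick is precisely the device that makes that verification automatic.
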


The meaning of Proposition is that there is a natural notion of
$\cat{U}$-graded operad in every ($\cat{U}\tensor E_1$)-monoidal
category, such that the notion of
$\cat{U}$-graded operad in a symmetric monoidal category $\cat{A}$
coincides naturally with the notion of $\cat{U}$-graded operad in the
($\cat{U}\tensor E_1$)-monoidal category underlying $\cat{A}$.
Our definition will generalize the familiar notions of
\begin{itemize}
\item associative algebra in a \emph{associative} monoidal category,
\item planar operad in a \emph{braided} monoidal category,
\item braided operad in a \emph{$E_3$-monoidal} infinity $1$-category
\end{itemize}
(in addition to vacuously, the notion of symmetric operad in a
symmetric monoidal category).

\begin{remark}\label{rem:dimension-for-example}
In order to actually have these examples, Proposition needs to be
interpreted in the framework of sufficiently high dimensional category
theory.
(Infinity $1$-category theory is sufficient.)
However, let us not emphasize this technical point in this introduction,
even though our work will eventually be about higher category theory.
\end{remark}

\begin{remark}
It seems to be a reasonable guess that the notion of $\cat{U}$-graded
multicategory
had been known before our work even in the form enriched in a
symmetric monoidal category, and if it had in the enriched form, then
we expect Proposition~\ref{prop:enriching-multicategory}
to have also been known.
We simply have failed to confirm this from the literature.
\end{remark}

In fact, we introduce in this work much more general notion of
``grading'', generally for higher theories, and find quite general
but natural places where the notions of graded higher theory can be
enriched.
An explanation of Proposition
\ref{prop:enriching-multicategory} from the general perspective to
be so acquired, will be given in Section
\ref{sec:enrichment-explanation}.
In fact, all of these will result from extremely simple ideas, which
we would like to describe with their main consequences.

\subsubsection{}
\label{sec:theoization-sketch}
Our starting point is the idea that an operad and more generally, a
\emph{coloured} operad or ``\emph{multicategory}'' (see Lambek
\cite{lambek} or Section \ref{sec:coloured-lax-algebra}) is analogous
to an algebraic theory for its role of governing algebras over it.
We extend this by defining, for every integer $n\ge 0$, a notion of
``$n$-theory'', where a $0$-theory is an algebra (commutative etc.), a
$1$-theory will
be a multicategory (symmetric etc.), and, for $n\ge 2$, each
$n$-theory will come with a natural notion of algebra over it, in such
a way that an ($n-1$)-theory coincides
precisely with an algebra over the terminal $n$-theory, generalizing
from the case $n=1$, the fact that, e.g., a commutative algebra is an
algebra over the terminal symmeric multicategory $\Com$.

\begin{remark}
While algebraic theory in Lawvere's sense is about a kind of algebraic
structure which makes sense in any \emph{Cartesian} symmetric monoidal
category, the notion of algebra over an $n$-theory can be enriched (in
particular) in any, e.g., symmetric, monoidal category (Definition
\ref{def:algebra}).
In this work, we only consider kinds of algebraic structure which are
definable at least in every symmetric monoidal category.
\end{remark}

The stated objective will be achieved by defining an $n$-theory
inductively as a \emph{theorized}
form of an ($n-1$)-theory, where the theorization we consider is a
``coloured'' version of it.
Theorization in the coloured sense, of commutative algebra, will be
symmetric multicategory.

Let us describe what about the notion of multicategory we would like
to generalize.

We first note that the notion of symmetric monoidal category is a
\emph{categorification} of the notion of commutative algebra, and the
process of categorification plays here the role of enabling us to
define the notion of commutative algebra \emph{in an} arbitrary
\emph{symmetric monoidal category}.
A commutative algebra in a symmetric monoidal category $\cat{A}$ is
simply a lax symmetric monoidal functor $\unity\to\cat{A}$, where
$\unity$ denotes the unit symmetric monoidal category.

Next, the notion of multicategory generalizes the notion of symmetric
monoidal category in the sense that there is a functor which
associates to a given symmetric monoidal category $\cat{A}$, a
multicategory $\wasreteori\cat{A}$ from the structure of which
$\cat{A}$ can be recovered (up to canonical equivalence).
Indeed, we can define $\wasreteori\cat{A}$ as the multicategory in
which
\begin{itemize}
\item an object is an object of $\cat{A}$,
\item a multimap $X\to Y$, where $X$ is a family of objects indexed by
  a finite set, say $S$, is a map $\Tensor_SX\to Y$ in $\cat{A}$,
\item multimaps compose by composition in $\cat{A}$ in the obvious
  manner.
\end{itemize}
Moreover, the notion of commutative algebra makes sense also in a
multicategory $\cat{M}$ so that, in the case
$\cat{M}=\wasreteori\cat{A}$, we get back the notion of commutative
algebra in $\cat{A}$.
Indeed, a commutative algebra \emph{in $\cat{M}$} is a functor
$\Com\to\cat{M}$ of multicategories.
We note that the notion of functor of multicategories is `primitive'
in that it is not a lax notion of morphism between categorified
structures, as the notion of lax symmetric monoidal functor is.

Now, a standard definition is that, for a multicategory $\cat{U}$, a
\emph{$\cat{U}$-algebra} in $\cat{M}$ is a functor
$\cat{U}\to\cat{M}$ of multicategories, generalizing the notion of
commutative algebra in $\cat{M}$.
Even in the special case where $\cat{M}=\wasreteori\cat{A}$, this
gives a vast generalization of the notion of commutative algebra in
$\cat{A}$.
The notion for a general $\cat{M}$, is a version which is
\emph{enriched} in a more general manner.

Now we would like to get back to the general context, and sketch the
relation between the notions, of $n$-theory and of ($n-1$)-theory.
(The generalization of what has been discussed so far, will be as
summarized by the diagrams \eqref{eq:theorization} and
\eqref{eq:enrichment} below, of which
the case already discussed is the case $n=1$.)

Firstly, there is an appropriate categorification of the notion of
($n-1$)-theory, which enables us to generalize the notion of
($n-1$)-theory to the notion of ($n-1$)-theory \emph{in a categorified
  ($n-1$)-theory}.
An \kore{($n-1$)-theory in a categorified ($n-1$)-theory $\cat{C}$}
will be a lax functor $\unity^{n-1}_\Com\to\cat{C}$ (possibly with
``colours'' in a suitable sense, which is not important in this
introduction, but is to be commented on after Theorem
\ref{thm:functor-of-theorization-intro} below) of categorified
($n-1$)-theories,
where $\unity^{n-1}_\Com$ denotes the terminal ($n-1$)-theory.

Next, the notion of $n$-theory will generalize the notion of
categorified ($n-1$)-theory in the similar manner as the manner how
the notion of
multicategory generalized the notion of symmetric monoidal category.
(General discussions of this will be given in Sections
\ref{sec:theorization-general}, \ref{sec:basic-construction}.)
Moreover, an ($n-1$)-theory in a categorified ($n-1$)-theory $\cat{C}$
will be equivalent as a datum to a (possibly coloured) functor
$\unity^n_\Com\to\wasreteori\cat{C}$, where the target here is the
$n$-theory corresponding to $\cat{C}$.

In fact, $\unity^n_\Com$ is equivalent to
$\wasreteori\unity^{n-1}_\Com$ in a suitable sense, and we shall have
the following.

\begin{theorem}[Theorem \ref{thm:functor-of-theorization}]
\label{thm:functor-of-theorization-intro}
Let $\cat{B}$ and $\cat{C}$ be categorified ($n-1$)-theories.
Then a functor $\wasreteori\cat{B}\to\wasreteori\cat{C}$ of
uncategorified $n$-theories, is
equivalent as a datum to a \textbf{lax} functor $\cat{B}\to\cat{C}$.
\end{theorem}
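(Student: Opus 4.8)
The plan is to prove this by exhibiting a natural equivalence at the level of the relevant mapping objects, using the adjunction-like relationship between the theorization functor $\wasreteori$ and the passage to lax morphisms. Let me first unpack what the statement is asking. On the left we have $\Fun(\wasreteori\cat{B},\wasreteori\cat{C})$, the functors of $n$-theories (the "primitive", non-lax notion of morphism, as emphasized in the excerpt for the case of multicategories). On the right we have the lax functors $\cat{B}\to\cat{C}$ between categorified $(n-1)$-theories. The claim is that these two data types agree. So my goal is to produce a natural bijection (or equivalence of mapping spaces/groupoids, in the higher-categorical setting forced by Remark \ref{rem:dimension-for-example}) between these, and to check it is natural in $\cat{B}$ and $\cat{C}$.

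**First I would** set up the base case $n=1$ to fix intuition and confirm the desired equivalence is the correct generalization. Here $\cat{B},\cat{C}$ are symmetric monoidal categories, $\wasreteori\cat{B},\wasreteori\cat{C}$ are the associated multicategories described explicitly in the excerpt, and the assertion specializes to: a functor of multicategories $\wasreteori\cat{B}\to\wasreteori\cat{C}$ is the same datum as a lax symmetric monoidal functor $\cat{B}\to\cat{C}$. This is the classical and well-understood fact, and I would extract from its proof the structural mechanism: a multifunctor sends each multimap $\Tensor_S X\to Y$ of $\wasreteori\cat{B}$ to a multimap in $\wasreteori\cat{C}$, and unwinding this on objects and on the universal multimaps $\Tensor_S X\to \Tensor_S X$ produces exactly the structure morphisms $F(\Tensor_S X)\to\Tensor_S F(X)$ (in the lax direction), with the compatibility with composition in the multicategory translating precisely into the coherence (associativity/unit) axioms for laxness. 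The key observation to abstract is that the theorized object $\wasreteori\cat{C}$ encodes all the tensor-operation data of $\cat{C}$ as multimaps, so that a morphism \emph{into} or \emph{between} theorizations need only respect this operadic data \emph{laxly}, never strictly.

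**Next I would** carry out the inductive step. Assume the theorem for $(n-1)$-theories, i.e.\ that $\wasreteori$ translates lax functors of categorified $(n-2)$-theories into functors of $(n-1)$-theories, and use the inductive description of an $n$-theory as a theorization of an $(n-1)$-theory together with the identification (stated in the excerpt) $\unity^n_\Com\equivwith\wasreteori\unity^{n-1}_\Com$. The strategy is to reduce a functor $\wasreteori\cat{B}\to\wasreteori\cat{C}$ to its effect on the operadic/multimap structure one theoretic order down, where the inductive hypothesis applies; the laxness that appears at order $n$ is exactly the laxness that the theorization $\wasreteori$ converts into strict functoriality, as isolated in the base case. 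Concretely, I expect to factor the equivalence as a composite of (i) the universal property defining $\wasreteori$ as the "free" construction sending lax-morphism data to strict-morphism data, and (ii) the inductive identification of the coherence data at level $n$ with a lax functor at level $n-1$.

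**The hard part will be** making the coherence bookkeeping precise in the fully higher-categorical setting, rather than the set-level $n=1$ case. In low order "a functor of multicategories is the same datum as a lax monoidal functor" is a finite check on structure morphisms and two or three axioms; but for general $n$, laxness at each order carries its own tower of higher coherences, and the equivalence must match the \emph{entire} tower on one side with the tower on the other. The genuine obstacle is to show that theorization $\wasreteori$ implements \emph{exactly} the right universal property—i.e.\ that it is the free enhancement converting lax $(n-1)$-morphisms into strict $n$-morphisms with no loss and no spurious coherence data—so that the two towers are identified coherently and not merely levelwise. I would expect to handle this by characterizing $\wasreteori\cat{C}$ via a universal property (a representability/adjunction statement) rather than by an explicit formula, so that the equivalence of data follows formally from the universal properties of both sides together with the inductive hypothesis, with naturality in $\cat{B}$ and $\cat{C}$ then being automatic.
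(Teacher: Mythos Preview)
Your approach diverges from the paper's, and the inductive scheme has a gap that is not obviously fillable.

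The paper's proof is \emph{direct}, not inductive. It fixes the common data---the action on colours up to dimension $n-2$, which is literally the same form of datum for both kinds of morphism---and then unpacks what remains on each side. A lax functor $F\colon\cat{B}\to\cat{C}$ consists of (0) a functorial action on the \emph{categories} of $(n-1)$-multimaps, and (1) coherent lax compatibility of that action with the composition functors. Upon applying $\wasreteori$, the composition functors become the bimodules they corepresent, and the key observation is that these bimodules \emph{are} the collections of $n$-multimaps in $\wasreteori\cat{B}$, while the morphisms inside each category $\Mul^\pi_\cat{B}[u]$ are precisely the suitably \emph{unary} $n$-multimaps. Hence when a functor $G\colon\wasreteori\cat{B}\to\wasreteori\cat{C}$ acts associatively on $n$-multimaps, restricting that action to unary ones recovers the functoriality on the categories of $(n-1)$-multimaps, and the compatibility of $G$ with composition of $n$-multimaps recovers the lax compatibility data. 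The paper then verifies explicitly that this reconstruction is inverse to the forgetting map $F\mapsto\wasreteori F$.

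Your inductive step, by contrast, never becomes concrete. The hypothesis at level $n-1$ concerns categorified $(n-2)$-theories; at level $n$ the objects are categorified $(n-1)$-theories, and a categorified $(n-1)$-theory does not decompose into categorified $(n-2)$-theories in any way that lets the hypothesis bite. The identification $\unity^n_\Com\simeq\wasreteori\unity^{n-1}_\Com$ you cite is about terminal objects and does not furnish such a reduction. More seriously, your proposed fix---characterising $\wasreteori\cat{C}$ by a universal property as ``the free enhancement converting lax morphisms into strict ones''---is circular: that universal property \emph{is} the theorem. The left adjoint $L$ to $\wasreteori$ that the paper mentions lives in the world of \emph{strict} functors, and the adjunction $L\dashv\wasreteori$ does not by itself produce lax functors on either side. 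What the paper supplies instead is the bimodule/unary-multimap bookkeeping above, carried out by hand; no induction and no abstract universal property are invoked.
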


The mentioned ``colouring'' of a (lax) functor (which the reader may
ignore in this introduction) means the replacement of the source
$\unity^{n-1}_\Com$ by a suitable $\cat{B}$ (Section
\ref{sec:colour-system}).

The notion of coloured functor $\unity^n_\Com\to\cat{V}$ of
$n$-theories thus generalizes the notion of ($n-1$)-theory, to the
similar notion \emph{in an $n$-theory} $\cat{V}$.
In general, given a specific kind of algebraic structure such as
``($n-1$)-theory'' here, a ``theorized'' instance of that kind of
structure, such as $\cat{V}$ here, will be a general and natural place
along which we hope to enrich the kind of structure.

Functor of higher theories will again be a ``primitive'' notion,
unlike lax functor of categorified higher theories.

Since the notion of $n$-theory generalizes the notion of categorified
($n-1$)-theory, we inductively find within the hierarchy of
$n$-theories as $n$ varies, the hierarchy of $n$-categories or
iterated categorifications of category, as in fact a very small part
of it.
Quite a variety of other hierarchies of iterated categorifications,
such as the hierarchy of symmetric monoidal $n$-cateogories, of
operads in $n$-categories,
and so on, also form very small parts of the same hierarchy.

\begin{remark}
It has not been clear whether there is an analogous hierarchy starting
from Lawvere theory.
Possible iterated \emph{categorifications} of Lawvere's notion are
algebraic theories enriched in $n$-categories, but we are looking for
a larger hierarchy than iterated categorifications.
\end{remark}

\subsubsection{}
\label{sec:grading-intro}

Let us sketch the basic structures which we obtain further.

Let $n\ge 1$, and let $\cat{U}$ and $\cat{V}$ be \emph{unenriched}
$n$-theories, i.e., those enriched in sets (or in groupoids).
Then it is appropriate to refer to a (coloured) functor
$\cat{U}\to\cat{V}$ as a \kore{$\cat{U}$-algebra in $\cat{V}$}.
(We have already mentioned this terminology for $n=1$, where we did
not want ``colours''.
For a general $n$, we shall treat an algebra over an enriched
$n$-theory using a kind of Day convolution, to handle the colours in a
simple manner.
See Section \ref{sec:convolution}.)
In the case where $\cat{U}$ is terminal, this notion of
$\cat{U}$-algebra was the (enriched) notion of ($n-1$)-theory.
Thus, we have obtained as various kinds of structure which are
analogous to the structure of an ($n-1$)-theory, as the choices there
are of an $n$-theory $\cat{U}$.

The situation can be summarized by the diagram
\begin{equation}\label{eq:theorization}
\begin{tikzcd}
\text{$n$-theory}
&\ni
&\cat{U}\arrow{d}{\text{control}}
&\tsketas{\unity}{^n_\Com}
\arrow{d}{\text{control}}\\
\text{($n-1$)-theory}\arrow[maps
to,dashed]{rr}{\substack{\text{``grade'' by $\cat{U}$}\\
    \text{(see below)}}}
\arrow[maps to,dashed]{u}{\text{theorize}}
&&\text{$\cat{U}$-algebra}&\text{%
($n-1$)-theory\kokoni{.}}
\end{tikzcd}
\end{equation}

To be detailed, various enriched notions are related as depicted in
the following diagram, where $\cat{U}$, $\cat{V}$ vary through
$n$-theories, and $\cat{C}$ varies through categorified
($n-1$)-theories.
(For example, the ``unenriched'' ($n-1$)-theory is where
$\cat{C}=\ddeloop^{n-1}\Set$ for a certain construction $\ddeloop$
(Section \ref{sec:delooping}), where $\Set$ denotes the Cartesian
symmetric monoidal category of sets.)
\begin{equation}\label{eq:enrichment}
\begin{tikzcd}[column sep=large]
\substack{\displaystyle\text{Functor}\\
  \displaystyle\cat{U}\to\cat{V}}
&\substack{\displaystyle\text{$\cat{U}$-algebra}\\
  \displaystyle\text{in $\cat{V}$}}
\arrow[maps to]{l}[swap]{\substack{\text{forbid}\\
    \text{colours}}}
\arrow[maps to]{d}[swap]{\cat{V}=\wasreteori\cat{C}}
\arrow[maps to]{r}{\cat{U}=\unity^n_\Com}
&\substack{\displaystyle\text{($n-1$)-theory}\\
  \displaystyle\text{in $\cat{V}$}}
\arrow[maps to]{d}[swap]{\cat{V}=\wasreteori\cat{C}}\\
&\substack{\displaystyle\text{$\cat{U}$-algebra}\\
  \displaystyle\text{in $\cat{C}$}}
\arrow[maps to]{r}[swap]{\cat{U}=\unity^n_\Com}
&\substack{\displaystyle\text{($n-1$)-theory}\\
  \displaystyle\text{in $\cat{C}$}}
\arrow[maps to]{r}{\substack{\text{forbid}\\
    \text{colours}}}
&\substack{\displaystyle\text{Lax functor}\\
  \displaystyle\unity^{n-1}_\Com\to\cat{C}.}
\end{tikzcd}
\end{equation}

On the other hand, there is a notion of \kore{$\cat{U}$-graded
  $n$-theory}, which theorizes the notion of $\cat{U}$-algebra, so the
relation between the former and the latter notions is analogous to
(and in this particular case, generalizes in
fact; see below) the relation between the notions, of $n$-theory and
of ($n-1$)-theory, described in Section \ref{sec:theoization-sketch}.
The term ``graded $n$-theory'' comes from the following proposition.
The previous, un-``graded'' version of an $n$-theory will be called a
\kore{symmetric} $n$-theory.

\begin{proposition}[Proposition \ref{prop:graded-is-overlying}]
\label{prop:graded-is-overlying-intro}
An unenriched $\cat{U}$-graded $n$-theory is equivalent as a datum to
an unenriched symmetric $n$-theory $\cat{X}$ equipped with a functor
$\cat{X}\to\cat{U}$ of symmetric $n$-theories.
\end{proposition}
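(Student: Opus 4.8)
The plan is to prove the equivalence by unwinding both sides into matching inductive data and then comparing them by induction on $n$, with the operad case $n=1$ (recorded in Remark~\ref{rem:graded-by-groupoid}) as the base. First I would reduce the statement to a comparison of data. By the description in Section~\ref{sec:grading-intro}, a $\cat{U}$-graded $n$-theory theorizes the notion of $\cat{U}$-algebra, and a $\cat{U}$-algebra in an $n$-theory $\cat{V}$ is precisely a functor $\cat{U}\to\cat{V}$; the terminal choice $\cat{U}=\unity^n_\Com$ recovers the un-graded, i.e.\ \emph{symmetric}, notion, so that the target of the asserted equivalence is the category of symmetric $n$-theories equipped with a functor to $\cat{U}$. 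I would then write down the evident comparison. From a $\cat{U}$-graded $n$-theory one extracts, on one side, an underlying symmetric $n$-theory $\cat{X}$ by forgetting the $\cat{U}$-grading carried by its operations and, on the other, a functor $P\colon\cat{X}\to\cat{U}$ of symmetric $n$-theories which records, for each operation of $\cat{X}$, the operation of $\cat{U}$ over which it lies. Conversely, from a pair $(\cat{X},P)$ one reconstructs the grading by declaring each operation of $\cat{X}$ to lie over its image under $P$. Both assignments are manifestly natural, so the whole content is that they are mutually inverse at the level of the full higher-coherent structure, not merely on colours and top operations.

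To control that higher-coherent structure I would induct on $n$. The inductive definition presents the operations and coherences of an $n$-theory as ($n-1$)-theory-like data one categorical level up, and a $\cat{U}$-grading restricts on this data to a grading by the ($n-1$)-theory extracted from $\cat{U}$ at that level. By the inductive hypothesis such an ($n-1$)-level grading is exactly the datum of a functor one level down, and Theorem~\ref{thm:functor-of-theorization-intro} lets me translate between functors of the theorized structures and the lax functors that organize the levels, so that these level-wise functors reassemble---together with the coherence cells that the lax structure records---into the single functor $P\colon\cat{X}\to\cat{U}$. Running this from the base case $n=1$ upward produces the desired equivalence of data.

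The hard part will be precisely this coherent reassembly. Since \emph{unenriched} here means enriched in groupoids, the matching of the two structures is a homotopy-coherent rather than a strict one, and the genuine work lies in verifying the higher-associativity coherences---exactly the coherences this article is organized around. Two points deserve the most care: first, that the \textbf{lax} structure appearing in Theorem~\ref{thm:functor-of-theorization-intro} contributes no data beyond the grading functor $P$, so that the comparison is an equivalence of \emph{data} and not only of underlying symmetric $n$-theories; and second, that the grading cells assembled over the successive inductive levels cohere into one functor without further choices. Checking that this assembly is compatible with the Day-convolution packaging of colours of Section~\ref{sec:convolution}---which the unenriched hypothesis fortunately simplifies---completes the argument, after which the equivalence follows formally.
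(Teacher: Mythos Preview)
Your proposal identifies the correct comparison maps---assembling the degree-wise data into a single symmetric $n$-theory with a projection to $\cat{U}$, and conversely fibring over $\cat{U}$---but the verification strategy you outline is both more complicated than needed and not clearly well-posed. The paper's proof is entirely direct: it writes down the Grothendieck-style construction $\diag_!\cat{X}$ explicitly (objects are pairs $(u,x)$ with $u\in\Ob\cat{U}$ and $x\in\Ob_u\cat{X}$; $k$-multimaps are pairs $(v,y)$ with $v\in\Mul^\pi_\cat{U}[u]$ and $y\in\Mul^\pi_{\cat{X},v}[x]$; at the top level one takes the colimit $\colim_v\Mul^\pi_{\cat{X},v}[x]$ with its canonical projection to $\Mul^\pi_\cat{U}[u]$), and then observes that, with a system of colours fixed, the data forms for ``structure of a $\cat{U}$-graded $n$-theory'' and ``structure of a symmetric $n$-theory over $\cat{U}$'' are literally the same. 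No induction on $n$ is performed and no external theorem is invoked.

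Your inductive scheme has a genuine gap at the step where you claim that ``a $\cat{U}$-grading restricts on this data to a grading by the ($n-1$)-theory extracted from $\cat{U}$ at that level''. An $n$-theory is not packaged in this paper as an ($n-1$)-theory internal to something in a way that would let you peel off one level and apply an ($n-1$)-dimensional inductive hypothesis; the data forms ($0$) through ($n+\ell$) of Definition~\ref{def:theory} are stratified by $k$, not by a reduction to lower $n$. Theorem~\ref{thm:functor-of-theorization-intro} compares functors $\wasreteori_n\cat{B}\to\wasreteori_n\cat{C}$ with lax functors $\cat{B}\to\cat{C}$ of categorified $n$-theories; it does not relate $\cat{U}$-gradings to projections $\cat{X}\to\cat{U}$, and you have not explained what would play the role of the categorified theories $\cat{B}$, $\cat{C}$ here. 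Finally, the Day-convolution material of Section~\ref{sec:convolution} concerns a different construction (producing a theory in which algebras over an enriched theory live) and plays no role in this proposition; invoking it is a red herring. The fix is simply to drop the induction and carry out the direct data comparison, which is what the paper does.
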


In the case where $\cat{U}$ is terminal, the notion of
$\cat{U}$-graded $n$-theory, which theorizes the notion of
$\unity^n_\Com$-algebra $=\text{($n-1$)-theory}$, coincides indeed
with the notion of symmetric $n$-theory.

One might think that, for a general $\cat{U}$, it makes sense to refer
to a $\cat{U}$-algebra also as a
\kore{$\cat{U}$-graded ($n-1$)-theory}.
In fact, there are even ``lower'' order notions of graded theory as
follows.

If $n\ge 2$, then as there is a notion of algebra over an
($n-1$)-theory, there is also a notion of algebra over a \emph{monoid}
$\cat{X}$ over an $n$-theory $\cat{U}$, where by a
\kore{monoid}, we mean an algebra enriched in sets.
In the case where $\cat{X}$ is the terminal monoid
$\unity^{n-1}_\cat{U}$, the notion of $\cat{U}$-algebra theorizes the
notion of $\cat{X}$-algebra, generalizing the manner how the notion of
($n-1$)-theory theorized the notion of ($n-2$)-theory.
It is appropriate to refer to $\unity^{n-1}_\cat{U}$-algebra also as a
\kore{$\cat{U}$-graded ($n-2$)-theory}.

We can further iterate a similar construction of a new notion, and
obtain for every integer $m$ such that $0\le m\le n-2$, the notion of
\emph{$\cat{U}$-graded $m$-theory}, which is theorized by the
notion of $\cat{U}$-graded ($m+1$)-theory.

There is also a notion of \kore{$\cat{U}$-graded ($n+1$)-theory},
which theorizes the notion of $\cat{U}$-graded $n$-theory.
We in fact obtain the following fundamental results.

\begin{theorem}[Theorems \ref{thm:graded-theory-is-monoid},
  \ref{thm:grade-theorized-theory}]
\label{thm:graded-theory-is-monoid-intro}
A $\cat{U}$-graded $n$-theory is equivalent as a datum to an algebra
over the ($n+1$)-theory $\wasreteori\cat{U}$.
A $\cat{U}$-graded ($n+1$)-theory is equivalent as
a datum to a $\wasreteori\cat{U}$-graded ($n+1$)-theory.
\end{theorem}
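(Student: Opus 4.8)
The two assertions express a single principle: theorizing the grading object, $\cat{U}\rightsquigarrow\wasreteori\cat{U}$, leaves the resulting notion of graded theory unchanged, up to the dimension shift that accompanies theorization. The plan is to prove the first assertion directly from the theorization correspondence of Theorem~\ref{thm:functor-of-theorization} together with the overlying characterization of Proposition~\ref{prop:graded-is-overlying}, and then to obtain the second assertion from the first by applying the theorization operation once more.

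For the first assertion I would begin by unwinding ``algebra over the $(n+1)$-theory $\wasreteori\cat{U}$'' in the unenriched setting. An unenriched algebra over an $(n+1)$-theory $\cat{W}$ is a functor $\cat{W}\to\cat{M}$ into the universal unenriched ambient $(n+1)$-theory $\cat{M}$, and this ambient is itself a theorization $\cat{M}\equivwith\wasreteori\cat{C}_0$ of the universal unenriched categorified $n$-theory $\cat{C}_0=\ddeloop^n\Set$; the identification $\unity^{n+1}_\Com\equivwith\wasreteori\unity^n_\Com$ guarantees that algebras over the terminal $(n+1)$-theory recover unenriched $n$-theories, which fixes this convention. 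Applying Theorem~\ref{thm:functor-of-theorization} (in its order-$(n+1)$ instance) with source $\cat{B}=\cat{U}$, regarded canonically as a categorified $n$-theory, and $\cat{C}=\cat{C}_0$ then identifies functors $\wasreteori\cat{U}\to\wasreteori\cat{C}_0$ with lax functors $\cat{U}\to\cat{C}_0$. It remains to recognize such a lax functor as an unenriched $\cat{U}$-graded $n$-theory; this last identification is the straightening underlying Proposition~\ref{prop:graded-is-overlying}, under which a lax functor into the universe $\ddeloop^n\Set$ corresponds to its overlying symmetric $n$-theory $\cat{X}\to\cat{U}$.

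For the second assertion I would invoke the inductive definition by which a $\cat{V}$-graded $(n+1)$-theory theorizes the notion of $\cat{V}$-graded $n$-theory, for any grading object $\cat{V}$. The first assertion reads $\cat{U}$-graded $n$-theory $\equivwith$ $\wasreteori\cat{U}$-graded $n$-theory, once one notes that, by the one-step-down grading convention, an algebra over $\wasreteori\cat{U}$ is precisely a $\wasreteori\cat{U}$-graded $n$-theory. Applying theorization to both sides then propagates this equivalence up one theoretic order, yielding $\cat{U}$-graded $(n+1)$-theory $\equivwith$ $\wasreteori\cat{U}$-graded $(n+1)$-theory. Alternatively one may argue symmetrically through Proposition~\ref{prop:graded-is-overlying} at order $n+1$, exhibiting both sides as the category of symmetric $(n+1)$-theories lying over $\wasreteori\cat{U}$.

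The main obstacle I anticipate is not any single identification, each being a direct application of the cited results, but the coherence and naturality needed to fit them together. Concretely, I expect the delicate points to be, first, upgrading the functor-level correspondence of Theorem~\ref{thm:functor-of-theorization} to an equivalence of the full graded-theory structure, including all higher operations and their coherences, rather than merely of underlying functor-data; and second, checking that the theorization operation used in the second assertion is natural enough in the grading object to transport the equivalence of the first assertion up a theoretic order while remaining compatible with the overlying descriptions of Proposition~\ref{prop:graded-is-overlying} at both orders $n$ and $n+1$. This naturality-and-coherence bookkeeping, rather than the conceptual skeleton, is where the real work lies.
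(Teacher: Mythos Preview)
Your core idea is right and matches the paper: Theorem~\ref{thm:functor-of-theorization} is the engine, converting functors out of $\wasreteori_n\cat{U}$ into lax functors out of $\cat{U}$. But your execution has a gap concerning colours. You write that ``an unenriched algebra over an $(n+1)$-theory $\cat{W}$ is a functor $\cat{W}\to\cat{M}$'', and you apply Theorem~\ref{thm:functor-of-theorization} with source $\cat{U}$ itself. In the paper's framework, however, a $\wasreteori_n\cat{U}$-algebra carries strata of colours up to dimension $n-1$: it is a functor $\diag'_!\cat{J}\to\ddeloop^n\wasreteori_0\cat{A}$, where $\cat{J}$ is the terminal $\wasreteori_n\cat{U}$-graded $(n+1)$-theory on a chosen colour system. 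Likewise a $\cat{U}$-graded $n$-theory is not a lax functor $\cat{U}\to\ddeloop^n\cat{A}$ but a \emph{coloured} one, namely a lax functor $\diag_!\cat{T}\to\ddeloop^n\cat{A}$ with $\cat{T}$ terminal on the same colours. The paper therefore fixes a common colour system, applies Theorem~\ref{thm:functor-of-theorization} (together with Lemma~\ref{lem:deloop-forget-commutation}) to $\diag_!\cat{T}$ rather than to $\cat{U}$, and then closes by checking the combinatorial equivalence $\wasreteori_n\diag_!\cat{T}\simeq\diag'_!\cat{J}$ of symmetric $(n+1)$-theories; this last identification of sources is the actual content, and it is what your outline omits. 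Your appeal to Proposition~\ref{prop:graded-is-overlying} as a ``straightening'' is also misplaced: that proposition gives the overlying description $\cat{X}\to\cat{U}$ of an unenriched graded theory, but it is not what identifies graded theories with (coloured) lax functors---that identification is definitional (Definition~\ref{def:coloured-lax-algebra-over-theory} and the discussion preceding it)---and the paper's proof does not invoke it.

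For the second assertion your option~(a), transporting the first equivalence through a ``theorization functor'', is not how the paper proceeds and would itself require justification of the requisite functoriality. The paper instead says the proof is \emph{similar} to the first: one repeats the same comparison of terminal theories on a fixed colour system, now at order $n+1$, directly verifying the analogue of $\wasreteori_n\diag_!\cat{T}\simeq\diag'_!\cat{J}$.
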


The notion of $\cat{U}$-graded ($n+1$)-theory can further be theorized
iteratively, but the resulting notion of \emph{$\cat{U}$-graded
  $m$-theory} for $m\ge n+2$ coincides with the notion of
$\wasreteori^m_n\cat{U}$-graded $m$-theory, where
$\wasreteori^m_n\cat{U}$ denotes the $m$-theory obtained by applying
the construction $\wasreteori$ $m-n$ times on $\cat{U}$ (Section
\ref{sec:forget-to-theorization}).
(Theorem also suggests that $\cat{U}$-graded $m$-theory in the case
$n=0$ should mean $\wasreteori\cat{U}$-graded $m$-theory for $m\ge
0$.)

In the case where $\cat{U}$ is terminal, the hierarchy of
$\cat{U}$-graded higher theories coincides with the original hierarchy
of symmetric higher theories.

We further consider higher theories graded by a graded theory, and so
on, and shall find out that this does not add much to what have
already been described.
See Section \ref{sec:grade-by-graded-intro} for some more on this.

In this work, we shall investigate relationship among various
mathematical structures related to these objects, do and investigate
various fundamental constructions, as well as consider a
generalization and its relation to other subjects of mathematics.
See Section \ref{sec:further-development}.

\subsubsection{}
\label{sec:enrichment-explanation}

The notion of $\cat{U}$-graded $1$-theory in a $\cat{U}$-graded
$2$-theory gives a generalization of Proposition
\ref{prop:enriching-multicategory}.
Let us see this.
(The discussion below will be slightly imprecise even though
essentially correct.
A more technical account can be found in Section
\ref{sec:enriched-theory}.)

Given a ($\cat{U}\tensor E_1$)-monoidal category $\cat{A}$, one
can `categorically deloop' $\cat{A}$ using the $E_1$-monoidal
structure (see B\'enabou \cite[2.2]{benabou} or the review in Section
\ref{sec:coloured-lax-algebra}) to obtain a $\cat{U}$-monoidal
$2$-category $\deloop\cat{A}$, and hence a $\cat{U}$-graded
$1$-theory $\wasreteori\deloop\cat{A}$ enriched in categories.
Since this is a categorified $\cat{U}$-graded $1$-theory, the notion
of $\cat{U}$-graded operad in $\wasreteori\deloop\cat{A}$ makes sense,
which we declare to be the notion of \emph{$\cat{U}$-graded operad in
  $\cat{A}$}.
This naturally generalizes the notion with the same name from the case
where $\cat{A}$ is symmetric monoidal.
Moreover, it is easy to check when $\cat{U}$ is one of the most
familiar operads, that this coincides with the usual notion.
However, this notion of operad, including the coloured version of it,
is nothing but the notion of $1$-theory in the $\cat{U}$-graded
$2$-theory $\wasreteori(\wasreteori\deloop\cat{A})$.

\subsubsection{}

Our method for iterative theorization will be by a technology of
producing
from a given kind of associative operation, a new kind of associative
operation, which is based on fundamental understanding of the higher
structure of associativity.
See Section \ref{sec:inductivity}.
The author indeed expects a hierarchy of iterated theorizations to
exist starting from a kind of algebraic structure which can be
expressed as defined by an ``associative'' operation, much more
generally than has been discussed so far.
Our construction of the hierarchy of $n$-theories, and its graded
generalization, may be showing the meaningfulness of a general
question of the existence of iterated theorizations for kinds of
``algebraic'' structure, and this may be our deepest contribution at
the conceptual level.

Indeed, we shall consider in Section \ref{sec:generalization}, a
modest generalization of symmetric higher theories, which are obtained
by iteratively theorizing some algebraic structures in which the
operations may have multiple inputs \emph{and} multiple outputs, such
as various versions of topological field theories.
See Section \ref{sec:further-list} for brief descriptions of the
examples to be discussed in this work.

\subsubsection{}
For the rest of this introduction, we shall describe the details of
the idea of theorization, and more results of this work, and then give
the outline of the body of the paper, and some notes for the reader
for proceeding.

\subsection{The conceptual origin of the notion of operad}
\label{sec:theorization}

\subsubsection{}
 
Theorization will be a process which produces a new kind
of algebraic structure from a given kind.
In order to start a discussion of the idea of theorization, we would
like to be able to talk about \emph{kinds} of algebraic structure.

An example of a kind of structure definable in a symmetric monoidal
category, is a symmetric monoidal functor from a fixed symmetric
monoidal category, say $\cat{B}$.
Thus, let us mean by a \kore{$\cat{B}$-algebra} in a symmetric
monoidal category $\cat{A}$,
simply a symmetric monoidal functor $F\colon\cat{B}\to\cat{A}$.
It is a `representation' of $\cat{B}$ in $\cat{A}$ (or an
$\cat{A}$-valued point of the `affine scheme' $\Spec\cat{B}$).

Concretely, by describing the structure of $\cat{B}$ using a
collection of generating objects and generating maps (as well as
decomposition of each of the source and the target of every generating
map into a monoidal product of generating objects), one may obtain a
presentation of the form of the structure of a $\cat{B}$-algebra in
terms of structure maps and equations satisfied by the structure
maps.
For example, the coCartesian symmetric monoidal category
$\cat{B}=\Fin$ of finite sets, is generated under the symmetric
monoidal multiplication
operations $\tensor=\kocoprod$, by the terminal object $\ten$, and one
map $\codiag_S\colon\ten^{\tensor S}\to\ten$ for each finite set $S$.
It follows that the datum of a symmetric monoidal functor
$F\colon\Fin\to\cat{A}$ can be described as the object $A=F(\ten)$
of $\cat{A}$ equipped with one operation $F(\codiag_S)\colon
A^{\tensor S}=F(\ten^{\tensor S})\to A$ for each finite set $S$,
satisfying suitable equations resulting from the relations one has in
$\Fin$.
Thus, we have obtained a presentation of the form of the structure of
a $\Fin$-algebra, as the form of datum for defining a commutative
algebra.

For a general $\cat{B}$, if objects of $\cat{B}$ are generated under
the monoidal multiplication by a family
$b=(b_\lambda)_{\lambda\in\Lambda}$ of objects, then a
$\cat{B}$-algebra defined by a symmetric monoidal functor
$F\colon\cat{B}\to\cat{A}$, can be considered similarly as structured
on the family $Fb$ of objects of $\cat{A}$, by structure maps
satisfying equations imposed by the structure of $\cat{B}$.
For example, the universal $\cat{B}$-algebra defined by
$\id\colon\cat{B}\to\cat{B}$, is structured on the family $b$ of
objects of $\cat{B}$, where the
structure maps will be the chosen generating maps of $\cat{B}$.

Algebra over a symmetric monoidal category in our sense, is simply the
most obvious formalization of kind of structure which can be presented
as defined by structure maps satisfying some specific equations.
Lawvere's theory is based on this idea.
Indeed, a \emph{multi-sorted}, i.e., ``coloured'', Lawvere theory is
essentially a Cartesian symmetric monoidal category $\cat{B}$ which is
given a nice collection of generating objects.
A \emph{PROP} \cite{mac-algebra} or ``\emph{category of operators}''
\cite{homotopy-everything}, with colours \cite{homotopy-invariant}, is
similar.

An algebra over a multicategory is also covered.
Indeed, given a multicategory $\cat{U}$, one can freely generate from
it a symmetric monoidal category, say $L\cat{U}$, so a
$\cat{U}$-algebra in a symmetric monoidal category $\cat{A}$ will
be equivalent as a datum to an $L\cat{U}$-algebra in $\cat{A}$.

Even though we are not particularly interested in the notion of
algebra over a general symmetric monoidal category, this can be the
starting point for our purpose of finding kinds of
algebraic structure which generalize nicely.
For example, recall that the basic idea of categorification is that a
\emph{categorification} of a certain kind of algebraic structure, is a
kind of structure on category obtained by replacing structure maps by
functors, and structural equations by suitably coherent isomorphisms,
forming a part of the structure.
We have a canonical categorification of $\cat{B}$-algebra, which we
shall call a \kore{$\cat{B}$-monoidal category}, and it is simply a
symmetric monoidal functor $\cat{B}\to\Cat$, where $\Cat$ denotes the
Cartesian symmetric monoidal category enriched in groupoids, of
categories (with a fix limit for size), where for
$\cat{X},\cat{Y}\in\Cat$, we let
$\Map_\Cat(\cat{X},\cat{Y})$ be the groupoid formed by functors
$\cat{X}\to\cat{Y}$ and
isomorphisms between them.

\begin{remark}\label{rem:groupoid-terminology}
This is a technical remark. 

Here and everywhere else in this introduction, a functor which we
consider
to a category enriched in groupoids (or in categories) is a functor in
the usual ``weakened'' sense (which is sometimes called a
\emph{pseudo}-functor, see Grothendieck \cite[Section 8]{sga1-6},
\cite{gro-semi-bour}).
Even though we shall not need to look into the details of this till we
enter the body, a symmetric monoidal structure on such a functor can
also be defined in an appropriate manner.

In fact, it should be understood that every categorical term in this
introduction is used in the similarly appropriate sense when there is
enrichment of the relevant categorical structures in groupoids or
categories, where erichment itself should be understood to be done in
the standard ``weakened'' manner.
See B\'enabou \cite{benabou}.
(The reader who is comfortable with homotopy theory may instead
replace all sets\slash groupoids with infinity groupoids, and understand
everything as enriched in infinity groupoids, and this will trivialize
the process of categorification since infinity $1$-categories (of size
up to a fixed limit) are already forming a (larger) infinity
$1$-category.)
\end{remark}

However, we notify the reader of a circularity here.
Namely, we have used one particular categorification of the notion of
commutative algebra, i.e., the notion of symmetric monoidal category,
to categorify kinds of structure which are similar to commutative
algebra.
Even though the result obtained is not bad, one may not be able to
expect that the same framework would also be the most useful for
categorifying very different kinds of structure.

For example, for categorification of the notion of multicategory, a
method which takes account of the categorical dimensionality appears
to lead to a cleaner and less redundant presentation of the result
than the method of reformulating the notion of multicategory as
algebra over a symmetric monoidal category, and then applying the
previous definition.
(It appears simpler to treat a multicategory as an algebra over a
categorically \emph{$2$-dimensional} algebraic structure, e.g., the
terminal ``$2$-theory'', than to treat it as an algebra over a
multicategory or a symmetric monoidal
category, which can naturally be seen as $1$-dimensional structures.)

Therefore, the general idea which we have described of a kind of
algebraic structure and its categorification, seems more important for
practical purposes, than precise but limited formulations of the
notions in particular contexts, such as $\cat{B}$-algebra and
$\cat{B}$-monoidal category.
Nevertheless, we hope that the examples above was clarifying on our
view on algebraic structures.

Finally, we remark that, in order to consider a categorification of
the notion of $\cat{B}$-algebra, the target of the symmetric monoidal
functor on $\cat{B}$ to define a categorified structure, did not need
to be $\Cat$.
Namely, the notion of symmetric monoidal functor $\cat{B}\to\cat{A}$,
where $\cat{A}$ is \emph{any} symmetric monoidal category enriched in
groupiods, formalizes the idea of replacing structural equations for
the structure of a $\cat{B}$-algebra (in some presentation of the
notion) by coherent isomorphisms.

Therefore, it seems reasonable to expect in general, that a meaningful
categorification of a kind of structure definable in a symmetric
monoidal category, should be a kind of structure definable in any
symmetric monoidal category $\cat{A}$ enriched in groupoids.
In fact, the right way to consider categorification is perhaps as
about enrichment in groupoids, and the resulting weakening in a
coherent manner, of the structure.

While the expectation above is indeed fulfilled in the concrete cases
which we consider in this work, we shall keep concentrating on the
case $\cat{A}=\Cat$ of categorified structures for the time being,
since this will keep things simpler.
One relation between the mentioned general form of categorification
and the idea of theorization, will be seen in Section
\ref{sec:theorization-general}.
On the other hand, for the kinds of structure which we theorize in
this work, the general form of categorification can be understood in
any case, as a specific kind of structure residing in a suitably
associated theorized structure, leaving us no need to consider more
general situation than $\cat{A}=\Cat$ in this early stage.
See e.g., Corollary \ref{cor:forget-to-theorization} (or Theorem \ref{thm:functor-of-theorization-intro}).
For example, the case ``$n=0$'' of this result applies to the
categorified form of the notion of $\cat{B}$-algebra.

\subsubsection{}
\label{sec:algebra-to-operad}

In order to get to the idea of theorization, we first recall that a
basic feature expected of the categorified structure is that, if a
category $\cat{C}$ is equipped with a categorified form of a certain
kind of algebraic structure, then the original, uncategorified form of
the same structure should naturally make sense in $\cat{C}$.
For example, if $\cat{C}$ is given a monoidal structure over a
symmetric monoidal category $\cat{B}$, then a ``$\cat{B}$-algebra'' in
$\cat{C}$ means a lax $\cat{B}$-monoidal functor $\unity\to\cat{C}$,
where $\unity$ denotes the unit $\cat{B}$-monoidal category.

However, for some kinds of algebraic structure, we have more general
instances of this phenomenon.
In a symmetric monoidal category for example, the notion of algebra
makes sense over any symmetric operad or multicategory, and the same
moreover makes sense also \emph{in} any symmetric multicategory.
Indeed, an algebra over a symmetric multicategory $\cat{U}$ in a
symmetric multicategory $\cat{V}$, is simply a morphism
$\cat{U}\to\cat{V}$.
Similarly, the notion of algebra over any planar multicategory makes
sense in any
associative monoidal category, and more generally, in any planar
multicategory in the same manner.

While the notion of associative monoidal category categorifies the
notion of associative algebra, the process of theorization, which will
be more general than the process of categorification, will produce the
notion of planar multicategory from the notion of associative algebra,
and symmetric multicategory from commutative algebra.
In general, theorization will produce from a given kind of algebraic
structure, a new kind of algebraic structure generalizing its
categorification, in such a manner that the original notion of algebra
reduces to the notion of algebra over the terminal one among the
theorized objects (meaning symmetric multicategories, for commutative
algebras, so generalizing the simple fact that an commutative algebra
is an algebra over the terminal symmetric multicategory).

Let us thus recall how one may naturally arrive at the notion of
symmetric operad, starting from the notion of commutative algebra (and
we are suggesting that the same procedure will produce the notion
of planar operad from the notion of associative algebra, for example).
Specifically, let us try to find the notion of symmetric operad (in
sets) out of
the desire of generalizing the notion of commutative algebra to the
notions of certain other kinds of algebra which makes sense in a
symmetric monoidal category.
Indeed, one of the most important roles of a multicategory is
definitely the role of governing algebras over it.

The way how we generalized the notion of commutative algebra is as
follows.
Recall, as we have already seen, that the structure of a commutative
algebra on an object $A$ of a symmetric monoidal category, was given
by a single $S$-ary
operation $A^{\tensor S}\to A$ for every finite set $S$ which,
collected over all $S$, had appropriate consistency.
We get a generalization of this by allowing not just a single $S$-ary
operation, but a \emph{family} of $S$-ary operations parametrized by a
set prescribed for $S$.
This ``set of $S$-ary operations'' for each $S$, is the first bit of
the datum defining an operad in sets.
Having this, we next would like to compose these operations just as we
can compose multiplication operations of a commutative algebra,
and the composition should have appropriate consistency.
A symmetric multicategory is simply a more general version of this,
with many objects, or ``colours''.
(We shall take a look at colours in a theorized structure in Section
\ref{sec:theorization-formulation}.)

\begin{remark}
In this formulation of an operad, part of the composition structure
makes the set of $S$-ary operations functorial with respect to
bijections of ``$S$''.
This gives the ``action of the symmetric group'' in another common
formulation of an operad.
\end{remark}

A similar procedure can be imagined once a kind of ``algebraic''
structure in a broad sense is specified as a specific kind of
system of operations, in place of commutative or associative algebra.
Inspired by Lawvere's notion of algebraic theory, we call a
multicategory also a (\kore{symmetric}) \kore{$1$-theory}, and
then generally call \kore{theorization}, a process similar to the
process above through which we have obtained $1$-theories from the
notion of commutative algebra.
The result of such a process will also be called a \kore{theorization}.
Thus, the notion of $1$-theory is a \emph{theorization} of the notion
of commutative algebra.
We shall see in Sections \ref{sec:theorization-general} and
\ref{sec:basic-construction}, how the process of theorization indeed
generalizes the process of categorification.

\begin{remark}\label{rem:presentation-vs-unenrich}
Recall that operad was a kind of structure which made sense in any
symmetric monoidal category, the meaning of which for us was that the
form of datum to define an operad, could be presented in terms of
structure maps and structural equations.
In general, it seems reasonable to expect that a meaningful
theorization of a given kind of algebraic structure should have a
similar presentation, and in particular, should make sense in any
symmetric monoidal category $\cat{A}$.
This ability of presentation will be important when we would like to
theorize a theorized kind of structure once again, even though we
shall till that time, stick normally to the case where $\cat{A}$ is
the Cartesian symmetric monoidal category $\Set$ of sets, in
order to keep our exposition simpler.
\end{remark}

\subsection{Theorization of algebra}
\label{sec:theorize-algebra}

\subsubsection{}
\label{sec:graded-operad}

As the simplest example of a theorization process next to the one
which we have seen in the previous section, let us consider
theorization of the
notion of $\cat{U}$-algebra for a symmetric operad $\cat{U}$ in sets
(see Remark \ref{rem:graded-by-groupoid} below for the case of an
operad in groupoids).
By using the same method as in the previous section, we shall obtain a
theorization of the notion of $\cat{U}$-algebra, which we call
\emph{$\cat{U}$-graded operad} (in the uncoloured version).
Let us assume for simplicity, that $\cat{U}$ is an uncoloured operad.

Recall that the structure of a $\cat{U}$-algebra on an object $A$ of a
symmetric monoidal category, is defined by an associative action on
$A$, of operations in $\cat{U}$.
If $u$ is an $S$-ary operation in $\cat{U}$ for a finite set $S$, then
it should act as an $S$-ary operation $A^{\tensor S}\to A$.
Now, to theorize the notion of $\cat{U}$-algebra given by an action of
the operators of $\cat{U}$, means to modify the definition
of this structure by replacing an action of every operator $u$ in
$\cat{U}$, by a choice of the set ``of operations of shape (so to
speak) $u$''.
We call an element of this set an \kore{operation} of \kore{degree}
$u$.

Thus the datum of a $\cat{U}$-graded operad $\cat{X}$ in sets should
include,
for every operation $u$ in $\cat{U}$, a set whose element we shall
call an operation in $\cat{X}$ of degree $u$.
If the operation $u$ is $S$-ary in $\cat{U}$, then we shall say that
any operation of degree $u$ in $\cat{X}$ has \kore{arity} $S$.

There should further be given a consistent way to compose operations
in $\cat{X}$, according to the way how operations in
$\cat{U}$ compose, namely, in a manner which respects the degrees of
the operations.
These will be a complete set of data for a \kore{$\cat{U}$-graded
  operad} $\cat{X}$ in sets.

There is also a coloured version of this, which we call
$\cat{U}$-graded \kore{$1$-theory}, \kore{multicategory} or
\kore{coloured} operad, and this is a theorization of
$\cat{U}$-algebra in a more general sense.
From the general discussions of theorization in Sections
\ref{sec:theorization-general} and \ref{sec:basic-construction},
$\cat{U}$-graded multicategory will turn out to be also a
generalization of $\cat{U}$-monoidal category, generalizing the way
how symmetric multicategory generalizes symmetric monoidal category.

\subsubsection{}
\label{sec:graded-multicat-as-lying-over}

By reflecting on what we have done above, we immediately find that a
$\cat{U}$-graded operad in sets is in fact exactly a symmetric operad
$\cat{Y}$ in sets equipped with a morphism
$P\colon\cat{Y}\to\cat{U}$.
The relation between $\cat{X}$ above and $\cat{Y}$ here is that an
$S$-ary operation in $\cat{Y}$ is an $S$-ary operation in $\cat{X}$
of \emph{any} degree.
The map $P$ maps an operation in $\cat{Y}$ to the degree which the
operation had when it was in $\cat{X}$.
Conversely, given an $S$-ary operation $u$ in $\cat{U}$, an operation
in $\cat{X}$ of degree $u$ is an $S$-ary operation in $\cat{Y}$ which
lies over $u$.
For example, $\cat{U}$, lying terminally over itself, indeed
corresponds in this manner, to the terminal $\cat{U}$-graded operad,
which has exacly one operation of each degree.

\begin{remark}\label{rem:graded-by-groupoid}
In the case where $\cat{U}$ is an operad in groupoids, the set of
$S$-ary operations in $\cat{X}$ of degree $u$ should be functorial in
$u$ on the groupoid of $S$-ary operations in $\cat{U}$.
The $S$-ary operations in $\cat{Y}$ will then be the corresponding
groupoid projecting to the groupoid of $S$-ary operations in $\cat{U}$
(obtained by the Grothendieck construction \cite[Section 8]{sga1-6} or
the (homotopy) colimit in groupoids).
For $\cat{U}=E_2$, the mentioned functoriality corresponds to the
action of the pure braid groups in a braided operad.
See Fiedorowicz \cite{fiedoro}.

In general, a symmetric operad $\cat{Y}$ in groupoids equipped with
$P\colon\cat{Y}\to\cat{U}$, corresponds to a $\cat{U}$-graded operad
$\cat{X}$ in \emph{groupoids}.
$\cat{X}$ is in sets if for every operation $u$ in $\cat{U}$, the
groupoid of operations in $\cat{X}$ of degree $u$, obtained as the
(homotopy) fibre over $u$ in $\cat{Y}$, is a \emph{homotopy
  $0$-type}, namely, a groupoid in which every pair of maps $f,g\colon
x\equivto y$ between the same pair of objects are equal.
\end{remark}

\begin{remark}\label{rem:different-categorification}
Inclusion of operads in groupoids in the discussion leads to a subtle
situation.
For example, if $\cat{U}=E_2$, then a $\cat{U}$-algebra in a symmetric
monoidal category is simply a commutative algebra.
Therefore, we are considering both $E_2$-graded multicategory and
symmetric multicategory as theorizations of commutative algebra.
The crucial difference between the two theorizations is between the
categorifications being generalized, namely, braided monoidal category
and symmetric monoidal category.
\end{remark}

Similarly, a $\cat{U}$-graded multicategory enriched in sets is a
symmetric multicategory
in which multimaps are graded by multimaps of $\cat{U}$.
In other words, it is just a symmetric multicategory (enriched in
sets) equipped with a morphism to $\cat{U}$.
Now, given a $\cat{U}$-graded multicategory $\cat{X}$, an
$\cat{X}$-algebra
in a $\cat{U}$-graded $1$-theory $\cat{Y}$ will be just a functor
$\cat{X}\to\cat{Y}$ of $\cat{U}$-graded $1$-theories.

\begin{example}\label{ex:initially-graded}
A multicategory graded by the initial operad $\Ini$, is a
multicategory 
with only unary multimaps, which is equivalent as a datum to a
category.
\end{example}

A similar theorization of the notion of $\cat{U}$-algebra, can be
defined for a \emph{coloured} symmetric operad $\cat{U}$ enriched in
sets, and a $\cat{U}$-graded $1$-theory enriched in sets will be again
a symmetric multicategory $\cat{X}$ (enriched in sets) equipped with a
functor to $\cat{U}$.
In other words, $\cat{X}$ will be such that not only multimaps in it
are graded, but objects are also graded by objects of $\cat{U}$.
For an object $u$ of $\cat{U}$, an object of $\cat{X}$ of
\emph{degree} $u$, will be just an object of $\cat{X}$ lying over
$u$.

\begin{example}\label{ex:graded-by-category}
Recall, as noted in Example \ref{ex:initially-graded}, that a category
$\cat{C}$ can be considered as a symmetric
multicategory having only unary multimaps.
If we consider $\cat{C}$ as a multicategory in this way, then a
$\cat{C}$-graded $1$-theory enriched in sets is a category equipped
with a functor to
$\cat{C}$, and this theorizes $\cat{C}$-algebra, or functor on
$\cat{C}$ (which one might also call a left $\cat{C}$-module).
On the other hand, a categorification of $\cat{C}$-algebra is a
category valued functor $\cat{C}\to\Cat$, and among the theorizations,
categorifications correspond to op-fibrations over $\cat{C}$.
\end{example}

Suppose given a category $\cat{C}$ and two functors
$\cat{F},\cat{G}\colon\cat{C}\to\Cat$, corresponding respectively to categories
$\cat{X},\cat{Y}$ lying over $\cat{C}$, mapping down to $\cat{C}$ by
op-fibrations.
Note that, by Example \ref{ex:graded-by-category}, $\cat{F}$ and $\cat{G}$ are
categorified $\cat{C}$-modules, and $\cat{X}$ and $\cat{Y}$ as
categories over
$\cat{C}$, are the corresponding $\cat{C}$-graded $1$-theories.

In this situation, the relation between maps $\cat{F}\to\cat{G}$ and
maps $\cat{X}\to\cat{Y}$, is as follows.
Namely, a functor $\phi\colon\cat{X}\to\cat{Y}$ of categories over
$\cat{C}$ (see Remark \ref{rem:groupoid-terminology}, to be
technical), corresponds to a map $\cat{F}\to\cat{G}$ if and only if
$\phi$ preserves coCartesian maps, and an arbitrary functor $\phi$
over $\cat{C}$ only corresponds to a \emph{lax} map
$\cat{F}\to\cat{G}$ (defined with respect to the standard
\emph{$2$-category} structure on $\Cat$).

This is an instance of Theorem
\ref{thm:functor-of-theorization-intro}.

\subsection{Theorization in general}
\label{sec:theorization-general}

\subsubsection{}
\label{sec:bimodule}
For the idea for theorization of a general algebraic structure, the
notion of \emph{profunctor}\slash\emph{distributor}\slash\emph{bimodule} is
useful.
For categories $\cat{C},\cat{D}$, a
\kore{$\cat{D}$--$\cat{C}$-bimodule} (in the category $\Set$)
is a functor $\cat{C}^\op\times\cat{D}\to\Set$.
The category of $\cat{D}$--$\cat{C}$-bimodules contains the opposite
of the functor category $\Fun(\cat{C},\cat{D})$
as a full subcategory, where a functor $F\colon\cat{C}\to\cat{D}$ is
identified with the bimodule $\Map_\cat{D}(F-,-)$.
Let us say that this bimodule is \kore{corepresented} by $F$.
By symmetry, the category of bimodules also contains
$\Fun(\cat{D},\cat{C})$.
However, for the purpose of theorization, we treat $\cat{C}$ and
$\cat{D}$ asymmetrically, and mostly consider only
\emph{co}representation of bimodules.
Bimodules compose by tensor product, to make categories form a
$2$-category, extending the $2$-category formed with (opposite)
functors as $1$-morphisms, by the identification of a functor with the
bimodule corepresented by it.

\subsubsection{}
\label{sec:theorization-formulation}

We would like to consider the general idea of theorization, while
having in mind as an example, the case of the notion of
$\cat{B}$-algebra for a symmetric monoidal category $\cat{B}$.

We shall find that theorization is in fact more general than
\emph{lax} (or ``op-lax''; see below) categorification, where, by a
\kore{lax categorification}
of a kind of structure, we mean a \emph{relaxation} of a specific
categorification of the same kind of structure in the sense that it is
a specific generalization of the specified categorification such that,
in a lax categorified structure, a non-invertible map (going in a
specified
direction) is allowed in place of every one of some specified
structure isomorphisms in some presentation of the categorified form
of structure of the kind.
Let us denote by $\CCat$, the \emph{$2$-category} of categories and
functors.
Then the notion of $\cat{B}$-algebra has a canonical lax
categorification, which we shall call \emph{lax} $\cat{B}$-monoidal
category, where a \kore{lax $\cat{B}$-monoidal category} is by
definition, a lax functor $\cat{B}\to\CCat$ which is given a datum of
(not lax) commutation with the symmetric monoidal structures.
In the case where $\cat{B}$ is $\Fin$, this coincides with the
standard notion of lax symmetric monoidal category.

There is another lax categorification of the notion of
$\cat{B}$-algebra, which we shall call \emph{op-lax}
$\cat{B}$-monoidal category.
An \kore{op-lax} $\cat{B}$-monoidal category is similar to a lax
$\cat{B}$-monoidal category except that the laxness of the functor
$\cat{B}\to\CCat$ should be opposite, namely, the non-invertible
structure maps should go in the opposite direction.
In other words, it should be an symmetric monoidal op-lax functor in
one of the common conventions, in which a \emph{lax} functor
$\unity\to\CCat$ from the unit category $\unity$, is a category
equipped with a monad, rather than a comonad, on it (see B\'enabou
\cite{benabou}).

The idea of \kore{theorization} which we have described in Sections
\ref{sec:theorization}, \ref{sec:theorize-algebra},
can now be expressed as that it is a \emph{virtualization} of an
op-lax categorification, where by a \kore{virtualization} of op-lax
$\cat{B}$-monoidal structure, or any kind of structure as far as the
following makes sense, we mean a specific generalization of the
structure such that, in a virtualized structure, non-corepresentable
bimodules are
allowed in place of some specified structure \emph{functors}.
(Note here also that the structure maps on bimodules should be
understood to be in the opposite direction to the structure maps on
functors, owing to the
contravariance of the corepresentation of bimodules by functors.)

To be cautious, specification of a theorization of $\cat{B}$-algebra
for example, includes specification of the notion of ``structure
functor'' for $\cat{B}$-monoidal structure, which should at least
include specification of a collection of generating objects of
$\cat{B}$.
Specifically, given a family $b=(b_\lambda)_{\lambda\in\Lambda}$ of
objects of $\cat{B}$ which generates all objects under the monoidal
multiplication, we consider for a family
$\cat{X}=(\cat{X}_\lambda)_\lambda$ of categories, a symmetric
monoidal functor $\cat{F}\colon\cat{B}\to\Cat$ with $\cat{F}b=\cat{X}$
as a structure on $\cat{X}$, and then see a theorization as a more
general kind of structure which we can consider on $\cat{X}$ (although
a slightly more precise understanding will be that the structure is on
the collections of colours to be described shortly, as would be
concluded from the discussions of Section \ref{sec:action-of-identity}
below).

For example, theorization of algebra over a multicategory $\cat{U}$,
can be considered as the case where $\cat{B}$ is freely generated by
$\cat{U}$, and the generating objects which we choose will be the
indecomposable objects, i.e., the objects which come from $\cat{U}$.
See Section \ref{sec:graded-as-theorization} below.

Note in particular, that the idea above does not determine the
theorization from a categorification uniquely, so there is a question
on which theorization if any, we would like.
In the case of algebra over a multicategory $\cat{U}$, we achieve the
following through theorization.
Firstly, our categorification is $\cat{U}$-monoidal category, and this
already allows us to define a $\cat{U}$-algebra internal in a
$\cat{U}$-monoidal category $\cat{A}$ as a lax $\cat{U}$-monoidal
functor $\unity\to\cat{A}$, where $\unity$ denotes the unit
$\cat{U}$-monoidal category.
Now, through the process of theorization, this notion of
$\cat{U}$-algebra becomes generalized to the notion of functor of
$\cat{U}$-graded multicategories.
See Example \ref{ex:algebra-as-functor}.

In general, given a specific kind of structure in a specific context,
and some reasonable categorification of it, we would like a similar
extension of the notion through theorization.
It is a theorization which allows this that we would like, and
existence of such a theorization appears to be usually a non-trivial
question.

Another thing to note is that the notion of theorization which we have
formulated is the ``coloured''
version which we did not discuss in detail in Section
\ref{sec:theorization} or \ref{sec:theorize-algebra}.
A \kore{colour} in the theorized structure is an object of a category
in the underlying family of categories, e.g.,
$(\cat{X}_\lambda)_{\lambda\in\Lambda}$ above.
This generalizes the colours in a multicategory. 
See Section \ref{sec:graded-as-theorization} below.

To be more detailed, in the case discussed above, one can consider an
object of $\cat{X}_\lambda$ as a colour having \kore{degree}
$\lambda$.

\begin{remark}
As mentioned in Remark \ref{rem:presentation-vs-unenrich}, it is
better to have a presentation of the form of datum for a theorized
structure, so in particular, we have a definition of a theorized
structure \emph{enriched} in a symmetric monoidal category.
In practice, it is perhaps not difficult usually, to write down a
presentation by looking at the process of the theorization carefully.
Essentially, one simply needs to run the virtualization process using
the enriched version of bimodules, even though, to be rigourous, there
is a minor issue here that bimodules in a general symmetric monoidal
category do not necessarily compose, so we need to work actually in a
\emph{$2$-theory} formed by enriched categories and bimodules.
However, we shall not worry about this in this introduction, and
shall mainly consider only \emph{un}enriched theorized structures.

In the concrete situations which we treat in the body, another,
simpler method for theorization (which demands more concrete data as
an input) will in fact give a simpler solution for enriching the
theorized structure in a symmetric monoidal category, as will be seen
in Section \ref{sec:coloured-lax-structure}.
\end{remark}

\subsubsection{}
\label{sec:graded-as-theorization}
For a multicategory $\cat{U}$, let us try to interpret
$\cat{U}$-graded multicategory as a ``theorization'' of
$\cat{U}$-algebra in the defined sense.

Firstly, we consider the structure of a $\cat{U}$-algebra as a
structure on family of objects (of a symmetric monoidal category)
indexed by the objects of $\cat{U}$.
Namely, we consider a $\cat{U}$-algebra $A$ in a symmetric monoidal
category $\cat{A}$, as consisting of
\begin{itemize}
\item for every object $u\in\Ob\cat{U}$, an object $A(u)$ of
  $\cat{A}$,
\item for every finite set $S$ and an $S$-ary operation $f\colon
  u\to u'$ in $\cat{U}$, where $u=(u_s)_{s\in S}$ is a family of
  objects of $\cat{U}$ indexed by $S$, a map $Af\colon A(u)\to A(u')$
  in $\cat{A}$, where $A(u):=\Tensor_{s\in S}A(u_s)$,
\end{itemize}
and then consider the latter as a structure on the family
$\Ob A:=\bigl(A(u)\bigr)_{u\in\Ob\cat{U}}$ of objects of $\cat{A}$.
The structure is thus an action of every multimap $f$ in $\cat{U}$ on
the relevant members of the family $\Ob A$.

We would like next to obtain from a $\cat{U}$-graded multicategory
$\cat{X}$, a family similar to $\Ob A$ above, \emph{of categories}, to
underlie $\cat{X}$.
For this, we take the family
$\Ob\cat{X}:=(\cat{X}_u)_{u\in\Ob\cat{U}}$, where $\cat{X}_u$ denotes
the category formed by the objects of $\cat{X}$ of 
degree $u$, and maps (i.e., unary multimaps) between them of degree
$\id_u$.

The rest of the structure of $\cat{X}$ can then be considered as a
lax associative action of the rest of the multimaps $f$ in $\cat{U}$,
on these categories $\cat{X}_u$, each $f$ acting as the bimodule
formed by the multimaps in $\cat{X}$ of degree $f$, so $\cat{X}$ can
be interpreted as obtained by putting a theorized $\cat{U}$-algebra
structure on the family $\Ob\cat{X}$.
(See Section \ref{sec:action-of-identity} below to be more precise.)

\begin{remark}\label{rem:contravariance}
Lax associativity of an action through bimodules generalizes
\emph{op}-lax associativity of an action through functors.
\end{remark}

If a $\cat{U}$-graded multicategory $\cat{X}$ is seen as a theorized
structure in this manner, then a colour in this theorized structure is
an object of a category $\cat{X}_u$, where $u$ is any object of
$\cat{U}$.
In other words, it is an object of the multicategory $\cat{X}$.

\subsubsection{}
\label{sec:action-of-identity}
We would like to give a minor and technical remark.

For a multicategory $\cat{U}$, a natural theorization which our
definition expects of
$\cat{U}$-algebra would appear to be lax $\cat{U}$-algebra in the
$2$-category mentioned above formed by categories and bimodules
between them.
(Note Remark \ref{rem:contravariance}.)
This does not coincide with our desired theorization, which is
$\cat{U}$-graded multicategory.

Indeed, for an object $u$ of $\cat{U}$, if a category, say
$\cat{X}_u$, is associated to $u$, and the identity map of $u$ acts on
$\cat{X}_u$ in our $2$-category of bimodules, then this acton gives
another category, say $\cat{Y}_u$, with objects the objects of
$\cat{X}_u$, and a map, say $F\colon\cat{X}_u\to\cat{Y}_u$, of the
structures of categories on the same collection of objects.
However, the theorization in the idea described in the previous
sections, is not where $\id_u$ acts on an already existing category
$\cat{X}_u$, but where the structure of $\cat{X}_u$ itself as a
category, is the action of $\id_u$.

In other words, we usually do not just want to consider a relaxed
structure in the $2$-category of categories and bimodules, but we
would further like to require that the resulting map corresponding to
$F$ in the example above, associated to each member of the
underlying family of categories, to be an isomorphism.

\begin{remark}
In the example of Section \ref{sec:graded-as-theorization}, there is
another category
structure on the objects of $\cat{X}_u$, in which a map is a (unary)
map in $\cat{X}$ of degree an \emph{arbitrary} endomorphism of $u$ in
$\cat{U}$, rather than just the identity.
This clearly does not interfere with the remark here.
\end{remark}

\subsubsection{}
\label{sec:coloured-lax-structure}
For a final remark, for the kinds of structure which we know how to
theorize, we actually have a more economical description of the
theorizations than we have given above.
This uses the construction of a $2$-category by `categorically
delooping' an associative monoidal category $\cat{A}$.
See B\'enabou \cite[2.2]{benabou} or the review in Section
\ref{sec:coloured-lax-algebra}.
Note that, if $\cat{A}$ is a symmetric monoidal category, then the
resulting $2$-category, which we shall denote by $\deloop\cat{A}$,
inherits a symmetric monoidal structure.

To turn to the description of the theorization, for the case of the
notion of algebra over a multicategory $\cat{U}$ for example, a
$\cat{U}$-graded multicategory enriched in a symmetric monoidal
category $\cat{A}$, can be described as a coloured
version of a lax $\cat{U}$-algebra in the symmetric monoidal
$2$-category $\deloop\cat{A}$.
We refer the reader to Section \ref{sec:coloured-lax-algebra} for the
case $\cat{U}=\Com$ of this.
In general, it will be seen in Section \ref{sec:theorem} that, for an
$n$-theory $\cat{U}$, our theorization of $\cat{U}$-algebra will, in
the $\cat{A}$-enriched form, be an (``$n$-tuply'') coloured lax
$\cat{U}$-algebra enriched in $\deloop\cat{A}$.

This idea of \emph{coloured lax structure enriched in
  $\deloop\cat{A}$}, is actually less redundant than the idea of
theorization which we have expressed above for a general situation,
and does not produce the issue
discussed in Section \ref{sec:action-of-identity}, either.
This idea is the one along which we actually theorize kinds of
algebraic structure which we can theorize so far.
See the definitions in the body.
In particular, the simplest case will be observed explicitly in
Proposition \ref{prop:theory-as-theorization}.

However, compared to our previous formulation of the idea of
theorization, the formulation of the notion of ``coloured lax
structure'' enriched in a symmetric monoidal $2$-category, would rely
more on the manner how we generate the relevant structure (e.g., the
symmetric monoidal category $\cat{B}$ for the case of
``$\cat{B}$-algebra'').
Since the author does not know what exact data are needed for
theorizing a kind of ``algebraic structure'' in general, he does not
know a general definition of a coloured lax structure.
To formulate this notion for a given kind of structure, seems
essentially equivalent to theorizing the kind of structure.

\begin{remark}
On the other hand, we have an `uncoloured theorization' as soon as we
have a lax version of the kind of structure.
However, unless we can further find a reasonable common generalization
of this uncoloured ``theorization'' and the categorification, there
might not be a reasonable notion of algebra over an uncoloured
``theorized'' object of the kind.
\end{remark}

\subsection{A basic construction}
\label{sec:basic-construction}

The idea of theorization was such that a categorified structure was an
instance of the theorized form of the same structure.
Let us suppose given a kind of structure and a theorization of it.
Then, for a categorified structure $\cat{X}$, let us denote by
$\wasreteori\cat{X}$, the theorized structure corresponding to
$\cat{X}$.
Concretely, $\wasreteori\cat{X}$ is obtained by replacing as needed,
structure functors of $\cat{X}$ with bimodules copresented by them.
The construction $\wasreteori$ generalizes the usual way to construct
a multicategory from a monoidal category.
Let us say that $\wasreteori\cat{X}$ is \kore{represented} by
$\cat{X}$.

\begin{remark}
Recall that a theorized structure in general could have colours.
This flexibility is playing an important role here.
Indeed, if the categorified structure $\cat{X}$ is structured on a
family, say $\Ob\cat{X}=(\Ob_\lambda\cat{X})_{\lambda\in\Lambda}$, of
categories, where $\Lambda$ denotes a collection which is suitably
specified in the chosen presentation of the form of the structure in
question, then any object of $\Ob_\lambda\cat{X}$ for any
$\lambda\in\Lambda$,
is being a colour in the theorized structure $\wasreteori\cat{X}$.

Thus the coloured version of the notion of theorization is indeed
necessary in order for every
categorified structure to be an instance of a theorized structure.
\end{remark}

As in the case of monoidal structure, $\wasreteori$ is usually only
faithful, but not full.
Indeed, for (families of) categories $\cat{X},\cat{Y}$, each equipped
with a categorified structure, a morphism
$\wasreteori\cat{X}\to\wasreteori\cat{Y}$ is equivalent as a datum to
a \emph{lax} morphism $\cat{X}\to\cat{Y}$.
See Section \ref{sec:functor-of-theorization}.

\begin{example}\label{ex:algebra-as-functor}
Let $\cat{U}$ be a symmetric multicategory, and let $\cat{A}$ be a
$\cat{U}$-monoidal category.
Then a $\cat{U}$-algebra in $\cat{A}$, namely, a lax
$\cat{U}$-monoidal functor $\unity\to\cat{A}$, where $\unity$ denotes
the unit $\cat{U}$-monoidal category, is equivalent as a datum to a
functor $\wasreteori\unity\to\wasreteori\cat{A}$ of $\cat{U}$-graded
multicategories, which is by definition, a $\cat{U}$-algebra in
$\wasreteori\cat{A}$.
\end{example}

\begin{remark}
The functor $\wasreteori$ has a left adjoint (which in fact can be
described in a concrete manner).
In the example above, $\cat{U}$-algebra in $\cat{A}$ is thus
equivalent to a $\cat{U}$-monoidal functor to $\cat{A}$ from the
$\cat{U}$-monoidal category freely generated from the terminal
$\cat{U}$-graded $1$-theory $\wasreteori\unity$ (which thus has a
concrete description).
\end{remark}

\subsection{Theorization of category}
\label{sec:theorize-category}

\subsubsection{}

For illustration of the general definition, let us describe a natural
theorization of the notion of category, which we shall call
``categorical theory'' here.

In order to define this, we first note that category can be understood
as a kind of structure on family of sets.
Indeed, for a category $\cat{X}$, there is the family
$\Map_\cat{X}:=\bigl(\Map_\cat{X}(x,y)\bigr)_{x,y}$ of sets of morphisms,
parametrized by pairs $x,y$ of objects of $\cat{X}$, so the structure of
$\cat{X}$ can be understood as defined on this family $\Map_\cat{X}$
of sets, by the composition operations.
Moreover, this presentation immediately leads to a generalization of
the notion to the notion of category enriched in a symmetric monoidal
category.

Now, if we choose and fix a collection as the collection of objects
for our (enriched) categories, then $2$-category with the same
collection of objects can
be considered as a categorification of category with that collection
of objects.
Categorical theory will be a theorization of category whose associated
categorification is $2$-category.

The description of a \kore{categorical theory} (enriched in sets) is
as follows.
Firstly, it, like a $2$-category (our categorification) has objects,
$1$-morphisms, and sets of $2$-morphisms.
$1$-morphisms do not compose, however.
Instead, for every nerve
$f\colon x_0\xrightarrow{f_1}\cdots\xrightarrow{f_n}x_n$ of
$1$-morphisms and a $1$-morphism $g\colon x_0\to x_n$, one has the
notion of (\kore{$n$-ary}) \kore{$2$-multimap} $f\to g$.
The $2$-morphisms, which were already mentioned, are just unary
$2$-multimaps.
There are given unit $2$-morphisms and associative composition for
$2$-multimaps, analogously to the similar operations for multimaps in
a planar multicategory.

A $2$-category in particular represents a categorical theory, in which
a $2$-multimap $f\to g$ is a $2$-morphism $f_n\compose\cdots\compose
f_1\to g$ in the $2$-category.
Between two $2$-categories, a natural map of the represented
categorical theories is not precisely a functor, but is a lax functor
of the $2$-categories.

As we have also suggested, a categorical theory is also a
generalization of a planar multicategory.
Indeed, planar multicategory was a theorization of associative
algebra.
The relation between the notions of planar multicategory and of
categorical theory, is parallel to the relation between the notions of
associative monoid and of category.
Namely, categorical theory is a `many objects' (or ``coloured'')
version of planar multicategory, where the word ``object'' here
refers to one at a deeper level than the many objects which
a multicategory (as a ``coloured'' operad) may already have are at.

One thing one should note then, is that, while the $2$-multimaps
in a categorical theory is generalizing the multimaps in a planar
multicategory here, the $1$-morphisms in a categorical theory is
generalizing the \emph{objects} of a planar multicategory, and no
longer have the characteristic of operators like the $1$-morphisms in
a category.
Indeed, a $1$-morphism in a categorical theory and an object of a
planar multicategory are both ``colours'', and we have also mentioned
earlier that there is no operations of composition given for
$1$-morphisms in a categorical theory.

What we said above in comparison of the structures of a categorical
theory and of a planar multicategory, is that a categorical theory
$\cat{X}$ has one more layer of `colouring' under the $1$-morphisms,
given by the collection of the objects of $\cat{X}$.

\begin{example}\label{ex:planar-deloop}
Let $\cat{A}$ be an associative monoidal category.
Then the categorical theory corresponding to the planar multicategory
$\wasreteori\cat{A}$, is represented by the $2$-category
$\deloop\cat{A}$.
(To be technical, the former categorical theory is ``simply coloured''
in the sense that it has only one layer of colours, and it is
equivalent to the `simply coloured part' at the base object, of the
categorical theory $\wasreteori\deloop\cat{A}$.
See Section \ref{sec:delooping-comparison} for an explanation in a
similar situation.)
\end{example}

\begin{example}\label{ex:morita-theory}
The ``Morita'' $2$-category due to B\'enabou \cite{benabou} of
associative algebras and bimodules in a monoidal category $\cat{A}$
with nicely behaving colimits, is well-defined as a categorical theory
when $\cat{A}$ is more generally, an arbitrary planar multicategory.
There is a forgetful functor from this ``Morita'' categorical theory
to the categorical theory of Example \ref{ex:planar-deloop}.
\end{example}

\subsubsection{}

Following the general pattern about theorization, there is a notion of
category in a categorical theory, and, as an uncoloured version of it,
monoid in a categorical theory, which generalizes monad in a
$2$-category.
A monad in a $2$-cateogry $\cat{X}$ was a lax functor to $\cat{X}$
from the unit $2$-category (see B\'enabou \cite{benabou}), which can
also be considered as a map between the categorical theories
represented by these $2$-categories.
See Section \ref{sec:basic-construction}. 
However, the latter is a monoid in the target categorical theory
$\wasreteori\cat{X}$ by definition.

\begin{example}
Let $\cat{C}$ be a category enriched in groupoids, and let $M$ be a
monad \emph{on} $\cat{C}$.
Then there is a categorical theory as follows.
\begin{itemize}
\item An object is an object of $\cat{C}$.
\item A map $x\to y$ is a map $Mx\to y$ in $\cat{C}$.
\item Given a sequence of objects $x_0,\ldots,x_n$ and maps $f_i\colon
  Mx_{i-1}\to x_i$ in $\cat{C}$ and $g\colon Mx_0\to x_n$, the set
  $\Mul(f;g)$ of $2$-multimaps $f\to g$, is the set of
  \emph{commutative} diagrams
  \[\begin{tikzcd}[column sep=large]
    M^nx_0\arrow{d}[swap]{m}\arrow{r}{M^{n-1}f_1}
    &\cdots\arrow{r}{f_n}
    &\tsketas{x}{_n}\arrow{d}{=}\\
    Mx_0\arrow{rr}{g}&&\tsketas{x}{_n}
  \end{tikzcd}\]
  in $\cat{C}$ (i.e., the set of isomorphisms filling the rectangle),
  where $m$ denotes the multiplication operation on $M$.
\item Composition is done in the obvious manner.
\end{itemize}
A monoid in this categorical theory is exactly an $M$-algebra in
$\cat{C}$.
\end{example}

More generally, a category in a categorical theory can be described as
a coloured version of a map of categorical theories.
A basic example is a category in the categorical theory obtained by
considering a planar multicategory $\cat{U}$ as a categorical theory.
This is equivalent as a datum to a category enriched in the planar
multicategory $\cat{U}$.

\subsection{Iterating theorization}
\label{sec:theorize-theory}

\subsubsection{}

In Section \ref{sec:theorize-category}, we have theorized the notion
of category by considering a category as a structure on the family of
sets consisting of the sets of maps.
Recall from Example \ref{ex:initially-graded} that a category was an
`initially graded' $1$-theory.
Section \ref{sec:graded-as-theorization} shows thus
that category is a theorization of $\Ini$-algebra, or bare, i.e.,
unstructured, object.
(It is not difficult to see in the similar manner, that the versions
enriched in a symmetric monoidal category, of the relevant notions
also coincide.)
For these reasons, we shall call a categorical theory also an
``$\Ini$-graded $2$-theory''.

One can similarly consider the structure of a categorical theory as a
structure on the sets of its $2$-multimaps, and then try to theorize
the notion of categorical theory after fixing the collections of
objects and of $1$-morphisms.
It turns out that there is indeed an interesting theorization in this
case, which in particular generalizes the notion of $3$-category.
One might call the resulting theorized object a categorical $2$-theory
or an initially graded $3$-theory.

One might ask whether it is possible to iterate theorization in a
similar manner here, or starting from some specific additional
structure, rather than `no structure at all'.
We should of course ask possibility of interesting theorizations.
One condition for this has been mentioned in Section
\ref{sec:theorization-formulation}.
Other desirable things may include abundance of natural examples, and
reasonable properties.

In the case where the answer to the question is affirmative, just as
the original structure could be expressed as the structure of an
algebra over the terminal object among the (unenriched) theorized
objects of the same kind, the theorization similarly becomes the
structure of an algebra over the
terminal object among the twice theorized objects, and so on, so all
the structures can be described using their iterated theorizations.
Moreover, by considering an algebra over a non-terminal theory, an
algebra over it, and so on, one obtains various general structures,
which can all be treated in a unified manner.

The question asked above is non-trivial.
However, we introduce the notion of $n$-theory in this work, which
will inductively be an interesting theorization of ($n-1$)-theory.
The hierarcy as $n$ varies, of $n$-theories will be an infinite
hierarchy of iterated theorizations which extends the various standard
hierarchies of iterated categorifications, in particular, the hierarcy
of $n$-categories in the ``initially graded'' case.

While our higher theories will be in general a completely new
mathematical objects, we have already found very classical objects of
mathematics among $2$-theories.
Namely, while we have seen that a categorical theory was an
``initially graded'' $2$-theory, we have also noted in Section
\ref{sec:theorize-category}, that planar multicategories were
among categorical theories.

We have also seen non-classical objects among $2$-theories in Section
\ref{sec:theorize-category}.
Less exotic examples of higher theories comes from the construction of
Section \ref{sec:basic-construction}.
Namely, a higher categorified instance of a \emph{lower} theorized
structure leads to a higher theorized structure through the iterated
application of the construction $\wasreteori$ (details of which can be
found in Section \ref{sec:forget-to-theorization}).
As an object, this is less interesting among the general higher
theories for the very reason that it is represented by a lower
theory.
However, the \emph{functors} between these theories are interesting in
that it is much more general than functors which we
consider between the original lower theories.
Namely, a functor between such higher theories amounts to highly
\emph{relaxed} functor between the higher categorified lower theories,
as follows from an iteration of the remark in Section
\ref{sec:basic-construction}.

In Section \ref{sec:delooping} (as will be previewed in Section
\ref{sec:deloop-preview}), we discuss a general construction which we
call ``delooping'', through which we obtain an ($n+1$)-theory which
normally fails to be representable by a categorified $n$-theory.
Another construction, which is closely related to the classical Day
convolution, will also be discussed in Section \ref{sec:convolution},
and this also produces similar examples.

\subsubsection{}

We have sketched in Section \ref{sec:grading-intro}, how to generalize
the hierarchy of higher theories to the hierarchy graded by a higher
theory.
This will be done in Section \ref{sec:grading}.

\subsection{Further developments}
\label{sec:further-development}

\subsubsection{}

Let us preview a few more highlights of our work.

\subsubsection{}
\label{sec:grade-by-graded-intro}
In Section \ref{sec:grading-intro}, we have hinted at a notion of
algebra over a monoid (i.e., algebra enriched in sets)
over an $n$-theory enriched in sets (if $n\ge 2$), a notion of algebra
over unenriched such (if $n\ge 3$), and so on.
For a $\cat{U}$-graded $m$-theory $\cat{X}$ enriched
in sets, we indeed define
the notion of $\cat{X}$-algebra, and more generally, of
$\cat{X}$-graded $\ell$-theory, as well as the notion of higher
theory graded by unenriched such, and so on.
We can actually give simple definitions of all these, using
Theorem \ref{thm:graded-theory-is-monoid-intro} as a general
principle (Section \ref{sec:iterated-monoid}).
At the end, every structure (which is enriched in sets or groupoids)
will come with a hierarchy of higher theories ``graded'' by it.
We obtain a generalization of Proposition
\ref{prop:graded-is-overlying-intro} with these new notions
(Proposition \ref{prop:graded-projection}).
We also treat some basic questions concern the general enriched
versions of all the notions in Section \ref{sec:enrichment}.

\subsubsection{}
\label{sec:deloop-preview}

An $n$-theory enriched in a symmetric monoidal category $\cat{A}$, can
also be described as a (suitable) $n$-theory in the ($n+1$)-theory
$\wasreteori^{n+1}\deloop^n\cat{A}$ obtained by applying the
construction $\wasreteori$ $n+1$ times (Section
\ref{sec:forget-to-theorization}) to the symmetric monoidal
($n+1$)-category $\deloop^n\cat{A}$ (the $n$-th iterated categorical
deloop of $\cat{A}$).
In Section \ref{sec:delooping}, we generalize the categorical
delooping construction for symmeric monoidal higher category, to a
certain construction $\ddeloop$ which
produces a symmetric $n$-theory from a symmetric ($n-1$)-theory.
This is a generalization of the categorical delooping in such a manner
that, for a symmetric monoidal category $\cat{A}$, there is a natural
`equivalence'
$\wasreteori^{n+1}\deloop^n\cat{A}\equivwith\ddeloop^n\wasreteori\cat{A}$
(we refer the reader to Section~\ref{sec:delooping-comparison} for
the precise relation).
It follows that a natural notion of $n$-theory enriched in a symmetric
multicategory $\cat{M}$, which is not necessarily of the form
$\wasreteori\cat{A}$, is $n$-theory in the ($n+1$)-theory
$\ddeloop^n\cat{M}$, and, as $n$ increases, the notion iteratively
`theorizes' the previous notions in a suitable sense
(see Remark~\ref{rem:lax-functor-general} and
Proposition~\ref{prop:theory-as-theorization}).

Incidentally, if $\cat{M}$ is not of the form $\wasreteori\cat{A}$,
then $\ddeloop^n\cat{M}$ is usually \emph{not} representable by a
categorified $n$-theory.

\subsubsection{}
\label{sec:graded-theory-as-lift-intro}

For a symmetric monoidal $n$-category $\cat{C}$, we construct a
certain ($n+1$)-theory $\gyzen_n\cat{C}$ and a functor
$\gyzen_n\cat{C}\to\wasreteori^{n+1}\deloop^n\Set$ of
($n+1$)-theories.
The use of this is the following.
Namely, while we have already mentioned that an $n$-theory $\cat{X}$
enriched in sets can be considered as an $n$-theory in
$\wasreteori^{n+1}\deloop^n\Set$, the construction above allows us
to understand an $\cat{X}$-graded $m$-theory enriched in a symmetric
monoidal category $\cat{A}$, where $0\le m\le n-1$, as an appropriate
lift of the theory to $\gyzen_n\deloop^m\cat{A}$ (a more general
statement being as Corollary \ref{cor:ab-ba}, where
$\gyzen_n=\wasreteori_n\gyzen_*\wasreteori^n_0$ in the notation
there).
There is also a version of this for $m=n$ (Corollary
\ref{cor:graded-theory-as-lift}), which will have an application in
our work.

\subsubsection{}
\label{sec:further-list}

We also touch on more topics, such as the following.
\begin{itemize}
\item Pull-back and push-forward constructions which changes gradings,
  and their properties (Sections \ref{sec:iterated-monoid},
  \ref{sec:right-adjoint-of-restriction}).
  The result Corollary \ref{cor:graded-theory-as-lift} mentioned in
  Section \ref{sec:graded-theory-as-lift-intro}, will be used for the
  construction of the push-forward `on the right side'.
\item Some other basic constructions such as a construction for higher
  theories related to Day's convolution \cite{day}.
  The Day type construction leads very easily to a notion of algebra
  over an
  \emph{enriched} higher theory (Section \ref{sec:convolution}).
\item Hierarchies of iterated theorizations associated to more general
  systems of operations, with multiple inputs \emph{and} multiple
  outputs, such as operations of `shapes' of bordisms as in various
  versions of a topological field theory (Section
  \ref{sec:generalization}).
  Examples also include iterated theorizations of the notion due to
  Vallette of (coloured) \emph{properad} \cite{vallette}.
  See Example \ref{ex:properad}.
\end{itemize}

The last topic leads to vast generalizations of the relevant versions
of the notion of topological field theory.
We obtain a simple but in a way exotic example (Section
\ref{sec:example-different-nature}) in addition to examples of a more
expected type.

\subsubsection{}

We also enrich everything we consider in this work, in the Cartesian
symmetric monoidal infinity $1$-category of infinity groupoids,
instead of in sets.
Fortunately, this does not add any difficulty to the discussions.
However, we invite the reader who does not wish to deal with homotopy
theory, to Section \ref{sec:no-homotopy}.

\subsection{Outline}

We shall give the definition of an ungraded higher theory with a
minimal degree of enrichment, in \textbf{Section
  \ref{sec:symmetric-theory}}.
We shall follow up the definition in \textbf{Section
  \ref{sec:variant-construction}} with discussions of simple subjects
such as a planar variant, a less coloured variant, the construction
$\wasreteori$, and the generalized ``delooping'' construction.
We shall then discuss algebras and graded higher (and ``lower'')
theories over a higher theory in \textbf{Section \ref{sec:grading}}.
We shall then discuss in \textbf{Section \ref{sec:enrichment}},
various topics about enrichment of the notion of higher theory,
including a construction for higher theories which is
related to the Day convolution.
Finally, we shall discuss iterated theorizations of more general
algebraic structure in \textbf{Section \ref{sec:generalization}}.
\textbf{Appendix} is for comparison of this work with a related
important work \cite{bd} of Baez and Dolan.

\subsection{Notes for the reader}

\subsubsection{}
\label{sec:no-homotopy}

For practical purposes, it seems best to build our theory in the
framework of homotopy theory.
Thus, we let the infinity $1$-category of infinity
groupoids be the default place where we enrich any categorical or
algebraic structure.

For the reader who does not wish to deal with homotopy theory, our
terminology in the body will be such that it could be read as if we
are working in the framework of the classical, discrete category
theory.
For example, we shall say ``category'' to in fact mean ``infinity
$1$-category''.
So such a reader would be comfortable with interpreting what we write
in the normal, classical manner, and then ignoring what is redundant
in such an interpretation.

\begin{remark}
Here are two cautions.
One is that, when we say ``groupoid'' (while in fact meaning infinity
groupoid), this can often be interpreted as set in the classical
context, but sometimes, it will be better to interpret
it honestly as ``groupoid'' (i.e., $1$-groupoid).
The other is that there is fear that some of the examples we give may
degenerate to trivialities in the non-homotopical interpretation.
\end{remark}

While we also welcome the reader who does this, our method for
theorization is by dealing with the structure of the coherence for
higher associativity, which is also the key for higher category
theory, so our expectation is that the
reader who interprets our work in the classical context, would
eventually find the ``homotopical'' interpretation more natural.
(The subject of the present work does not select any particular model
for higher category theory.
In order to communicate to as wide audience as we can, we shall try to
make clear which data are being used from the theory of higher
categories when we use them, so the hope is that even the reader who
has no more than basic ideas on the essence of the higher category
theory, would find our exposition largely accessible.)

\subsubsection{}

As we have mentioned,
we adopt the convention that all terms should be interpreted in
homotopical\slash infinity $1$-categorical sense.
Namely, categorical terms are used in the sense enriched in the
infinity $1$-category of infinity groupoids, and algebraic terms are
used freely in the sense generalized in accordance with the enriched
categorical structures.

However, we do welcome the reader who prefers to work in the classical
setting, to interpret the terms in the usual, non-homotopical manner.
In this case, categorical terms (e.g., multicategory) should be
understood in the sense enriched in the category of sets (or sometimes
better groupoids) unless otherwise specified, and Remark
\ref{rem:groupoid-terminology} would continue in effect.

For example, by a \emph{$1$-category}, we officially mean an
\emph{infinity} $1$-category, while also welcoming the classical
interpretation.
We often call a $1$-category (namely an infinity $1$-category) simply
a \kore{category}.
More generally, for an integer $n\ge 0$, by an \emph{$n$-category},
we mean an \emph{infinity} $n$-category.

\subsubsection{}

Unless otherwise noted, we do not consider non-unital associative
algebraic structures.
Moreover, we normally treat unitality as part of associativity.
(Specification of the unit will be a nullary operation.)

\subsubsection{}

In our notations, we shall freely put a non-negative integer (or a
variable, such as ``$n$'', for a non-negative integer) as a
superscript to a letter in order to avoid excessive use of multiple
subscripts.
Other than the exceptions listed below, and unless otherwise noted,
such a superscript will be a label just like a subscript, put on the
right upper corner in order to preserve rooms for subscripts.
In particular, there will be only few occasions where we need to take
a power of a thing, in which case, we shall indicate so.

Major exceptions are as follows.
\begin{itemize}
\item ``$\simp^n$'', ``$d^i$'', ``$\R^n$'' will respectively denote
  the $n$-dimensional symplex, the $i$-th simplicial coface operator,
  $n$-dimensional Euclidean space.
\item ``$f^{-1}$'' for a map or a morphism $f$, will denote the
  inverse of $f$, or the inverse image by the map $f$.
\item ``$\deloop^n$'' and ``$\ddeloop^n$'' will denote the $n$-fold
  applications of the (``delooping'') constructions ``$\deloop$'' and
  ``$\ddeloop$'' respectively (which will have been defined).
\item ``$\wasreteori^n_m$'' will denote the instance of a certain
  construction (defined in this work) which applies to an
  ``$m$-theory'', and produces an ``$n$-theory''.
\item ``$\unity^n_\cat{U}$'' and ``$\univalg^n$'' will respectively
  denote the terminal ($\cat{U}$-graded, unenriched uncoloured)
  $n$-theory and the $n$-dimensional ``universal'' monoid.
\end{itemize}

All other exceptions will be noted at the relevant places.

\section{Symmetric higher theories}
\label{sec:symmetric-theory}

\subsection{Introduction}

After giving a small number of preliminary definitions, we shall give
in this section, the definition of a higher theory with least amount
of structure.

\subsection{The definition}
\label{sec:definition}

\subsubsection{}

Let
\begin{itemize}
\item $\Ord$ denote the category (in the classical, discrete sense) of
  finite ordinals (including the empty set $\kara$),
\item $\Simp$ denote the category (again in the discrete sense) of
  combinatorial simplices, in other words, non-empty finite ordinals.
\end{itemize}

For example, we have objects $[0]=\{0\}$ and $[1]=\{0<1\}$ of $\Simp$,
and the maps in $\Simp$ called the \emph{coface operators}
$d^i\colon[0]\to[1]$, for $i=0,1$, where $d^0(0)=1$, $d^1(0)=0$.

The following is about all of the combinatorics which we need for
this work.
Namely, there is a functor $[-]\colon\Ord\to\Simp^\op$ defined as
follows.

For an object $I\in\Ord$, we define
\[
[I]:=[1]\mathbin{\maeoki{^{d^0}}{\union}^{d^1}_{[0]}}\cdots\mathbin{\maeoki{^{d^0}}{\union}^{d^1}_{[0]}}[1]\qquad(\text{$I$-fold;
  e.g., $[\kara]=[0]$}).
\]
In other words, $[I]=\Union_{i\in I}[1]$ is obtained by gluing for
every pair $i<i+1$ of adjacent elements of $I$, $1$ in the $i$-th
component $[1]$, with $0$ in the ($i+1$)-th component $[1]$.

For a map $\phi\colon I\to J$ in $\Ord$, we note that
$[I]=\Union_{j\in J}[\phi^{-1}j]$, where adjacent components are glued
(similarly to before) at the respective maximal and minimal elements.
We define $[\phi]\colon[I]\from[J]$ in $\Simp$ to be the map obtained
by gluing over $j\in J$, the maps $[1]\to[\phi^{-1}j]$ in $\Simp$
preserving the minimum and the maximum.

\begin{remark}
Let
\begin{itemize}
\item $\Fin$ denote the category (in the discrete sense) of finite sets,
\item $\Fin_*$ denote the category (discrete sense) of pointed finite
  sets.
\end{itemize}
Namely, an object of $\Fin_*$ is a finite set $S$ equipped with a
``base point'' $\ten\in S$, and a morphism is a map $f$ for which we
have $f(*)=*$.

Then there is a commutative square
\[\begin{tikzcd}
\Ord\arrow{d}[swap]{\mathrm{forget}}\arrow{r}{[-]}
&\tsketas{\Simp}{^\op}\arrow{d}{\simp^1/\boundary\simp^1}\\
\Fin\arrow{r}{\blank_+}&\tsketas{\Fin}{_*,}
\end{tikzcd}\]
where $\simp^1$ denotes the simplicial $1$-simplex $\Simp^\op\to\Fin$,
with its boundary $\boundary\simp^1$, and $\blank_+$ is the functor
which externally adds a base point to every finite set.
\end{remark}

\begin{remark}
The functor $[-]\colon\Ord\to\Simp^\op$ has a right adjoint
$\lift{\simp}^1\colon\Simp^\op\to\Ord$, which, as a functor, is the
lift of $\simp^1\colon\Simp^\op\to\Fin$ obtained by putting the
natural total order on the set of all faces of $\simp^1$ of each
fixed dimension.
In particular,
\[
[I]=\Hom_\Simp([0],[I])\equivwith\Hom_\Ord(I,\lift{\simp}^1_0)
\]
as a functor of $I\in\Ord$.
\end{remark}

\begin{notation}\label{notation:skip-0}
In the following, we usually write the elements of an ordinal $I$ as
$1<2<\cdots$ in the ascending order, and then the elements of $[I]$ as
$0<1<2<\cdots$.
\end{notation}

\subsubsection{}

Next, we explain our terminology and notations conerning families,
nerves and operations on them.
Here we introduce only a minimal amount of it; more will be introduced
later during various other definitions.

For $S\in\Fin$, we mean by an \kore{$S$-family} a family
of mathematical objects indexed by the elements of $S$.

Let $\phi\colon T\to S$ be a map in $\Fin$.
Then from an $S$-family $x=(x_s)_{s\in S}$, we obtain a $T$-family
$\phi^*x:=(x_{\phi t})_{t\in T}$.

For $I\in\Ord$, we mean by an \kore{$I$-nerve} in a category, a pair
consisting of
\begin{itemize}
\item a $[I]$-family of objects $x=(x_i)_{i\in[I]}$, and
\item an $I$-family of maps $f=(f_i)_{i\in I}$, where $f_i\colon
  x_{i-1}\to x_i$.
\end{itemize}
Such an $f$ is also called an $I$-nerve \kore{connecting} the
$[I]$-family $x$.

Let $\phi\colon I\to J$ be a map in $\Ord$.
Then from an $[I]$-family $x$, we obtain an $[J]$-family
$\phi_!x:=[\phi]^*x$, and from an $I$-nerve $f$ as above connecting
$x$, we obtain an $J$-nerve $\phi_!f$ connecting $\phi_!x$, defined by
\[
(\phi_!f)_j=f_{[\phi](j)}\cdots f_{[\phi](j-1)+1}.
\]
Note that $\{[\phi](j-1)+1<\cdots<[\phi](j)\}=\phi^{-1}j\sub I$.

\begin{definition}
Let $I\in\Ord$.
Then an $[I]$-family $J$ in either $\Ord$ or $\Fin$, is said to be
\kore{elemental} if $J_{[\pi](1)}=\ten$, the terminal object, where
$\pi$ denotes the unique map $I\to\{1\}$, so $[\pi](1)$ is the maximum
of $[I]$.
\end{definition}

\subsubsection{}
\label{sec:coloured-lax-algebra}

The notion of multicategory is a theorization of the notion of
commutative algebra.
Indeed, a multicategory (enriched in groupoids) is a virtualized form
of a op-lax symmetric monoidal category.

We can also formulate the notion of coloured lax commutative algebra
as follows.

\begin{definition}\label{def:coloured-lax-commutative-algebra}
A \kore{coloured lax commutative algebra} $U$ in a symmetric monoidal
$2$-category $\cat{A}$ consists of the following data.
\begin{itemize}
\item[($0$)] A collection $\Ob U$, whose member is called an
  \emph{object} of $U$, and, for every object $u\in\Ob U$, an object
  $U(u)\in\Ob\cat{A}$.
\item[($1$)] For every finite set $S$, every $S$-family
  $u_0=(u_{0s})_{s\in S}$ of objects of $U$, and every object
  $u_1$, a map $m^U_1(u_0;u_1)\colon U(u_0)\to U(u_1)$, where
  $U(u_0):=\Tensor_{s\in S}U(u_{0s})$.
\item[($2$)] Suppose given
  \begin{itemize}
  \item a finite ordinal $I$,
  \item an elemental $[I]$-family $S=(S_i)_{i\in[I]}$ of finite sets,
    and an $I$-nerve $\phi=(\phi_i)_{i\in I}$ in $\Fin$ connecting
    $S$,
  \item for every $i\in[I]$, an $S_i$-family $u_i=(u_{is})_{s\in S_i}$
    in $\Ob U$.
  \end{itemize}
  Then a $2$-morphism $m^U_2\colon\pi_!m^U_1[u]\to
  m^U_1(u_0;u_{[\pi](1)})$ in $\cat{A}$, where
  \begin{itemize}
  \item $m^U_1[u]:=\bigl(m^U_1(u_{i-1};u_i)\bigr)_{i\in I}$, where
    $m^U_1(u_{i-1};u_i):=\Tensor_{s\in
      S_i}m^U_1(u_{i-1}\resto{s};u_{is})$, where $u_{i-1}\resto{s}$
    denotes the restriction of $u_{i-1}$ to $(\phi_i)^{-1}s\sub
    S_{i-1}$, namely, $m^U_1[u]$ is an $I$-nerve in $\cat{A}$
    connecting the $[I]$-family $U(u):=\bigl(U(u_i)\bigr)_{i\in[I]}$ of
    objects,
  \item $\pi$ denotes the unique map $I\to\{1\}$.
  \end{itemize}
\item[($\infty$)] A datum of \emph{coherence} for the structure.
\end{itemize}
\end{definition}

We shall not write down the details of ($\infty$) since the explicit
form of it is not so important here.
A more general case is treated in Definition \ref{def:theory}, in
particular, Sections \ref{sec:associative-n+2},
\ref{sec:coherence-n+ell}, below.

Now, given a monoidal category $\cat{A}$, by its \emph{categorical
  deloop}, we mean the $2$-category $\deloop\cat{A}$ with a chosen
``base'' object, in which all objects are equivalent, and the
endomorphism monoidal category of the base
object is given an equivalence with $\cat{A}$.
(Note that this determines $\deloop\cat{A}$ uniquely.)

For a symmetric monoidal category $\cat{A}$, we can consider a
multicategory \emph{enriched in $\cat{A}$} as, by definition, a
coloured lax commutative algebra $\cat{U}$ in the \emph{symmetric
  monoidal} $2$-category $\deloop\cat{A}$ (with the induced symmetric
monoidal structure) such that, for every
object $u\in\Ob\cat{U}$, $\cat{U}(u)$ is the unit (i.e., the base)
object of $\deloop\cat{A}$.
For a multicategory $\cat{U}$, we denote the object
$m^\cat{U}_1(u_0;u_1)\in\cat{A}$ by $\Mul_\cat{U}(u_0;u_1)$.
In the case where $\cat{A}$ is the Cartesian symmetric monoidal
category $\Gpd$ of groupoids, $\Mul_\cat{U}(u_0;u_1)$ is the groupoid
of multimaps $u_0\to u_1$ in $\cat{U}$.
For a general $\cat{A}$, the object $\Mul_\cat{U}(u_0;u_1)$ of
$\cat{A}$ is made to behave as if it were formed by (generally
fantastical) multimaps $u_0\to u_1$.

For example, in the case where $\cat{A}$ is the Cartesian symmetric
monoidal category of categories (with a fixed limit for the size), a
multicategory enriched in $\cat{A}$ is the obvious categoried form of
a multicategory.
This is not a useless notion, and the notion of coloured lax
commutative algebra can more generally be defined in a categorified
multicategory, for example.
A more general notion of enrichment will be the subject of Section
\ref{sec:enrichment}.

\subsubsection{}
\label{sec:inductivity}
Starting from commutative algebra and multicategory, there is an
infinite hierarchy of iterated theorizations.
The idea for its construction is simple.
We consider the structure of a multicategory as given by an
associative system of ``composition'' operations for multimaps.
In general, we would like to produce from one kind of associative
system of operations, another kind of associative system of
operations.
This can be done using the following simple observation on the
inductivity of the structure of coherent associativity.

Suppose that we have a collection $m$ of operations (e.g., $m^U_1$ of
Definition \ref{def:coloured-lax-commutative-algebra}) which, if made
coherently associative, would define (in perhaps a special case)
$n$-th theorized version of a multicategory (e.g., a multicategory if
$n=0$).
Suppose further that we actually have a collection $m'$ of (at least
lax) associativity maps for the operations $m$, but that we still do
not have a coherence datum for these maps $m'$, so $m$ is not yet
coherently (lax) associative.
An example of $m'$ is $m^U_2$ for $m=m^U_1$ in Definition
\ref{def:coloured-lax-commutative-algebra}.

In this situation, datum of coherence for the lax associativity has the
following interpretation.
Consider $m'$ as itself a new collection of \emph{operations}.
Then a coherence datum we are looking for amounts precisely to a datum
of coherent \emph{associativity} for the collection $m'$ \emph{of
  operations}.

A ($n+1$)-theorization of multicategory is then obtained by
formalizing the structure given by the collection $m'$ of operations
and its coherent associativity.
See Definition \ref{def:theory} below for the details.

\begin{definition}\label{def:theory}
Let $n\ge 2$ be an integer.
A \kore{symmetric $n$-theory} $\cat{U}$ (which will often be called
simply an ``\kore{$n$-theory}'') enriched in a symmetric monoidal
category $\cat{A}$, consists of data of the forms
specified below as ($0$), ($1$), ($2$) (or just $(0)$, $(1)$ if
$n=2$), ``($k$)'' for every integer $k$ such that $3\le k\le n-1$,
($n$), ($n+1$), ($n+2$), and ``($n+\ell$)'' for every integer $\ell\ge
3$.

We refer to a multicategory enriched in $\cat{A}$ also as a
(symmetric) \kore{$1$-theory} enriched in $\cat{A}$.
We refer to a commutative algebra in $\cat{A}$ also as a
(symmetric) \kore{$0$-theory} (``enriched'') in $\cat{A}$.
\end{definition}

The case where $\cat{A}$ is the Cartesian symmetric monoidal category
$\Gpd$ of groupoids, of $n$-theories, will play important roles, so we
let $\Gpd$ be the default place where an $n$-theory is to be enriched,
and consider an $n$-theory enriched in groupoids as an
\kore{unenriched} $n$-theory.
An \kore{$n$-theory} in the narrower sense will mean an ``unenriched''
$n$-theory.

An \kore{$n$-theory} in the broader sense will mean an enriched (or
unenriched) $n$-theory.
Later, enrichment of an $n$-theory will be considered in a more
general place than a symmetric monoidal category.

Specification of the forms of data will occupy the rest of this
section.

\subsection{Objects of a higher theory}
\label{sec:0}

The form of datum ($0$) for Definition \ref{def:theory} is as follows.
\begin{itemize}
\item[($0$)] A collection $\Ob\cat{U}$, whose member will be called an
  \kore{object} of $\cat{U}$.
\end{itemize}

\subsection{Multimaps in a higher theory}
\label{sec:1}

The form of datum ($1$) for Definition \ref{def:theory} is as follows.
\begin{itemize}
\item[($1$)] Suppose given
  \begin{itemize}
  \item[($0'$)] an elemental $[1]$-family $I=(S,\ten)$ of finite sets,
    and a $\{1\}$-nerve $(\pi\colon S\to \ten)$ in $\Fin$ connecting
    $I$, whole of which is determined by a free choice of $S$,
  \item[($0''$)] an $S$-family $u_0=(u_{0s})_{s\in S}$ of objects of
    $\cat{U}$, and an object $u_1$.
  \end{itemize}
  Then a collection $\Mul^\pi_\cat{U}(u_0;u_1)$ or
  $\Mul^\pi_\cat{U}[u]$ for short, whose member will be called an
  ($S$-ary) (\kore{$1$-})\kore{multimap} $u_0\to u_1$ in $\cat{U}$.
\end{itemize}

Observe that a datum of this form extends as follows.
Suppose given
\begin{itemize}
\item $\psi\colon S\to T$ in $\Fin$,
\item $u_0$ as above,
\item a $T$-family $u_1$ of objects of $\cat{U}$.
\end{itemize}
Then we let $\Mul^\psi_\cat{U}[u]$ denote the collection
of all $T$-families $v=(v_t)_{t\in T}$, where $v_t$ is a
multimap $u_0\resto{t}\to u_{1t}$ in $\cat{U}$, where the
source here is the restriction of the $S$-family $u_0$ to
$\psi^{-1}t\sub S$, so $v_t$ is $\psi^{-1}t$-ary.

\subsection{$2$-multimaps in an $n$-theory, in the case $n\ge 3$}
\label{sec:2}

\subsubsection{}

The form of datum ($2$) for Definition \ref{def:theory} is as
follows.
\begin{itemize}
\item[($2$)] Suppose given
  \begin{itemize}
  \item[($1'$)] an elemental $[1]$-family $I\sp{1}=(I\sp{1}_0,\{1\})$
    in $\Ord$, and an $\{1\}$-nerve $(\pi\colon I\sp{1}_0\to\{1\})$,
  \item[($0'$)] an elemental $[I\sp{1}_0]$-family
    $I\sp{0}=(I\sp{0}_i)_{i\in[I\sp{1}_0]}$ in $\Fin$, and a
    $I\sp{1}_0$-nerve $\phi\sp{0}$ connecting $I\sp{0}$, namely,
    $\phi\sp{0}=(\phi\sp{0}_i)_{i\in I\sp{1}_0}$, where
    $\phi\sp{0}_i\colon I\sp{0}_{i-1}\to I\sp{0}_i$,
  \item[($0''$)] an $I\sp{0}$-family $u\sp{0}$ in $\Ob\cat{U}$,
    namely, $u\sp{0}=(u\sp{0}_i)_{i\in[I\sp{1}_0]}$, where $u\sp{0}_i$
    is an $I\sp{0}_i$-family in $\Ob\cat{U}$,
  \item[($1''$)]
    \begin{itemize}
    \item a \emph{$\phi\sp{0}$-nerve} $u\sp{1}_0$ of multimaps in
      $\cat{U}$, \emph{connecting} $u\sp{0}$, which \emph{by
        definition} means that $u\sp{1}_0=(u\sp{1}_{0i})_{i\in
        I\sp{1}_0}$, where
      $u\sp{1}_{0i}\in\Mul^{\phi\sp{0}_i}_\cat{U}(u\sp{0}_{i-1};u\sp{0}_i)$,
    \item $u\sp{1}_1\in\Mul^{\pi_!\phi\sp{0}}[\pi_!u\sp{0}]$.
    \end{itemize}
  \end{itemize}
  Then a collection $\Mul^\pi_\cat{U}[u\sp{0}](u\sp{1}_0;u\sp{1}_1)$
  or $\Mul^\pi_\cat{U}[u]$ for short, whose member will be called a
  \kore{$2$-multimap} $u\sp{1}_0\to u\sp{1}_1$ in $\cat{U}$.
\end{itemize}

\subsubsection{}

We can extend a datum of this form from above for a similar input
datum with the requirement that the $[I\sp{1}_0]$-family
$I^0$ be elemental discarded.
The idea is to treat such an input datum as an $I^0_{[\pi(1)]}$-family
of \emph{elemental} data, to obtain an $I^0_{[\pi(1)]}$-family of
outputs.

Thus, for input data similar to ($1'$) through ($1''$) above with $I^0$
\emph{non}-elemental, we let $\Mul^\pi_\cat{U}[u]$ denote the
collection whose member is an $I^0_{[\pi](1)}$-family $(v_s)_{s\in
  I^0_{[\pi](1)}}$ of $2$-multimaps in $\cat{U}$, where
$v_s\in\Mul^\pi_\cat{U}\bigl[u^0\resto{s}\bigr]\bigl(u^1_0\bigresto{s};u^1_{1s}\bigr)$,
where
\begin{itemize}
\item $u^0\resto{s}:=\bigl(u^0_i\bigresto{s}\bigr)_{i\in[I^1_0]}$,
  where $u^0_i\bigresto{s}$ is the restriction of $u^0_i$ to
  $I^0_{is}:=(\sakihe{\phi^0}{i})^{-1}s\sub I^0_i$, where
  $\sakihe{\phi^0}{i}:=\phi^0_{[\pi](1)}\cdots\phi^0_{i+1}$,
\item
  $u^1_0\bigresto{s}:=\bigl(\bigl(u^1_0\bigresto{s}\bigr)_i\bigr)_{i\in
    I^1_0}$, where
  $\bigl(u^1_0\bigresto{s}\bigr)_i:=\bigl(u^1_{0it}\bigr)_{t\in I^0_{is}}$,
\end{itemize}
so
$\bigl(u^1_0\bigresto{s}\bigr)_i\in\Mul^{(\phi^0_{/s})_i}\bigl(u^0_{i-1}\bigresto{s};u^0_i\bigresto{s}\bigr)$,
where $(\phi^0_{/s})_i:=\phi^0_i\bigresto{I^0_{i-1,s}}\colon
I^0_{i-1,s}\to I^0_{i,s}$, so $\phi^0_{/s}$ is an $I^1_0$-nerve in
$\Fin$ connecting the \emph{elemental} $[I^1_0]$-family
$(I^0_{is})_{i\in[I^1_0]}$.
In other words, $u^1_0\bigresto{s}$ is a $\phi^0_{/s}$-nerve of
multimaps in $\cat{U}$ connecting $u^0\resto{s}$.

\subsubsection{}
 
We can extend the datum ($2$) further by discarding the requirement
that $I^1$ be elemental, in the similar way as above.

To do this, suppose given
\begin{itemize}
\item a map $\psi\colon I^1_0\to I^1_1$ in $\Ord$,
\item not necessarily elemental data of the form ($0'$) and
  ($0''$) of ($2$),
\item a $\phi^0$-nerve $u^1_0$ and a $\psi_!\phi^0$-nerve $u^1_1$
  respectively, of multimaps in $\cat{U}$ (which we mean to be
  interpreted following the previous step).
\end{itemize}
Then we let $\Mul^\psi_\cat{U}[u]$ denote
the collection whose member is an $I^1_1$-family $v=(v_i)_{i\in
  I^1_1}$ of $2$-multimaps in $\cat{U}$, where
$v_i\in\Mul^{\pi_i}\bigl[u^0\resto{i}\bigr]\bigl(u^1_0\bigresto{i};u^1_{1i}\bigr)$,
where
\begin{itemize}
\item $\pi_i$ denotes the unique map $I^1_{0i}:=\psi^{-1}i\to \ten$, so
  $\psi=\sum_{i\in I^1_1}\pi_i$ (where $\sum_{i\in I^1_1}$ denotes the
  functor which takes the disjoint union equipped with the
  lexicographical order),
\item $u^0\resto{i}$ denotes the restriction of $u^0$ to
  $[I^1_{0i}]\sub[I^1_0]$,
\item $u^1_0\bigresto{i}$ denotes the restriction of
  $u^1_0$ to $I^1_{0i}$,
\end{itemize}
so $u^1_0\bigresto{i}$ is a ($\phi^0\resto{i}$)-nerve connecting
$u^0\resto{i}$, where $\phi^0\resto{i}$ denotes the restriction of
$\phi^0$ to $I^1_{0i}$.

\subsection{$k$-multimaps in a higher theory}
\label{sec:k}

\subsubsection{}

The form of datum ($k$) for $3\le k\le n-1$ for Definition
\ref{def:theory}, is specified inductively as follows.
\begin{itemize}
\item[($k$)] Suppose given
  \begin{itemize}
  \item[($k-1'$)] an elemental $[1]$-family
    $I^{k-1}=(I^{k-1}_0,\{1\})$ in $\Ord$, and an $\{1\}$-nerve
    $(\pi\colon I^{k-1}_0\to\{1\})$,
  \item[($k-2'$)] an elemental $[I^{k-1}_0]$-family
    $I^{k-2}=(I^{k-2}_i)_{i\in[I^{k-1}_0]}$ in $\Ord$, and an
    $I^{k-1}_0$-nerve $\phi^{k-2}=(\phi^{k-2}_i)_{i\in I^{k-1}_0}$
    connecting $I^{k-2}$,
  \item ($k-3'$) through ($k-3''$) of ($k-1$),
  \item[($k-2''$)] a $I^{k-2}$-family
    $u^{k-2}=(u^{k-2}_i)_{i\in[I^{k-1}_0]}$ of ($k-2$)-multimaps in
    $\cat{U}$, where the $I^{k-2}_i$-family $u^{k-2}_i$ is in fact a
    $\kakowanu{\maekara{\phi^{k-2}}{i}}_!\phi^{k-3}$-nerve (see
    ($k-1''$) below) of ($k-2$)-multimaps in $\cat{U}$ (where
    $\maekara{\phi}{i}:=\phi_i\cdots\phi_1$),
    connecting $\kakowanu{\maekara{\phi^{k-2}}{i}}_!u^{k-3}$,
  \item[($k-1''$)]
    \begin{itemize}
    \item a \emph{$\phi^{k-2}$-nerve} $u^{k-1}_0$ of ($k-1$)-multimaps
      \emph{connecting} $u^{k-2}$ in $\cat{U}$, which \emph{by
        definition} means that $u^{k-1}_0=(u^{k-1}_{0i})_{i\in
        I^{k-1}_0}$, where
      $u^{k-1}_{0i}\in\Mul^{\phi^{k-2}_i}_\cat{U}[u^{\le
        k-2}\resto{i}]$ (see below), where $u^{\le k-2}\resto{i}$
      consists of
      \[
      u^{\le
        k-4}:=(u^\nu)_{0\le\nu\le
        k-4},\;{(\maekara{\phi^{k-2}}{i-1})}_!u^{k-3},\;u^{k-2}\resto{i},
      \]
      where $u^{k-2}\resto{i}$ denotes the
      restriction of $u^{k-2}$ to $\{i-1,i\}\sub[I^{k-1}_0]$,
    \item $u^{k-1}_1\in\Mul^{\pi_!\phi^{k-2}}[\pi_!u^{\le k-2}]$,
      where $\pi_!u^{\le k-2}$ consists of
      \[
      u^{\le k-4},\;(\pi_!\phi^{k-2})_!u^{k-3},\;\pi_!u^{k-2}.
      \]
    \end{itemize}
  \end{itemize}
  Then a collection $\Mul^\pi_\cat{U}[u^{\le
    k-2}](u^{k-1}_0;u^{k-1}_1)$ or $\Mul^\pi_\cat{U}[u]$ for short,
  whose member will be called
  a \kore{$k$-multimap} $u^{k-1}_0\to u^{k-1}_1$ in $\cat{U}$.
\end{itemize}

\begin{definition}
We refer to a datum of the form $(I;\pi,\phi)$, where
$I:=(I^\nu)_{0\le\nu\le k-1}$,
$\phi:=(\phi^\nu)_{0\le\nu\le k-2}$, specified by ($k-1'$)
through ($0'$) above, as the \kore{arity of a} \kore{$k$-multimap} in
a symmetric higher theory.

We refer to a datum of the form $u=(u^\nu)_{0\le\nu\le k-1}$ specified
by ($0''$) through ($k-1''$) above (by induction in $k$), as the
\kore{type of a $k$-multimap} in $\cat{U}$, of \kore{arity}
$(I;\pi,\phi)$.
\end{definition}

\begin{remark}
Even though we have not yet specified the form of the rest of data
for $\cat{U}$, note that the notion of the type of a $k$-multimap ``in
$\cat{U}$'' makes sense as soon as data of the forms ($0$) through
($k-1$) are given ``for $\cat{U}$''.
\end{remark}

A datum of the form ($k$) above extends for a similar input datum with
the elementality requirements discarded.
This can be done by induction, starting from the elemental case above,
as follows.

\subsubsection{}

Fix an integer $\nu$ such that $1\le\nu\le k-1$, and suppose as an
inductive hypothesis, that we have extended the datum ($k$)
for input data similar to ($k-1'$) through ($k-1''$) above, where the
families $I^0$ through $I^{\nu-2}$ are allowed to be non-elemental.
Then we extend the datum ($k$) for input data with the families up to
$I^{\nu-1}$ allowed to be non-elemental, as follows.

Suppose given data similar to ($k-1'$) through ($k-1''$) above, where
the families $I^0$ through $I^{\nu-1}$ are allowed to be
non-elemental (which we mean to be interpreted following the previous
inductive step).
Then we let $\Mul^\pi_\cat{U}[u]$ denote the collection whose member
is an $I^{\nu-1}_{[\pi^\nu](1)}$-family $(v_i)_{i\in
  I^{\nu-1}_{[\pi^\nu](1)}}$,
where $\pi^\nu$ denotes the unique map $I^\nu_0\to\{1\}$, and
$v_i\in\Mul^\pi_\cat{U}[u\resto{i}]$, where $u\resto{i}$ consists of
\[
u^{\le\nu-4},\;{(\maekara{\phi^{\nu-2}}{i-1})}_!u^{\nu-3},\;u^{\ge\nu-2}\resto{i}:=(u^\kappa\resto{i})_{\kappa\ge\nu-2},
\]
where if $\nu\le k-2$,
\begin{itemize}
\item $u^{\ge\nu-2,\le
    k-3}\resto{i}=(u^\kappa\resto{i})_{\nu-2\le\kappa\le k-3}$ are as
  already defined by the previous step of the induction on $k$ (see
  the case $\nu=k-1$ below for $u^{\nu-2}\resto{i}$ and
  $u^{\nu-1}\resto{i}$, and the next
  point for $u^{\ge \nu,\le k-3}\resto{i}$),
\item
  $u^{k-2}\resto{i}:=\bigl(u^{k-2}_j\bigresto{i}\bigr)_{j\in[I^{k-1}_0]}$,
  where $u^{k-2}_j\bigresto{i}$ is as already defined by the previous
  step of the induction on $k$ (see below),
\end{itemize}
and if $\nu=k-1$,
\begin{itemize}
\item $u^{\nu-2}\resto{i}$ denotes the restriction of $u^{\nu-2}$ to
  $[(\pi^\nu_!\phi^{\nu-1})^{-1}i]\sub[I^{\nu-1}_0]$,
\item
  $u^{\nu-1}\resto{i}:=\bigl(u^{\nu-1}_j\bigresto{i}\bigr)_{j\in[I^\nu_0]}$,
  where $u^{\nu-1}_j\bigresto{i}$ denotes the
  $\bigl(\maekara{(\phi^{\nu-1}_{/i})}{j}\bigr)_!\bigl(\phi^{\nu-2}\resto{i}\bigr)$-nerve
  connecting
  $\bigl(\maekara{(\phi^{\nu-1}_{/i})}{j}\bigr)_!\bigl(u^{\nu-2}\resto{i}\bigr)$,
  obtained by restricting $u^{\nu-1}_j$ to
  $I^{\nu-1}_{ji}:=(\sakihe{\phi^{\nu-1}}{j})^{-1}i\sub
  I^{\nu-1}_j$, where
  \begin{itemize}
  \item $\phi^{\nu-1}_{/i}$ denotes the $I^\nu_0$-nerve in $\Ord$
    connecting the \emph{elemental} $[I^\nu_0]$-family
    $(I^{\nu-1}_{ji})_{j\in[I^\nu_0]}$, obtained by restricting
    $\phi^{\nu-1}$,
  \item $\phi^{\nu-2}\resto{i}$ is the $I^{\nu-1}_{0i}$-nerve obtained
    by restricting $\phi^{\nu-2}$, connecting the
    $[I^{\nu-1}_{0i}]$-family $I^{\nu-2}\resto{i}$ obtained by
    restricting $I^{\nu-2}$,
  \end{itemize}
  so
  \begin{align*}
  \bigl(\maekara{(\phi^{\nu-1}_{/i})}{j}\bigr)_!\bigl(\phi^{\nu-2}\resto{i}\bigr)
    &=\bigl(\kakowanu{\maekara{\phi^{\nu-1}}{j}}_!\phi^{\nu-2}\bigr)\bigresto{I^{\nu-1}_{ji}},\\
  \bigl(\maekara{(\phi^{\nu-1}_{/i})}{j}\bigr)_!\bigl(u^{\nu-2}\resto{i}\bigr)
    &=\bigl(\kakowanu{\maekara{\phi^{\nu-1}}{j}}_!u^{\nu-2}\bigr)\bigresto{[I^{\nu-1}_{ji}]},
  \end{align*}
\end{itemize}
and for any $\nu$, $u^{k-1}_0\bigresto{i}=(u^{k-1}_{0ji})_{j\in
  I^{k-1}_0}$, and $u^{k-1}_1\bigresto{i}=(u^{k-1}_{1i})$ are as
already specified by the previous step of the induction on $k$.
Note (see below) that, by induction on $k$, $u^{k-1}_{0j}$ for $j\in
I^{k-1}_0$ is an $I^{k-2}_{[\pi](1)}$-family $(u^{k-1}_{0ji})_{i\in
  I^{k-2}_{[\pi^1](1)}}$, where
$u^{k-1}_{0ji}\in\Mul^{\phi^{k-2}_i}[u^{\le k-2}\resto{i,j}]$, and
similarly for $u^{k-1}_1$.

\subsubsection{}

Finally, the datum ($k$) extends for input data with $I^{k-1}$
non-elemental, as follows.
Suppose given
\begin{itemize}
\item a map $\psi\colon I^{k-1}_0\to I^{k-1}_1$ in $\Ord$,
\item not necessarily elemental data of the form ($k-2'$) through
  ($k-2''$) above,
\item a $\phi^{k-2}$-nerve $u^{k-1}_0$ connecting $u^{k-2}$, and a
  $\psi_!\phi^{k-2}$-nerve $u^{k-1}_1$ connecting $\psi_!u^{k-2}$
  respectively, of ($k-1$)-multimaps in $\cat{U}$.
\end{itemize}
Then we let $\Mul^\psi_\cat{U}[u]$ denote the collection whose member
is an $I^{k-1}_1$-family $v=(v_i)_{i\in I^{k-1}_1}$
of $k$-multimaps in $\cat{U}$, where $v_i\in\Mul^{\pi_i}[u\resto{i}]$,
where
\begin{itemize}
\item $\pi_i$ denotes the unique map $I^{k-1}_{0i}:=\psi^{-1}i\to \ten$,
  so $\psi=\sum_{i\in I^{k-1}_1}\pi_i$,
\item $u\resto{i}$ consists of
  \[
  u^{\le
    k-4},\;{(\maekara{\phi^{k-2}}{[\psi](i-1)})}_!u^{k-3},\;u^{k-2}\resto{i},\;u^{k-1}\resto{i},
  \]
  where
  \begin{itemize}
  \item $u^{k-2}\resto{i}$ denotes the restriction of
    $u^{k-2}$ to $[I^{k-1}_{0i}]\sub[I^{k-1}_0]$,
  \item
    $u^{k-1}\bigresto{i}:=\bigl(u^{k-1}_j\bigresto{i}\bigr)_{j\in[1]}$,
    where $u^{k-1}_0\bigresto{i}$ denotes the restriction of
    $u^{k-1}_0$ to $I^{k-1}_{0i}$, and
    $u^{k-1}_1\bigresto{i}:=(u^{k-1}_{1i})$,
  \end{itemize}
so $u^{k-1}_0\bigresto{i}$ is a ($\phi^{k-2}\resto{i}$)-nerve
connecting $u^{k-2}\resto{i}$, where $\phi^{k-2}\resto{i}$ denotes the
restriction of $\phi^{k-2}$ to $I^{k-1}_{0i}$.
\end{itemize}

\subsubsection{}

This allows us to go to the next inductive step.

\subsection{The object ``formed by $n$-multimaps in an $n$-theory''}
\label{sec:n}

The form of datum ($n$) for Definition \ref{def:theory} is as
follows.
\begin{itemize}
\item[($n$)] Suppose given the type $u$ of an $n$-multimap in
  $\cat{U}$ of arity given as $(I;\pi,\phi)$.
  Namely, suppose given a set of data similar to ($k-1'$) through
  ($k-1''$) of ``($k$)'' in Section \ref{sec:k}, but with $k$ substituted
  by $n$ (where $I^{k-2}$ should be a family in $\Fin$ if $n=2$), so
  these will be ($n-1'$) through ($n-1''$) here.
  Then an object $\Mul^\pi_\cat{U}[u^{\le n-2}](u^{n-1}_0;u^{n-1}_1)$
  of $\cat{A}$, or $\Mul^\pi_\cat{U}[u]$ for short.
  In the case where $\cat{A}$ is $\Gpd$ or some other category so that
  $\Mul^\pi_\cat{U}[u]$ can have its objects, then those objects will
  be called \kore{$n$-multimaps} $u^{n-1}_0\to u^{n-1}_1$ in
  $\cat{U}$.
  For a general $\cat{A}$, we shall call $\Mul^\pi_\cat{U}[u]$ the
  object ``of \emph{$n$-multimaps}''.
\end{itemize}

A datum of this form extends for a non-elemental input datum just as
the datum ``($k$)'' did in Section \ref{sec:k}.

To see this, for the purpose of induction starting from the elemental
case above, fix an integer $\nu$ such that $1\le\nu\le n-1$, and
suppose given data similar to ($n-1'$) through ($n-1''$)
above, where the families $I^0$ through $I^{\nu-1}$ are allowed to be
non-elemental.
Then we define $\Mul^\pi_\cat{U}[u]:=\Tensor_{i\in
  I^{\nu-1}_{[\pi^\nu](1)}}\Mul^\pi_\cat{U}[u\resto{i}]$, which makes
sense by induction on $\nu$.

Suppose given next, instead of $\pi\colon I^{n-1}_0\to\{1\}$
above, a map $\psi\colon I^{n-1}_0\to I^{n-1}_1$ in $\Ord$, and
suppose $u^{n-1}_1$ is now an $\psi_!\phi^{n-2}$-nerve of
($n-1$)-multimaps connecting $\psi_!u^{n-2}$.
Then we let $\pi_i$ for $i\in I^{n-1}_1$ denote the unique map
$\psi^{-1}i\to \ten$ (so $\psi=\sum_i\pi_i$), and define
$\Mul^\psi_\cat{U}[u]:=\Tensor_{i\in I^{n-1}_1}\Mul^{\pi
  _i}[u\resto{i}]$.

\subsection{Composition of $n$-multimaps}
\label{sec:n+1}

The form of datum ($n+1$) for Definition \ref{def:theory} is as
follows.
\begin{itemize}
\item[($n+1$)] Suppose given the arity $(I;\pi,\phi)$ of an
  ($n+1$)-multimap in a symmetric higher theory, namely
  \begin{itemize}
  \item ($k-1'$) and ($k-2'$) of ``($k$)'' in Section \ref{sec:k}, but
    with $k$ substituted by $n+1$, so these will be ($n'$) and ($n-1'$)
    here.
  \item ($n-2'$) through ($0'$) of ($n$) in Section \ref{sec:n},
  \end{itemize}
  as well as
  \begin{itemize}
  \item ($0''$) through ($n-2''$) of ($n$) in Section \ref{sec:n},
  \item ($k-2''$) of ``($k$)'' in Section \ref{sec:k}, but
    with $k$ substituted by $n+1$, so this will be ($n-1''$) here.
  \end{itemize}
  Then a map
  $m^\cat{U}_1(\pi)\colon\Mul^{\phi^{n-1}}_\cat{U}[u]\to\Mul^{\pi_!\phi^{n-1}}_\cat{U}[\pi_!u]$
  in $\cat{A}$, where
  \begin{itemize}
  \item the source of $m^\cat{U}_1(\pi)$ is the object $\Tensor_{i\in
      I^n_0}\Mul^{\phi^{n-1}_i}[u\resto{i}]$ of $\cat{A}$ (``of
    $\phi^{n-1}$-nerves of $n$-multimaps connecting $u^{n-1}$ in
    $\cat{U}$''), where $u\resto{i}$ consists of
    \[
    u^{\le
      n-3},\;{(\maekara{\phi^{n-1}}{i-1})}_!u^{n-2},\;u^{n-1}\resto{i},
    \]
  \item $\pi_!u$ consists of $u^{\le n-2}$, $\pi_!u^{n-1}$.
  \end{itemize}
  The map $m^\cat{U}_1(\pi)$ will be called the \kore{composition}
  operation for $n$-multimaps.
\end{itemize}

\begin{definition}
We refer to a datum of the form $u=(u^\nu)_{0\le\nu\le n-1}$ specified
by ($0''$) through ($n-1''$) above, as the \kore{type of a
  $\phi^{n-1}$-nerve of $n$-multimaps} in $\cat{U}$.
\end{definition}

A datum of the form ($n+1$) above extends for a non-elemental input
datum as follows.

To begin with, fix an integer $\nu$ such that $1\le\nu\le n-1$, and
suppose given data similar to ($n'$) through ($n-1''$)
above, where the families $I^0$ through $I^{\nu-1}$ are allowed to be
non-elemental.
Then we define
$m^\cat{U}_1(\pi)\colon\Mul^{\phi^{n-1}}_\cat{U}[u]\to\Mul^{\pi_!\phi^{n-1}}_\cat{U}[\pi_!u]$
as the monoidal product over $i\in I^{\nu-1}_{[\pi^\nu](1)}$ of the
maps
$m^\cat{U}_1(\pi)\colon\Mul^{\phi^{n-1}}_\cat{U}[u\resto{i}]\to\Mul^{\pi_!\phi^{n-1}}[\pi_!u\resto{i}]$,
which makes sense by induction on $\nu$.

Next, suppose given data similar to ($n'$) through ($n-1''$)
above, where the families $I^0$ through $I^{n-1}$ are allowed to be
non-elemental.
Then we define
$m^\cat{U}_1(\pi)\colon\Mul^{\phi^{n-1}}_\cat{U}[u]\to\Mul^{\pi_!\phi^{n-1}}_\cat{U}[\pi_!u]$
as the monoidal product over $i\in I^{n-1}_{[\pi](1)}$ of the maps
$m_1(\pi)\colon\Mul^{\phi^{n-1}_{/i}}[u\resto{i}]\to\Mul^{\pi_!\phi^{n-1}_{/i}}[\pi_!u\resto{i}]$.

Finally, suppose given, instead of $\pi\colon I^n_0\to\{1\}$ above,
a map $\psi\colon I^n_0\to I^n_1$ in $\Ord$.
Then we let $\pi_i$ for $i\in I^n_1$ denote the unique map
$\psi^{-1}i\to \ten$ (so $\psi=\sum_i\pi_i$), and define
$m^\cat{U}_1(\psi)\colon\Mul^{\phi^{n-1}}[u]\longto\Mul^{\psi_!\phi^{n-1}}[\psi_!u]$,
where $\psi_!u$ consists of $u^{\le n-2}$, $\psi_!u^{n-1}$, as the
monoidal product over $i\in I^n_1$ of the maps
$m_1(\pi_i)\colon\Mul^{\phi^{n-1}\resto{i}}[u\resto{i}]\to\Mul^{(\psi_!\phi^{n-1})_i}[(\psi_!u)\resto{i}]$,
where $u\resto{i}$ consists of
\[
u^{\le
  n-3},\;{(\maekara{\phi^{n-1}}{[\psi](i-1)})}_!u^{n-2},\;u^{n-1}\resto{i},
\]
so $(\psi_!u)\resto{i}$ consists of
\[
u^{\le
  n-3},\;\bigl(\maekara{(\psi_!\phi^{n-1})}{i-1}\bigr)_!u^{n-2},\;\bigl(\psi_!u^{n-1}\bigr)\bigresto{i}.
\]

\subsection{The associativity isomorphism for the composition}
\label{sec:associative-n+2}

The form of datum ($n+2$) for Definition \ref{def:theory} is as
follows.
\begin{itemize}
\item[($n+2$)] Suppose given the arity $(I;\pi,\phi)$ of an
  ($n+2$)-multimap in a symmetric higher theory, and the type $u$ of a
  $\phi^{n-1}$-nerve of $n$-multimaps in $\cat{U}$.
  Then a $2$-isomorphism
  $m^\cat{U}_2(\pi)\colon\pi_!m^\cat{U}_1(\phi^n)\equivto
  m^\cat{U}_1(\pi_!\phi^n)$ in $\cat{A}$, where $m^\cat{U}_1(\phi^n)$
  denotes the $I^{n+1}_0$-nerve obtained by indexing with $i\in
  I^{n+1}_0$, the maps
  \[
  m_1(\phi^n_i)\colon\Mul^{{(\maekara{\phi^n}{i-1})}_!\phi^{n-1}}[{(\maekara{\phi^n}{i-1})}_!u]\longto\Mul^{{(\maekara{\phi^n}{i})}_!\phi^{n-1}}[{(\maekara{\phi^n}{i})}_!u].
  \]
\end{itemize}

A datum of this form extends for a non-elemental input datum as
follows.

To begin with, fix an integer $\nu$ such that $1\le\nu\le n-1$, and
suppose given data similar to ($n+1'$) through ($n-1''$)
above, where the families $I^0$ through $I^{\nu-1}$ are allowed to be
non-elemental.
Then we define $m^\cat{U}_2(\pi)\colon\pi_!m^\cat{U}_1(\phi^n)\equivto
m^\cat{U}_1(\pi_!\phi^n)$ as the monoidal product over $i\in
I^{\nu-1}_{[\pi^\nu](1)}$ of the $2$-isomorphisms
\[
m^\cat{U}_2(\pi)\colon\pi_!m_1(\phi^n)\longequivto
m_1(\pi^n)\colon\Mul^{\phi^{n-1}}[u\resto{i}]\longto\Mul^{{\pi^n}_!\phi^{n-1}}[{\pi^n}_!u\resto{i}],
\]
which makes sense by induction on $\nu$.

Next, suppose given data similar to ($n+1'$) through ($n-1''$)
above, where the families $I^0$ through $I^{n-1}$ are allowed to be
non-elemental.
Then we define $m^\cat{U}_2(\pi)\colon\pi_!m_1(\phi^n)\equivto
m_1(\pi_!\phi^n)$ as the monoidal product over $i\in
I^{n-1}_{[\pi_!\phi^n](1)}$ of the $2$-isomorphisms
\[
m_2(\pi)\colon\pi_!m_1(\phi^n)\longequivto
m_1(\pi^n)\colon\Mul^{\phi^{n-1}_{/i}}[u\resto{i}]\longto\Mul^{{\pi^n}_!\phi^{n-1}_{/i}}[{\pi^n}_!u\resto{i}].
\]

Next, suppose given data similar to ($n+1'$) through ($n-1''$)
above, where the families $I^0$ through $I^n$ are allowed to be
non-elemental.
Then we write for $i\in I^n_{[\pi](1)}$,
$\pi^n_i:=\pi_!(\phi^n_{/i})$ (so $\pi_!\phi^n=\sum_i\pi^n_i$), and
define $m_2(\pi)\colon\pi_!m_1(\phi^n)\equivto m_1(\pi_!\phi^n)$ as
the monoidal product over $i\in I^n_{[\pi](1)}$ of the
$2$-isomorphisms
\[
m_2(\pi)\colon\pi_!m_1(\phi^n_{/i})\longequivto
m_1(\pi^n_i)\colon\Mul^{\phi^{n-1}\resto{i}}[u\resto{i}]\longto\Mul^{{\pi^n_i}_!(\phi^{n-1}\resto{i})}[({\pi^n}_!u)\resto{i}].
\]

Finally, suppose given, instead of $\pi\colon I^{n+1}_0\to\{1\}$
above, a map $\psi\colon I^{n+1}_0\to I^{n+1}_1$ in $\Ord$.
Then we let $\pi_i$ for $i\in I^{n+1}_1$ denote the unique map
$\psi^{-1}i\to \ten$ (so $\psi=\sum_i\pi_i$), and define the isomorphism
$m_2(\psi)\colon\psi_!m_1(\phi^n)\equivto m_1(\psi_!\phi^n)$ of
$I^{n+1}_1$-nerves in $\cat{A}$ as the
family indexed by $i\in I^{n+1}_1$ of the isomorphisms
\[
m_2(\pi_i)\colon{\pi_i}_!m_1\bigl(\phi^n\resto{i}\bigr)\longequivto
m_1\bigl((\psi_!\phi^n)_i\bigr).
\]

\subsection{Coherence for the associativity}
\label{sec:coherence-n+ell}

\subsubsection{}

The form of datum ($n+\ell$) for $\ell\ge 3$ for Definition
\ref{def:theory}, is specified inductively as follows.
\begin{itemize}
\item[($n+\ell$)] Suppose given the arity $(I;\pi,\phi)$ of an
  ($n+\ell$)-multimap in a symmetric higher theory, and the type $u$
  of a $\phi^{n-1}$-nerve of $n$-multimaps in $\cat{U}$.
  Then an $\ell$-isomorphism
  \[
  m^\cat{U}_\ell(\pi)\colon\pi_!m^\cat{U}_{\ell-1}(\phi^{n+\ell-2})\longequivto
  m^\cat{U}_{\ell-1}(\pi_!\phi^{n+\ell-2})
  \]
  in $\cat{A}$, where $m^\cat{U}_{\ell-1}(\phi^{n+\ell-2})$ denotes the
  $I^{n+\ell-1}_0$-nerve of ($\ell-1$)-isomorphisms obtained by
  indexing with $i\in I^{n+\ell-1}_0$ the isomorphisms
  \[
  m_{\ell-1}(\phi^{n+\ell-2}_i)\colon
  m^\cat{U}_{\ell-2}(\kakowanu{\maekara{\phi^{n+\ell-2}}{i-1}}_!\phi^{n+\ell-3})\longequivto
  m^\cat{U}_{\ell-2}(\kakowanu{\maekara{\phi^{n+\ell-2}}{i}}_!\phi^{n+\ell-3})
  \]
  of ($\ell-2$)-isomorphisms
  \[
  {\pi^{n+\ell-3}}_!m_{\ell-3}(\phi^{n+\ell-4})\longequivto
  m_{\ell-3}({\pi^{n+\ell-3}}_!\phi^{n+\ell-4})
  \]
  (where $\pi^{n+\ell-3}:=(\pi_!\phi^{n+\ell-2})_!\phi^{n+\ell-3}$) in
  $\cat{A}$,
  or $\Mul^{\phi^{n-1}}[u]\to\Mul^{{\pi^n}_!\phi^{n-1}}[{\pi^n}_!u]$
  if $\ell=3$.
\end{itemize}

A datum of this form extends for a non-elemental input datum as
follows.

The initial step is a similar induction as before.
Fix an integer $\nu$ such that $1\le\nu\le n+\ell-1$, and suppose
given data similar to ($n+\ell-1'$) through ($n-1''$) above, where the
families $I^0$ through $I^{\nu-1}$ are allowed to be non-elemental.
Then we define
$m^\cat{U}_\ell(\pi)\colon\pi_!m_{\ell-1}(\phi^{n+\ell-2})\equivto
m_{\ell-1}(\pi_!\phi^{n+\ell-2})$ as
\begin{itemize}
\item if $\nu\le n+1$, the monoidal product over
  $I^{\nu-1}_{[\pi^\nu](1)}$,
\item if $\nu\ge n+2$, the ($n+\ell+1-\nu$)-isomorphism of
  $I^{\nu-1}_{[\pi^\nu](1)}$-nerves of ($\nu-n-1$)-isomorphisms (or
  $1$-morphisms if $\nu=n+2$) in $\cat{A}$, given by the family
  indexed by $I^{\nu-1}_{[\pi^\nu](1)}$,
\end{itemize}
of $m^\cat{U}_\ell(\pi)$ defined for each $i\in
I^{\nu-1}_{[\pi^\nu](1)}$ as an instance of the previous inductive
step.

For instance, as the case $\nu=n+\ell-1$, suppose given data similar to
($n+\ell-1'$) through ($n-1''$) above, where the families $I^0$ through
$I^{n+\ell-2}$ are allowed to be non-elemental.
Then we write for $i\in I^{n+\ell-2}_{[\pi](1)}$,
$\pi^{n+\ell-2}_i:=\pi_!\phi^{n+\ell-2}_{/i}$ (so
$\pi_!\phi^{n+\ell-2}=\sum_i\pi^{n+\ell-2}_i$), and define the isomorphism
$m_\ell(\pi)\colon\pi_!m_{\ell-1}(\phi^{n+\ell-2})\equivto
m_{\ell-1}(\pi_!\phi^{n+\ell-2})$ of isomorphisms
\[
\bigl(\pi_!\phi^{n+\ell-2}\bigr)_!m_{\ell-2}\bigl(\phi^{n+\ell-3}\bigr)\longequivto
m_{\ell-2}\bigl((\pi_!\phi^{n+\ell-2})_!\phi^{n+\ell-3}\bigr)
\]
of $I^{n+\ell-2}_{[\pi](1)}$-nerves of ($\ell-2$)-isomorphisms (or
$1$-morphisms if $\ell=3$) in $\cat{A}$, as
given by the family indexed by $i\in I^{n+\ell-2}_{[\pi](1)}$ of
\begin{multline*}
m_\ell(\pi)\colon \pi_!m_{\ell-1}(\phi^{n+\ell-2}_{/i})\longequivto
m_{\ell-1}(\pi^{n+\ell-2}_i)\colon\\
{\pi^{n+\ell-2}_i}_!m_{\ell-2}\bigl(\phi^{n+\ell-3}\resto{i}\bigr)\longequivto m_{\ell-2}\bigl(\bigl((\pi_!\phi^{n+\ell-2})_!\phi^{n+\ell-3}\bigr)_i\bigr).
\end{multline*}

Finally, suppose given, instead of $\pi\colon I^{n+\ell-1}_0\to\{1\}$
above, a map $\psi\colon I^{n+\ell-1}_0\to I^{n+\ell-1}_1$ in $\Ord$.
Then we let $\pi_i$ for $i\in I^{n+\ell-1}_1$ denote the unique map
$\psi^{-1}i\to \ten$ (so $\psi=\sum_i\pi_i$), and define the isomorphism
\[
m_\ell(\psi)\colon\psi_!m_{\ell-1}(\phi^{n+\ell-2})\longequivto
m_{\ell-1}(\psi_!\phi^{n+\ell-2})
\]
of $I^{n+\ell-1}_1$-nerves of ($\ell-1$)-isomorphisms in $\cat{A}$, as
given by
the family indexed by $i\in I^{n+\ell-1}_1$ of the isomorphisms
\[
m_\ell(\pi_i)\colon{\pi_i}_!m_{\ell-1}\bigl(\phi^{n+\ell-2}\resto{i}\bigr)\longequivto
m_{\ell-1}\bigl((\psi_!\phi^{n+\ell-2})_i\bigr).
\]

\subsubsection{}

We can thus proceed to the next inductive step, and this completes
Definition \ref{def:theory}.

\section{Simple variants and basic constructions}
\label{sec:variant-construction}

\subsection{Introduction}

In this section, we shall first discuss a planar variant of higher
theories, which iteratevely theorize associative algebra.
In particular, we shall find the structure of a planar ($n-1$)-theory
formed by endomorphisms in a symmetric $n$-theory.
This will lead to a discussion of other theorized structures similarly
residing in a symmetric higher theory.
We shall further discuss less coloured variants of higher theory,
relation of higher theory to higher categorified structure, and a
construction for a higher theory which generalizes the ``delooping''
construction for a symmetric monoidal category.

\subsection{Planar theories}
\label{sec:planar-theory}

\subsubsection{}

The notion of symmetric multicategory had variants such as planar and
braided.
While a generalization of these will appear in Section
\ref{sec:grading}, the notion of \emph{planar} $n$-theory is
particularly simple to describe, and turns out to be also fundamental,
so we shall discuss it here.

The definition of a \kore{planar $n$-theory} is the same as the
definition of a symmetric $n$-theory except that one uses the
category $\Ord$ instead of $\Fin$.
Namely, $I^0$ (and $S$) appearing in the definition \ref{def:theory}
of an $n$-theory should now be in $\Ord$, and everything else is as it
makes sense under this modification.

In particular, one obtains a planar $n$-theory from a symmetric
$n$-theory $\cat{U}$ by restricting the data defining $\cat{U}$,
through the forgetful functor $\Ord\to\Fin$.

\subsubsection{}

The notion of planar $n$-theory is fundamental for the following
reason.
Given a symmetric $n$-theory $\cat{U}$, and its object $x$, the
structure of a planar ($n-1$)-theory underlies the
structure formed by endomorphisms (i.e., unary endomultimaps) of $x$.

The idea is that if one intends to take as the part ($0'$) of an
input datum for $\cat{U}$, the constant elemental $I^1_0$-nerve
$\ten\to\cdots\to\ten$, and as the part ($0''$), the constant family
at $x$,
then the rest of the required input data is of the same form as the
form for an input datum for a \emph{planar} ($n-1$)-theory.

Thus, a planar ($n-1$)-theory $\cat{V}=\cat{M}ap_\cat{U}(x,x)$ is
obtained as follows.
Suppose inductively in $k$, that an input for the datum ($k$) for the
planar ($n-1$)-theory $\cat{V}$ is given by $J^\nu$,
$\psi^\nu$, $v^\nu$ for all integers $\nu$ in the suitable range.
Then one obtains an input for the datum ($k+1$) for $\cat{U}$ by
letting
\begin{itemize}
\item $I^\nu:=J^{\nu-1}$, $\phi^\nu:=\psi^{\nu-1}$ for $\nu\ge 1$,
\item $I^0$ and $\phi^0$ constant as above, as well as $u^0$ constant
  at $x$,
\item $u^\nu:=v^{\nu-1}$ for $\nu\ge 1$,
\end{itemize}
so we use the output for this by $\cat{U}$ as the datum ($k$) for
$\cat{V}$ for the original input.

For example, the collection of the objects of $\cat{V}$ is
$\Mul^\pi_\cat{U}(x;x)$, where $\pi$ is the identity map of
$I^0_1=\{1\}$.

\subsection{More related higher theorizations.}

In Section \ref{sec:planar-theory}, we have found the structure of a
planar ($n-1$)-theory within the structure of every $n$-theory.
The case $n=1$ of this is the associative algebra of endomorphisms
within a multicategory, and by no accident, a planar $n$-theory is an
$n$-th theorization of an associative algebra.

Since we can also find other structures within the structure of a
multicategory, we can find higher theorized forms of them within the
structure of an $n$-theory.

For example, if we focus on, instead of endormophisms on a selected
object, all unary multimaps between arbitrary objects within an
$n$-theory, then we find that they naturally form a structure which is
an ($n-1$)-th theorization of the structure of a category.
We have seen examples of theorized categories in Section
\ref{sec:theorize-category}.
We have also noted there that theorized category was a `more coloured'
version of planar multicategory.
Higher theorizations of category relates to planar higher theories in
a similar manner.

For another example, if we fix one object of an $n$-theory to look at,
but allow all endomultimaps of arbitrary arities (and arbitrary higher
multimaps between them), then we find the
structure of an ($n-1$)-th theorization of an uncoloured operad.
The theorizations can have colours at `shallower' levels, and these
will be precisely defined as \emph{($n-1$)-tuply coloured}
$n$-theories in Section \ref{sec:stratum-for-colour}.

\subsection{Restricting strata for colours}
\label{sec:stratum-for-colour}

\subsubsection{}

The datum of a multicategory, or a coloured operad, enriched in
groupoids, say, consists of
collection of objects or ``colours'', and operations or ``multimaps''
which compose.
In an $n$-theory, only the multimaps of dimension $n$ are required to
compose, so only these are really considered as operations, while the
collections of lower dimensional multimaps are then considered as
forming \emph{strata of colours} whose role is to specify the types of
the operations in the top dimension.

As we could consider uncoloured operad, we sometimes want to consider
uncoloured, and only partially coloured, higher theories.
Specifically, we would like to consider situations in which the data
below some dimension are all fixed to be `trivial'.
We actually do this by simulating such a situation, instead of
defining what we mean by the ``trivial'' data.

\begin{definition}\label{def:uncoloured}
Let $n\ge 2$ be an integer.
Then a (symmetric) \kore{uncoloured $n$-theory} $\cat{U}$ enriched in
a symmetric monoidal category $\cat{A}$, consists of
data of the forms specified below as ($n$), ($n+1$) and ($\infty$).
\end{definition}

\begin{itemize}
\item[($n$)](The object ``formed by \emph{objects}''.)
  Suppose given the arity $(I;\pi,\phi)$ of an $n$-multimap in a
  symmetric higher theory, namely, ($n-1'$) through ($0'$) of ($n$) in
  Section \ref{sec:n}.
  Then an object $\Ob^\pi\cat{U}$ of $\cat{A}$, to be called the
  object of \kore{objects} of $\cat{U}$ of the specified arity.
\end{itemize}

This extends for a non-elemental input datum just as the datum ($n$)
of Section \ref{sec:n} did, and we use the resulting extended datum in
the specification of the next datum form.

\begin{itemize}
\item[($n+1$)](\emph{Composition} of objects.)
  Suppose given the arity $(I;\pi,\phi)$ of an ($n+1$)-multimap in a
  symmetric higher theory.
  Then a map
  $m^\cat{U}_1(\pi)\colon\Ob^{\phi^{n-1}}\cat{U}\to\Ob^{\pi_!\phi^{n-1}}\cat{U}$
  in $\cat{A}$, where the source here is the object $\Tensor_{i\in
    I^n_0}\Ob^{\phi^{n-1}_i}\cat{U}$.
  The map $m_1(\pi)$ will be called the \kore{composition}
  operation for objects of $\cat{U}$.
\item[($\infty$)] A datum of coherent associativity for the
  composition
  operations, corresponding to that for an $n$-theory described in
  Sections \ref{sec:associative-n+2} and \ref{sec:coherence-n+ell}.
\end{itemize}

This completes Definition \ref{def:uncoloured}.

\begin{definition}\label{def:tuply-coloured}
Let $n\ge 2$ be an integer.
We say that an $n$-theory as defined in Definition \ref{def:theory},
as \kore{$n$-tuply coloured}.

Let $m$ be an integer such that $1\le m\le n-1$.
Then a (symmetric) \kore{$m$-tuply coloured $n$-theory} $\cat{U}$
enriched in a symmetric monoidal category $\cat{A}$,
consists of data of the forms specified below as ($n-m$), ``($k$)''
for every integer $k$ such that $n-m+1\le k\le n-1$, ($n$),
($n+1$) and ($\infty$).
\end{definition}

\begin{itemize}
\item[($n-m$)](\emph{Object}.)
  Suppose given the arity $(I;\pi,\phi)$ of an ($n-m$)-multimap in a
  symmetric higher theory (or a finite set $S$ with unique map
  $\pi\colon S\to\ten$, if $n-m=1$).
  Then a collection $\Ob^\pi\cat{U}$, whose member will be called an
  \kore{object} of $\cat{U}$ of the specified arity.
\end{itemize}

This extends for a non-elemental input datum just as the datum ($k$)
of
Section \ref{sec:k} did for $k=n-m$, and we use the resulting extended
datum in the specification of the next datum form.

\begin{itemize}
\item[($k$)](\emph{($k-n+m$)-multimap}, inductively for $n-m+1\le k\le
  n-1$.)
  Suppose given
  \begin{itemize}
  \item the arity $(I;\pi,\phi)$ of a $k$-multimap in a symmetric
    higher theory,
  \item if $k-3\ge n-m$, then ($n-m''$) through ($k-3''$) of ($k-1$)
    here,
  \end{itemize}
  and
  \begin{itemize}
  \item[($k-2''$)] if $k-2\ge n-m$, then an $I^{k-2}$-family
    $u^{k-2-n+m}=(u^{k-n+m-2}_i)_{i\in[I^{k-1}_0]}$ of
    ($k-n+m-2$)-multimaps (or objects if $k-n+m-2=0$) in $\cat{U}$,
    where, if $k-n+m-2\ge 1$, then the $I^{k-2}_i$-family
    $u^{k-n+m-2}_i$ is in fact a
    ${\maekara{\phi^{k-2}}{i}}_!\phi^{k-3}$-nerve (see
    ($k-1''$) below) of ($k-n+m-2$)-multimaps in $\cat{U}$,
    connecting ${\maekara{\phi^{k-2}}{i}}_!u^{k-n-m-3}$, 
  \item[($k-1''$)] if $k-1=n-m$, then
    \begin{itemize}
    \item a \emph{$\phi^{k-2}$-nerve}
      $u^0_0$ of objects in $\cat{U}$,
      which \emph{by definition} means that $u^0_0=(u^0_{0i})_{i\in
        I^{k-1}_0}$, where $u^0_{0i}\in\Ob^{\phi^{k-2}_i}\cat{U}$,
    \item an object $u^0_1\in\Ob^{\pi_!\phi^{k-2}}\cat{U}$;
    \end{itemize}
    if $k-1\ge n-m+1$, then
    \begin{itemize}
    \item a \emph{$\phi^{k-2}$-nerve} $u^{k-1-n+m}_0$ of
      ($k-n+m-1$)-multimaps in $\cat{U}$ \emph{connecting}
      $u^{k-n+m-2}$, which \emph{by definition} means that
      $u^{k-n+m-1}_0=(u^{k-n+m-1}_{0i})_{i\in I^{k-1}_0}$, where
      $u^{k-n+m-1}_{0i}\in\Mul^{\phi^{k-2}_i}[u^{\le
        k-n+m-2}\resto{i}]$ (see below),
    \item $u^{k-n+m-1}_1\in\Mul^{\pi_!\phi^{k-2}}[\pi_!u^{\le
        k-n+m-2}]$.
    \end{itemize}
  \end{itemize}
  Then a collection
  $\Mul^\pi_\cat{U}[u^{\le k-n+m-2}](u^{k-n+m-1}_0;u^{k-n+m-1}_1)$ or
  $\Mul^\pi_\cat{U}[u]$ for short, whose member will be called a
  \kore{($k-n+m$)-multimap} $u^{k-n+m-1}_0\to u^{k-n+m-1}_1$ in
  $\cat{U}$.
\end{itemize}

This extends for a non-elemental input datum just as the datum ($k$)
of Section \ref{sec:k} did, and we use the resulting extended datum in
the specification of the next datum form.

\begin{itemize}
\item[($n$)](The object ``formed by \emph{$m$-multimaps}''.)
  Suppose given the arity $(I;\pi,\phi)$ of an $n$-multimap in a
  symmetric higher theory, and a set of data similar to ($0''$)
  through ($k-1''$) of ``($k$)'' above, but with $k$ substituted by
  $n$, so these will be ($0''$) through ($n-1''$) here.
  Then an object $\Mul^\pi_\cat{U}[u^{\le m-2}](u^{m-1}_0;u^{m-1}_1)$
  of $\cat{A}$, or $\Mul^\pi_\cat{U}[u]$ for short, to be called the
  object of \kore{$m$-multimaps} $u^{m-1}_0\to u^{m-1}_1$ in
  $\cat{U}$.
\end{itemize}

This extends for a non-elemental input datum just as the datum ($n$)
of Section \ref{sec:n} did, and we use the resulting extended datum in
the specification of the next datum form.

\begin{itemize}
\item[($n+1$)](\emph{Composition} of $m$-multimaps.)
  Suppose given
  \begin{itemize}
  \item the arity $(I;\pi,\phi)$ of an ($n+1$)-multimap in a symmetric
    higher theory,
  \item ($n-m''$) through ($n-2''$) of ($n$) above,
  \item ($k-2''$) of ($k$) above, but with $k$ substituted by
    $n+1$, so this will be ($n-1''$) here.
  \end{itemize}
  Then a map
  $m^\cat{U}_1(\pi)\colon\Mul^{\phi^{n-1}}_\cat{U}[u]\to\Mul^{\pi_!\phi^{n-1}}_\cat{U}[\pi_!u]$
  in $\cat{A}$, where the source here is the object $\Tensor_{i\in
    I^n_0}\Mul^{\phi^{n-1}_i}[u\resto{i}]$ (of $\phi^{n-1}$-nerves of
  $m$-multimaps connecting $u^{m-1}$ in $\cat{U}$).
  The map $m_1(\pi)$ will be called the \kore{composition} operation
  for $m$-multimaps in $\cat{U}$.

\item[($\infty$)] A datum of (coherent) associativity for the
  composition
  operations, corresponding to that for an $n$-theory described
  in Sections \ref{sec:associative-n+2} and \ref{sec:coherence-n+ell}.
\end{itemize}

This completes Definition \ref{def:tuply-coloured}.

\subsubsection{}

Any notion which makes sense for a general $n$-theory also makes sense
for an $n$-theory with restricted strata of colours as above.
Indeed, given the definition of a notion concerning an $n$-theory, one
obtains the definition of the corresponding notion for a less coloured
$n$-theory just by suppressing from the definition, every
specification involving colours in the lower dimensions in the
$n$-theory.
For example, at a place where one needs to choose an object of an
$n$-theory, one just does not need to make any choice with an
$m$-tuply coloured $n$-theory if $m\le n-1$.
Similarly, at a place where one needs to make some choice for every
object of the $n$-theory, one just make one choice with a not fully
coloured $n$-theory.

\subsection{Forgetting categorifications to theorizations}
\label{sec:forget-to-theorization}

\subsubsection{}

For an integer $m\ge 0$, let us denote by $\Cat_m$ the Cartesian
symmetric monoidal category of $m$-categories with a fix limit for the
size, where we let a \kore{$0$-category} mean a groupoid by
convention.

While an $n$-theory enriched in $\Cat_m$ is an instance of an enriched
$n$-theory, it is also \emph{unenriched} in the sense that we can more
generally consider $m$-categories enriched in a symmetric monoidal
category.
Thus it can be considered both as an ``unenriched'' instance of an
\emph{$m$-categorified} $n$-theory, and as an enriched
``uncategorified'' $n$-theory.
Interpolating these two views, it can also be considered as an
$\ell$-categorified $n$-theory enriched in ($m-\ell$)-categories, for
every integer $\ell$ such that $1\le\ell\le m-1$.

\subsubsection{}

Since ($n+1$)-theory is a theorization of $n$-theory, there are in
particular, $n$-theories enriched in $\Cat_{m+1}$ among
($n+1$)-theories enriched in $\Cat_m$.
Indeed, a \emph{op-lax} $n$-theory enriched in $\Cat_{m+1}$ can be
characterized among ($n+1$)-theories enriched in $\Cat_m$, as one in
which every
profunctor\slash distributor\slash bimodule (enriched in $\Cat_m$) virtually
giving the composition of
\emph{$n$-multimaps}, is corepresentable, and an $n$-theory can be
characterized among op-lax $n$-theories as
one in which every associativity map is an isomorphism.

Given an $n$-theory $\cat{U}$ enriched in ($m+1$)-categories, let us
denote by $\wasreteori_n\cat{U}$, ($n+1$)-theory enriched in
$m$-categories obtained by replacing each functor of composition
operation for $n$-multimaps in $\cat{U}$, by the bimodule
corepresented by it.
We shall say that $\wasreteori_n\cat{U}$ is \kore{represented} by
$\cat{U}$.

\begin{definition}\label{def:forget-to-theorization}
Let $n\ge 0$, $m\ge 2$ be integers, and let $\cat{U}$ be an $n$-theory
enriched in $m$-categories.
Given an integer $\ell$ such that $0\le\ell\le m$, we define an
($n+\ell$)-theory $\wasreteori^{n+\ell}_n\cat{U}$ enriched in
($m-\ell$)-categories, by the inductive relations
\[
\wasreteori^{n+\ell}_n\cat{U}=
\begin{cases}
\cat{U}&\text{if $\ell=0$},\\
\wasreteori_{n+\ell-1}\wasreteori^{n+\ell-1}_n\cat{U}&\text{if $\ell\ge 1$}.
\end{cases}
\]
\end{definition}

For example, we obtain from a symmetric monoidal $n$-category
$\cat{A}$ an uncategorified $n$-theory $\wasreteori^n_0\cat{A}$.
Even though this is an $n$-theory, it is true that this is not really
a new mathematical object since it is essentially just a symmetric
monoidal $n$-category $\cat{A}$.
An $n$-theory which \emph{fails} to be represented by an
($n-1$)-theory arises, for example, through the ``delooping'', as well
as the ``convolution'', constructions, which we shall discuss in
Sections \ref{sec:delooping} and \ref{sec:convolution} respectively.

However, considering symmetric monoidal $n$-categories as $n$-theories
means considering very different \emph{morphisms} between symmetric
monoidal $n$-categories, since a functor of these $n$-theories turns
out to be an \emph{$n$-lax} version of a symmetric monoidal functor of
the original symmetric monoidal $n$-categories.
Here and everywhere, by ``\emph{$n$-lax}'' we mean ``relaxed $n$
times''.
The way in which the structure is relaxed each time is actually quite
interesting here, and the author also finds the structure resulting
from iteration of these processes of relaxation fascinating.

Even though $n$-lax symmetric monoidal functor may still not be a
\emph{very} new notion, the construction above is certainly giving a
new meaning to this notion, in a richer environment where many new
and natural mathematical structures interact, as we shall show through
this work.

In general, for $m$-categorified $n$-theories $\cat{U},\cat{V}$, a
functor $\wasreteori^{m+n}_n\cat{U}\to\wasreteori^{m+n}_n\cat{V}$ of
uncategorified ($n+m$)-theories is equivalent as a datum to an $m$-lax
functor $\cat{U}\to\cat{V}$, as follows similarly to Therem
\ref{thm:functor-of-theorization} below.

\subsubsection{}

There are of course, less coloured versions of all the above.
In particular, in the situation of Definition
\ref{def:forget-to-theorization}, if $\cat{U}$ is $k$-tuply coloured,
where $0\le k\le n-1$, then $\wasreteori^{n+\ell}_n\cat{U}$ is
obtained as ($k+\ell$)-tuply coloured.

\subsection{Delooping a higher theory}
\label{sec:delooping}

\subsubsection{}

There is a construction, which we shall call the \emph{delooping}, of
a symmetric $n$-tuply coloured ($n+1$)-theory from a symmetric
$n$-theory.
We shall describe this construction, and then discuss its relation to
the ``categorical'' delooping.

An ($n+1$)-theory which is obtained through this construction normally
fails to be representable by an $n$-theory.
Delooped theories will be conveniently used throughout this work as
the targets of (possibly ``coloured'') functors of higher theories.

\subsubsection{}

The delooping construction relies on the following construction.

Suppose first that the arity of a $2$-multimap is specified as in
($1'$) and ($0'$) of ($2$) in Section \ref{sec:2}.
Then since our notation \ref{notation:skip-0} chooses an embedding
$I^1_0\into[I^1_0]$, it makes sense to take the coproduct
$\coprod_{I^1_0}I^0$.
Then note that a $\phi^0$-nerve $u^0_0$ as in ($2$) of Definition
\ref{def:tuply-coloured} in the case $m-n=1$, of objects of an
($n-1$)-tuply coloured $n$-theory, can be considered as a
($\coprod_{I^1_0}I^0$)-family, while an ($I^0_0$-ary) object $u^0_1$
is a ($\coprod_{I^1_1}\pi_!I^0$)-family, where $I^1_1:=\{1\}$.

Suppose next given the arity $(I;\pi,\phi)$ of a $3$-multimap in a
symmetric higher theory.
Then for every $i\in I^2_0$ and $j\in I^1_i$, we have a map
$\coprod_{(\phi^1_i)^{-1}j}{(\maekara{\phi^1}{i-1})}_!I^0\to
I^0_{[\maekara{\phi^1}{i}](j)}$ whose component for
$k\in(\phi^1_i)^{-1}j$ is the composite
\[
\phi^0_{[\maekara{\phi^1}{i}](j)}\cdots\phi^0_{[\maekara{\phi^1}{i-1}](k)+1}\colon
I^0_{[\maekara{\phi^1}{i-1}](k)}\longto
I^0_{[\maekara{\phi^1}{i}](j)}.
\]
Taking the coproduct of these over $j$, we obtain a map
$\coprod_{I^1_{i-1}}{(\maekara{\phi^1}{i-1})}_!I^0\to\coprod_{I^1_i}{(\maekara{\phi^1}{i})}_!I^0$,
which together form an $I^2_0$-nerve in $\Fin$.

Let us denote this nerve by $\int_{I^1}\phi^0$, and the
$[I^2_0]$-family of finite sets connected by it by $\int_{I^1}I^0$.
Then the datum $u^0_i$ of ($1''$) of ($3$) for Definition
\ref{def:tuply-coloured} in the case $m=n-1$, can be considered as a
$\bigl(\int_{I^1}I^0\bigr)_i$-family, so $u^0$ can be considered as a
($\int_{I^1}I^0$)-family.

\subsubsection{}

Suppose now given an $n$-theory $\cat{V}$.
Then we wish to construct a new $n$-tuply coloured ($n+1$)-theory
$\cat{U}=\ddeloop\cat{V}$ by precomposing the above constructions
to the data for $\cat{V}$.

The construction of $\cat{U}$ is as follows.
\begin{itemize}
\item[($1$)] Given a finite set $S$, we let
  $\Ob^\pi\cat{U}=\Ob\cat{V}$, where $\pi$ denotes the unique
  map $S\to\ten$.
\item[($2$)] Given the arity of a $2$-multimap as in the preliminary
  construction above, if $n=0$, then we let
  $m^\cat{U}_1(\pi)\colon\Ob^{\phi^0}\cat{U}\to\Ob^{\pi_!\phi^0}\cat{U}$
  be the multiplication map
  $(\Ob\cat{V})^{\tensor(\coprod_{I^1_0}I^0)}\to\Ob\cat{V}$.
  If $n\ge 1$, and further given a $\phi^0$-nerve $u^0_0$ of objects
  of $\cat{U}$ and an $I^0_0$-ary object $u^0_1$ as in ($2$) of
  Definition \ref{def:tuply-coloured} in the case $m=n-1$, then we let
  $\Mul^\pi_\cat{U}[u^0]=\Mul^{\pi'}_\cat{V}[u^0]$, where $\pi'$
  denotes the unique map $\coprod_{I^1_0}I^0\to\ten$, and $u^0$
  is considered as a ($\int_{I^1}I^0$)-family of objects in $\cat{V}$,
  by the preliminary construction above.
\end{itemize}

Next, if $n\ge 2$, suppose given an input datum for ($3$) for
Definition \ref{def:tuply-coloured} in the case $m=n-1$.
Then we can construct a set of data of the form required of an input
to ($2$) in Section \ref{sec:2} for $\cat{V}$, as follows.
If we define
\[
J^1_0:=I^2_0,\quad
J^0:=\int_{I^1}I^0,\quad\psi^0:=\int_{I^1}\phi^0,\quad\psi^1:=(\pi\colon
J^1_0\to\{1\}),
\]
then $J,\psi,u$ give the required form of datum.
Using this, we let $\Mul^\pi_\cat{U}[u]=\Mul^\pi_\cat{V}[u]$.

If $n\ge 1$, we let $m^\cat{U}_\nu(\pi)=m^\cat{V}_\nu(\pi)$ for
$\nu=1$ or $2$, in the similar manner, where the form of $u$ is
different in the
case $n=1$, and we are not given $u$ in the case $n=0$.

For every $k\ge 4$, the datum ($k$) for $\cat{U}$ is constructed in
the similar manner from the datum ($k-1$) for $\cat{V}$.

\subsubsection{}

It is clear that if $\cat{U}$ is an $m$-tuply coloured $n$-theory,
then its deloop $\ddeloop\cat{U}$ is obtained as an $m$-tuply coloured
($n+1$)-theory.

\subsubsection{}
\label{sec:delooping-comparison}

Our delooping construction for higher theories relates to the
categorical delooping to be recalled now.
The \emph{categorical delooping} construction
associates to a monoid $A$ (or
more generally, a monoidal $n$-category), a category (or
($n+1$)-category in the general case) $\deloop A$ with a
chosen ``base'' object, in which
\begin{itemize}
\item all objects are equivalent,
\item the endomorphism monoid (or monoidal $n$-category) of the base
  object is given an equivalence with $A$.
\end{itemize}

Note that, if $\cat{A}$ is a \emph{symmetric monoidal} $n$-category,
then $\deloop\cat{A}$ is canonically a symmetric monoidal
($n+1$)-category (with unit the base object) since the functor
$\deloop$ preserves direct products.

Let us describe one relation which will be convenient for us.

Specifically, let $n\ge 0$ be an integer, and $\cat{A}$ be a symmetric
monoidal $n$-category.
Then for an integer $m$ such that $0\le m\le n$, we would like to
relate the ($m+1$)-theory $\ddeloop\wasreteori^m_0\cat{A}$ to the
($m+1$)-theory $\wasreteori^{m+1}_0\deloop\cat{A}$, both enriched in
($n-m$)-categories.

An obvious issue for this is that $\wasreteori^{m+1}_0\deloop\cat{A}$
is fully coloured whereas $\ddeloop\wasreteori^m_0\cat{A}$ is only
$m$-tuply coloured.
However, if we restrict the data for
$\wasreteori^{m+1}_0\deloop\cat{A}$ so the only object we consider is
the base object of $\deloop\cat{A}$, then the rest of the data for
$\wasreteori^{m+1}_0\deloop\cat{A}$ is of the form of datum for an
$m$-tuply coloured ($m+1$)-theory enriched in ($n-m$)-categories.

\begin{proposition}\label{prop:deloop-comparison}
Let $n\ge 0$ be an integer, and $\cat{A}$ be a symmetric monoidal
$n$-category.
Then for every integer $m$ such that $0\le m\le n$,
$\ddeloop\wasreteori^m_0\cat{A}$ is equivalent to the
($n-m$)-categorified $m$-tuply coloured ($m+1$)-theory obtained as
above from $\wasreteori^{m+1}_0\deloop\cat{A}$ by restricting objects
to just the base object of $\deloop\cat{A}$.
\end{proposition}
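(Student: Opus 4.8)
The plan is to prove the equivalence by unwinding both constructions and matching their data form by form, organized as an induction on $m$. Both sides are $(n-m)$-categorified $m$-tuply coloured $(m+1)$-theories, so by Definitions \ref{def:theory} and \ref{def:tuply-coloured} it suffices to produce, compatibly with the inductive specification of the data, an equivalence between the two collections of data of each form together with their composition maps and coherence. The organizing principle is a single dimension shift. On the right, fixing the object to the base $*$ of $\deloop\cat{A}$ rebases the represented $(m+1)$-theory: the $1$-multimaps at $*$, namely $\Map_{\deloop\cat{A}}(*^{\tensor S},*)\equivwith\End_{\deloop\cat{A}}(*)\equivwith\cat{A}$, become the bottom colour layer of an $m$-tuply coloured $(m+1)$-theory, and a $k$-multimap based at $*$ plays the role of a $(k-1)$-multimap after the shift. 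On the left, $\ddeloop$ produces an $m$-tuply coloured $(m+1)$-theory directly from $\wasreteori^m_0\cat{A}$, whose bottom colour layer is $\Ob(\wasreteori^m_0\cat{A})\equivwith\Ob\cat{A}\equivwith\Ob\bigl(\End_{\deloop\cat{A}}(*)\bigr)$; matching these is the first point to record.

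For the base case $m=0$ both sides are uncoloured symmetric $1$-theories (multicategories) enriched in $n$-categories. On the right, restricting $\wasreteori_0\deloop\cat{A}$ to $*$ gives, for each arity $S$, the $n$-category $\Map_{\deloop\cat{A}}(*^{\tensor S},*)\equivwith\cat{A}$ with composition governed by the monoidal product of $\cat{A}$; on the left, clauses ($1$) and ($2$) of the delooping in the case $n=0$ produce the same $n$-category $\cat{A}$ of $S$-ary operations, with composition the multiplication map $\cat{A}^{\tensor(\coprod_{I^1_0}I^0)}\to\cat{A}$. These agree because the categorical deloop is built precisely so that composition along $*$ is the monoidal multiplication of $\cat{A}$; this is the case $\cat{U}=\Com$ reviewed in Section \ref{sec:coloured-lax-algebra}, which I would simply invoke.

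For the inductive step I would use the recursive definitions on both sides, writing $\wasreteori^{m+1}_0\deloop\cat{A}=\wasreteori_m\bigl(\wasreteori^m_0\deloop\cat{A}\bigr)$ and $\wasreteori^m_0\cat{A}=\wasreteori_{m-1}\bigl(\wasreteori^{m-1}_0\cat{A}\bigr)$, and analysing how the final application of $\wasreteori$ interacts with the restriction to $*$. Since restricting to the base object constrains only the bottom colour layer while $\wasreteori_m$ virtualizes the top composition, the two operations act on disjoint parts of the data and commute; the inductive hypothesis then identifies $\wasreteori^m_0\deloop\cat{A}$ restricted to $*$ with $\ddeloop\wasreteori^{m-1}_0\cat{A}$, and applying $\wasreteori_m$ to both and comparing with the top datum that $\ddeloop$ reads off from $\wasreteori^m_0\cat{A}$ closes the induction. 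Concretely, the arities $(I;\pi,\phi)$ of the multimaps of $\ddeloop\wasreteori^m_0\cat{A}$ are matched with the restricted arities of the multimaps of $\wasreteori^{m+1}_0\deloop\cat{A}$ at $*$ through the reindexing constructions $\int_{I^1}\phi^0$ and $\int_{I^1}I^0$, which fold the two lowest arity layers into one exactly as collapsing the base object of $\deloop\cat{A}$ does.

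I expect the main obstacle to be precisely this arity bookkeeping: verifying that the explicit folding $\int_{I^1}I^0$, $\int_{I^1}\phi^0$ used to define $\ddeloop$ coincides, under the dimension shift, with the degeneration of the nerve data $(I^\nu,\phi^\nu)$ that occurs once the bottom object layer is pinned to $*$, so that one $I^1$-indexed coproduct is absorbed into the finite set at the bottom. Once this combinatorial identity is established for the arities, the coherence data ($\infty$) require no separate argument: on both sides every higher associativity isomorphism is induced from the single coherent associativity of the monoidal product of $\cat{A}$ transported through the categorical deloop, so the comparison at the level of coherence is an identification rather than a construction.
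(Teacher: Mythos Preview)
Your approach is essentially the paper's: induction on $m$, with the base case $m=0$ immediate from the constructions. The difference is in how the inductive step is packaged. The paper isolates a single commutation identity
\[
\ddeloop\wasreteori_n\cat{U}\;\equivwith\;\wasreteori_{n+1}\ddeloop\cat{U}
\]
(Lemma~\ref{lem:deloop-forget-commutation}), stated for an arbitrary categorified $n$-theory $\cat{U}$ and declared to follow immediately from the definitions. With this in hand the inductive step is a one-liner: apply the lemma with $\cat{U}=\wasreteori^{m-1}_0\cat{A}$, combine with the obvious fact that restricting to the base object commutes with $\wasreteori_m$ (since these touch disjoint layers, as you correctly note), and invoke the inductive hypothesis.

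Your inductive step contains the same content but leaves it diffuse: the phrase ``comparing with the top datum that $\ddeloop$ reads off from $\wasreteori^m_0\cat{A}$'' is precisely the commutation $\wasreteori_m\ddeloop\equivwith\ddeloop\wasreteori_{m-1}$, and the ``arity bookkeeping'' you flag as the main obstacle---matching the folding $\int_{I^1}I^0$, $\int_{I^1}\phi^0$ against the dimension shift at $*$---is exactly what this lemma absorbs. Once you state the commutation as a general identity between the functors $\ddeloop$ and $\wasreteori$, there is no separate combinatorial verification to perform at each stage of the induction; the arity reindexing is handled once, uniformly, and ``immediately from the definitions.'' I would recommend extracting that lemma explicitly: it shortens the argument and makes clear that the coherence data require no further work.
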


\begin{proof}
The case $m=0$ is immediate from the constructions, and the general
case follows by induction, from Lemma
\ref{lem:deloop-forget-commutation} below.
\end{proof}

The lemma is as follows, and follows immediately from the
definitions.

\begin{lemma}\label{lem:deloop-forget-commutation}
For an arbitrary times categorified $n$-theory $\cat{U}$, we have
\[
\ddeloop\wasreteori_n\cat{U}\equivwith\wasreteori_{n+1}\ddeloop\cat{U}.
\]
\end{lemma}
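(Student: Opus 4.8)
The plan is to unwind both sides levelwise against the explicit data of Definitions \ref{def:theory} and \ref{def:tuply-coloured}, and to observe that the two constructions act on complementary features of that data. The construction $\wasreteori_n$ (Definition \ref{def:forget-to-theorization}) alters only the top-dimensional layer: it replaces the composition functor $m^\cat{U}_1(\pi)$ for $n$-multimaps by the bimodule it corepresents, thereby producing a new top layer of $(n+1)$-multimaps, while leaving every lower-dimensional datum of $\cat{U}$ untouched. The delooping $\ddeloop$ (Section \ref{sec:delooping}), by contrast, is nothing but precomposition with an arity-reindexing: inspecting its construction, one sees that it inserts a constant bottom nerve and shifts the arity indices up by one, absorbing the extra dimension into the finite-set level through the integration operation $\int_{I^1}$, so that for a theory $\cat{V}$ the $(\ddeloop\cat{V})$-data at an arity equals the $\cat{V}$-data at the reindexed arity $(J;\psi)$ with $J^0=\int_{I^1}I^0$ and $\psi^0=\int_{I^1}\phi^0$. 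First I would record the routine bookkeeping that both composites land in the same place, each yielding an $(n+2)$-theory with matching coloring and with enrichment reduced by one categorical level, by the tracking rules noted after Definition \ref{def:forget-to-theorization} and in Section \ref{sec:delooping}.

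The key steps then concern the top layer. Unwinding $\wasreteori_{n+1}\ddeloop\cat{U}$, its object of $(n+2)$-multimaps is the bimodule corepresented by the composition operation $m^{\ddeloop\cat{U}}_1(\pi)$ for $(n+1)$-multimaps of $\ddeloop\cat{U}$; but by the delooping construction this operation is literally $m^\cat{U}_1$ read through the arity-reindexing $R$. Unwinding $\ddeloop\wasreteori_n\cat{U}$ instead, the object of $(n+1)$-multimaps of $\wasreteori_n\cat{U}$ is the bimodule corepresented by $m^\cat{U}_1$, and $\ddeloop$ then evaluates this bimodule at the reindexed arity, i.e.\ applies the same $R$. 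The crux is that these two results agree, which holds because corepresentation is defined pointwise in the arity: the bimodule $\Map\bigl(m^\cat{U}_1(-),-\bigr)$ evaluated at the reindexed source and target is exactly the bimodule corepresented by $m^\cat{U}_1\circ R$. Hence the top layers coincide. For every dimension below the top, $\wasreteori_n$ does nothing, so both composites apply the identical reindexing $R$ to the identical lower data of $\cat{U}$, and these trivially match; this already identifies all the $\Mul$-objects and the composition maps $m_1$ on the two sides.

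What remains is the coherence layer $(\infty)$, namely the associativity isomorphism $m_2$ and the higher $m_\ell$ of Sections \ref{sec:associative-n+2} and \ref{sec:coherence-n+ell}. Since both constructions generate these by the same inductive recipe out of lower-dimensional data, and that lower data has already been matched, the coherence isomorphisms are forced to agree by the same induction. The main obstacle I anticipate is purely one of bookkeeping: confirming that $\ddeloop$ really does act as honest precomposition by a single reindexing across all levels at once---in particular that the $\int_{I^1}$ integration and the inherited restriction operations are compatible with corepresentation strictly, rather than merely up to coherent isomorphism. This compatibility is exactly the naturality of $\Mul$ in its arity that was built into the non-elemental extensions of the data in Sections \ref{sec:k}, \ref{sec:n} and \ref{sec:n+1}; once it is invoked, the identification of the two sides is immediate, as the statement of the lemma already advertises.
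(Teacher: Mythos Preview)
Your proposal is correct and is essentially the same approach as the paper's: the paper simply asserts that the lemma ``follows immediately from the definitions,'' and what you have written is precisely the careful unwinding of those definitions that justifies that assertion. The key observation you isolate---that $\wasreteori_n$ acts only on the top layer by corepresenting the composition functor, while $\ddeloop$ acts by precomposition with an arity-reindexing, so the two commute because corepresentation is pointwise in arity---is exactly the content behind the paper's one-line dismissal.
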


\subsection{Theorization and lax functors}
\label{sec:functor}

\subsubsection{}

The obvious notion of functor of $n$-theories has a reasonable lax
version.
Let us start with recording the definition of a functor.

\begin{definition}\label{def:functor}
Let $n\ge 2$ be an integer, and let $\cat{U}$ and $\cat{V}$ be
$n$-theories enriched in a symmetric monoidal category $\cat{A}$.
Then a \kore{functor} $F\colon\cat{U}\to\cat{V}$ of $n$-theories
consists of data
of the forms specified below as ($0$), ``($k$)'' for every integer $k$
such that $1\le k\le n-1$, ($n$), ($n+1$), $(n+2)$, and ``($n+\ell$)''
for every integer $\ell\ge 3$.
\end{definition}

Similarly to the data for an $n$-theory, data for a functor will be
associated to input data satisfying the same elementality requirements
as before.
As before, each datum form specified needs to be extended for a
non-elemental input datum before the next datum form is specified.
The way how we do this is more or less the same as before, and we
shall do this implicitly.

The forms of data are as follows.

\begin{itemize}
\item[($0$)](\emph{Action} on objects.)
  For every object $u$ of $\cat{U}$, an object $Fu$ of $\cat{V}$.
\item[($k$)](\emph{Action} on $k$-mutimaps, inductively for $1\le k\le
  n-1$.)
  Suppose given the type $u$ of a $k$-multimap in $\cat{U}$ of arity
  given as $(I;\pi,\phi)$.
  Then for every $k$-multimap $x\in\Mul^\pi_\cat{U}[u]$, a
  $k$-multimap $Fx\in\Mul^\pi_\cat{V}[Fu]$.
\item[($n$)](\emph{Action} on $n$-multimaps.)
  Suppose given the type $u$ of an $n$-multimap in $\cat{U}$ of arity
  given as $(I;\pi,\phi)$.
  Then a map
  $\inv{m}^F_0(\pi)\colon\Mul^\pi_\cat{U}[u]\to\Mul^\pi_\cat{V}[Fu]$
  in $\cat{A}$.
\item[($n+1$)](The isomorphism of \emph{compatibility} with the
  composition.)
  Suppose given
  \begin{itemize}
  \item the arity $(I;\pi,\phi)$ of an ($n+1$)-multimap in a symmetric
    higher theory,
  \item the type $u$ of a $\phi^{n-1}$-nerve of $n$-multimaps in
    $\cat{U}$.
  \end{itemize}
  Then a $2$-isomorphism
  \[
  \inv{m}^F_1(\pi)\colon\inv{m}^F_0(\pi_!\phi^{n-1})\compose
  m^\cat{U}_1(\pi)\longequivto
  m^\cat{V}_1(\pi)\compose\inv{m}^F_0(\phi^{n-1})
  \]
  in $\cat{A}$, filling the square
  \[\begin{tikzcd}[column sep=large]
  \Mul^{\phi^{n-1}}_\cat{U}[u]\arrow{r}{m^\cat{U}_1(\pi)}\arrow{d}[swap]{\inv{m}^F_0(\phi^{n-1})}
  &\Mul^{\pi_!\phi^{n-1}}_\cat{U}[\pi_!u]\arrow{d}{\inv{m}^F_0(\pi_!\phi^{n-1})}\\
  \Mul^{\phi^{n-1}}_\cat{V}[Fu]\arrow{r}{m^\cat{V}_1(\pi)}
  &\Mul^{\pi_!\phi^{n-1}}_\cat{V}[\pi_!Fu]\kokoni{,}
  \end{tikzcd}\]
  where $\inv{m}^F_0(\phi^{n-1})$ denotes the monoidal product over
  $i\in I^n_0$ of the maps
    \[
    \inv{m}^F_0(\phi^{n-1}_i)\colon\Mul^{\phi^{n-1}_i}[u\resto{i}]\longto\Mul^{\phi^{n-1}_i}[Fu\resto{i}].
    \]
\item[($n+2$)](The isomorphism of \emph{coherence} for the
  compatibility with the composition.)
  Suppose given
  \begin{itemize}
  \item the arity $(I;\pi,\phi)$ of an ($n+2$)-multimap in a
    symmetric higher theory,
  \item the type $u$ of a $\phi^{n-1}$-nerve of $n$-multimaps in
    $\cat{U}$.
  \end{itemize}
  Then an $3$-isomorphism
  \[
  \inv{m}^F_2(\pi)\colon\inv{m}^F_1(\pi_!\phi^n)\compose
  m^\cat{U}_2(\pi)\longequivto
  m^\cat{V}_2(\pi)\compose\pi_!\inv{m}^F_1(\phi^n)
  \]
  in $\cat{A}$, filling the square
  \[\begin{tikzcd}[column sep=huge]
    \inv{m}^F_0({\pi^n}_!\phi^{n-1})\compose\pi_!m^\cat{U}_1(\phi^n)\arrow{r}{\pi_!\inv{m}^F_1(\phi^n)}\arrow{d}[swap]{m^\cat{U}_2(\pi)}
    &\pi_!m^\cat{V}_1(\phi^n)\compose\inv{m}^F_0(\phi^{n-1})\arrow{d}{m^\cat{V}_2(\pi)}\\
    \inv{m}^F_0({\pi^n}_!\phi^{n-1})\compose
    m^\cat{U}_1(\pi_!\phi^n)\arrow{r}{\inv{m}^F_1(\pi_!\phi^n)}
    &m^\cat{V}_1(\pi_!\phi^n)\compose\inv{m}^F_0(\phi^{n-1})\kokoni{,}
  \end{tikzcd}\]
  where $\inv{m}^F_1(\phi^n)$ denotes the
  $(I^{n+1}_0)^\op_{\vphantom{0}}$-nerve
  $\bigl(\inv{m}^F_1(\phi^n_i)\bigr)_{i\in I^{n+1}_0}$
  of $2$-isomorphisms in $\cat{A}$ connecting
  the family indexed by $i\in[I^{n+1}_0]$ of the maps
  \[
  \sakihe{m^\cat{V}_1\bigl(\phi^n\bigr)}{i}\compose\inv{m}^F_0\bigl({(\maekara{\phi^n}{i})}_!\phi^{n-1}\bigr)\compose\maekara{m^\cat{U}_1\bigl(\phi^n\bigr)}{i}\colon\Mul^{\phi^{n-1}}_\cat{U}[u]\longto\Mul^{\pi_!\phi^{n-1}}_\cat{V}[\pi_!Fu]
  \]
  in $\cat{A}$.
\item[($n+\ell$)](Higher coherence, inductively for $\ell\ge 3$.)
  Suppose given
  \begin{itemize}
  \item the arity $(I;\pi,\phi)$ of an ($n+\ell$)-multimap in a
    symmetric higher theory,
  \item the type $u$ of a $\phi^{n-1}$-nerve of $n$-multimaps in
    $\cat{U}$.
  \end{itemize}
  Then an ($\ell+1$)-isomorphism
  \[
  \inv{m}^F_\ell(\pi)\colon\inv{m}^F_{\ell-1}(\pi_!\phi^{n+\ell-2})\compose
  m^\cat{U}_\ell(\pi)\longequivto
  m^\cat{V}_\ell(\pi)\compose\pi_!\inv{m}^F_{\ell-1}(\phi^{n+\ell-2})
  \]
  in $\cat{A}$, filling the square
  \[\begin{tikzcd}[column sep=7.2 em]
  \begin{split}
  \inv{m}^F_{\ell-2}({\pi^{n+\ell-2}}_!\phi^{n+\ell-3})\compose\qquad\quad\\
  \pi_!m^\cat{U}_{\ell-1}(\phi^{n+\ell-2})
  \end{split}
  \arrow{r}{\pi_!\inv{m}^F_{\ell-1}(\phi^{n+\ell-2})}\arrow{d}[swap]{m^\cat{U}_\ell(\pi)}
  &
  \begin{split}
  \pi_!m^\cat{V}_{\ell-1}(\phi^{n+\ell-2})\compose\qquad\qquad\\
  {\pi^{n+\ell-2}}_!\inv{m}^F_{\ell-2}(\phi^{n+\ell-3})
  \end{split}
  \arrow{d}{m^\cat{V}_\ell(\pi)}\\
  \begin{split}
  \inv{m}^F_{\ell-2}({\pi^{n+\ell-2}}_!\phi^{n+\ell-3})\compose\qquad\quad\\
  m^\cat{U}_{\ell-1}(\pi_!\phi^{n+\ell-2})
  \end{split}
  \arrow{r}{\inv{m}^F_{\ell-1}(\pi_!\phi^{n+\ell-2})}
  &
  \begin{split}
  m^\cat{V}_{\ell-1}(\pi_!\phi^{n+\ell-2})\compose\qquad\qquad\phantom{,}\\
  {\pi^{n+\ell-2}}_!\inv{m}^F_{\ell-2}(\phi^{n+\ell-3})\kokoni{,}
  \end{split}
  \end{tikzcd}\]
  where $\inv{m}^F_{\ell-1}(\phi^{n+\ell-2})$ denotes the
  $(I^{n+\ell-1}_0)^\op_{\vphantom{0}}$-nerve
  $\bigl(\inv{m}^F_{\ell-2}(\phi^{n+\ell-2}_i)\bigr)_{i\in
    I^{n+\ell-1}_0}$ of $\ell$-isomorphisms in $\cat{A}$ connecting
  the family indexed by $i
  \in[I^{n+\ell-1}_0]$ of the ($\ell-1$)-isomorphisms
  \begin{multline*}
  \sakihe{m^\cat{V}_{\ell-1}(\phi^{n+\ell-2})}{i}\compose\\
  {\sakihe{\phi^{n+\ell-2}}{i}}_!\inv{m}^F_{\ell-2}({\maekara{\phi^{n+\ell-2}}{i}}_!\phi^{n+\ell-3})\compose\maekara{m^\cat{U}_{\ell-1}(\phi^{n+\ell-2})}{i}\colon\qquad\qquad\\
  \qquad\qquad \inv{m}^F_{\ell-3}({\pi^{n+\ell-3}}_!\phi^{n+\ell-4})\compose{\pi^{n+\ell-2}}_!m^\cat{U}_{\ell-2}(\phi^{n+\ell-3})\\
  \longequivto
  m^\cat{V}_{\ell-2}(\pi^{n+\ell-3})\compose{\pi^{n+\ell-3}}_!\inv{m}^F_{\ell-3}(\phi^{n+\ell-4})
  \end{multline*}
  in $\cat{A}$.
\end{itemize}

This completes Definition \ref{def:functor}.

\subsubsection{}

Now a lax functor can be defined as follows.

\begin{definition}\label{def:lax-functor}
Let $n\ge 2$ be an integer, and let $\cat{U}$ and $\cat{V}$ be
$n$-theories enriched in a symmetric monoidal $2$-category $\cat{A}$
(i.e., in the symmetric monoidal category underlying $\cat{A}$).
Then a \kore{lax functor} $F\colon\cat{U}\to\cat{V}$ consists of data
of the forms
\begin{itemize}
\item ($0$) through ($n$) specified above for Definition
  \ref{def:functor} for the same value of ``$n$'',
\item ($n+1$) and ($\infty$) below.
\end{itemize}
\end{definition}

\begin{itemize}
\item[($n+1$)](The map of \emph{compatibility} with the composition.)
  Suppose given
  \begin{itemize}
  \item the arity $(I;\pi,\phi)$ of an ($n+1$)-multimap in a symmetric
    higher theory,
  \item the type $u$ of a $\phi^{n-1}$-nerve of $n$-multimaps in
    $\cat{U}$.
  \end{itemize}
  Then a $2$-map
  \[
  m^F_1(\pi)\colon
  m^\cat{V}_1(\pi)\compose\inv{m}^F_0(\phi^{n-1})\longto\inv{m}^F_0(\pi_!\phi^{n-1})\compose
  m^\cat{U}_1(\pi)
  \]
  in $\cat{A}$.
  See ($n+1$) for Definition \ref{def:functor} above.
\item[($\infty$)] A datum of coherence, similar to that for a
  functor.
\end{itemize}

This completes Definition \ref{def:lax-functor}.

\subsubsection{}
\label{sec:colour-system}

The notion of lax functor can be used to define the notion of
``\emph{($n+1$)-tuply}'' \emph{coloured lax $n$-theory}, which will
allow us to describe an ($n+1$)-theory along the line discussed in
Section \ref{sec:coloured-lax-structure}, as to be done in Proposition
\ref{prop:theory-as-theorization}.
In order to do this, let us first understand an $n$-theory as a
functor of higher theories.

In order to describe the source of the functor, we need the following
definitions.
Recall that an $n$-theory $\cat{U}$ consists by definition, of data
($k$) for all integers $k\ge 0$, of the forms specified in Section
\ref{sec:symmetric-theory}.
Of these, the part $k\le n-1$ does not involve the information of
where the theory is enriched.

\begin{definition}
Let $n\ge 0$ be an integer.
Then we refer to the system consisting of data of the forms ($0$)
through ($n-1$) as specified
for Definition \ref{def:theory} of an $n$-theory, as a
system of \kore{colours up to dimension $n-1$} for a higher theory, or
a system of colours \kore{for an $n$-theory}.
\end{definition}

In particular, for every $n$-theory $\cat{U}$ and every integer $m$
such that $0\le m\le n-1$, we have a system of colours up to dimension
$m$ \kore{underlying} $\cat{U}$, consisting of the data ($0$) through
($m$) for $\cat{U}$, so $\cat{U}$ consists of this system of
colours and a structure on it.

\begin{definition}
Let $m\ge 0$ and $n\ge m+1$ be integers.
Then for the system of colours up to dimension $m$ consisting of data
of the forms ($0$) through ($m$) as specified for Definition
\ref{def:theory} of an $n$-theory, we refer to the rest of data for an
$n$-theory, consisting of data of the forms ($k$) for $k\ge m+1$, as
the \kore{structure of an $n$-theory} on the system of colours.
\end{definition}

Now the following gives an interpretation of a higher theory as a
functor of higher theories.

\begin{example}\label{ex:theory-as-functor}
Choose and fix a system of colours up to dimension $n-1$, and denote
by $\cat{T}$ the terminal object among unenriched $n$-theories
extending this system of lower colours.
Explicitly, $\cat{T}$ is such that every groupoid of $n$-multimaps in
it is contractible.

Then the structure on the chosen system of colours, of an $n$-theory
enriched in a symmetric monoidal category $\cat{A}$, is equivalent as
a datum to a lax functor $\cat{T}\to\ddeloop^n\cat{A}$ of categorified
$n$-theories.
This can also be described as a functor
$\wasreteori_n\cat{T}\to\wasreteori_n\ddeloop^n\cat{A}$ of
($n+1$)-theories.
\end{example}

The equivalence of these two descriptions will be generalized by
Theorem \ref{thm:functor-of-theorization} below.
Note that $\wasreteori_n\cat{T}$ is terminal among unenriched
($n+1$)-theories extending our system of lower colours.

\begin{remark}\label{rem:deloop-forget-commutation}
Lemma \ref{lem:deloop-forget-commutation} implies an equivalence
$\wasreteori_n\ddeloop^n\cat{A}\equivwith\ddeloop^n\wasreteori_0\cat{A}$.
\end{remark}

Let us say that an $n$-theory $\cat{T}$ as in Example
\ref{ex:theory-as-functor} is terminal \kore{on}
the chosen system of colours up to dimension $n-1$.
Considering $\cat{T}$ as a coloured variant of the terminal unenriched
uncoloured $n$-theory $\unity^n_\Com$ (where $\Com=E_\infty$ denotes
the commutative operad), one might say that an $n$-theory enriched in
$\cat{A}$ is a lax functor $\unity^n_\Com\to\ddeloop^n\cat{A}$
\emph{with strata of colours} for an $n$-theory, and similarly, also
a functor $\unity^{n+1}_\Com\to\ddeloop^n\wasreteori_0\cat{A}$ with
similar strata of colours.

\begin{definition}\label{def:theory-enriched-in-multicategory}
Let $n\ge 0$ be an integer.
Then, an \kore{$n$-theory enriched in a multicategory $\cat{M}$}, is a
functor $\unity^{n+1}_\Com\to\ddeloop^n\cat{M}$ with strata of colours
for an $n$-theory.
\end{definition}

Concretely,
\begin{itemize}
\item the colours of the functor will be the colours of the
  $n$-theory, say $\cat{U}$, enriched in $\cat{M}$, defined by the
  coloured functor,
\item $n$-multimaps in such $\cat{U}$ are similar to those in the
  case enriched in a symmetric monoidal
  category, but they `form' objects of $\cat{M}$,
\item the composition operations for $n$-multimaps in $\cat{U}$ are
  multimaps in $\cat{M}$.
\end{itemize}

Accordingly, the notion of lax functor generalizes in an obvious
manner, to that between $n$-theories enriched in a multicategory
$\cat{M}$ enriched in categories.
See Definition \ref{def:lax-functor}.

\begin{definition}\label{def:coloured-lax-theory}
Let $n\ge 0$ be an integer, and $\cat{M}$ be a multicategory enriched
in categories.
Then,
\begin{itemize}
\item a \kore{lax $n$-theory} enriched in $\cat{M}$ is a
  lax functor $\unity^{n+1}_\Com\to\ddeloop^n\cat{M}$ with strata of
  colours for an $n$-theory,
\item an \kore{($n+1$)-tuply coloured lax
    $n$-theory} enriched in $\cat{M}$ is a
  lax functor $\unity^{n+1}_\Com\to\ddeloop^n\cat{M}$ with strata
  of colours up to dimension $n$, namely, a
  lax functor $F\colon\cat{T}\to\ddeloop^n\cat{M}$, where
  $\cat{T}$ is an unenriched ($n+1$)-theory, which is terminal on a
  system of colours for an ($n+1$)-theory.
\end{itemize}
\end{definition}

We record Example~\ref{ex:theory-as-functor} using the term just
introduced.

\begin{proposition}\label{prop:theory-as-theorization}
Let $\cat{A}$ be a symmetric monoidal category.
Then, for an integer $n\ge 1$, an $n$-theory enriched in $\cat{A}$ is
equivalent as a datum to an $n$-tuply coloured lax
($n-1$)-theory enriched in $\ddeloop\cat{A}$.
\end{proposition}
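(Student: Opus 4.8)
The plan is to show that both sides unwind to literally the same datum — a choice of a system of colours up to dimension $n-1$ together with a lax functor out of the terminal unenriched $n$-theory on that system into $\ddeloop^n\cat{A}$ — so that the statement becomes a reformulation of Example~\ref{ex:theory-as-functor}, as the surrounding text already advertises.

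First I would unwind the right-hand notion. Since $\cat{A}$ is a symmetric monoidal category, i.e.\ a $0$-theory enriched in $\Cat$, its deloop $\ddeloop\cat{A}$ is a $0$-tuply coloured $1$-theory enriched in $\Cat$, hence in particular a multicategory enriched in categories (this is immediate from the $\ddeloop$ construction, and is consistent with the identification of $\ddeloop\cat{A}$ with a restriction of $\wasreteori\deloop\cat{A}$ afforded by Proposition~\ref{prop:deloop-comparison}). Therefore Definition~\ref{def:coloured-lax-theory} applies with enrichment base $\cat{M}=\ddeloop\cat{A}$ and theory level $n-1$, and it says that an $n$-tuply coloured lax $(n-1)$-theory enriched in $\ddeloop\cat{A}$ is a lax functor $F\colon\cat{T}\to\ddeloop^{n-1}(\ddeloop\cat{A})$, where $\cat{T}$ is the terminal unenriched $n$-theory on a freely chosen system of colours for an $n$-theory, that is, colours up to dimension $n-1$.

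Next I would identify the target. By the convention that $\ddeloop^k$ denotes the $k$-fold application of $\ddeloop$, one has $\ddeloop^{n-1}(\ddeloop\cat{A})=\ddeloop^n\cat{A}$ as $\Cat$-enriched $n$-theories, not merely on objects but as entire theory structures. Consequently a lax functor $\cat{T}\to\ddeloop^{n-1}(\ddeloop\cat{A})$ is the same datum as a lax functor $\cat{T}\to\ddeloop^n\cat{A}$ of categorified $n$-theories, and the right-hand side becomes exactly a choice of a system of colours up to dimension $n-1$ together with such a lax functor out of the terminal unenriched $n$-theory $\cat{T}$ on that system. I would then invoke Example~\ref{ex:theory-as-functor}, which asserts that, for a fixed system of colours up to dimension $n-1$, the structure of an $n$-theory enriched in $\cat{A}$ extending that system is equivalent as a datum to precisely such a lax functor $\cat{T}\to\ddeloop^n\cat{A}$. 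Combining the choice of the colour system with its structure recovers a full $n$-theory enriched in $\cat{A}$, matching the unwound right-hand side term for term and yielding the asserted equivalence.

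The step I expect to be the main obstacle is the middle one: one must check that the generalized notion of lax functor between $(n-1)$-theories enriched in the multicategory $\ddeloop\cat{A}$ (Definition~\ref{def:coloured-lax-theory}, via the generalization of Definition~\ref{def:lax-functor}) coincides on the nose with the notion of lax functor of categorified $n$-theories into $\ddeloop^n\cat{A}$ used in Example~\ref{ex:theory-as-functor}. This rests on the equality $\ddeloop^{n-1}(\ddeloop\cat{A})=\ddeloop^n\cat{A}$ holding as full theory structures, and on the colour strata matching — the ``$n$-tuply coloured'' bookkeeping on the right against the ``system of colours up to dimension $n-1$ plus structure'' bookkeeping on the left — which is routine but should be verified dimension by dimension before one declares the two data identical.
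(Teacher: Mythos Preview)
Your argument is correct and is exactly the paper's approach: the paper introduces the proposition with the sentence ``We record Example~\ref{ex:theory-as-functor} using the term just introduced,'' meaning the entire content is to unwind Definition~\ref{def:coloured-lax-theory} at level $n-1$ with $\cat{M}=\ddeloop\cat{A}$, identify $\ddeloop^{n-1}(\ddeloop\cat{A})=\ddeloop^n\cat{A}$, and read off Example~\ref{ex:theory-as-functor}. The subtlety you flag about matching the two notions of lax functor is real but, as you suspected, routine --- the paper treats it as immediate from the definitions.
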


\subsubsection{}
\label{sec:functor-of-theorization}

The following is a key fact on the relation between theorization and
categorification.

\begin{theorem}\label{thm:functor-of-theorization}
Let $n\ge 0$ be an integer, and $\cat{U}$ and $\cat{V}$ be
categorified $n$-theories.
Then a functor $\wasreteori_n\cat{U}\to\wasreteori_n\cat{V}$ of
uncategorified ($n+1$)-theories, is
equivalent as a datum to a lax functor $\cat{U}\to\cat{V}$.
\end{theorem}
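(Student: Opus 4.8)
The plan is to prove the equivalence by unwinding both sides of the claimed correspondence data-form by data-form, using the (higher, enriched) Yoneda lemma for corepresentable bimodules as the single engine that both produces the \emph{laxness} and establishes the bijection. I would argue by induction on $n$, the base case $n=0$ being the classical identification of functors of symmetric multicategories $\wasreteori_0\cat{U}\to\wasreteori_0\cat{V}$ with lax symmetric monoidal functors $\cat{U}\to\cat{V}$ recorded in Section \ref{sec:basic-construction}. Recall that, by Definition \ref{def:forget-to-theorization}, $\wasreteori_n$ alters $\cat{U}$ only by replacing the composition functor $m^\cat{U}_1(\pi)$ for $n$-multimaps with the bimodule corepresented by it; hence the $n$-multimaps of $\wasreteori_n\cat{U}$ are exactly the \emph{objects} of the categories $\Mul^\pi_\cat{U}[u]$, while the object of $(n+1)$-multimaps of $\wasreteori_n\cat{U}$ is the $\Map$-object $\Map_\cat{U}\bigl(m^\cat{U}_1(\pi)(\text{source}),\text{target}\bigr)$ corepresented by that functor (cf. Section \ref{sec:bimodule}).

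First I would match the data of a functor $F\colon\wasreteori_n\cat{U}\to\wasreteori_n\cat{V}$ (Definition \ref{def:functor} with $n$ replaced by $n+1$) against the data of a lax functor $\cat{U}\to\cat{V}$ (Definition \ref{def:lax-functor}). The forms $(0)$ through $(n-1)$ coincide verbatim, since these are the actions on the shared strata of colours. The datum $(n)$ of $F$ (action on objects of $\Mul^\pi_\cat{U}[u]$) together with the \emph{unary} part of the datum $(n+1)$ of $F$ (action on morphisms of $n$-multimaps) assemble, by the unary composition and its coherence, into precisely the functor $\inv{m}^F_0(\pi)\colon\Mul^\pi_\cat{U}[u]\to\Mul^\pi_\cat{V}[Fu]$ constituting the datum $(n)$ of a lax functor.

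The heart of the argument is the transposition of the \emph{non-unary} part of datum $(n+1)$ of $F$. For a general arity this is a map, natural in the target colour $b$,
\[
\inv{m}^F_0(\pi)\colon\Map_\cat{U}\bigl(m^\cat{U}_1(\pi)(a),b\bigr)\longto\Map_\cat{V}\bigl(m^\cat{V}_1(\pi)(Fa),Fb\bigr).
\]
By the Yoneda lemma such a natural family corresponds to a single morphism $m^\cat{V}_1(\pi)(Fa)\to F\bigl(m^\cat{U}_1(\pi)(a)\bigr)$, obtained by evaluating at $b=m^\cat{U}_1(\pi)(a)$ on the identity; conversely $\inv{m}^F_0(\pi)$ is recovered from it by applying $F$ and precomposing. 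This morphism is exactly the lax structure map
\[
m^F_1(\pi)\colon m^\cat{V}_1(\pi)\compose\inv{m}^F_0(\phi^{n-1})\longto\inv{m}^F_0(\pi_!\phi^{n-1})\compose m^\cat{U}_1(\pi)
\]
of datum $(n+1)$ of a lax functor, and the fact that this comparison need not be invertible — $F$ being required to respect the corepresentable composition bimodule only through a possibly non-invertible map — is precisely the source of the laxness, matching the faithful-but-not-full phenomenon of Section \ref{sec:basic-construction}.

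The main obstacle I anticipate is the coherence bookkeeping. The higher data $(n+2),(n+3),\dots$ of $F$ encode compatibility of $\inv{m}^F_0$ with the composition and higher associativity of $(n+1)$-multimaps in $\wasreteori_n\cat{U}$, which by construction are governed by the associativity isomorphisms $m^\cat{U}_2,m^\cat{U}_3,\dots$ of $\cat{U}$ (Sections \ref{sec:associative-n+2}, \ref{sec:coherence-n+ell}), whereas the lax side packages the same information into the coherence datum $(\infty)$ for $m^F_1$. Rather than grind through the index juggling of the non-elemental extensions directly, I would organise the verification either as the induction on $n$ or layer-by-layer in the coherence depth $\ell$: at each stage the relevant datum of $F$ transposes, under the same Yoneda argument applied uniformly across arities, to exactly one clause of $(\infty)$, so that the genuine content is isolated in a single naturality square while the remainder follows from the lower-dimensional case (cf. the endomorphism structures of Section \ref{sec:planar-theory}) and the routine compatibility of $\wasreteori_n$ with restriction to non-elemental arities. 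Checking that this collection of transpositions intertwines the two coherence systems coherently, rather than merely termwise, is where I expect the real labour to lie.
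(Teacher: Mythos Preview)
Your approach is essentially the paper's, with a more conceptual framing. The paper also matches the data $(0)$ through $(n-1)$ verbatim, recovers the functoriality of the action on $n$-multimaps from the restriction of $G$ to \emph{unary} $(n+1)$-multimaps in $\wasreteori_n\cat{U}$, and then identifies the remaining action on $(n+1)$-multimaps with the lax structure map $m^F_1$; it just does not name the Yoneda lemma explicitly, instead writing out both directions $F\mapsto\wasreteori_nF$ and $G\mapsto H$ by hand and checking they are mutually inverse.

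Two small points of divergence are worth noting. First, the paper does \emph{not} induct on $n$: the argument is direct for each $n$, and nothing from a lower-$n$ case is invoked, so your proposed induction is harmless but unnecessary. Second, when you write that $\inv m^F_0(\pi)$ is ``natural in the target colour $b$'' and then apply Yoneda, be aware that this naturality is not automatic from the datum $(n+1)$ alone; it is supplied precisely by the compatibility of $G$ with the unary part of the composition structure, which is the same mechanism that gave you the functoriality on $n$-multimaps. The paper makes this dependence explicit in its Step~\ref{step:functor-to-lax-functor}, and that is where the ``real labour'' you anticipate in the coherence bookkeeping is actually discharged.
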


\begin{proof}
\proofsec
We first note that the form of datum for the action of a functor
$\wasreteori_n\cat{U}\to\wasreteori_n\cat{V}$ on the colours up to
dimension $n-1$, is identical to the form of datum for the action of a
lax functor $\cat{U}\to\cat{V}$ on the colours up to the same
dimension.
Then for any fixed datum $F$ of this form, we would like to prove
that the
categories (or $2$-categories) naturally formed respectively by the
structures on these same data, of functors (of ($n+1$)-theories), and
of lax functors (of categorified $n$-theories), are equivalent.

\proofsec
The structure of a categorified $n$-theory is given on the categories
of $n$-multimaps, by the (cohrently) associative composition
functors.
A lax functor $F\colon\cat{U}\to\cat{V}$ extending the chosen action,
is then seen to be given by
\begin{enumerate}
\setcounter{enumi}{-1}
\item\label{item:action-on-n}
  the action of $F$ on the categories of $n$-multimaps through
  functors specified by the datum $\inv{m}^F_1$, and
\item\label{item:compatibility-n-action-composition}
  the (lax) compatibility specified by the data $m^F_2$ and
  $\inv{m}^F_k$ for $k\ge 3$, of the mentioned action with the
  associative composition functors for $n$-multimaps in $\cat{U}$ and
  in $\cat{V}$.
\end{enumerate}

\proofsec\label{step:lax-functor-to-functor}
Recall that we have obtained $\wasreteori_n\cat{U}$ by replacing the
composition functors of $\cat{U}$ by the
bimodules/distributors/profunctors represented by them, and similarly
for $\cat{V}$.

Given a datum as (\ref{item:action-on-n}) above for a lax functor $F$,
of an action on $n$-multimaps, a datum of compatibility as
(\ref{item:compatibility-n-action-composition}) above, of this action
with the composition functors for $n$-multimaps, can equivalently be
described as an action on the (associative) composition bimodules for
$n$-multimaps (which were represented by the composition functors) in
$\wasreteori_n\cat{U}$ and in $\wasreteori_n\cat{V}$.
Note that these bimodules (in $\wasreteori_n\cat{U}$ or in
$\wasreteori_n\cat{V}$) are formed by ($n+1$)-multimaps, and the
(coherent) associativity of the composition (for $n$-multimaps) can
equivalently be described as the (coherently associative) composition
operations for ($n+1$)-multimaps.

In this manner, we consider $F$ as a structure between
$\wasreteori_n\cat{U}$ and $\wasreteori_n\cat{V}$.

The difference of this structure between $\wasreteori_n\cat{U}$ and
$\wasreteori_n\cat{V}$, from a functor
$G\colon\wasreteori_n\cat{U}\to\wasreteori_n\cat{V}$ of
($n+1$)-theories, is that
\begin{itemize}
\item the action of $G$ on $n$-multimaps does not (explicitly) include
  the datum of functoriality included in $\inv{m}^F_1$, and
\item the action of $G$ on ($n+1$)-multimaps (given by $\inv{m}^G_1$)
  and its compatibility (given by $\inv{m}^G_k$, $k\ge 2$) with the
  composition operations for ($n+1$)-multimaps in
  $\wasreteori_n\cat{U}$ and in $\wasreteori_n\cat{V}$, do not
  explicitly include the data of compatibility with the bimodules
  structures, included in $m^F_2$ and $\inv{m}^F_{\ge 3}$.
\end{itemize}

Never the less, we obtain a functor
$\wasreteori_nF\colon\wasreteori_n\cat{U}\to\wasreteori_n\cat{V}$ by
forgetting these extra data (which will turn out in the next step to
have been actually redundant).

\proofsec\label{step:functor-to-lax-functor}
Conversely, suppose given a functor
$G\colon\wasreteori_n\cat{U}\to\wasreteori_n\cat{V}$ which extends the
chosen actions up to dimension $n-1$.
Then, from the discussions above, a lax functor
$H\colon\cat{U}\to\cat{V}$ extending the same lower
actions, such that $\wasreteori_nH=G$, is constructed if we equip the
higher data for $G$ with the following.
\begin{itemize}
\item A functoriality in the $n$-multimap $x\in\Mul^\pi_\cat{U}[u]$,
  on the $n$-multimap $Gx\in\Mul^\pi_\cat{V}[Hu]$.
\item A compatibility of this functoriality with the data
  $\inv{m}^G_i$, $i\ge 1$, of the action of $G$ (in a manner
  compatible with the (associative) composition operations for
  ($n+1$)-multimaps in $\wasreteori_n\cat{U}$ and in
  $\wasreteori_n\cat{V}$) on ($n+1$)-multimaps.
\end{itemize}

In order to obtain a functoriality of the $n$-multimap $Gx$, one notes
that a map in the category $\Mul^\pi_\cat{U}[u]$
is suitably `unary' ($n+1$)-multimaps in $\wasreteori_n\cat{U}$.
The desired functoriality is given by the restriction of the
associative action of $G$ to unary ($n+1$)-multimaps in
$\wasreteori_n\cat{U}$.

A compatibility of this functoriality with the action of $G$ means
\begin{itemize}
\item a naturality of every instance of the map
  \begin{multline*}
  \inv{m}^G_1(\pi)\colon\Map_{\Mul^{\pi_!\phi^{n-1}}_\cat{U}[\pi_!u^{\le
      n-1}]}\bigl(m^\cat{U}_1(\pi)(u^n_0),u^n_1\bigr)\\
  \longto\Map_{\Mul^{\pi_!\phi^{n-1}}_\cat{V}[\pi_!Hu^{\le
      n-1}]}\bigl(m^\cat{V}_1(\pi)(Gu^n_0),Gu^n_1\bigr)
  \end{multline*}
  in $u^n$, and
\item a compatibility of this naturality with the compatibility, given
  by $\inv{m}^G_i$, $i\ge 2$, of these maps $\inv{m}^G_1(\pi)$ with
  the composition operations in $\wasreteori_n\cat{U}$ and in
  $\wasreteori_n\cat{V}$.
\end{itemize}
We note that the functoriality in $u^n$ of the target of
$\inv{m}^G_1(\pi)$ is the composite of the functoriality of
$\Map_{\Mul^{\pi_!\phi^{n-1}}_\cat{V}[\pi_!Hu^{\le
    n-1}]}\bigl(m^\cat{V}_1(\pi)(-),-\bigr)$ in its variables, with the
functoriality of $G$.
Moreover, the compatibility of this functoriality with the composition
structures of $\wasreteori_n\cat{U}$ and of $\wasreteori_n\cat{V}$,
is also induced by the same functoriality of $G$, from the
compatibility of the functoriality of
$\Map_{\Mul^{\pi_!\phi^{n-1}}_\cat{V}[\pi_!v]}\bigl(m^\cat{V}_1(\pi)(-),-\bigr)$
with the composition operations.

Now, the functoriality, together with its compatibility with the
composition structure of $\wasreteori_n\cat{U}$, of
$\Map_{\Mul^{\pi_!\phi^{n-1}}_\cat{U}[\pi_!u^{\le
    n-1}]}\bigl(m^\cat{U}_1(\pi)(-),-\bigr)=\Mul^\pi_{\wasreteori_n\cat{U}}\bigl[u^{\le
  n-1}\bigr]\bigl(-;-\bigr)$ can alternatively be considered as obtained
by suitably restricting the composition structure of
$\wasreteori_n\cat{U}$, and similarly for
$\Map_{\Mul^{\pi_!\phi^{n-1}}_\cat{V}[\pi_!v]}\bigl(m^\cat{V}_1(\pi)(-),-\bigr)$.
Since the functoriality of $G$ on the category of $n$-multimaps is as
constructed above, we see that the compatibility of the action of $G$
on ($n+1$)-multimaps with the composition structures of
$\wasreteori_n\cat{U}$ and of
$\wasreteori_n\cat{V}$, also induces a desired datum.

\proofsec
We have thus constructed a desired lax functor
$H\colon\cat{U}\to\cat{V}$ naturally from $G$.
In order to complete the proof, we would like to verify in the case
$G=\wasreteori_nF$ for a lax functor $F\colon\cat{U}\to\cat{V}$, that
the lax functor ``$H$'' constructed from $G$ will be naturally
equivalent to $F$.
For this, it suffices to observe that the set of data which we have
added to the datum of $G$ to construct $H$ in the step
\ref{step:functor-to-lax-functor}, is naturally equivalent to the
set of data which was ``forgotten'' in the construction
$F\mapsto\wasreteori_nF$ in the step
\ref{step:lax-functor-to-functor}, but this is clear.
\end{proof}

\begin{remark}\label{rem:lax-functor-general}
We have thus described for the ($n+1$)-theory
$\cat{W}=\wasreteori_n\cat{V}$, a functor
$G\colon\wasreteori_n\cat{U}\to\cat{W}$ as a lax functor
$\cat{U}\to\cat{V}$.
Similar arguments lead to a description of $G$ for an arbitrary
($n+1$)-theory $\cat{W}$, as a structure generalizing a lax functor
$\cat{U}\to\cat{V}$.
One might call this structure between $\cat{U}$ and $\cat{W}$ a
\kore{functor} $\cat{U}\to\cat{W}$.
Thus, a functor $\wasreteori_n\cat{U}\to\cat{W}$ will be equivalent as
a datum to a functor $\cat{U}\to\cat{W}$.

For example, an $n$-theory enriched in a multicategory $\cat{M}$
(Definition~\ref{def:theory-enriched-in-multicategory}) is
equivalent as a datum to a functor $\unity^n_\Com\to\ddeloop^n\cat{M}$
with strata of colours up to dimension $n-1$ for a higher theory.
\end{remark}

Note that $n$-theories enriched in groupoids are also among
$n$-theories enriched in categories since a groupoid can be considered
as a category in which every morphism is invertible.
This kind of unenriched categorified $n$-theories is special as a
target of a
functor in that there is no difference between an ordinary functor and
a lax functor from an unenriched categorified $n$-theory to such a
target.

\begin{corollary}\label{cor:forget-to-theorization}
The functor $\wasreteori_n$ is fully faithful on $n$-theories enriched
in groupoids.
\end{corollary}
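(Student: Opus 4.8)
The plan is to deduce the corollary directly from Theorem~\ref{thm:functor-of-theorization} together with the observation, recorded just above, that a lax functor with groupoid-enriched target is automatically an ordinary functor. Concretely, full faithfulness of $\wasreteori_n$ amounts to showing that for $n$-theories $\cat{U}$ and $\cat{V}$ enriched in groupoids, the assignment $F\mapsto\wasreteori_nF$ is an equivalence from the space of functors $\cat{U}\to\cat{V}$ to the space of functors $\wasreteori_n\cat{U}\to\wasreteori_n\cat{V}$ of $(n+1)$-theories. Since a groupoid is in particular a category, $\cat{U}$ and $\cat{V}$ are categorified $n$-theories, so Theorem~\ref{thm:functor-of-theorization} applies and identifies the latter space with the space of \emph{lax} functors $\cat{U}\to\cat{V}$; moreover the equivalence produced in its proof sends a lax functor $F$ precisely to $\wasreteori_nF$ (this is the construction $F\mapsto\wasreteori_nF$ obtained there by forgetting the extra functoriality and compatibility data). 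Hence it suffices to show that, when the target $\cat{V}$ is enriched in groupoids, the inclusion of functors into lax functors $\cat{U}\to\cat{V}$ is an equivalence of spaces.

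First I would isolate the only place where the data of a functor (Definition~\ref{def:functor}) and of a lax functor (Definition~\ref{def:lax-functor}) differ. At level $(n+1)$ a functor carries an \emph{isomorphism} $\inv{m}^F_1(\pi)$, whereas a lax functor carries only a $2$-map $m^F_1(\pi)$ in the opposite direction, with the higher coherence repackaged as the single datum $(\infty)$; thus an ordinary functor is the same as a lax functor all of whose compatibility $2$-maps $m^F_1(\pi)$ (together with the higher coherence cells) happen to be invertible. When $\cat{V}$ is enriched in groupoids, each object $\Mul^\pi_\cat{V}[Fu]$ of $n$-multimaps is a groupoid, so the $2$-maps $m^F_1(\pi)$ are morphisms in mapping objects valued in groupoids, and every such morphism is invertible; the same holds for all coherence cells living above the level of $n$-multimaps. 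The invertibility condition is therefore vacuous, so every lax functor $\cat{U}\to\cat{V}$ is already a functor, and the two spaces coincide.

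Combining the two observations, $F\mapsto\wasreteori_nF$ is an equivalence on mapping spaces, which is exactly the assertion that $\wasreteori_n$ is fully faithful on groupoid-enriched $n$-theories. The step I expect to require the most care is checking that the identification lax $=$ ordinary is an equivalence of \emph{spaces} rather than merely a bijection on isomorphism classes, and that the equivalence of Theorem~\ref{thm:functor-of-theorization} restricts, on the subspace of genuine functors, to precisely the map induced by $\wasreteori_n$; both points, however, reduce to tracing the naturality already built into the construction of $\wasreteori_nF$ and to the fact that in a groupoid-enriched theory every cell above dimension $n$ is invertible, so I do not anticipate any genuinely new difficulty beyond bookkeeping.
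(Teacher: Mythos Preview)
Your proposal is correct and follows exactly the paper's own reasoning: the paper states the corollary immediately after observing that groupoid-enriched $n$-theories are special as targets in that lax functors into them coincide with ordinary functors, and the corollary is then an immediate consequence of Theorem~\ref{thm:functor-of-theorization}. You have simply spelled out in more detail what the paper leaves implicit.
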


\section{Graded higher theories}
\label{sec:grading}

\subsection{Introduction}

The purpose of this section is to discuss grading of higher theory by
a higher theory (enriched in groupoids).
It turns out that this notion naturally arises by considering
theorization of algebra over a higher theory.
We shall also see interrelations between the notions, of ``graded''
lower theory over a higher theory, and of \emph{iterated monoid},
i.e., monoid (i.e., algebra in the 
category of groupoids) over a monoid over ...~over a monoid, over a
higher theory enriched in groupoids.
Some results obtained along the way will be basic.

\subsection{Algebras over a higher theory}
\label{sec:monoid-theorization}

\subsubsection{}

We have sought for the notion of $n$-theory as an interesting $n$-th
theorization of the notion of commutative algebra.
As have been discussed in Section \ref{sec:introduction}, what we
wished to get from this was for each $n$-theory to govern
\emph{algebras} over it, which naturally generalize ($n-1$)-theories.

\begin{definition}\label{def:algebra}
Let $n\ge 2$ be an integer, and let $\cat{U}$ be an unenriched
$n$-theory.
Then a \kore{$\cat{U}$-algebra} $\cat{X}$ enriched in a multicategory
$\cat{M}$ consists of data of the forms specified below as ($0$) and
($1$) (or just ($0$) if $n=2$), ``($k$)'' for every integer $k$ such
that $2\le k\le n-2$, ($n-1$), ($n$) and ($\infty$).
\end{definition}

Let us specify the forms of data for Definition.

\begin{remark}\label{rem:extend-to-non-elemental}
As before, we implicitly extend each datum form specified below for a
non-elemental input datum before proceeding to specifying the next datum
form, in a more or less similar manner as before.
\end{remark}

\begin{itemize}
\item[($0$)](\emph{Object}.)
  For every object $u$ of $\cat{U}$, a collection $\Ob_u\cat{X}$,
  whose member
  will be called an \kore{object} of $\cat{X}$ of \kore{degree} $u$.
\item[($1$)](\emph{Multimap}, in the case $n\ge 3$.)
  Suppose given ($0'$) and ($0''$) of ($1$) in Section \ref{sec:1},
  and
  \begin{itemize}
  \item[($0^\circ$)] an $S$-family $x_0$ of objects of $\cat{X}$ of
    degree $u_0$ (namely, $x_{0s}\in\Ob_{u_{0s}}\cat{X}$ for every
    $s\in S$), and an object $x_1$ of degree $u_1$.
  \end{itemize}
  Then for every multimap $v\in\Mul^\pi_\cat{U}[u]$, a collection
  $\Mul^\pi_{\cat{X},v}(x_0;x_1)$ or $\Mul^\pi_{\cat{X},v}[x]$
  for short, whose member will be called an ($S$-ary)
  (\kore{$1$-})\kore{multimap} $x_0\to x_1$ in $\cat{X}$ of
  \kore{degree} $v$.
\item[($k$)](\emph{$k$-multimap}, inductively for $2\le k\le n-2$.)
  Suppose given (\ref{item:monoid-k-1}), ($0^\circ$),
  (\ref{item:monoid-k-5}), ($k-2^\circ$) (or just
  (\ref{item:monoid-k-1}), ``($0^\circ$)'' if $k=2$), and
  ($k-1^\circ$) below:
  \begin{enumerate}
  \renewcommand{\theenumi}{\alph{enumi}}
  \item\label{item:monoid-k-1}
    the type $u$ of a $k$-multimap in $\cat{U}$ of arity given as
    $(I;\pi,\phi)$.
  \item[($0^\circ$)] an $I^0$-family $x^0$ of objects of $\cat{X}$, of
    degree $u^0$, namely, $x^0=(x^0_i)_{i\in[I^1_0]}$, where
    $x^0_i$ is an $I^0_i$-family of objects of $\cat{X}$, of degree
    $u^0_i$,
  \item\label{item:monoid-k-5} if $k\ge 4$, then ($1^\circ$) through
    ($k-3^\circ$) of ($k-1$) here,
  \item[($k-2^\circ$)](in the case $k\ge 3$)
    an $I^{k-2}$-family $x^{k-2}=(x^{k-2}_i)_{i\in[I^{k-1}_0]}$ of
    ($k-2$)-multimaps in $\cat{X}$, where $x^{k-2}_i$ is an
    ${\maekara{\phi^{k-2}}{i}}_!\phi^{k-3}$-nerve of ($k-2$)-multimaps
    in $\cat{X}$, connecting ${\maekara{\phi^{k-2}}{i}}_!x^{k-3}$ of
    degree $u^{k-2}_i$,
  \item[($k-1^\circ$)]
    \begin{itemize}
    \item a $\phi^{k-2}$-nerve $x^{k-1}_0$ of ($k-1$)-multimaps
      connecting $x^{k-2}$, of degree $u^{k-1}_0$ in $\cat{X}$,
      namely,
      $x^{k-1}_{0i}\in\Mul^{\phi^{k-2}_i}_{u^{k-1}_{0i}}[x^{\le
        k-2}\resto{i}]$ for every $i\in I^{k-1}_0$,
    \item $x^{k-1}_1\in\Mul^{\pi_!\phi^{k-2}}_{u^{k-1}_1}[\pi_!x^{\le k-2}]$.
    \end{itemize}
  \end{enumerate}
  Then for every $k$-multimap $v\in\Mul^\pi_\cat{U}[u]$, a collection
  $\Mul^\pi_{\cat{X},v}[x^{\le k-2}](x^{k-1}_0;x^{k-1}_1)$ or
  $\Mul^\pi_{\cat{X},v}[x]$ for
  short, whose member will be called a \kore{$k$-multimap}
  $x^{k-1}_0\to x^{k-1}_1$ in $\cat{X}$ of \kore{degree} $v$.
\end{itemize}

\begin{definition}
We refer to a datum of the form $x=(x^\nu)_{0\le\nu\le k-1}$ specified
by
($0^\circ$) through ($k-1^\circ$) above (by induction in $k$), as the
\kore{type of a $k$-multimap} in $\cat{X}$ of \kore{arity}
$(I;\pi,\phi)$ and of \kore{degree} $u$.
\end{definition}

\begin{remark}
Even though we have not yet specified the form of the rest of data
for $\cat{X}$, note that the notion of the type of a
$k$-multimap ``in $\cat{X}$'' makes sense as soon as data of the forms
($0$) through ($k-1$) are given ``for $\cat{X}$''.
\end{remark}

\begin{itemize}
\item[($n-1$)](The object ``formed by \emph{($n-1$)-multimaps}''.)
  Suppose given the type $u$ of an ($n-1$)-multimap in $\cat{U}$ of
  arity given as $(I;\pi,\phi)$, and the type $x$ of an
  ($n-1$)-multimap in $\cat{X}$ of the same arity of degree $u$,
  namely, a set of data similar to ($0^\circ$) through ($k-1^\circ$),
  of ``($k$)'' above, but with $k$ substituted by $n-1$, so these will
  be ($0^\circ$) through ($n-2^\circ$) here.
  Then for every ($n-1$)-multimap $v\in\Mul^\pi_\cat{U}[u]$, an object
  $\Mul^\pi_{\cat{X},v}[x^{\le n-3}](x^{n-2}_0;x^{n-2}_1)$ of
  $\cat{M}$, or $\Mul^\pi_{\cat{X},v}[x]$ for short.
  In the case where $\cat{M}$ is $\wasreteori_0\Gpd$ or some other
  multicategory so that
  $\Mul^\pi_{\cat{X},v}[x]$ can have its objects, then those objects
  will be called \kore{($n-1$)-multimaps} $x^{n-2}_0\to x^{n-2}_1$ in
  $\cat{X}$ of \kore{degree} $v$.
  For a general $\cat{M}$, we shall call $\Mul^\pi_{\cat{X},v}[x]$ the
  object ``of \emph{($n-1$)-multimaps} in $\cat{X}$ of \emph{degree}
  $v$''.

\item[($n$)](\emph{Action} of the $n$-multimaps of $\cat{U}$.)
  Suppose given
  \begin{itemize}
  \item the type $u$ of an $n$-multimap in $\cat{U}$ of arity given as
    $(I;\pi,\phi)$,
  \item ($0^\circ$) through ($n-3^\circ$) of ($n-1$) above,
  \item ($k-2^\circ$) of ``($k$)'' above, but with $k$ substituted by
    $n$, so this will be  ($n-2^\circ$) here.
  \end{itemize}
  Then a map
  \[
  \inv{m}^\cat{X}_0(\pi)\colon\Mul^\pi_\cat{U}\bigl[u\bigr]\longto\Mul_\cat{M}\bigl(\Mul^{\phi^{n-2}}_{\cat{X},u^{n-1}_0}[x];\Mul^{\pi_!\phi^{n-2}}_{\cat{X},u^{n-1}_1}[\pi_!x]\bigr)
  \]
  of groupoids, where $\Mul^{\phi^{n-2}}_{\cat{X},u^{n-1}_0}[x]$
  denotes the ($\coprod_{I^{n-1}_0}I^{n-2}$)-family
  $\coprod_{i\in
    I^{n-1}_0}\Mul^{\phi^{n-2}_i}_{\cat{X},u^{n-1}_{0i}}[x\resto{i}]$
  of objects of $\cat{M}$, and $\pi_!x$ consists of
  $x^{\le n-3}$, $\pi_!x^{n-2}$.
  For $v$ in the source of this map, the multimap
  \[
  \inv{m}_0(\pi)(v)\colon\Mul^{\phi^{n-2}}_{u^{n-1}_0}[x]\longto\Mul^{\pi_!\phi^{n-2}}_{u^{n-1}_1}[\pi_!x]
  \]
  in $\cat{M}$ will be called the \kore{composition} operation
  \kore{along} $v$ for ($n-1$)-multimaps in $\cat{X}$.
\end{itemize}

\begin{definition}
We refer to a datum of the form $x=(x^\nu)_{0\le\nu\le n-2}$ specified
by ($0^\circ$) through ($n-2^\circ$) above, as the \kore{type of a
  $\phi^{n-2}$-nerve of ($n-1$)-multimaps} in $\cat{X}$
of \kore{degree} $u^{\le n-2}$.
\end{definition}

\begin{itemize}
\item[($\infty$)] A datum of (coherent) associativity for the action
  of $n$-multimaps, similar to a datum for a functor, of
  compatibility of the action with the composition (Definition
  \ref{def:functor}).
\end{itemize}

This completes Definition \ref{def:algebra}.

\subsubsection{}

Similarly, one can define the notion of algebra over a less coloured
$n$-theory.
See Section \ref{sec:stratum-for-colour}.

\begin{example}
A $\unity^n_\Com$-algebra enriched in a multicategory $\cat{M}$
is equivalent as a datum to an ($n-1$)-theory enriched in $\cat{M}$
(Definition~\ref{def:theory-enriched-in-multicategory}).
\end{example}

This can be generalized over an arbitrary $n$-theory $\cat{U}$
after we theorize the notion of $\cat{U}$-algebra, which we shall do
next.

\subsection{Iterated theorizations of algebra}
\label{sec:iterate-theorize-algebra}

\subsubsection{}

We would like to theorize the notion of algebra.
Let us first relax the notion.

\begin{definition}\label{def:lax-monoid}
Let $n\ge 2$ be an integer, and let $\cat{U}$ be an unenriched
$n$-theory.
Then a \kore{lax $\cat{U}$-algebra} $\cat{X}$ enriched in a
categorified multicategory $\cat{M}$ consists of data of the forms
($0$) through ($n$), specified above for Definition \ref{def:algebra}
for the same value of ``$n$'', and a datum of coherent lax
associativity
for the action of $n$-multimaps, similar to a datum for a lax
functor, of coherent compatibility of the action with the composition
(Definition \ref{def:lax-functor}).
\end{definition}

This essentially contains at least the unenriched version of a
virtualization of the notion of op-lax categorified $\cat{U}$-algebra,
namely, a \emph{theorization} of the notion of $\cat{U}$-algebra.
We shall write down an enriched version of the definition explicitly,
in a form which will be convenient shortly.
(The notion will be generalized in Section \ref{sec:enrichment}.)
A natural name for the kind of thing will turn out to be ``graded
$n$-theory''.

\begin{definition}\label{def:graded}
Let $n\ge 1$ be an integer, and let $\cat{U}$ be an $n$-theory
enriched in groupoids.
Then a \kore{$\cat{U}$-graded $n$-theory} $\cat{X}$ enriched in a
multicategory $\cat{M}$
consists of data of the forms specified below as ($0$) and ($1$)
(or just ($0$) if $n=2$), ``($k$)'' for every integer $k$ such that
$2\le k\le n-2$, ($n-1$), ($n$), ($n+1$), and ($\infty$).
\end{definition}

The forms of data are as follows.
Remark \ref{rem:extend-to-non-elemental} applies here again.

\begin{itemize}
\item[($0$)](Object.)
  For every object $u$ of $\cat{U}$, a collection $\Ob_u\cat{X}$,
  whose member
  will be called an \kore{object} of $\cat{X}$ of \kore{degree} $u$.
\item[($1$)](Multimap, in the case $n\ge 2$.)
  Suppose given ($0'$) and ($0''$) of ($1$) in Section \ref{sec:1},
  and
  \begin{itemize}
  \item[($0^\circ$)] an $S$-family $x_0$ of objects of $\cat{X}$ of
    degree $u_0$, and an object $x_1$ of degree $u_1$.
  \end{itemize}
  Then for every multimap $v\in\Mul^\pi_\cat{U}[u]$, a collection
  $\Mul^\pi_{\cat{X},v}(x_0;x_1)$ or $\Mul^\pi_{\cat{X},v}[x]$
  for short, whose member will be called an ($S$-ary)
  (\kore{$1$-})\kore{multimap} $x_0\to x_1$ in $\cat{X}$ of
  \kore{degree} $v$.
\item[($k$)]($k$-multimap, inductively for $2\le k\le n-1$.)
  Suppose given the type $u$ of a $k$-multimap in $\cat{U}$ of arity
  given as $(I;\pi,\phi)$, and the type $x$ of a $k$-multimap
  in $\cat{X}$ of the same arity of degree $u$, namely, ($0^\circ$),
  (\ref{item:graded-k-5}), ($k-2^\circ$) (or just ``($0^\circ$)'' if
  $k=2$), and ($k-1^\circ$) below:
  \begin{enumerate}
  \renewcommand{\theenumi}{\alph{enumi}}
  \addtocounter{enumi}{1}
  \item[($0^\circ$)] an $I^0$-family $x^0$ of objects of
    $\cat{X}$, of degree $u^0$, namely, $x^0_i\in\Ob_{u^0_i}\cat{X}$
    for every $i\in[I^1_0]$,
  \item\label{item:graded-k-5} if $k\ge 4$, then ($1^\circ$) through
    ($k-3^\circ$) of ($k-1$) here,
  \item[($k-2^\circ$)](in the case $k\ge 3$)
    an $I^{k-2}$-family $x^{k-2}=(x^{k-2}_i)_{i\in[I^{k-1}_0]}$, where
    $x^{k-2}_i$ is an ${\maekara{\phi^{k-2}}{i}}_!\phi^{k-3}$-nerve of
    ($k-2$)-multimaps connecting ${\maekara{\phi^{k-2}}{i}}_!x^{k-3}$
    in $\cat{X}$, of degree $u^{k-2}_i$,
  \item[($k-1^\circ$)] a $\phi^{k-2}$-nerve $x^{k-1}_0$ of
    ($k-1$)-multimaps connecting $x^{k-2}$, of degree $u^{k-1}_0$ in
    $\cat{X}$, and
    $x^{k-1}_1\in\Mul^{\pi_!\phi^{k-2}}_{u^{k-1}_1}[\pi_!x^{\le
      k-2}]$.
  \end{enumerate}
  Then for every $k$-multimap $v\in\Mul^\pi_\cat{U}[u]$, a collection
  $\Mul^\pi_{\cat{X},v}[x^{\le k-2}](x^{k-1}_0;x^{k-1}_1)$ or
  $\Mul^\pi_{\cat{X},v}[x]$ for
  short, whose member will be called a \kore{$k$-multimap}
  $x^{k-1}_0\to x^{k-1}_1$ in $\cat{X}$ of \kore{degree} $v$.
\item[($n$)](Action of the $n$-multimaps of $\cat{U}$.)
  Suppose given the type $x$ of an $n$-multimap in $\cat{X}$ of
  arity and degree given respectively as $(I;\pi,\phi)$ and $u$.
  Then a functor (to the underlying category of $\cat{M}$)
  \[
  \inv{M}^\cat{X}_0(\pi)[x^{\le
    n-2}](x^{n-1}_0,x^{n-1}_1)\colon\Mul^\pi_\cat{U}[u]\longto\cat{M}
  \]
  which will also be denoted by $\inv{M}^\cat{X}_0(\pi)[x]$ for
  short.
  For $v$ in the source of this functor, we write
  \[
  \Mul^\pi_{\cat{X},v}[x]:=\Mul^\pi_{\cat{X},v}[x^{\le
    n-2}](x^{n-1}_0;x^{n-1}_1):=\inv{M}^\cat{X}_0(\pi)[x](v).
  \]
  In the case where $\cat{M}$ is $\wasreteori_0\Gpd$ or some other
  multicategory so that
  $\Mul^\pi_{\cat{X},v}[x]$ can have its objects, then those objects
  will be called \kore{$n$-multimaps} $x^{n-1}_0\to x^{n-1}_1$ in
  $\cat{X}$ of \kore{degree} $v$.
  For a general $\cat{M}$, we shall call $\Mul^\pi_{\cat{X},v}[x]$ the
  object ``of \emph{$n$-multimaps} in $\cat{X}$ of \emph{degree}
  $v$''.

\item[($n+1$)](Associativity map.)
  Suppose given
  \begin{itemize}
  \item the arity $(I;\pi,\phi)$ of an ($n+1$)-multimap in a symmetric
    higher theory,
  \item the type $u$ of a $\phi^{n-1}$-nerve of $n$-multimaps in
    $\cat{U}$,
  \item the type $x$ of a $\phi^{n-1}$-nerve of $n$-multimaps
    in $\cat{X}$ of degree $u^{\le n-1}$, namely, ($0^\circ$) through
    ($k-2^\circ$) of ``($k$)'' above, but with $k$ substituted by
    $n+1$, so this will be ($0^\circ$) through ($n-1^\circ$) here.
  \end{itemize}
  Then a multimap
  \[
  M^\cat{X}_1(\pi)\colon\inv{M}^\cat{X}_0(\phi^{n-1})[x]\longto
  \inv{M}^\cat{X}_0(\pi_!\phi^{n-1})[\pi_!x]\compose m^\cat{U}_1(\pi)
  \]
  of functors $\Mul^{\phi^{n-1}}_\cat{U}[u]\to\cat{M}$, where the
  source of this multimap is the $\coprod_{I^n_0}I^{n-1}$-family
  $\coprod_{i\in I^n_0}{\pr_i}^*\inv{M}_0(\phi^{n-1}_i)[x\resto{i}]$,
  where $\pr_i$ denotes the projection
  $\Mul^{\phi^{n-1}}_\cat{U}[u]\to\Mul^{\phi^{n-1}_i}[u\resto{i}]$.
  For $v$ in the source of this functor, we write
  \[
  m^\cat{X}_1(\pi)_v:=M^\cat{X}_1(\pi)(v)\colon\Mul^{\phi^{n-1}}_v[x]\longto\Mul^{\pi_!\phi^{n-1}}_{\pi_!v}[\pi_!x],
  \]
  where $\pi_!v:=m^\cat{U}_1(\pi)(v)$, and call it the
  \kore{composition} operation \kore{along} $v$ for $n$-multimaps.
\item[($\infty$)] A datum of coherence for the associativity,
  corresponding to that for a lax $\cat{U}$-algebra (Definition
  \ref{def:lax-monoid}).
\end{itemize}

This completes Definition \ref{def:graded}.

\subsubsection{}

Let us consider the unenriched case where
$\cat{M}=\wasreteori_0\Gpd$.

Note that in this case, $\inv{M}_0(\pi)[x]$ above can be identified
with the datum of the canonical projection map
\[
\colim_{v\in\Mul^\pi_\cat{U}[u]}\Mul^\pi_{\cat{X},v}[x]\longto\Mul^\pi_\cat{U}[u]
\]
of groupoids.
The groupoid $\colim_v\Mul^\pi_{\cat{X},v}[x]$ of $n$-multimaps of
arbitrary degrees, will further be the groupoid of $n$-multimaps in a
symmetric $n$-theory.

Indeed, the datum $M_1$ induces ``composition'' operations
$m^\cat{X}_1(\pi):=\colim_vM_1(\pi)(v)$ on these groupoids, covering
the composition operations $m^\cat{U}_1(\pi)$ in $\cat{U}$.
Writing down the form of datum for the coherence for the associativity
in $\cat{X}$
(which is straightforward), we obtain the case $m=0$ of Proposition
\ref{prop:graded-is-overlying} below (hence our term for the notion).

The construction simply remains valid in the unenriched (higher)
categorified case $\cat{M}=\wasreteori_0\Cat_m$.

\begin{proposition}\label{prop:graded-is-overlying}
Let $n\ge 1$ be an integer, and let $\cat{U}$ be an $n$-theory
enriched in groupoids.
Then for every integer $m\ge 0$, an unenriched $m$-categorified
$\cat{U}$-graded $n$-theory (i.e., $\cat{U}$-graded $n$-theory
enriched in $\wasreteori_0\Cat_m$) is equivalent as a datum to an
unenriched $m$-categorified \textbf{symmetric} $n$-theory $\cat{Y}$
equipped with a functor $\cat{Y}\to\cat{U}$.
\end{proposition}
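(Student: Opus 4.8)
The plan is to upgrade the $m=0$ computation carried out in the paragraph preceding the statement to general $m$, with the Grothendieck construction (over the groupoids $\Mul^\pi_\cat{U}[u]$) as the only real tool. I would set up two mutually inverse passages. In one direction, from a $\cat{U}$-graded $n$-theory $\cat{X}$ enriched in $\wasreteori_0\Cat_m$ I build a symmetric $n$-theory $\cat{Y}$ by ``summing over degrees'': an object of $\cat{Y}$ is a pair of an object $u$ of $\cat{U}$ and an object of $\cat{X}$ of degree $u$; for $1\le k\le n-1$ the collection of $k$-multimaps of $\cat{Y}$ of a given arity and type is $\coprod_v\Mul^\pi_{\cat{X},v}[x]$, the disjoint union over the $k$-multimaps $v$ of $\cat{U}$ of that arity and type; and at the top dimension the $m$-category $\Mul^\pi_\cat{Y}[y]$ is the total $m$-category $\colim_{v\in\Mul^\pi_\cat{U}[u]}\Mul^\pi_{\cat{X},v}[x]$ of the datum $\inv{M}^\cat{X}_0(\pi)[x]\colon\Mul^\pi_\cat{U}[u]\to\wasreteori_0\Cat_m$. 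The functor $P\colon\cat{Y}\to\cat{U}$ records the degree at every level, and at the top dimension it is the canonical projection from the total $m$-category to $\Mul^\pi_\cat{U}[u]$. In the other direction, from $(\cat{Y},P)$ I recover $\cat{X}$ by taking, at each degree $v$, the fibre of $P$.

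The key structural input, used uniformly at every level, is that since $\cat{U}$ is enriched in \emph{groupoids} every arity object $\Mul^\pi_\cat{U}[u]$ is a groupoid; over a groupoid the Grothendieck construction (\cite[Section~8]{sga1-6}) identifies a degree-wise functorial family of $m$-categories with its total $m$-category equipped with the projection to the groupoid, and the fibre over a degree recovers the corresponding member of the family. This is precisely the identification noted for $m=0$ before the statement, where $\Cat_m$ is replaced by $\Gpd$ and the total $m$-category is a groupoid; the passage to $\Cat_m$-coefficients is formally identical. It is exactly here that the groupoid hypothesis on $\cat{U}$ is essential: it is what lets the ``grading'' functoriality be absorbed into a bare functor $\cat{Y}\to\cat{U}$, in contrast with the situation over a genuine category base, where one must instead distinguish the fibration-like morphisms (cf.\ Example~\ref{ex:graded-by-category}).

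It then remains to match the remaining structure. I would check that the associativity datum $(n+1)$ of $\cat{X}$, namely the multimap $M^\cat{X}_1(\pi)$ of functors on $\Mul^\pi_\cat{U}[u]$, assembles under totalisation into composition operations $m^\cat{Y}_1(\pi)=\colim_v m^\cat{X}_1(\pi)_v$ on the total $m$-categories that cover $m^\cat{U}_1(\pi)$; the statement that they cover the composition of $\cat{U}$ is exactly the assertion that $P$ respects composition, i.e.\ carries the datum $(n+1)$ of the functor $P$ (Definition~\ref{def:functor}). Because the Grothendieck construction is functorial and compatible with colimits, the coherence datum $(\infty)$ of $\cat{X}$ --- which by Definition~\ref{def:graded} is modelled on the compatibility-with-composition coherence of a lax functor --- transports level by level into the coherence of $\cat{Y}$ as a symmetric $n$-theory together with the higher compatibility data $(n+\ell)$, $\ell\ge2$, of $P$. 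Running the two passages against one another then yields the asserted equivalence of data, by the fibre/total-space inversion of the Grothendieck construction at every level.

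The main obstacle I anticipate is not any isolated computation but the coherent bookkeeping in the last step: verifying that the infinite tower of associativity and coherence constraints of $\cat{X}$ corresponds, with nothing left over and nothing missing, to the coherence of $\cat{Y}$ together with the full coherence of the functor $P$. All of the conceptual content is concentrated in the groupoid-base Grothendieck identification of the top-dimensional datum; once that is in hand the remaining verification is parallel to the constructions in Sections~\ref{sec:associative-n+2} and~\ref{sec:coherence-n+ell} and to Definition~\ref{def:functor}, and is valid verbatim with $\Gpd$ replaced by $\Cat_m$.
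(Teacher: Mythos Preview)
Your proposal is correct and takes essentially the same approach as the paper, which also constructs the symmetric $n$-theory by pairing each datum of $\cat{X}$ with its degree in $\cat{U}$ (the construction the paper calls $\diag_!\cat{X}$) and then argues that the remaining structure on both sides matches. The paper phrases this last step as an identification of the categories of extensions of a fixed colour system rather than spelling out the two mutually inverse passages, but the content is the same Grothendieck-type identification you describe.
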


In order to see this more precisely, let us introduce the following
terminology, which will be justified shortly.

\begin{definition}
Let $n\ge 1$ be an integer, and $\cat{U}$ be an (unenriched)
$n$-theory.
Then, for an integer $m$ such that $0\le m\le n$, we refer to data of
the forms ($0$) through ($m-1$) specified for Definition
\ref{def:graded}, as a system of \kore{colours} up to dimension $m-1$
\kore{for a $\cat{U}$-graded higher theory}, or a system of colours
\kore{for a $\cat{U}$-graded $m$-theory}.
\end{definition}

Suppose given a system $\cat{X}$ of colours up to dimension $n-2$ for
a $\cat{U}$-graded higher theory.
Then we obtain a system $\diag_!\cat{X}$ (where
$\diag\colon\cat{U}\to\unity^n_\Com$) of colours up to dimension $n-2$
for a symmetric higher theory, by inductively defining as follows.

We first define $\Ob\diag_!\cat{X}$ as the collection whose member is
a pair $(u,x)$, where $u\in\Ob\cat{U}$ and $x\in\Ob_u\cat{X}$.
If $n\ge 3$, then inductively for an integer $k$ such that $1\le k\le
n-2$, the type of a $k$-multimap in $\cat{Y}$ will be an identical
form of datum as a pair $(u,x)$, where $u$ is the type of a
$k$-multimap in $\cat{U}$, and $x$ is the type of a
$k$-multimap in $\cat{X}$ of degree $u$.
We then inductively define $\Mul^\pi_{\diag_!\cat{X}}[(u,x)]$ as the
collection whose member is a pair $(v,y)$ consisting of
$v\in\Mul^\pi_\cat{U}[u]$ and $y\in\Mul^\pi_{\cat{X},v}[x]$.
Moreover, one can associate to every member
$(v,y)\in\Mul^\pi_{\diag_!\cat{X}}[x]$ the member
$v\in\Mul^\pi_\cat{U}[u]$.

Proposition \ref{prop:graded-is-overlying} now follows since the
category (or ($m+2$)-category) of extensions of the datum
$\diag_!\cat{X}$ to an unenriched $m$-categorified symmetric
$n$-theory equipped with a functor to $\cat{U}$, gets equated by the
described construction, to the category (or ($m+2$)-category) of
extensions of the datum $\cat{X}$ to an unenriched $m$-categorified
$\cat{U}$-graded $n$-theory.

\begin{example}\label{ex:graded-multicategory}
Let $\Ini$ denote the initial uncoloured operad in groupoids.
\begin{itemize}
\item A category is equivalent as a datum to an $\Ini$-graded
  $1$-theory.
\item A planar multicategory is equivalent as a datum to an
  $E_1$-graded $1$-theory.
\item A braided multicategory (see Fiedorowicz \cite{fiedoro})
  is equivalent as a datum to an $E_2$-graded $1$-theory.
\end{itemize}
\end{example}

\begin{example}
Since $\cat{U}$-graded $n$-theory is a theorization of
$\cat{U}$-algebra, one obtains from a $\cat{U}$-monoidal category
$\cat{X}$, a $\cat{U}$-graded $n$-theory by replacing the functors
giving the composition operations by the
bimodules\slash distributors\slash profunctors corepresented by them.
We shall say that this $n$-theory is \kore{represented} by $\cat{X}$,
and shall denote it by $\wasreteori_{n-1}\cat{X}$, where the subscript
comes from the fact that $\cat{U}$-algebra is a generalization of
($n-1$)-theory from the case $\cat{U}=\unity^n_\Com$.
\end{example}

\begin{example}
Recall that we called a plain $n$-theory, i.e., an $n$-theory which is
not considered with any grading, also a \emph{symmetric} $n$-theory.
Every symmetric $n$-theory is canonically graded by the terminal
unenriched uncoloured $n$-theory $\unity^n_\Com$, and there is no
difference between a
symmetric $n$-theory and a $\unity^n_\Com$-graded $n$-theory.
\end{example}

\subsubsection{}

For an unenriched $\cat{U}$-graded $n$-theory $\cat{X}$, let us denote the
symmetric $n$-theory underlying $\cat{X}$ (which maps to $\cat{U}$;
see Proposition \ref{prop:graded-is-overlying}) by $\diag_!\cat{X}$,
where $\diag$ denotes the unique functor $\cat{U}\to\unity^n_\Com$.
For example, for the terminal unenriched uncoloured $\cat{U}$-graded
$n$-theory $\unity^n_\cat{U}$, we have that the canonical projection
functor $\diag_!\unity^n_\cat{U}\to\cat{U}$ is an equivalence.

Using this, we can obtain a compact reformulation of the notion of
algebra, as will be given now.

Suppose given a system of colours up to dimension $n-2$ for
$\cat{U}$-graded higher theory, and let $\cat{T}$ denote the terminal
unenriched $\cat{U}$-graded $n$-theory on this system of colours.
Note that one can consider the structure of a $\cat{U}$-algebra on
this system of colours.
Indeed, the structure of a $\cat{U}$-algebra enriched in a
multicategory $\cat{M}$, is equivalent as a datum to a functor
$\diag_!\cat{T}\to\ddeloop^{n-1}\cat{M}$ of $n$-theories (and
equivalently therefore, an uncoloured $\diag_!\cat{T}$-algebra
enriched in $\cat{M}$).
It is convenient to say that a $\cat{U}$-algebra enriched in $\cat{M}$
is a functor $\cat{U}\to\ddeloop^{n-1}\cat{M}$ \emph{with strata of
  colours} up to dimension $n-2$, or colours for a $\cat{U}$-algebra.

For example, a \emph{$\cat{U}$-monoid}, i.e., a $\cat{U}$-algebra
enriched in groupoids, is naturally equivalent as a datum to a
coloured functor $\cat{U}\to\ddeloop^{n-1}\wasreteori_0\Gpd$.
In this sense, the symmetric ($n+1$)-theory
$\ddeloop^{n-1}\wasreteori_0\Gpd$ classifies (uncoloured) monoids over
unenriched symmetric $n$-theories, where the universal monoid is the
uncoloured $\ddeloop^{n-1}\wasreteori_0\Gpd$-monoid $\univalg^{n-1}$
``classified'' by the identity functor of
$\ddeloop^{n-1}\wasreteori_0\Gpd$.

\begin{proposition}\label{prop:universal-theorization}
For the universal monoid $\univalg^{n-1}$, the projection functor of
the $\ddeloop^{n-1}\wasreteori_0\Gpd$-graded $n$-theory
$\wasreteori_{n-1}\univalg^{n-1}$ is equivalent to
\[
\ddeloop^{n-1}\wasreteori_0(\Gpd_*)\longto\ddeloop^{n-1}\wasreteori_0\Gpd
\]
induced from the forgetful functor $\Gpd_*\to\Gpd$, where $\Gpd_*$
denotes the Cartesian symmetric monoidal category
of pointed groupoids.
\end{proposition}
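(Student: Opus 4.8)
The plan is to identify the projection functor of $\wasreteori_{n-1}\univalg^{n-1}$ by first reducing, via the commutation of the two constructions $\ddeloop$ and $\wasreteori$, to the level of the multicategory $\wasreteori_0\Gpd$, and then computing the theorization of the \emph{tautological} monoid there explicitly as the forgetful projection from pointed groupoids.

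By Proposition \ref{prop:graded-is-overlying}, the $\cat{U}$-graded $n$-theory $\wasreteori_{n-1}\univalg^{n-1}$, where $\cat{U}=\ddeloop^{n-1}\wasreteori_0\Gpd$, is the same datum as its overlying symmetric $n$-theory $\diag_!\wasreteori_{n-1}\univalg^{n-1}$ together with its projection functor to $\cat{U}$; this projection is what we must identify. First I would observe that the universal monoid is the identity functor of $\cat{U}$, and that $\id_\cat{U}=\ddeloop^{n-1}(\id_{\wasreteori_0\Gpd})$ by functoriality of $\ddeloop$; hence $\univalg^{n-1}\equivwith\ddeloop^{n-1}\univalg^0$, where $\univalg^0$ denotes the universal $\wasreteori_0\Gpd$-monoid, i.e.\ the identity functor of the multicategory $\wasreteori_0\Gpd$. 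Since $\wasreteori$ commutes with $\ddeloop$ (Lemma \ref{lem:deloop-forget-commutation}, iterated; cf.\ Remark \ref{rem:deloop-forget-commutation}), both the theorization and its projection commute past $\ddeloop^{n-1}$, so it suffices to treat the base case and then apply $\ddeloop^{n-1}$ to the resulting projection.

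For the base case I would unwind the theorization of $\univalg^0$. By the construction of $\wasreteori_{n-1}$ for monoids given after Proposition \ref{prop:graded-is-overlying} (cf.\ Definition \ref{def:forget-to-theorization}), $\wasreteori_0\univalg^0$ is obtained from the tautological composition functors of $\univalg^0$ — which are exactly the structure functors of the Cartesian monoidal structure on $\Gpd$ — by replacing each with the bimodule it corepresents. Concretely this means: an object of the overlying multicategory of degree $G\in\Ob\wasreteori_0\Gpd=\Gpd$ is a point of the fibre, i.e.\ an element $g\in G$; and a multimap of degree $f\colon\prod_iG_i\to H$ (a multimap of $\wasreteori_0\Gpd$, that is, a functor out of the product) is the datum exhibiting $f$ as basepoint-preserving for chosen points, namely an identification $f(g_1,\dots,g_k)\equivwith h$. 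Contracting the contractible choice of the pair $(h,\text{identification})$ shows the groupoid of such multimaps lying over a fixed $f$ is equivalent to $\prod_iG_i$. These are precisely the objects and multimaps of $\wasreteori_0\Gpd_*$, and the projection forgetting the points is exactly $\wasreteori_0(p)$ for the forgetful functor $p\colon\Gpd_*\to\Gpd$. The cleanest way to see this coherently is to note that $\Gpd_*$ is the coslice of $\Gpd$ under the terminal groupoid, so that $p$ is the universal family whose fibre over $G$ is $G$ itself; the theorization of the tautological monoid is, by definition, the total space of this universal family over $\wasreteori_0\Gpd$, which reproduces $\wasreteori_0\Gpd_*$ together with all of its composition and coherence data as a multicategory over $\wasreteori_0\Gpd$.

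Finally I would apply $\ddeloop^{n-1}$. By the reduction of the second paragraph and the functoriality of $\ddeloop$, the projection of $\wasreteori_{n-1}\univalg^{n-1}$ is $\ddeloop^{n-1}$ of the base projection, i.e.\ $\ddeloop^{n-1}\wasreteori_0(p)\colon\ddeloop^{n-1}\wasreteori_0(\Gpd_*)\to\ddeloop^{n-1}\wasreteori_0\Gpd$, as claimed. I expect the main obstacle to be the base-case identification of the previous paragraph at the level of the \emph{full} higher structure: one must check that theorizing the tautological composition operations reassembles, compatibly with composition and with all higher coherence, into the multicategory $\wasreteori_0\Gpd_*$ over $\wasreteori_0\Gpd$, and not merely fibrewise over each degree. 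Establishing this coherently — for which the coslice description of $\Gpd_*$ and the defining universal property of the construction $\wasreteori$ are the key tools — is the substantive content; the reduction through $\ddeloop$ and the fibre computation are then formal.
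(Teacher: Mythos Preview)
Your proposal is correct. The paper's own proof is the single sentence ``The proof is straightforward from the definitions,'' so there is no detailed argument to compare against; what you have written is a reasonable and accurate expansion of that sentence, using exactly the tools the paper makes available (Lemma~\ref{lem:deloop-forget-commutation} for the reduction through $\ddeloop^{n-1}$, and the explicit description of $\wasreteori$ via corepresented bimodules for the base case), and your honest flag about checking the full coherence in the base case is the only place where ``straightforward'' hides any work.
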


The proof is straightforward from the definitions.

\subsubsection{}
\label{sec:theorem}

Now the notion of algebra generalizes immediately as follows.

\begin{definition}
Let $n\ge 2$ be an integer, and $\cat{U}$ be an unenriched
$n$-theory.
Then a \kore{$\cat{U}$-algebra in a symmetric $n$-theory $\cat{V}$} is
a \emph{functor $\cat{U}\to\cat{V}$ with strata of colours}
for a $\cat{U}$-algebra, namely, a functor $\diag_!\cat{T}\to\cat{V}$,
where $\cat{T}$ is the terminal unenriched $\cat{U}$-graded $n$-theory
on a system of colours
up to dimension $n-2$ for a $\cat{U}$-graded higher theory.
\end{definition}

We generalize this as follows.

\begin{definition}\label{def:coloured-lax-algebra-over-theory}
Let $n\ge 1$ be an integer, and $\cat{U}$ be an unenriched
$n$-theory.
Then an \kore{$n$-tuply coloured lax $\cat{U}$-algebra} in a
categorified symmetric $n$-theory $\cat{V}$, is a lax functor
$\cat{U}\to\cat{V}$ with strata of colours up
to dimension $n-1$, namely, a lax functor $\diag_!\cat{T}\to\cat{V}$,
where $\cat{T}$ is the terminal unenriched $\cat{U}$-graded $n$-theory
on a system of
colours up to dimension $n-1$ for a $\cat{U}$-graded higher theory.
\end{definition}

We obtain from Definition \ref{def:graded} that a $\cat{U}$-graded
$n$-theory enriched in a symmetric monoidal category $\cat{A}$, i.e.,
in $\wasreteori_0\cat{A}$, is (circularly) an $n$-tuply coloured lax
$\cat{U}$-algebra in $\ddeloop^n\cat{A}$.

\begin{remark}\label{rem:coloured-lax-algebra}
Using the definition mentioned in Remark
\ref{rem:lax-functor-general}, of a ``functor'', one can define an
$n$-tuply coloured $\cat{U}$-algebra in a symmetric
\emph{($n+1$)-theory} $\cat{W}$ as a functor $\cat{U}\to\cat{W}$ with
strata of colours up to dimension $n-1$.
In particular, a $\cat{U}$-graded $n$-theory enriched in a
multicategory $\cat{M}$ will be equivalent as a datum to an $n$-tuply
coloured $\cat{U}$-algebra in $\ddeloop^n\cat{M}$.
Note the equivalence
$\wasreteori_n\ddeloop^n\cat{A}=\ddeloop^n\wasreteori_0\cat{A}$ for a
symmetric monoidal category $\cat{A}$, which follows from Lemma
\ref{lem:deloop-forget-commutation}.
\end{remark}

We obtain the following fundamental result.

\begin{theorem}\label{thm:graded-theory-is-monoid}
Let $n\ge 1$ be an integer, and let $\cat{U}$ be an $n$-theory
enriched in groupoids.
Then a $\cat{U}$-graded $n$-theory enriched in a multicategory
$\cat{M}$ is equivalent as a datum to a $\wasreteori_n\cat{U}$-algebra
enriched in $\cat{M}$.
\end{theorem}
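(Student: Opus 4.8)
The plan is to reduce both sides of the claimed equivalence to one and the same kind of datum, namely a (generalized) functor $\cat{U}\to\ddeloop^n\cat{M}$ carrying strata of colours up to dimension $n-1$, and then to recognize that datum on each side through the reformulations already established in the excerpt. So first I would rewrite the left-hand side: by Remark~\ref{rem:coloured-lax-algebra}, a $\cat{U}$-graded $n$-theory enriched in $\cat{M}$ is the same datum as an $n$-tuply coloured $\cat{U}$-algebra in the symmetric $(n+1)$-theory $\ddeloop^n\cat{M}$, which, unwinding Definition~\ref{def:coloured-lax-algebra-over-theory} in the generalized ``functor'' sense of Remark~\ref{rem:lax-functor-general}, is precisely a functor $\cat{U}\to\ddeloop^n\cat{M}$ equipped with strata of colours up to dimension $n-1$.

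Next I would rewrite the right-hand side by applying the description of an algebra over a higher theory as a coloured functor (the reformulation preceding Proposition~\ref{prop:universal-theorization}) to the $(n+1)$-theory $\wasreteori_n\cat{U}$. This exhibits a $\wasreteori_n\cat{U}$-algebra enriched in $\cat{M}$ as a functor $\wasreteori_n\cat{U}\to\ddeloop^n\cat{M}$ (here $(n+1)-1=n$) equipped with strata of colours up to dimension $(n+1)-2=n-1$. Thus the two sides become, respectively, functors out of $\cat{U}$ and out of $\wasreteori_n\cat{U}$ into the common target $\ddeloop^n\cat{M}$, both decorated with the same system of colours up to dimension $n-1$.

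Finally I would invoke the generalized functor–theorization equivalence of Remark~\ref{rem:lax-functor-general} — the extension of Theorem~\ref{thm:functor-of-theorization} to an arbitrary target $(n+1)$-theory $\cat{W}$ — with $\cat{W}=\ddeloop^n\cat{M}$: a functor $\wasreteori_n\cat{U}\to\cat{W}$ is equivalent as a datum to a functor $\cat{U}\to\cat{W}$. Since the construction $\wasreteori_n$ alters only the data in dimensions $\ge n$, the colours up to dimension $n-1$ of $\wasreteori_n\cat{U}$ coincide with those of $\cat{U}$, so the decorating strata on the two sides agree verbatim, and composing the three reductions yields the asserted equivalence of data.

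The only real work I expect is the colour bookkeeping: checking that the equivalence of Remark~\ref{rem:lax-functor-general} carries the strata of colours up to dimension $n-1$ intact between the $\wasreteori_n\cat{U}$-side and the $\cat{U}$-side. This should be automatic rather than something to reprove, since the proof of Theorem~\ref{thm:functor-of-theorization} proceeds precisely by fixing an arbitrary common action on colours up to dimension $n-1$ and only then establishing the equivalence of the remaining higher structures; hence the strata compatibility is built into that argument.
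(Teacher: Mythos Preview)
Your proposal is correct and follows essentially the same approach as the paper's proof. The paper chooses to present the special case $\cat{M}=\wasreteori_0\cat{A}$ and uses Theorem~\ref{thm:functor-of-theorization} directly, then explicitly identifies the terminal theories $\wasreteori_n\diag_!\cat{T}$ and ${\diag'}_!\cat{J}$; it remarks that the general case follows by the same (simpler) argument using Remarks~\ref{rem:coloured-lax-algebra} and~\ref{rem:lax-functor-general}, which is precisely the route you take. Your colour bookkeeping in the final paragraph corresponds to the paper's observation that the systems of colours for a $\cat{U}$-graded $n$-theory and for a $\wasreteori_n\cat{U}$-algebra are identical and that $\wasreteori_n\diag_!\cat{T}\simeq{\diag'}_!\cat{J}$; to be fully precise you should apply Remark~\ref{rem:lax-functor-general} with source $\diag_!\cat{T}$ rather than $\cat{U}$ itself, so that the coloured functors are matched in one step.
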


\begin{proof}
We shall prove the case where $\cat{M}$ is represented by a symmetric
monoidal category $\cat{A}$.
The general case follows from essentially the same (and simpler)
argument, but one needs to use Remarks~\ref{rem:coloured-lax-algebra}
and \ref{rem:lax-functor-general},
of which we have omitted the details.

Let us first note that systems of colours for a $\cat{U}$-graded
$n$-theory and for
a $\wasreteori_n\cat{U}$-algebra are identical forms of datum.
Choose and fix a datum of this form.
Then we would like to show that the categories of the structures on
this same system of colours, of a $\cat{U}$-graded $n$-theory and of
$\wasreteori_n\cat{U}$-algebras, are equivalent.
Let us prove this.

Let $\cat{T}$ denote the terminal unenriched $\cat{U}$-graded
$n$-theory on the chosen system of colours.
Then the structure of a $\cat{U}$-graded $n$-theory on those strata of
colours could be described as a lax functor
$\diag_!\cat{T}\to\ddeloop^n\cat{A}$, which Theorem
\ref{thm:functor-of-theorization} and Lemma
\ref{lem:deloop-forget-commutation} equates with the datum of a
functor
$\wasreteori_n\diag_!\cat{T}\to\ddeloop^n\wasreteori_0\cat{A}$.

Let next $\cat{J}$ denote the terminal unenriched
$\wasreteori_n\cat{U}$-graded ($n+1$)-theory on the same system of
colours.
Then the structure of a $\wasreteori_n\cat{U}$-algebra on those strata
of colours can be described as a functor
${\diag'}_!\cat{J}\to\ddeloop^n\wasreteori_0\cat{A}$, where
$\diag'\colon\wasreteori_n\cat{U}\to\unity^{n+1}_\Com$.

However, it is immediate that the strata of colours up to dimension
$n-1$, of $\wasreteori_n\diag_!\cat{T}$, and of ${\diag'}_!\cat{J}$,
are identical, and these two ($n+1$)-theories on the same strata of
colours are in fact equivalent.
The result follows.
\end{proof}

\begin{definition}\label{def:graded-by-0-theory}
Let $A$ be a $0$-theory in groupoids, i.e., a commutative monoid.
Then an \kore{$A$-graded} $0$-theory is a $\wasreteori_0A$-algebra.
\end{definition}

In particular, an unenriched $A$-graded $0$-theory is equivalent as
a datum to a commutative monoid $X$ equipped with a morphism $X\to
A$.

\subsubsection{}

The notion of $\cat{U}$-graded $n$-theory can be theorized in almost
the same way as how we
have theorized the notion of algebra over an $n$-theory.
We might call the resulting object a \kore{$\cat{U}$-graded
($n+1$)-theory}.

However, this is not really a new notion as should be expected from
Theorem \ref{thm:graded-theory-is-monoid}.

\begin{theorem}\label{thm:grade-theorized-theory}
Let $n\ge 0$ be an integer, and let $\cat{U}$ be an $n$-theory
enriched in groupoids.
Then a $\cat{U}$-graded ($n+1$)-theory is equivalent as a datum to a
$\wasreteori_n\cat{U}$-graded ($n+1$)-theory.
\end{theorem}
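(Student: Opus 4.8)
The plan is to deduce the statement from Theorem~\ref{thm:graded-theory-is-monoid} via the principle that theorizing two equivalent notions yields equivalent theorizations. By construction, a $\cat{U}$-graded ($n+1$)-theory is the theorization of the notion of $\cat{U}$-graded $n$-theory, obtained by one further iteration of the coloured lax construction of Section~\ref{sec:coloured-lax-structure}. On the other hand, Definition~\ref{def:graded} applied to the ($n+1$)-theory $\wasreteori_n\cat{U}$ exhibits a $\wasreteori_n\cat{U}$-graded ($n+1$)-theory as the theorization of the notion of $\wasreteori_n\cat{U}$-algebra. Since Theorem~\ref{thm:graded-theory-is-monoid} identifies the notion of $\cat{U}$-graded $n$-theory with that of $\wasreteori_n\cat{U}$-algebra (as an equivalence of the respective categories of structures on a fixed system of colours), the two objects in the statement are theorizations of one and the same notion. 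As a consistency check, combining the statement with Theorem~\ref{thm:graded-theory-is-monoid} one dimension up identifies both sides with a $\wasreteori_{n+1}\wasreteori_n\cat{U}$-algebra, i.e.\ a $\wasreteori^{n+2}_n\cat{U}$-algebra.

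To make ``theorization of the same notion'' into an actual equivalence rather than a bijection on equivalence classes, I would mirror the proof of Theorem~\ref{thm:graded-theory-is-monoid} one dimension higher. First I would observe that the systems of colours for a $\cat{U}$-graded ($n+1$)-theory and for a $\wasreteori_n\cat{U}$-graded ($n+1$)-theory are identical forms of datum, and fix such a datum. Then, letting $\cat{T}$ denote the terminal unenriched $\cat{U}$-graded ($n+1$)-theory on the chosen colours and $\cat{J}$ the terminal unenriched $\wasreteori_n\cat{U}$-graded ($n+1$)-theory on the same colours, I would express the structure of a $\cat{U}$-graded ($n+1$)-theory enriched in $\cat{A}$ as a lax functor out of $\diag_!\cat{T}$ into the appropriate $\ddeloop$-tower over $\cat{A}$, and the structure of a $\wasreteori_n\cat{U}$-algebra as a functor out of ${\diag'}_!\cat{J}$ into $\ddeloop^{n}\wasreteori_0\cat{A}$, where $\diag'\colon\wasreteori_n\cat{U}\to\unity^{n+1}_\Com$. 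Applying Theorem~\ref{thm:functor-of-theorization} to pass between lax functors and functors, and Lemma~\ref{lem:deloop-forget-commutation} to commute $\ddeloop$ past $\wasreteori$, reduces the claim to showing that the two terminal ($n+2$)-theories restrict to identical strata of colours and are equivalent on them.

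That final identification of the two terminal theories is where the real content sits, and it should follow from the naturality in $\cat{U}$ of the equivalence of Theorem~\ref{thm:graded-theory-is-monoid}: that equivalence is produced by matching systems of colours and then invoking Theorem~\ref{thm:functor-of-theorization}, and applying $\wasreteori$ once more (Definition~\ref{def:forget-to-theorization}), together with Lemma~\ref{lem:deloop-forget-commutation}, propagates the matching to the theorized level exactly as Proposition~\ref{prop:graded-is-overlying} was used there. I would handle the case of enrichment in a general multicategory $\cat{M}$ by the same (and simpler) argument, invoking Remarks~\ref{rem:coloured-lax-algebra} and \ref{rem:lax-functor-general} in place of the symmetric-monoidal bookkeeping.

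The main obstacle I anticipate is not any new combinatorial or geometric input but precisely this bookkeeping of the strata of colours, and verifying that the theorization of the two equivalent notions is carried out by \emph{literally the same} inductive-associativity data; that is what guarantees the equivalence of Theorem~\ref{thm:graded-theory-is-monoid} upgrades to an equivalence of the theorized categories rather than collapsing coherence. I expect the bulk of the work to be confirming that the associative system of top-dimensional composition operations (and its coherence) for a $\cat{U}$-graded $n$-theory is carried, under the equivalence of Theorem~\ref{thm:graded-theory-is-monoid}, to the corresponding system for a $\wasreteori_n\cat{U}$-algebra, so that re-reading these coherences as compositions one dimension up gives the asserted equivalence $\wasreteori_n\diag_!\cat{T}\equivwith{\diag'}_!\cat{J}$.
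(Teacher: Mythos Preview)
Your approach is essentially the same as the paper's: the paper simply states that the proof is similar to that of Theorem~\ref{thm:graded-theory-is-monoid} and leaves the details to the reader, which is exactly your plan of mirroring that argument one dimension higher by matching systems of colours, introducing the terminal theories $\cat{T}$ and $\cat{J}$, and invoking Theorem~\ref{thm:functor-of-theorization} together with Lemma~\ref{lem:deloop-forget-commutation} to reduce to an identification of terminal $(n+2)$-theories.

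Two minor slips in your second paragraph are worth fixing before you write it out: you write ``the structure of a $\wasreteori_n\cat{U}$-algebra'' where you mean ``the structure of a $\wasreteori_n\cat{U}$-graded $(n+1)$-theory'' (your own definition of $\cat{J}$ and of $\diag'$ make clear this is what you intend), and the target of the lax functors should be $\ddeloop^{n+1}\cat{A}$ (hence $\ddeloop^{n+1}\wasreteori_0\cat{A}$ after applying $\wasreteori_{n+1}$), not $\ddeloop^n\wasreteori_0\cat{A}$, since both sides are $(n+1)$-theories rather than $n$-theories. With those corrections your outline tracks the intended argument precisely.
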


The proof is also similar to the proof of Theorem
\ref{thm:graded-theory-is-monoid}.
We leave the details to the interested reader.

\begin{definition}\label{def:higher-graded}
Let $n\ge 0$ be an integer, and let $\cat{U}$ be an $n$-theory
enriched in groupoids.
For an integer $m\ge n+2$, a \kore{$\cat{U}$-graded $m$-theory} is a
$\wasreteori^m_n\cat{U}$-graded $m$-theory, or equivalently, a
$\wasreteori^{m+1}_n\cat{U}$-algebra.
\end{definition}

Thus, for every $\cat{U}$, $\cat{U}$-graded $m$-theories are iterated
theorizations of $\cat{U}$-algebra for $m\ge n$.

\begin{example}
An $E_1$-graded $n$-theory is equivalent as a datum to a planar
$n$-theory defined in Section \ref{sec:planar-theory}.
\end{example}

More generally, for every multicategory $\cat{U}$ enriched in
groupoids, there is a similar description of a $\cat{U}$-graded
$n$-theory.
Recall that the notion of planar $n$-theory was defined by replacing
the category $\Fin$ in the definition of a symmetric $n$-theory, with
the category $\Ord$.
For a similar description of the notion of $\cat{U}$-graded
$n$-theory, we would like to replace $\Fin$ by a category which is an
analogue for $\cat{U}$, of $\Ord$.

In order to construct this category, note that the forgetful functor
$\wasreteori_0$ from $\cat{U}$-monoidal categories to $\cat{U}$-graded
multicategories, has a left adjoint $L_\cat{U}$, and that the unique
functor $\diag\colon\cat{U}\to\Com$ of symmetric multicategories,
where $\Com=E_\infty$ denotes the commutative operad (i.e., the
terminal unenriched symmetric operad), induces a functor
$\diag_*\colon
L_\cat{U}\unity^1_\cat{U}\to\diag^*L_\Com\unity^1_\Com=\diag^*\Fin$ of
$\cat{U}$-monoidal categories, where $\unity^1_\cat{U}$ denotes the
terminal unenriched $\cat{U}$-graded multicategory.

For simplicity, suppose first that $\cat{U}$ is uncoloured.
Then we obtain a description of a $\cat{U}$-graded $n$-theory by
replacing the category $\Fin$ in the definition \ref{def:theory} of a
symmetric $n$-theory, with $\cat{L}:=L_\cat{U}\unity^1_\cat{U}$, where
\begin{itemize}
\item by a family \emph{indexed by $J\in\cat{L}$}, we mean a family
  indexed by $\diag_*J\in\Fin$, and
\item for $I\in\Ord$, we say that a $[I]$-family $J$ of objects of
  $\cat{L}$ is \emph{elemental} if $\diag_*J$ is an elemental
  $[I]$-family in $\Fin$.
\end{itemize}

If $\cat{U}$ is instead a coloured multicategory, then the description
is similar except that we would need to have objects of the ``theory''
to have ``degrees'' in $\cat{U}$.
We leave the details to the reader.

\subsection{Iterated monoids}
\label{sec:iterated-monoid}

\subsubsection{}

Let $\cat{U}$ be an $n$-theory enriched in groupoids, and $\cat{X}$ be
a $\cat{U}$-monoid.
Then since the structure of $\cat{X}$ is a generalization of the
structure of an ($n-1$)-theory, it is natural to consider the notion
of $\cat{X}$-algebra if $n\ge 2$.

The notion can actually be reduced to the same notion in the special
case where $\cat{X}$ is the unit (or terminal uncoloured)
$\cat{U}$-monoid.
Indeed, if we denote by $\diag$ the unique functor
$\cat{U}\to\unity^n_\Com$, then an $\cat{X}$-algebra will simply be an
algebra over the unit $\diag_!\wasreteori_{n-1}\cat{X}$-monoid.
(In particular, the dependence of the notion on $\cat{U}$ will be only
through the presentation of the symmetric $n$-theory
$\cat{Y}=\diag_!\wasreteori_{n-1}\cat{X}$ as underlying
$\wasreteori_{n-1}\cat{X}$.)

\subsubsection{}
\label{sec:detheorize}

Let now $\cat{X}$ be the unit $\cat{U}$-monoid (so
$\cat{U}=\diag_!\wasreteori_{n-1}\cat{X}$).
Then the notion of $\cat{X}$-algebra will be such that the notion of
$\cat{U}$-algebra theorizes it.
In fact, the notion of $\cat{U}$-algebra `detheorizes' more times, and
the most detheorized notion will at the end be equivalent to the
notion of algebra over the unit monoid over~...~over the unit monoid
over $\cat{U}$, where we should have $n-1$ unit monoids in the
expression.

We can directly define all iteratively detheorized notions as
follows.

\begin{definition}
Let $n\ge 1$ be an integer, and $\cat{U}$ be an $n$-theory enriched in
groupoids.
Then for an integer $m$ such that $0\le m\le n-1$, a
\kore{$\cat{U}$-graded $m$-theory} enriched in a multicategory
$\cat{M}$, is a functor
$\cat{U}\to\wasreteori^n_{m+1}\ddeloop^m\cat{M}$ with strata of
colours for a $\cat{U}$-graded $m$-theory.
\end{definition}

To be explicit, in the case $m=0$, there is no colours added.
In the case $m\ge 1$, given a system of colours for a $\cat{U}$-graded
$m$-theory $\cat{X}$, the structure on it, of a $\cat{U}$-graded
$m$-theory, is a functor
$\cat{T}\to\wasreteori^n_{m+1}\ddeloop^m\cat{M}$,
where $\cat{T}$ is the symmetric $n$-theory described as follows.
\begin{itemize}
\item[($0$)] $\Ob\cat{T}$ is the collection whose member is a pair
  $(u,x)$, where $u\in\Ob\cat{U}$ and $x\in\Ob_u\cat{X}$.
\end{itemize}
By induction, for $k$ such that $1\le k\le m$, the type of a
$k$-multimap in $\cat{T}$ of a given arity $(I;\pi,\phi)$, is
specified by
\begin{itemize}
\item the type $u$ of a $k$-multimap in $\cat{U}$ of arity
  $(I;\pi,\phi)$,
\item the type $x$ of a $k$-multimap in $\cat{X}$ of the same arity of
  degree $u$.
\end{itemize}
\begin{itemize}
\item[($k$)](Inductively for $k$ such that $1\le k\le m-1$.)
  Suppose given the arity $(I;\pi,\phi)$ of a $k$-multimap, and the
  type $(u,x)$ of a $k$-multimap in $\cat{T}$ of arity
  $(I;\pi,\phi)$.
  Then $\Mul^\pi_\cat{T}[(u,x)]$ is the collection whose member is a
  pair $(v,y)$, where $v\in\Mul^\pi_\cat{U}[u]$ and
  $y\in\Mul^\pi_{\cat{X},v}[x]$.
\item[($m$)] Suppose given a datum similar to an input datum for
  ``($k$)'' above, but with $k$ substituted by $m$.
  Then $\Mul^\pi_\cat{T}[(u,x)]=\Mul^\pi_\cat{U}[u]$.
\item[($\ell$)](Inductively for $\ell$ such that $m+1\le\ell\le n$.)
  Suppose given the arity $(I;\pi,\phi)$ of a $\ell$-multimap, and the
  type of an $\ell$-multimap in $\cat{T}$ of arity $(I;\pi,\phi)$,
  which by induction, will be specified by
  \begin{itemize}
  \item the type $u$ of a $\ell$-multimap in $\cat{U}$ of arity
    $(I;\pi,\phi)$,
  \item the type $x$ of a $\phi^{m-1}$-nerve of $m$-multimaps in
    $\cat{X}$ of degree $u^{\le m-1}$.
  \end{itemize}
  Then $\Mul^\pi_\cat{T}[(u,x)]=\Mul^\pi_\cat{U}[u]$.
\end{itemize}
The composition is given by the composition in $\cat{U}$.

Thus, the notion of $\cat{U}$-graded ($n-1$)-theory coincides with the
notion of $\cat{U}$-algebra, and for all $m\ge 0$ (which may be $\ge
n$), the notion of $\cat{U}$-graded $m$-theory is iteratively an
$m$-th theorization of the notion of $\cat{U}$-graded $0$-theory.

\begin{lemma}
A $\cat{U}$-graded $m$-theory is equivalent as a datum to a
$\wasreteori_n\cat{U}$-graded $m$-theory.
\end{lemma}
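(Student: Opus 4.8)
The plan is to argue by cases on the size of $m$ relative to $n$, routing the cases $m\ge n$ through the fundamental results already proved and treating $0\le m\le n-1$ by hand from the definitions of Section~\ref{sec:detheorize}. The one combinatorial fact that synchronises the two gradings is the composition law of Definition~\ref{def:forget-to-theorization}, which I would use in the form $\wasreteori^{m+1}_{n+1}\wasreteori_n=\wasreteori^{m+1}_n$.

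First I would dispose of all $m\ge n$ at once, by showing that each side is a $\wasreteori^{m+1}_n\cat{U}$-algebra. For the left-hand side, Theorem~\ref{thm:graded-theory-is-monoid} gives the case $m=n$, a $\cat{U}$-graded $n$-theory being a $\wasreteori_n\cat{U}$-algebra, i.e.\ a $\wasreteori^{n+1}_n\cat{U}$-algebra; for $m\ge n+1$ one iterates, climbing from $\cat{U}$ to $\wasreteori_n\cat{U}$ by Theorem~\ref{thm:grade-theorized-theory} and invoking Definition~\ref{def:higher-graded} thereafter, to land on a $\wasreteori^{m+1}_n\cat{U}$-algebra. The same reduction, valid for $m\ge n$, applied now to the $(n+1)$-theory $\wasreteori_n\cat{U}$ in place of $\cat{U}$, presents a $\wasreteori_n\cat{U}$-graded $m$-theory as a $\wasreteori^{m+1}_{n+1}\wasreteori_n\cat{U}$-algebra, which the composition law collapses to a $\wasreteori^{m+1}_n\cat{U}$-algebra. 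Hence the two notions coincide for every $m\ge n$.

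For $0\le m\le n-1$ I would compare the two definitions directly. A $\cat{U}$-graded $m$-theory is a functor with strata of colours $\cat{U}\to\wasreteori^n_{m+1}\ddeloop^m\cat{M}$, and a $\wasreteori_n\cat{U}$-graded $m$-theory a functor with strata $\wasreteori_n\cat{U}\to\wasreteori^{n+1}_{m+1}\ddeloop^m\cat{M}$; since $\wasreteori^{n+1}_{m+1}\ddeloop^m\cat{M}=\wasreteori_n\bigl(\wasreteori^n_{m+1}\ddeloop^m\cat{M}\bigr)$, the target on the right is $\wasreteori_n$ of the target on the left. The crux is to match the explicit classifying theories: from a fixed system of colours the construction of Section~\ref{sec:detheorize} builds a terminal symmetric $n$-theory $\cat{T}$ (for $\cat{U}$) and a terminal symmetric $(n+1)$-theory $\cat{T}'$ (for $\wasreteori_n\cat{U}$), and I would check that $\cat{T}'=\wasreteori_n\cat{T}$. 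This holds because below dimension $n$ the data of $\cat{U}$ and of $\wasreteori_n\cat{U}$ agree, above dimension $m$ both terminal theories merely reproduce the grading theory, and $\wasreteori_n$ converts precisely the composition of the top ($n$-dimensional) multimaps of $\cat{T}$ into the $(n+1)$-multimaps of $\cat{T}'$. With this identification in hand, Theorem~\ref{thm:functor-of-theorization} (together with Remark~\ref{rem:lax-functor-general}) equates a functor $\wasreteori_n\cat{T}\to\wasreteori_n\bigl(\wasreteori^n_{m+1}\ddeloop^m\cat{M}\bigr)$ with the classifying functor of a $\cat{U}$-graded $m$-theory, the groupoid-enrichment of $\cat{U}$ via Corollary~\ref{cor:forget-to-theorization} being what guarantees that no spurious laxity is introduced on the fixed system of colours.

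I expect the genuine difficulty to be concentrated in this last case $0\le m\le n-1$: one must verify, as in the proofs of Theorems~\ref{thm:graded-theory-is-monoid} and~\ref{thm:grade-theorized-theory}, that the colour bookkeeping of the two explicit terminal-theory constructions consists of identical forms of datum, and that the compatibility data of the classifying functor, carried across $\wasreteori_n$, match strictly rather than merely laxly. I would model the argument closely on those two proofs, commuting $\wasreteori$ past the delooping of the enriching multicategory where needed by Lemma~\ref{lem:deloop-forget-commutation}.
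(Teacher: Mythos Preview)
Your proposal is correct and lands on the same key ingredient as the paper, namely Corollary~\ref{cor:forget-to-theorization}. The paper's own proof is the single line ``direct from Corollary~\ref{cor:forget-to-theorization}'': since both the source $\cat{T}$ and the target $\wasreteori^n_{m+1}\ddeloop^m\cat{M}$ are $n$-theories enriched in groupoids, full faithfulness of $\wasreteori_n$ immediately matches functors $\cat{T}\to\wasreteori^n_{m+1}\ddeloop^m\cat{M}$ with functors $\wasreteori_n\cat{T}\to\wasreteori^{n+1}_{m+1}\ddeloop^m\cat{M}$, and your identification $\cat{T}'=\wasreteori_n\cat{T}$ finishes.

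Two remarks on economy. First, your detour through Theorem~\ref{thm:functor-of-theorization} and Remark~\ref{rem:lax-functor-general}, followed by invoking Corollary~\ref{cor:forget-to-theorization} to eliminate laxity, is correct but redundant: Corollary~\ref{cor:forget-to-theorization} already packages exactly that combination. Second, your separate treatment of $m\ge n$ is fine but unnecessary in the paper's framing: the lemma sits immediately after the definition of the case $0\le m\le n-1$, and for $m\ge n$ the statement is either literally Theorem~\ref{thm:graded-theory-is-monoid} ($m=n$), Theorem~\ref{thm:grade-theorized-theory} ($m=n+1$), or tautological from Definition~\ref{def:higher-graded} together with $\wasreteori^m_n=\wasreteori^m_{n+1}\wasreteori_n$ ($m\ge n+2$). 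So the paper regards those cases as already settled and reserves the one-line argument for the new range.
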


The proof of this is direct from Corollary
\ref{cor:forget-to-theorization}.

\begin{definition}
Let $n\ge 0$ be an integer, and $\cat{U}$ be an $n$-theory enriched in
groupoids.
Then we refer to a $\cat{U}$-graded $1$-theory also as a
$\cat{U}$-graded \kore{multicategory}.
\end{definition}

\begin{definition}
Let $n\ge 1$ be an integer, and $\cat{U}$ be an $n$-theory enriched in
groupoids.
Let $m\ge 0$ and $\ell\ge 1$ be integers.
Then, for an $\ell$-categorified $\cat{U}$-graded $m$-theory
$\cat{X}$, we denote by $\wasreteori_m\cat{X}$, the
($\ell-1$)-categorified $\cat{U}$-graded ($m+1$)-theory
\emph{represented} by $\cat{X}$, namely, obtained by
replacing the structure functors of $\cat{X}$ by the
bimodules\slash distributors\slash profunctors corepresented by them.
\end{definition}

If $\cat{X}$ is enriched in $\ell$-categories, then it can be regarded
as $k$-categorified for any $k\ge\ell$.
However, resulting $\wasreteori_m\cat{X}$ is independent of $k$.
In particular, $\wasreteori_m\cat{X}$ is also defined for an
$\cat{U}$-graded $m$-theory $\cat{X}$ enriched in groupoids, and it is
an uncategorified $\cat{U}$-graded ($m+1$)-theory, which can be
considered as as highly categorified (trivially) as one wishes to.

\begin{definition}
Let $n\ge 1$ be an integer, and $\cat{U}$ be an $n$-theory enriched in
groupoids.
Let $m\ge 0$ be an integer, and $\cat{X}$ be a $\cat{U}$-graded
$m$-theory which is possibly higher categorified.
Then, for an integer $\ell\ge 0$, we define a (less or un-
categorified) $\cat{U}$-graded ($m+\ell$)-theory
$\wasreteori^{m+\ell}_m\cat{X}$ by the inductive relations
\[
\wasreteori^{m+\ell}_m\cat{X}=
\begin{cases}
\cat{X}&\text{if $\ell=0$},\\
\wasreteori_{m+\ell-1}\wasreteori^{m+\ell-1}_m\cat{X}&\text{if $\ell\ge 1$}.
\end{cases}
\]
\end{definition}

\subsubsection{}

The following definition essentially achieves (more than) our initial
goal of the discussions here.

\begin{definition}
Let $n\ge 0$ be an integer, and $\cat{U}$ be an $n$-theory enriched in
groupoids.
Let $m\ge 0$ be an integer, and $\cat{X}$ be a
$\cat{U}$-graded $m$-theory enriched in groupoids.
Then for an integer $\ell\ge 0$, an \kore{$\cat{X}$-graded}
$\ell$-theory is a $\diag_!\wasreteori^N_m\cat{X}$-graded
$\ell$-theory, where $N:=\max\{m,n\}$, and
$\diag\colon\wasreteori^N_n\cat{U}\to\unity^N_\Com$.
\end{definition}

\begin{example}\label{ex:terminally-graded}
Let $\cat{X}$ be the terminal uncoloured $\cat{U}$-graded $m$-theory
enriched in groupoids.
Then we have $\diag_!\wasreteori^N_m\cat{X}=\wasreteori^N_n\cat{U}$,
so an $\cat{X}$-graded
$\ell$-theory is equivalent as a datum to a $\cat{U}$-graded
$\ell$-theory, as had been predicted.
\end{example}

\subsubsection{}

In order to analyse the notions further, we shall next consider how
gradings can be altered.

Given a functor $F\colon\cat{V}\to\cat{U}$ of $n$-theories enriched in
groupoids, it
is clear that a $\cat{U}$-graded $m$-theory $\cat{X}$ gets pulled back
by $F$.
Let us denote the resulting $\cat{V}$-graded $m$-theory by
$F^*\cat{X}$.
In the case where $m=n$, and $\cat{X}$ is unenriched, the projection
$(\diag_\cat{V})_!F^*\cat{X}\to\cat{V}$ (where
$\diag_\cat{V}\colon\cat{V}\to\unity^n_\Com$) is the base
change of the projection $(\diag_\cat{U})_!\cat{X}\to\cat{U}$ by $F$
in a suitable sense.
In the case where $m\le n-1$, one has
$\maeoki{^\cat{V}}{\wasreteori}^n_m(F^*\cat{X})=F^*(\maeoki{^\cat{U}}{\wasreteori}^n_m\cat{X})$,
where the superscripts to $\wasreteori^n_m$ indicate the gradings
considered.
If $m\ge n+1$, then $F^*$ is the pull-back by
$\wasreteori^m_nF\colon\wasreteori^m_n\cat{V}\to\wasreteori^m_n\cat{U}$.

\begin{example}
For an $n$-theory $\cat{V}$ enriched in groupoids, and an uncoloured
$\cat{V}$-monoid $\cat{X}$ defined by a functor
$F\colon\cat{V}\to\ddeloop^{n-1}\wasreteori_0\Gpd$,
we have an equivalence
$\wasreteori_{n-1}\cat{X}=F^*(\wasreteori_{n-1}\univalg^{n-1})$ of
(simply coloured) $\cat{V}$-graded $n$-theories, where
$\wasreteori_{n-1}\univalg^{n-1}$ has been described in Proposition
\ref{prop:universal-theorization}.
\end{example}

Let again $\cat{U}$ be an $n$-theory enriched in groupoids.
For an integer $m\ge 0$, suppose that $\cat{V}$ is a $\cat{U}$-graded
$m$-theory enriched in groupoids, and denote by $P$, the projection
$\diag_!\wasreteori^N_m\cat{V}\to\wasreteori^N_n\cat{U}$, where $N\ge
m,n$, and $\diag\colon\cat{U}\to\unity^n_\Com$.
Then also clearly, one obtains from an unenriched (possibly higher
categorified) $\cat{V}$-graded $m$-theory
$\cat{Y}$, its push forward $P_!\cat{Y}$ as a $\cat{U}$-graded
$m$-theory, generalizing the construction $\diag_!$.

In the case $m=n$, $P_!\cat{Y}$ has the same underlying symmetric
$n$-theory as that of $\cat{Y}$, with projection to $\cat{U}$ given by
the projection to $\diag_!\cat{V}$ composed with
$P\colon\diag_!\cat{V}\to\cat{U}$.

In the case $m\le n-1$, the description of $P_!\cat{Y}$ is as
follows.
\begin{itemize}
\item[($0$)] For an object $u\in\Ob\cat{U}$, $\Ob_uP_!\cat{Y}$ is the
  collection whose member is a pair $(v,y)$, where $v\in\Ob_u\cat{V}$
  and $y\in\Ob_v\cat{Y}$.
\end{itemize}
Let $k$ be an integer such that $1\le k\le m$, and suppose given the
type $u$ of a $k$-multimap in $\cat{U}$ whose arity is given as
$(I;\pi,\phi)$.
Then by induction, the type of a $k$-multimap in $P_!\cat{X}$ of the
same arity of degree $u$, is specified by
\begin{itemize}
\item the type $v$ of a $k$-multimap in $\cat{V}$ of the same arity of
  degree $u$,
\item the type $y$ of a $k$-multimap in $\cat{Y}$ of the same arity of
  degree $v$.
\end{itemize}
\begin{itemize}
\item[($k$)](Inductively for $1\le k\le m-1$.)
  Suppose given
  \begin{itemize}
  \item a $k$-multimap $u^k$ in $\cat{U}$ whose arity and type
    are given respectively as $(I;\pi,\phi)$ and $u^{\le
      k-1}=(u^\nu)_{0\le\nu\le k-1}$,
  \item the type $(v,y)$ of a $k$-multimap in $P_!\cat{X}$ of the same
    arity of degree $u^{\le k-1}$.
  \end{itemize}
  Then $\Mul^\pi_{P_!\cat{Y},u^k}[(v,y)]$ is the collection whose
  member is a pair $(w,z)$, where $w\in\Mul^\pi_{\cat{V},u^k}[v]$ and
  $z\in\Mul^\pi_{\cat{Y},w}[y]$.
\item[($m$)] Suppose given a datum similar to an input datum for
  ``($k$)'' above, but with $k$ substituted by $m$.
  Then
  \[
  \Mul^\pi_{P_!\cat{Y},u^m}[(v,y)]=\colim_{w\in\Mul^\pi_{\cat{V},u^m}[v]}\Mul^\pi_{\cat{Y},w}[y].
  \]
\end{itemize}
The composition is given by the composition in $\cat{Y}$.

It is immediate to verify that we have an equivalence
$\maeoki{^\cat{U}}{\wasreteori}^n_m(P_!\cat{Y})\equivwith
P_!\maeoki{^\cat{V}}{\wasreteori}^n_m\cat{Y}$.

In the case $m\ge n+1$, the description above applies after $\cat{U}$
is replaced by $\wasreteori^m_n\cat{U}$.

In general, between suitable categories, the construction $P_!$
gives a left adjoint to the functor $P^*$.

\begin{remark}
A construction $P_*$, which suitably gives a \emph{right} adjoint of
$P^*$, will be considered later in Section
\ref{sec:right-adjoint-of-restriction}.
This construction is not as obvious, and will in general, only produce
an ($m+1$)-theory from an $m$-theory.
\end{remark}

\begin{example}\label{ex:push-terminal-forward}
$P_!\unity^m_\cat{V}=\cat{V}$ for the terminal unenriched uncoloured
$\cat{V}$-graded $m$-theory $\unity^m_\cat{V}$.
\end{example}

\begin{example}
In the case where $\cat{V}$ is the terminal uncoloured
$\cat{U}$-graded $m$-theory enriched in groupoids, $P$ is an
equivalence, and $P^*$ and
$P_!$ are the mutually inverse equivalences giving the identification
of Example \ref{ex:terminally-graded}.
\end{example}

\begin{example}\label{ex:left-push-kan-extension}
Consider the case where $\cat{U}$ is an initially graded $1$-theory
enriched in groupoids, and $m=0$.
In this case, a $\cat{U}$-monoid $\cat{V}$ is a groupoid-valued
functor on $\cat{U}$, and the projection
$P\colon(\diag_\cat{U})_!\wasreteori_0\cat{V}\to\cat{U}$ is the
corresponding op-fibration (fibred in groupoids).
For a $\cat{V}$-graded $0$-theory $\cat{X}$, the op-fibration
\[
(\diag_\cat{U})_!P_!\wasreteori_0X\longto\cat{U}
\]
describing the $\cat{U}$-monoid $P_!\cat{X}$ is the composite of the
op-fibrations $(\diag_\cat{V})_!\wasreteori_0X\to\wasreteori_0\cat{V}$
and $P$, agreeing with the alternative description of the
$\cat{U}$-module $P_!\cat{X}$ as the left Kan extension along $P$ of
the $(\diag_\cat{U})_!\wasreteori_0\cat{V}$-module $X$.
\end{example}

The version of this connection for $\cat{U}=\unity^n_\Com$, has been
concretely expressed in Proposition
\ref{prop:universal-theorization}.

Proposition \ref{prop:graded-is-overlying} generalizes as follows.

\begin{proposition}\label{prop:graded-projection}
Let $n\ge 0$ be an integer, $\cat{U}$ be an $n$-theory enriched in
groupoids, $m\ge 0$ be an integer, and $\cat{V}$ be a $\cat{U}$-graded
$m$-theory enriched in groupoids.
Then an unenriched $\cat{V}$-graded $m$-theory is equivalent as a
datum to an unenriched
$\cat{U}$-graded $m$-theory $\cat{X}$ equipped with a ``projection''
functor $\cat{X}\to\cat{V}$.
\end{proposition}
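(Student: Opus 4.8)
The plan is to implement the equivalence by the push-forward functor $P_!$ of Section~\ref{sec:iterated-monoid}, refined so as to land in theories over $\cat{V}$, with a fibrewise construction as its inverse; this reproduces the proof of Proposition~\ref{prop:graded-is-overlying} with the structural map $\diag_!$ there replaced by $P_!$. Recall that $\cat{V}$ determines a projection $P\colon\diag_!\wasreteori^N_m\cat{V}\to\wasreteori^N_n\cat{U}$, with $N:=\max\{m,n\}$, along which $P_!$ sends an unenriched $\cat{V}$-graded $m$-theory $\cat{Y}$ to a $\cat{U}$-graded $m$-theory. First I would note that $P_!\cat{Y}$ carries a canonical projection functor $P_!\cat{Y}\to\cat{V}$ of $\cat{U}$-graded $m$-theories: using the explicit description of $P_!\cat{Y}$ recalled in Section~\ref{sec:iterated-monoid}, in each dimension $k<m$ it sends the pair $(v,y)$ (resp.\ $(w,z)$) to its $\cat{V}$-component $v$ (resp.\ $w$), and in the top dimension $m$ it is the canonical map $\colim_{w}\Mul^\pi_{\cat{Y},w}[y]\to\Mul^\pi_{\cat{V},u^m}[v]$ projecting the colimit onto its indexing groupoid. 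This gives the forward assignment $\cat{Y}\mapsto\bigl(P_!\cat{Y},\,P_!\cat{Y}\to\cat{V}\bigr)$.

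For the inverse, given a $\cat{U}$-graded $m$-theory $\cat{X}$ together with a projection functor $\pi\colon\cat{X}\to\cat{V}$, I would reconstruct a $\cat{V}$-graded $m$-theory $\cat{Y}$ fibrewise: for a colour $v\in\Ob_u\cat{V}$ set $\Ob_v\cat{Y}$ to be the (homotopy) fibre of $\pi$ over $v$ inside $\Ob_u\cat{X}$, and inductively, for a degree $w\in\Mul^\pi_{\cat{V},u^k}[v]$, let $\Mul^\pi_{\cat{Y},w}[y]$ be the fibre of $\pi$ over $w$; composition and the coherence datum for $\cat{Y}$ are inherited from those of $\cat{X}$. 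The heart of the argument is then the bookkeeping of Proposition~\ref{prop:graded-is-overlying}: systems of colours for a $\cat{V}$-graded $m$-theory are identified with systems of colours for a $\cat{U}$-graded $m$-theory over $\cat{V}$ by these two mutually inverse constructions, and I would check that under this identification the remaining structure---the action functor, the associativity multimap, and the coherence datum $(\infty)$ of Definition~\ref{def:graded}---is transported isomorphically, which is automatic because $P_!$ is assembled degreewise from the data of $\cat{Y}$ while the projection merely records the $\cat{V}$-degree.

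It is convenient to separate the dimensions $k<m$ from the top dimension $k=m$. Below $m$ the colours of $P_!\cat{Y}$ are genuine pairs and the fibre of the projection recovers the second coordinate on the nose, so the two constructions are visibly inverse there, and the composition and coherence data match by inspection. The main obstacle is the top dimension, where $\Mul^\pi_{P_!\cat{Y},u^m}[(v,y)]=\colim_{w\in\Mul^\pi_{\cat{V},u^m}[v]}\Mul^\pi_{\cat{Y},w}[y]$: here I must verify that passing to the fibre of the projection over a degree $w$ inverts the colimit, i.e.\ that in the unenriched (groupoid) setting the colimit over the groupoid $\Mul^\pi_{\cat{V},u^m}[v]$ coherently decomposes as the total space of the family of its fibres. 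This is precisely the Grothendieck construction for that groupoid of degrees (see \cite[Section~8]{sga1-6} and Remark~\ref{rem:graded-by-groupoid}), and it is what forces the projection $\pi$ to be an \emph{arbitrary} functor rather than one preserving coCartesian data, exactly as in Example~\ref{ex:graded-by-category}. Granting this equivalence of classifying groupoids, the case $m\ge n+1$ follows after first replacing $\cat{U}$ by $\wasreteori^m_n\cat{U}$ as in the push-forward description, and the general case is then complete.
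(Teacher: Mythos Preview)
Your approach is correct and broadly parallel to the paper's, but the inverse construction and the round-trip verification differ in style. You build the inverse by taking homotopy fibres of the projection $\pi\colon\cat{X}\to\cat{V}$ degreewise and then invoke the Grothendieck construction to identify the colimit in the top dimension with the total space of its fibres. The paper instead expresses the inverse abstractly as another push-forward: from $(\cat{X},Q)$ it forms the $\cat{V}$-graded $m$-theory $\bigl((\diag_\cat{U})_!\wasreteori^n_mQ\bigr)_!\unity^m_\cat{X}$, pushing the terminal $\cat{X}$-graded theory along the functor induced by $Q$. It then checks the two round-trips by formal identities among $P_!$, $\diag_!$, and $\wasreteori^n_m$ (in particular $\maeoki{^\cat{U}}{\wasreteori}^n_mP_!\equivwith P_!\maeoki{^\cat{V}}{\wasreteori}^n_m$ and Example~\ref{ex:push-terminal-forward}), never unpacking the Grothendieck correspondence explicitly. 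The two arguments are equivalent---your fibre construction is exactly what the paper's push-forward of $\unity^m_\cat{X}$ unpacks to---but the paper's route is more uniform across dimensions and avoids a separate analysis of the top level, while yours makes the underlying classifying-space mechanism visible. Also note that the paper first reduces to $m\le n$ and disposes of $m=n$ separately as ``obviously true'' (essentially Proposition~\ref{prop:graded-is-overlying} iterated), whereas you treat all cases at once; both reductions are fine.
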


\begin{proof}
We may assume $m\le n$ without loss of generality, and the case $m=n$
may not be tautologous, but is obviously true.

For the case $m\le n-1$, denote by $P$, the projection
${(\diag_\cat{U})}_!\wasreteori^n_m\cat{V}\to\cat{U}$, where
$\diag_\cat{U}\colon\cat{U}\to\unity^n_\Com$.
Then, from an unenriched $\cat{V}$-graded $m$-theory $\cat{Y}$, we
obtain a $\cat{U}$-graded $m$-theory $P_!\cat{Y}$ equipped with the
functor
\[
P_!\diag_\cat{Y}\colon P_!\cat{Y}\longto P_!\unity^m_\cat{V}=\cat{V},
\]
where $\diag_\cat{Y}$ denotes the unique functor
$\cat{Y}\to\unity^m_\cat{V}$.

Conversely, given an unenriched $\cat{U}$-graded $m$-theory $\cat{X}$
with
projection $Q\colon\cat{X}\to\cat{V}$, one obtains a $\cat{V}$-graded
$m$-theory
$\bigl({(\diag_\cat{U})}_!\wasreteori^n_mQ\bigr)_!\unity^m_\cat{X}$.

We would like to show that these constructions are inverse to each
other.

The functor
$P_!\diag\colon
P_!\bigl({(\diag_\cat{U})}_!\wasreteori^n_mQ\bigr)_!\unity^m_\cat{X}\to
P_!\unity^m_\cat{V}$ is equivalent to $Q\colon\cat{X}\to\cat{V}$.

It therefore, suffices to show a natural equivalence
$\bigl({(\diag_\cat{U})}_!\wasreteori^n_mP_!\diag_\cat{Y}\bigr)_!\unity^m_{P_!\cat{Y}}\equivwith\cat{Y}$.
However, the functor
${(\diag_\cat{U})}_!\wasreteori^n_mP_!\diag_\cat{Y}$ can be identified
with the functor
\[
{(\diag_\cat{U}\compose\wasreteori^n_mP)}_!\maeoki{^\cat{V}}{\wasreteori}^n_m\diag_\cat{Y}\colon{(\diag_\cat{U}\compose\wasreteori^n_mP)}_!\maeoki{^\cat{V}}{\wasreteori}^n_m\cat{Y}\longto{(\diag_\cat{U}\compose\wasreteori^n_mP)}_!\maeoki{^\cat{V}}{\wasreteori}^n_m\unity^m_\cat{V},
\]
so we obtain
\begin{align*}
\maeoki{^\cat{V}}{\wasreteori}^n_m\bigl({(\diag_\cat{U})}_!\wasreteori^n_mP_!\diag_\cat{Y}\bigr)_!\unity^m_{P_!\cat{Y}}
&=\bigl({(\diag_\cat{U})}_!\wasreteori^n_mP_!\diag_\cat{Y}\bigr)_!\maeoki{^{P_!\cat{Y}}}{\wasreteori}^n_m\unity^m_{P_!\cat{Y}},\\
&=\bigl({(\diag_\cat{U}\compose\wasreteori^n_mP)}_!\maeoki{^\cat{V}}{\wasreteori}^n_m\diag_\cat{Y}\bigr)_!\unity^n_\cat{Y}\\
&=\maeoki{^\cat{V}}{\wasreteori}^n_m\cat{Y},
\end{align*}
from which the result follows.
\end{proof}

\section{Enrichment of higher theories}
\label{sec:enrichment}

\subsection{Introduction}

The subject of this section will be a general notion of enrichment for
graded higher theories.
We shall show how some previous notions such as grading by a graded
higher theory, can be compactly understood using the new notions
introduced in this section.
We shall also show how the new framework helps with considering
push-forward construction `on the right side', along some functors of
higher theories.
We shall also discuss a construction for higher theories which is
related to the Day convolution for symmetric monoidal categories, and
leads to a notion of algebra over an enriched higher theory.

\subsection{Enriched theories}
\label{sec:enriched-theory}
\subsubsection{}

Given a kind of algebraic structure, one motivation for theorizing it
was to generalize it to similar structure definable in a theorized
form of the same kind of algebraic structure.
Specifically, we would like to define the kind in question of
algebraic structure in a theorized structure $\cat{V}$, as a
(coloured) morphism to $\cat{V}$
from the terminal unenriched theorized structure.

For example, for an $n$-theory $\cat{U}$ enriched in groupoids, we
would like to define the notion of $\cat{U}$-algebra in a
$\cat{U}$-graded $n$-theory, since $\cat{U}$-graded $n$-theory is the
kind of structure which theorize the structure of a $\cat{U}$-algebra.
More generally, the following definition seems reasonable, which can
be considered as giving a quite general manner of \emph{enrichment},
of the notion of graded higher theory.

\begin{definition}\label{def:enrich}
Let $n\ge 0$ be an integer, and $\cat{U}$ be an $n$-theory enriched in
groupoids.
Let $m\ge 0$ and $M\ge m+1$ be integers, and $\cat{V}$ be a
$\cat{U}$-graded $M$-theory.
Then an \kore{$m$-theory in $\cat{V}$} is a functor
$\unity^M_\cat{U}\to\cat{V}$ of $\cat{U}$-graded $M$-theories with
strata of colours for a $\cat{U}$-graded $m$-theory.
Namely, it consists of
\begin{itemize}
\item a system of colours up to dimension $m-1$ for a $\cat{U}$-graded
  higher theory,
\item a functor $\wasreteori^M_{m+1}\cat{T}\to\cat{V}$, where
  $\cat{T}$ denotes the terminal unenriched $\cat{U}$-graded
  ($m+1$)-theory on the chosen system of colours.
\end{itemize}

The system of colours will be called the system of colours \kore{of}
the $m$-theory defined.

In the case $m=n-1$, $m$-theory will also be called an
\kore{algebra}.
\end{definition}

\begin{remark}\label{rem:terminology-redundancy-enriched-algebra}
In order to avoid redundancy, we call the defined kind of object
simply an ``$m$-theory'' (or ``algebra'') instead of a
``$\cat{U}$-graded $m$-theory'' (or $\cat{U}$-algebra) when it is
understood that a $\cat{U}$-grading is contained is the datum
$\cat{V}$.
We may explicitly refer to the $m$-theory as a $\cat{U}$-graded
$m$-theory when ($M\ge n$ and) $\cat{V}$ is a priori, just a symmetric
$M$-theory, and different higher theories may be being considered
over which we would like to grade $\cat{V}$, perhaps including the
commutative operad $\Com$.
This would clarify that a $\cat{U}$-grading is considered for
$\cat{V}$.
Note however, that this would be still ambiguous if more than one
$\cat{U}$-gradings are being considered for $\cat{V}$.
\end{remark}

Similarly to the definitions in Section \ref{sec:stratum-for-colour},
there are also less coloured versions of the notion of enriched graded
theory.
These are the cases where the $\cat{U}$-graded theory $\cat{T}$ which
were considered in Definition \ref{def:enrich}, are less coloured.

There is also a lax generalization.

\begin{definition}
Let $n\ge 0$ be an integer, and $\cat{U}$ be an $n$-theory enriched in
groupoids.
Let $m\ge 0$ and $M\ge m+1$ be integers, and let $\cat{V}$ be a
possibly higher categorified $\cat{U}$-graded $M$-theory.
Then for an integer $\ell\ge 0$, an \kore{$\ell$-lax} $m$-theory
in $\cat{V}$ is an $m$-theory in $\wasreteori^{M+\ell}_M\cat{V}$.
\end{definition}

$\cat{V}$ needs to be at least $\ell$-categorified in order for this
to be properly more general than the ($\ell-1$)-lax notion.

If $\cat{V}$ is represented by a once more categorified
$\cat{U}$-graded ($M-1$)-theory $\cat{W}$, then an
$\ell$-lax $m$-theory in $\cat{V}=\wasreteori_{M-1}\cat{W}$, is also
an ``$m$-tuply coloured'' ($\ell+1$)-lax ($m-1$)-theory in $\cat{W}$.

\subsubsection{}

As we have discussed in Section \ref{sec:introduction}, we obtain the
following in a low ``theoretic'' level of algebra.
For a functorial formulation of the following, see Proposition
\ref{prop:enriching-multicategory}.

\begin{proposition}
For every coloured symmetric operad $\cat{U}$ in groupoids, the notion
of coloured
$\cat{U}$-graded operad in a symmetric monoidal category has a
generalization in a ($\cat{U}\tensor E_1$)-monoidal category.
Namely, there is a notion of coloured $\cat{U}$-graded operad in a
($\cat{U}\tensor E_1$)-monoidal category, such that the notion of
coloured $\cat{U}$-graded operad in a symmetric monoidal category
coincides with the notion of coloured $\cat{U}$-graded operad in its
underlying ($\cat{U}\tensor E_1$)-monoidal category.
\end{proposition}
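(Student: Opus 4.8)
The plan is to manufacture the required notion directly from the delooping technology and then reduce the coincidence claim to the already-available comparison between the categorical delooping and the theory-level delooping. First I would exhibit the target $\cat{U}$-graded $2$-theory into which the would-be operads should map. Given a ($\cat{U}\tensor E_1$)-monoidal category $\cat{A}$, regard it as an associative ($E_1$-)monoidal object in $\cat{U}$-monoidal categories and categorically deloop the $E_1$-factor. Since the $\cat{U}$-monoidal structure commutes with the $E_1$-structure and $\deloop$ preserves the finite products used to encode that structure, the resulting $2$-category $\deloop\cat{A}$ inherits a $\cat{U}$-monoidal structure, i.e.\ is a twice-categorified $\cat{U}$-algebra. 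Representing it twice, set $\cat{V}:=\wasreteori_1\wasreteori_0\deloop\cat{A}$: here $\wasreteori_0\deloop\cat{A}$ is the categorified $\cat{U}$-graded $1$-theory represented by $\deloop\cat{A}$, and $\cat{V}$ is a $\cat{U}$-graded $2$-theory. I would then \emph{define} a coloured $\cat{U}$-graded operad in $\cat{A}$ to be a $1$-theory in $\cat{V}$ in the sense of Definition~\ref{def:enrich} (with $n=m=1$ and $M=2$, so that the constraint $M\ge m+1$ holds), using Remark~\ref{rem:terminology-redundancy-enriched-algebra} to suppress the grading from the terminology. This settles the existence half of the statement.

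For the coincidence half, let $\cat{A}$ be symmetric monoidal, viewed as ($\cat{U}\tensor E_1$)-monoidal through the forgetful functor recalled in the introduction. Here both the $E_1$- and the $\cat{U}$-factor are extracted from the single symmetric structure, so $\deloop\cat{A}$ in fact underlies a genuinely symmetric monoidal $2$-category (again using that $\deloop$ preserves products), and the $\cat{U}$-graded $2$-theory $\cat{V}$ is the pullback along $\cat{U}\to\Com$ of the symmetric $2$-theory it represents. The usual notion of coloured $\cat{U}$-graded operad in $\cat{A}$ is, by definition, a $\cat{U}$-graded $1$-theory enriched in $\cat{A}$, which is likewise the pulled-back form of the symmetric notion. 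It therefore suffices to match the two constructions at the symmetric level and transport the grading. At the symmetric level the required identification of $\cat{V}$ with the theory-level delooping $\ddeloop\wasreteori_0\cat{A}$ is precisely Proposition~\ref{prop:deloop-comparison} in its case $n=m=1$ (whose proof already rests on Lemma~\ref{lem:deloop-forget-commutation}), once one accounts for the passage between the fully coloured $\wasreteori_1\wasreteori_0\deloop\cat{A}$ and the base-object restriction governing operads. The circular characterization of enriched graded theories as theories in a delooping is then supplied by Proposition~\ref{prop:theory-as-theorization} together with Theorem~\ref{thm:graded-theory-is-monoid}, so that a $1$-theory in $\cat{V}$ and a $\cat{U}$-graded $1$-theory enriched in $\cat{A}$ become the same datum.

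To keep the grading under control throughout, I would lean on Proposition~\ref{prop:graded-is-overlying}: a $\cat{U}$-graded higher theory is a symmetric higher theory equipped with a projection to $\cat{U}$, so the whole comparison can be carried out on the underlying symmetric theories and then re-graded by transporting the canonical projections to $\cat{U}$ along the equivalences, with compatibility of the identifications with these projections following from Theorem~\ref{thm:functor-of-theorization}. The hard part will be exactly the verification that delooping the $E_1$-factor of $\cat{A}$ and then representing twice yields the \emph{same} $\cat{U}$-graded $2$-theory as representing $\cat{A}$ once and then delooping as a theory, i.e.\ making the equivalence $\cat{V}=\wasreteori_1\wasreteori_0\deloop\cat{A}\equivwith\ddeloop\wasreteori_0\cat{A}$ precise and checking that it respects the $\cat{U}$-grading; this is where the two a priori different meanings of ``delooping'' must be reconciled and where the bulk of the coherence bookkeeping lives. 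Once that equivalence is in hand, both halves of the proposition follow immediately from the definitions.
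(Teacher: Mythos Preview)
Your proposal takes essentially the same route as the paper: deloop the $E_1$-factor of $\cat{A}$ to obtain the $\cat{U}$-monoidal $2$-category $\deloop\cat{A}$, represent twice to land in the $\cat{U}$-graded $2$-theory $\wasreteori^2_0\deloop\cat{A}$, and declare a coloured $\cat{U}$-graded operad in $\cat{A}$ to be a $1$-theory therein. The paper's argument is exactly this, stated more tersely and with the coincidence half left largely implicit (via the earlier Section~\ref{sec:enrichment-explanation}).

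One point you underplay: the paper explicitly imposes that the defining functor $F\colon\cat{T}\to\wasreteori^2_0\deloop\cat{A}$ send every object (colour) of $\cat{T}$ to the \emph{base object} of $\deloop\cat{A}$. You mention the ``base-object restriction'' only in passing during the comparison, but it belongs in the definition itself. Without it the new notion is only equivalent to, not literally coincident with, the classical one when $\cat{A}$ is symmetric monoidal, since the multimap objects would a priori live in various (equivalent, but not canonically identified) endomorphism categories of $\deloop\cat{A}$ rather than in $\cat{A}$ itself. This is also what makes Proposition~\ref{prop:deloop-comparison} apply cleanly, since that proposition compares $\ddeloop\wasreteori_0\cat{A}$ with the base-object restriction of $\wasreteori^2_0\deloop\cat{A}$.

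A smaller remark: your list of references for the coincidence half is heavier than needed. The clean route is Example~\ref{ex:enrichment-in-groupoid} (which identifies $\cat{U}$-graded $1$-theories enriched in $\cat{A}$ with $1$-theories in $\diag^*\ddeloop\wasreteori_0\cat{A}$) together with Proposition~\ref{prop:deloop-comparison}; the appeals to Theorem~\ref{thm:graded-theory-is-monoid}, Proposition~\ref{prop:graded-is-overlying}, and Theorem~\ref{thm:functor-of-theorization} are not really doing independent work here.
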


Indeed, for a ($\cat{U}\tensor E_1$)-monoidal category $\cat{A}$, it
sufficed to define a coloured $\cat{U}$-graded operad in $\cat{A}$ as
a $1$-theory in the $\cat{U}$-graded $2$-theory
$\wasreteori^2_0\deloop\cat{A}$, where $\deloop\cat{A}$ denotes the
$\cat{U}$-monoidal $2$-category obtained by categorically delooping
$\cat{A}$ using the $E_1$-monoidal structure.
To be more precise, the $1$-theory in $\wasreteori^2_0\deloop\cat{A}$
should satisfy the following condition as a functor
\[
  F\colon\cat{T}\longto\wasreteori^2_0\deloop\cat{A},
\]
where $\cat{T}$ denotes the unenriched $\cat{U}$-graded $2$-theory
which is terminal on the colours of the $1$-theory.
The condition is that $F$ should send any object of $\cat{T}$ (i.e.,
colour of the $1$-theory) to the base object of $\deloop\cat{A}$.

In the case where $\cat{U}=\Ini, E_1, E_2$, this coincides with the
familiar notion.
See Example \ref{ex:graded-multicategory}.

\begin{remark}
It seems natural that the notion of $\cat{U}$-graded multicategory can
further be generalized to be enriched in a ($\cat{U}\tensor
E_1$)-graded multicategory.
We hope to come back to this in a sequel to this work.
\end{remark}

\subsubsection{}

We can more generally consider the similarly general enrichement of
the notion of $\cat{U}$-graded higher theory in the
case where $\cat{U}$ is graded by an $N$-theory, where $N\ge n+1$.
In this case, a $\cat{U}$-graded higher theory in the previous sense
was simply a $\diag_!\wasreteori^N_n\cat{U}$-graded higher theory.

The notion is therefore a special case of the notion defined in
Definition \ref{def:enrich}.
To be explicit, we have the following.
(There will be changes in the notation.)

Let
\begin{itemize}
\item $n\ge 0$ be an integer, and $\cat{U}$ be an $n$-theory enriched
  in groupoids,
\item $m\ge 0$ be an integer, and $\cat{X}$ be a
  $\cat{U}$-graded $m$-theory enriched in groupoids,
\item $\ell\ge 0$ and $L\ge\ell+1$ be integers, and $\cat{Y}$ be an
  $\cat{X}$-graded $L$-theory.
\end{itemize}
Then $\cat{Y}$ is a $\diag_!\wasreteori^N_m\cat{X}$-graded $L$-theory,
where $N:=\max\{m,n\}$, and $\diag\colon\cat{U}\to\unity^n_\Com$, so
an $\ell$-theory \emph{in $\cat{Y}$}
makes sense according to Definition \ref{def:enrich}.

\begin{definition}\label{def:algebra-in-graded-theory}
Let
\begin{itemize}
\item $n\ge 0$ be an integer, and $\cat{U}$ be an $n$-theory enriched
  in groupoids,
\item $m\ge 0$ be an integer, and $\cat{X}$ be a $\cat{U}$-graded
  $m$-theory enriched in groupoids,
\item $\ell\ge 0$ and $L\ge\ell+1$ be integers, and $\cat{V}$ be a
  $\cat{U}$-graded $L$-theory.
\end{itemize}
Then an \kore{$\cat{X}$-graded} $\ell$-theory in $\cat{V}$ is an
$\ell$-theory in the $\cat{X}$-graded $L$-theory $P^*\cat{V}$, where
$P$ denotes the projection
$\diag_!\wasreteori^N_m\cat{X}\to\wasreteori^N_n\cat{U}$, where
$N:=\max\{m,n\}$, and
$\diag\colon\cat{U}\to\unity^n_\Com$.

In the case $\ell=m-1$, an $\cat{X}$-graded $\ell$-theory will also be
called an \kore{$\cat{X}$-algebra}.
\end{definition}

Thus, a $\cat{U}$-graded $m$-theory in $\cat{V}$ is an uncoloured
$\cat{T}$-algebra in $\cat{V}$ for an unenriched $\cat{U}$-graded
($m+1$)-theory
$\cat{T}$ which is terminal on a system of colours up to
dimension $m-1$.

\begin{remark}
For the notion of Definition \ref{def:algebra-in-graded-theory}, we
are refraining from saying ``\emph{$\cat{U}$-graded}
$\cat{X}$-graded'' theory when we do not intend to emphasize
the $\cat{U}$-grading, and this is part of our convention on the
terminology.
See Remark \ref{rem:terminology-redundancy-enriched-algebra}.
\end{remark}

\begin{example}\label{ex:enrichment-in-groupoid}
For $\cat{U}$ and $\cat{X}$ as in Definition
\ref{def:algebra-in-graded-theory}, a $\cat{X}$-graded $\ell$-theory
enriched in a multicategory $\cat{M}$, is equivalent as a datum to an
$\cat{X}$-graded $\ell$-theory in the $\cat{U}$-graded
($\ell+1$)-theory $\diag^*\ddeloop^\ell\cat{M}$, where
$\diag\colon\cat{U}\to\unity^n_\Com$.
\end{example}

\begin{example}
In Definition \ref{def:algebra-in-graded-theory}, if $\cat{X}$ is the
terminal uncoloured $\cat{U}$-graded $m$-theory enriched in groupoids,
then $P$ is an
equivalence, and an $\cat{X}$-graded $\ell$-theory in $\cat{V}$ is
equivalent as a datum to an $\ell$-theory in $\cat{V}$.
\end{example}

\subsection{Graded theories as lifts of an algebra}
\label{sec:graded-theory-as-lift}

\subsubsection{}

Using Definition \ref{def:algebra-in-graded-theory}, an
$\cat{X}$-graded $\ell$-theory in $\cat{V}$ in the notation there, can
be written as an appropriate coloured functor of $\cat{U}$-graded
higher theories.
On the other hand, $\cat{X}$ itself may be defined by a coloured
functor
$F\colon\unity^{m+1}_\cat{U}\to\diag^*\ddeloop^m\wasreteori_0\Gpd$ of
$\cat{U}$-graded theories.
In this situation, one might wish to describe a higher theory graded
by $\cat{X}$, directly in terms of $F$.
We shall show that it can indeed be described as a coloured lift of
$F$.

Note that we may assume $m=L$($\ge \ell+1$) without loss of
generality.
We shall first discuss a result in this situation (\emph{Proposition
\ref{prop:graded-low-theory-as-lift}}).

For application in Section \ref{sec:right-adjoint-of-restriction}, we
shall also give an analogous result \emph{Proposition
\ref{prop:graded-theory-as-lift}} for the case ``$\ell=m$''.

\subsubsection{}

Let us first recall common notation.

If $\cat{C}$ is a category and $x$ is an object of $\cat{C}$,
then we denote by $\cat{C}_{/x}$, the category of objects of
$\cat{C}$ lying over $x$, i.e., equipped with a map to $x$.
More generally, if a category $\cat{D}$ is equipped with a functor to
$\cat{C}$, then we define
$\cat{D}_{/x}:=\cat{D}\times_{\cat{C}}\cat{C}_{/x}$.
Note here that $\cat{C}_{/x}$ is mapping to $\cat{C}$ by the functor
which forgets the structure map to $x$.
Note that the notation is abusive in that the name of the functor
$\cat{D}\to\cat{C}$ is dropped from it.
In order to avoid this abuse from causing any confusion, we shall use
this notation only when the considered functor $\cat{D}\to\cat{C}$ is
clear from the context.

\subsubsection{}

To get on the task now, let us denote by $\CAT$ the very large
category of large categories.
Consider this as symmetric monoidal by the Cartesian structure, and
give the functor category $\Fun(\Gpd^\op,\CAT)$ the structure of a
multicategory by the Day convolution.
Namely, we consider the structure of a multicategory underlying (or
``represented'' by) the symmetric monoidal structure given by the Day
convolution.
Then the Grothendieck construction defines a functor
$G\colon\Fun(\Gpd^\op,\CAT)\to\wasreteori_0(\CAT_{/\Gpd})$ of
multicategories, where $\CAT_{/\Gpd}$ is made symmetric monoidal by
the structure
induced from the (Cartesian) symmetric monoidal structure of $\Gpd$.

Furthermore, considering $\Gpd$ as a symmetric monoidal (full)
subcategory of $\CAT$, we obtain the composite
\[
\wasreteori_0\Gpd\xlongrightarrow{\mathrm{Yoneda}}\Fun(\Gpd^\op,\Gpd)\xlongrightarrow{\mathrm{inclusion}}\Fun(\Gpd^\op,\CAT)\xlongrightarrow{G}\wasreteori_0(\CAT_{/\Gpd}),
\]
where the structures of multicategories on the functor categories are
by the Day convolution.
We denote this functor by
$\gy\colon\wasreteori_0\Gpd\to\wasreteori_0(\CAT_{/\Gpd})$,
$X\mapsto\gy_X$, so $\gy_X$ is defined by the forgetful functor
$\gyzen X:=\Gpd_{/X}\to\Gpd$.

Now let $n\ge 0$ be an integer, and $\cat{U}$ be an $n$-theory.
Then for an integer $m\ge 0$, we obtain from a $\cat{U}$-graded
$m$-theory $\cat{V}$ enriched in $\Gpd$, a $\cat{U}$-graded $m$-theory
$\gy_*\cat{V}$ enriched in $\CAT_{/\Gpd}$, by postcomposition with
$\ddeloop^m\gy\colon\ddeloop^m\wasreteori_0\Gpd\to\ddeloop^m\wasreteori_0(\CAT_{/\Gpd})$.

$\gy_*\cat{V}$ can be considered as a $\cat{U}$-graded $m$-theory
$\gyzen_*\cat{V}$ enriched in $\CAT$, equipped with a functor
$\gyzen_*\cat{V}\to\diag^*\ddeloop^m\Gpd$, where
$\diag\colon\cat{U}\to\unity^n_\Com$.
In particular, we obtain a functor
$\wasreteori_m\gyzen_*\cat{V}\to\diag^*\ddeloop^m\wasreteori_0\Gpd$ by
Lemma \ref{lem:deloop-forget-commutation}.

\begin{proposition}\label{prop:graded-low-theory-as-lift}
Let $n\ge 0$ be an integer, and $\cat{U}$ be an $n$-theory enriched in
groupoids.
Let $m\ge 1$ be an integer, and $\cat{T}$ be a
$\cat{U}$-graded ($m+1$)-theory enriched in groupoids.
Suppose that an uncoloured $\cat{T}$-monoid $\cat{X}$ is defined by a
functor
$F\colon\cat{T}\to\diag^*\ddeloop^m\wasreteori_0\Gpd$,
where $\diag\colon\cat{U}\to\unity^n_\Com$, of $\cat{U}$-graded
($m+1$)-theories (e.g., these data may be specifying a
$\cat{U}$-graded $m$-theory).

Then, for an integer $\ell$ such that $0\le\ell\le m-1$, an
$\cat{X}$-graded $\ell$-theory in a $\cat{U}$-graded $m$-theory
$\cat{V}$ enriched in groupoids, is equivalent as a datum to a lift
with strata of colours up to dimension $\ell-1$, of $F$ to
$\wasreteori_m\gyzen_*\cat{V}$.
\end{proposition}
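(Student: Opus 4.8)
The plan is to argue exactly as in the proofs of Theorems~\ref{thm:functor-of-theorization} and~\ref{thm:graded-theory-is-monoid}: fix a common system of colours and then establish an equivalence between the two categories of structures extending it. First I would use the reduction recorded at the start of Section~\ref{sec:graded-theory-as-lift} to assume $L=m$, so that by Definition~\ref{def:algebra-in-graded-theory} an $\cat{X}$-graded $\ell$-theory in $\cat{V}$ is an $\ell$-theory in the $\cat{X}$-graded $m$-theory $P^*\cat{V}$, where $P$ is the projection $\diag_!\wasreteori^N_m\cat{X}\to\wasreteori^N_n\cat{U}$. Unwinding the ``functor with strata of colours'' description, such an $\ell$-theory is a system of colours up to dimension $\ell-1$ together with a coloured functor $\wasreteori^m_{\ell+1}\cat{T}'\to P^*\cat{V}$, with $\cat{T}'$ the terminal $\cat{X}$-graded $(\ell+1)$-theory on those colours. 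On the other side, a lift of $F$ with strata of colours up to dimension $\ell-1$ is a coloured functor $\cat{T}'\to\wasreteori_m\gyzen_*\cat{V}$ lying over $F\colon\cat{T}\to\diag^*\ddeloop^m\wasreteori_0\Gpd$ through the projection $\wasreteori_m\gyzen_*\cat{V}\to\diag^*\ddeloop^m\wasreteori_0\Gpd$. The first observation is that the two colour systems coincide: a $\cat{U}$-graded colour system lying over the $\cat{X}$-data named by $F$ is, by transitivity of gradings, the same form of datum as an $\cat{X}$-graded colour system. Hence the task reduces to matching the two kinds of structure supported on one fixed such system.

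The heart of the argument is the representing property of $\gy\colon\wasreteori_0\Gpd\to\wasreteori_0(\CAT_{/\Gpd})$, $X\mapsto(\gyzen X=\Gpd_{/X}\to\Gpd)$, which is the slice-theoretic counterpart of the $\Gpd_*$-description of the universal monoid in Proposition~\ref{prop:universal-theorization}. Concretely, for a groupoid $M$ of operations of $\cat{V}$, an object of $\gyzen M$ lying over a groupoid named by $F$ is precisely an object fibered over $M$, that is (since $M$ is a groupoid, via the Grothendieck construction $G$ built into the definition of $\gy$) a $\Gpd$-valued grading of that operation by $M$. I would therefore show that lifting $F$ through $\wasreteori_m\gyzen_*\cat{V}\to\diag^*\ddeloop^m\wasreteori_0\Gpd$ amounts, level by level in the theorization, to choosing coherently over each operation groupoid of the monoid $\cat{X}$ an object lying over the corresponding operation groupoid of $\cat{V}$, compatibly with composition, and that this is exactly the data assembling into the $\cat{V}$-valued operations of an $\ell$-theory in $P^*\cat{V}$. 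The passage between these ``chosen objects over $\cat{V}$-operations'' and the bimodule/corepresentation data appearing in $\wasreteori_m\gyzen_*\cat{V}$ is governed by Theorem~\ref{thm:functor-of-theorization}, and by Corollary~\ref{cor:forget-to-theorization} (everything in sight being groupoid-valued, so that functors and lax functors into the target agree), exactly as composition functors were traded for corepresented bimodules there.

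To align the targets I would invoke Lemma~\ref{lem:deloop-forget-commutation} repeatedly: both to produce the projection $\wasreteori_m\gyzen_*\cat{V}\to\diag^*\ddeloop^m\wasreteori_0\Gpd=\wasreteori_m\diag^*\ddeloop^m\Gpd$ out of the map $\gyzen_*\cat{V}\to\diag^*\ddeloop^m\Gpd$, and to commute $\wasreteori_m$ past $\ddeloop^m$ wherever $F$ and the $\cat{X}$-grading get rewritten in monoid form. Assembling the level-by-level identifications, together with the matching of the coherence data $(\infty)$ by the same inductive bookkeeping as in Definition~\ref{def:theory}, yields the desired equivalence of the categories (or higher categories) of the two structures on the fixed colour system, which is the assertion.

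The step I expect to be the main obstacle is the faithful propagation of the theorization $\wasreteori_m$ through the slice construction $\gyzen_*$: one must check that replacing the composition functors of $\gyzen_*\cat{V}$ by their corepresented bimodules produces an $(m+1)$-theory whose fibres over $\diag^*\ddeloop^m\wasreteori_0\Gpd$ classify exactly the $\cat{V}$-graded lifts of the $\cat{X}$-data, and that the functor $G$ is compatible with this bimodule replacement. This compatibility, essentially that ``slice over'' and ``corepresent'' interchange coherently at every multimap dimension, is where the real content lies; once it is in place, the remaining verifications are the routine, if lengthy, coherence inductions already templated by Theorems~\ref{thm:functor-of-theorization} and~\ref{thm:graded-theory-is-monoid}.
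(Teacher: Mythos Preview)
Your approach is correct in spirit --- the colour systems do match, and the representing property of $\gy$ is indeed the core --- but it is considerably more elaborate than the paper's two-sentence proof. The paper works over $\cat{T}$ rather than $\cat{X}$ throughout (legitimate precisely because $\cat{X}$ is an \emph{uncoloured} $\cat{T}$-monoid, so colour systems for $\cat{X}$-graded and for $\cat{T}$-graded $\ell$-theories coincide), takes $\cat{J}$ to be the terminal unenriched $\cat{T}$-graded $(\ell+1)$-theory on the chosen colours, and then simply observes that ``from the definitions'' an $\cat{X}$-graded $\ell$-theory in $\cat{V}$ on those colours is the same as a lift of $F$ to a functor $P_!\wasreteori^{m+1}_{\ell+1}\cat{J}\to\wasreteori_m\gyzen_*\cat{V}$, with $P$ the projection $\diag_!\wasreteori^N_{m+1}\cat{T}\to\wasreteori^N_n\cat{U}$. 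The ``main obstacle'' you anticipate --- propagating $\wasreteori_m$ through $\gyzen_*$ --- dissolves in this framing: $\gyzen_*\cat{V}$ is \emph{by construction} the postcomposition of the defining functor of $\cat{V}$ with $\ddeloop^m\gy$, so a lift of $F$ to $\wasreteori_m\gyzen_*\cat{V}$ is, by the definition $\gy_X=(\Gpd_{/X}\to\Gpd)$, exactly a coherent choice of groupoids over the $\cat{V}$-operations indexed by the $\cat{X}$-data named by $F$. None of Theorem~\ref{thm:functor-of-theorization}, Corollary~\ref{cor:forget-to-theorization}, or Lemma~\ref{lem:deloop-forget-commutation} is invoked; the paper treats the identification as transparent once $\gyzen_*$ is unpacked. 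Your route through $P^*\cat{V}$ with $P$ the projection from the $\cat{X}$-side would also work, but the detour through $\cat{T}$ avoids that pullback and the attendant bookkeeping entirely.
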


\begin{proof}
Since $\cat{X}$ is uncoloured, a system of colours for an
$\cat{X}$-graded $\ell$-theory (up to dimension $\ell-1$) is
equivalent as a datum to a system of colours for an $\cat{T}$-graded
$\ell$-theory.
If $\cat{J}$ is the terminal unenriched $\cat{T}$-graded
($\ell+1$)-theory on a
system of colours up to dimension $\ell-1$, then, from the
definitions, a correspondence is immediate between the structures on
this system of colours, of $\cat{X}$-graded $\ell$-theories in
$\cat{V}$, and lifts of $F$ to functors
$P_!\wasreteori^{m+1}_{\ell+1}\cat{J}\to\wasreteori_m\gyzen_*\cat{V}$,
where $P$ denotes the projection
$\diag_!\wasreteori^N_{m+1}\cat{T}\to\wasreteori^N_n\cat{U}$,
$N\ge m+1,n$.
\end{proof}

\begin{corollary}\label{cor:ab-ba}
Let $F$, $\cat{X}$ be as in Proposition.
Then, for an integer $\ell$ such that $0\le\ell\le m-1$,
an $\cat{X}$-graded $\ell$-theory enriched
in a symmetric multicategory $\cat{M}$ is equivalent as a datum to a
lift with strata of colours up to dimension $\ell-1$, of $F$ to
$\diag^*\ddeloop^\ell\wasreteori_{m-\ell}\gyzen_*\wasreteori^{m-\ell}_1\cat{M}$.
\end{corollary}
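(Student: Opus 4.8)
The plan is to deduce the Corollary from Proposition \ref{prop:graded-low-theory-as-lift} in two moves: first realize enrichment in $\cat{M}$ as enrichment in a $\cat{U}$-graded $m$-theory that is actually \emph{enriched in groupoids}, so that the Proposition applies verbatim; then rewrite the target $\wasreteori_m\gyzen_*\cat{V}$ it produces into the stated form by commuting the constructions $\gyzen_*$, $\ddeloop$, $\wasreteori$ and $\diag^*$ past one another.

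For the first move I would invoke Example \ref{ex:enrichment-in-groupoid}, which identifies an $\cat{X}$-graded $\ell$-theory enriched in $\cat{M}$ with an $\cat{X}$-graded $\ell$-theory in the $\cat{U}$-graded $(\ell+1)$-theory $\diag^*\ddeloop^\ell\cat{M}$. Since $\ell\le m-1$ this ambient theory has order below the $m$ demanded by the Proposition, but it is enriched in groupoids, as $\ddeloop$ and $\diag^*$ preserve groupoid-enrichment. I then set $\cat{V}:=\diag^*\ddeloop^\ell\wasreteori^{m-\ell}_1\cat{M}$, a $\cat{U}$-graded $m$-theory enriched in groupoids, and note the equivalence $\cat{V}\equivwith\wasreteori^m_{\ell+1}\diag^*\ddeloop^\ell\cat{M}$ obtained by reshuffling the $\ddeloop$'s and $\wasreteori$'s via Lemma \ref{lem:deloop-forget-commutation} (together with the commutation of $\wasreteori$ with $\diag^*$ recorded in Section \ref{sec:iterated-monoid}). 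Because, by Definitions \ref{def:enrich} and \ref{def:algebra-in-graded-theory}, both $\cat{X}$-graded $\ell$-theories unwind to \emph{ordinary} functors between groupoid-enriched higher theories on a common fixed system of colours, and because $\wasreteori$ is fully faithful on such theories (Corollary \ref{cor:forget-to-theorization}) and commutes with the pullbacks $P^*$ and $\diag^*$, applying $\wasreteori^m_{\ell+1}$ identifies $\cat{X}$-graded $\ell$-theories in $\diag^*\ddeloop^\ell\cat{M}$ with $\cat{X}$-graded $\ell$-theories in $\cat{V}$.

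Proposition \ref{prop:graded-low-theory-as-lift}, applied to this $\cat{V}$, then identifies these with lifts of $F$, with strata of colours up to dimension $\ell-1$, to $\wasreteori_m\gyzen_*\cat{V}$. Writing $\cat{N}:=\wasreteori^{m-\ell}_1\cat{M}$, it remains to compute
\[
\wasreteori_m\gyzen_*\cat{V}=\wasreteori_m\gyzen_*\diag^*\ddeloop^\ell\cat{N}\equivwith\diag^*\ddeloop^\ell\wasreteori_{m-\ell}\gyzen_*\cat{N},
\]
where the second equivalence passes $\gyzen_*$ through $\diag^*$ and through $\ddeloop^\ell$, passes $\wasreteori_m$ through $\diag^*$, and finally applies Lemma \ref{lem:deloop-forget-commutation} in the form $\wasreteori_m\ddeloop^\ell\equivwith\ddeloop^\ell\wasreteori_{m-\ell}$. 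The right-hand side is exactly the Corollary's target $\diag^*\ddeloop^\ell\wasreteori_{m-\ell}\gyzen_*\wasreteori^{m-\ell}_1\cat{M}$, and transporting the lift of $F$ along this equivalence preserves the strata of colours up to dimension $\ell-1$, giving the stated conclusion.

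The main obstacle I expect is the last equivalence, specifically verifying that $\gyzen_*$ — built from the Grothendieck-construction/Day-convolution functor $\gy$ by postcomposition at the appropriate delooped level — genuinely commutes with $\wasreteori$, with $\ddeloop$, and with $\diag^*$. Each commutation is natural and ultimately formal, but since $\gyzen_*$ is assembled from $\gy$ in a level-dependent way, I would check these compatibilities inductively in the theoretic order, matching them against the bookkeeping already carried out for $\wasreteori$ and $\ddeloop$ in Section \ref{sec:forget-to-theorization} and Lemma \ref{lem:deloop-forget-commutation}. A secondary point needing care is that the reduction of the second paragraph respects the $\cat{X}$-grading encoded through $F$ and the projection $P$, which follows from the naturality of $\wasreteori$ and its commutation with $P^*$.
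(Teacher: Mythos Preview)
Your proposal is correct and follows essentially the same route as the paper: reduce enrichment in $\cat{M}$ to an $\cat{X}$-graded $\ell$-theory in $\diag^*\ddeloop^\ell\cat{M}$ via Example~\ref{ex:enrichment-in-groupoid}, promote the ambient theory to level $m$ using $\wasreteori^m_{\ell+1}$ so that Proposition~\ref{prop:graded-low-theory-as-lift} applies, and then rewrite the resulting target $\wasreteori_m\gyzen_*\cat{V}$ by commuting $\gyzen_*$, $\ddeloop$, $\wasreteori$, and $\diag^*$. The paper is simply terser about the commutations you spell out (it asserts the final equivalence of $(m+1)$-theories over $\ddeloop^m\wasreteori_0\Gpd$ directly rather than decomposing it), but the argument is the same.
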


\begin{proof}
An $\cat{X}$-graded $\ell$-theory enriched in a symmetric
multicategory $\cat{M}$, is an $\cat{X}$-graded $\ell$-theory in
$\diag^*\cat{W}$, where $\cat{W}$ denotes the symmetric
($\ell+1$)-theory $\ddeloop^\ell\cat{M}$.
Proposition identifies this with a coloured lift of $F$ to
$\diag^*\wasreteori_m\gyzen_*\wasreteori^m_{\ell+1}\cat{W}$.
Moreover, there is an equivalence
\[
\wasreteori_m\gyzen_*\wasreteori^m_{\ell+1}\cat{W}=\ddeloop^\ell\wasreteori_{m-\ell}\gyzen_*\wasreteori^{m-\ell}_1\cat{M}.
\]
of unenriched ($m+1$)-theories lying over
$\ddeloop^m\wasreteori_0\Gpd=\ddeloop^\ell\ddeloop^{m-\ell}\wasreteori_0\Gpd$.
\end{proof}

\subsubsection{}

In Section \ref{sec:right-adjoint-of-restriction}, we shall use a
natural analogue of Proposition \ref{prop:graded-low-theory-as-lift}
for $\ell=m$.
The way how we obtain it will be by restricting the context.

Let us denote by $\CCAT$ the very large \emph{$2$-category} of large
categories extending the $1$-category $\CAT$.
In order to formulate the result, we first extend $\gy$ to the
composite
\begin{multline*}
\gy\colon\wasreteori_0\CCAT\xlongrightarrow{\mathrm{Yoneda}}\FFun(\CCAT^\op,\CCAT)\\
\xlongrightarrow{\mathrm{restriction}}\FFun(\Gpd^\op,\CCAT)\xlongrightarrow{G}\wasreteori_0(\CCAT_{/\Gpd})
\end{multline*}
of functors of categorified multicategories, where $\FFun$ indicates
the $2$-categories of functors extended to categorified
multicategories by the Day convolution, and $\Gpd$ is considered as a
symmetric monoidal subcategory of $\CCAT$.
Thus, for $\cat{C}\in\CCAT$, the object $\gy_\cat{C}\in\CCAT_{/\Gpd}$
is defined by the forgetful functor
$\gyzen\cat{C}:=\Gpd_{/\cat{C}}\to\Gpd$.

Now let $n\ge 0$ be an integer, and $\cat{U}$ be an $n$-theory.
Then as before, for an integer $m\ge 0$, we obtain from a
$\cat{U}$-graded $m$-theory $\cat{V}$ enriched in $\CCAT$,
\begin{itemize}
\item a $\cat{U}$-graded $m$-theory $\gyzen_*\cat{V}$ enriched in
  $\CCAT$,
\item a functor
$\wasreteori_m\gyzen_*\cat{V}\to\diag^*\ddeloop^m\wasreteori_0\Gpd$,
\end{itemize}
by considering $\gy_*\cat{V}$.

\begin{proposition}\label{prop:graded-theory-as-lift}
For $\cat{U}$, $F$, $\cat{X}$ as in Proposition
\ref{prop:graded-low-theory-as-lift}, an $\cat{X}$-graded $m$-theory
in the ($m+1$)-theory represented by a $\cat{U}$-graded $m$-theory
$\cat{V}$ enriched in $\CCAT$, is equivalent as a datum to a lift with
strata of colours up to dimension
$m-1$, of $F$ to $\wasreteori_m\gyzen_*\cat{V}$.
\end{proposition}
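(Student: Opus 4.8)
The plan is to run the argument of Proposition \ref{prop:graded-low-theory-as-lift} verbatim in the boundary case $\ell=m$, with the $\CCAT$-enriched extension of $\gy$ constructed just above playing the role of the $\CAT$-enriched one. Since $\cat{X}$ is uncoloured, a system of colours up to dimension $m-1$ for an $\cat{X}$-graded $m$-theory is again the same datum as a system of colours for a $\cat{T}$-graded $m$-theory, so I would fix such a system and let $\cat{J}$ denote the terminal unenriched $\cat{T}$-graded $(m+1)$-theory on it, exactly as before. Writing $P\colon\diag_!\wasreteori^N_{m+1}\cat{T}\to\wasreteori^N_n\cat{U}$ ($N\ge m+1,n$) for the relevant projection, the goal is to identify the structures on this fixed system, of $\cat{X}$-graded $m$-theories in the $(m+1)$-theory $\wasreteori_m\cat{V}$ represented by $\cat{V}$, with the lifts of $F$ along $\wasreteori_m\gyzen_*\cat{V}\to\diag^*\ddeloop^m\wasreteori_0\Gpd$ to functors $P_!\wasreteori^{m+1}_{m+1}\cat{J}=P_!\cat{J}\to\wasreteori_m\gyzen_*\cat{V}$.

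Unwinding Definitions \ref{def:enrich} and \ref{def:algebra-in-graded-theory}, an $\cat{X}$-graded $m$-theory in $\wasreteori_m\cat{V}$ on the chosen colours is a functor $P_!\cat{J}\to\wasreteori_m\cat{V}$ into a \emph{represented} $(m+1)$-theory, in the generalized sense of Remark \ref{rem:lax-functor-general}. By the generalized form of Theorem \ref{thm:functor-of-theorization} recorded there, such a functor is the same datum as the corresponding structure relative to $\cat{V}$ itself, the top-dimensional composition bimodules of $\wasreteori_m\cat{V}$ corepresenting the composition functors of $\cat{V}$. The content of the proposition is then that postcomposing the groupoid-valued datum $F$ with the $\gyzen$-construction turns this structure into a genuine lift: the functor $\gy\colon\wasreteori_0\CCAT\to\wasreteori_0(\CCAT_{/\Gpd})$, sending $\cat{C}$ to $\gyzen\cat{C}=\Gpd_{/\cat{C}}\to\Gpd$, exhibits each groupoid-valued $\Mul$-datum prescribed by $F$ as a fibre of a map to $\Gpd$, so that a coherent system of objects in these fibres, compatible with the top-dimensional composition, is precisely a lift of $F$ to $\wasreteori_m\gyzen_*\cat{V}$.

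The step I expect to be the main obstacle is verifying that at the top dimension $m$ a genuine functor (lift), and not merely a lax one, corresponds to an $\cat{X}$-graded $m$-theory, i.e.\ that no laxity is lost or spuriously introduced in the composition of $m$-multimaps. This is exactly where one must ``restrict the context'': because the $m$-multimaps of $\gyzen_*\cat{V}$ are genuinely categorical (their objects live in $\Gpd_{/\cat{C}}$), the functoriality and naturality data that distinguish a functor from a lax functor in Theorem \ref{thm:functor-of-theorization} are now carried by the $2$-morphisms of $\CCAT$, which is why the extension of $\gy$ has to be taken through the $2$-categorical functor categories $\FFun(\Gpd^\op,\CCAT)$ rather than through $\Fun(\Gpd^\op,\CAT)$. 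Once one checks that this $\FFun$-extension supplies exactly the missing functoriality, compatibly with the top-dimensional composition and its coherent associativity, the identification of the two sides follows from the definitions and the proof concludes as in Proposition \ref{prop:graded-low-theory-as-lift}.
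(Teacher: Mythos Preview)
Your proposal is correct and follows essentially the same approach as the paper: the paper's proof is literally the single sentence ``The proof is similar to the proof of Proposition~\ref{prop:graded-low-theory-as-lift},'' and your first paragraph reproduces that argument with $\ell=m$ and $\cat{J}$ the terminal $\cat{T}$-graded $(m+1)$-theory on the chosen colours. Your second and third paragraphs elaborate on why the $\CCAT$-enriched (i.e.\ $2$-categorical, via $\FFun$) extension of $\gy$ is the right setup and worry about a possible lax/strict discrepancy at the top level; the paper does not record these concerns explicitly because it has already built the extended $\gy$ before stating the proposition and regards the resulting identification as ``immediate from the definitions,'' just as in the proof of Proposition~\ref{prop:graded-low-theory-as-lift}.
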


The proof is similar to the proof of Proposition
\ref{prop:graded-low-theory-as-lift}.

\begin{corollary}\label{cor:graded-theory-as-lift}
For $F$, $\cat{X}$ as in Proposition
\ref{prop:graded-low-theory-as-lift}, an $\cat{X}$-graded $m$-theory
enriched in a symmetric monoidal category $\cat{A}$, is equivalent as
a datum to a lift with strata of colours up to dimension $m-1$, of $F$
to $\diag^*\ddeloop^m\wasreteori_0\gyzen_*\cat{A}$.
\end{corollary}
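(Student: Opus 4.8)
The plan is to mirror the proof of Corollary~\ref{cor:ab-ba}, reducing the statement to Proposition~\ref{prop:graded-theory-as-lift} by identifying the ($m+1$)-theory in which the enriched graded theory lives. First I would unwind the enrichment: by Example~\ref{ex:enrichment-in-groupoid} applied with $\ell=m$ and the multicategory $\wasreteori_0\cat{A}$, an $\cat{X}$-graded $m$-theory enriched in the symmetric monoidal category $\cat{A}$ (i.e., in $\wasreteori_0\cat{A}$) is the same datum as an $\cat{X}$-graded $m$-theory in the $\cat{U}$-graded ($m+1$)-theory $\diag^*\ddeloop^m\wasreteori_0\cat{A}$.

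Next I would exhibit this ($m+1$)-theory as one represented by a $\cat{U}$-graded $m$-theory enriched in $\CCAT$, so that Proposition~\ref{prop:graded-theory-as-lift} applies directly. Set $\cat{V}:=\diag^*\ddeloop^m\cat{A}$. Since $\cat{A}$ is a symmetric monoidal category, $\ddeloop^m\cat{A}$ is a (categorified) symmetric $m$-theory, and $\diag^*$ makes it a $\cat{U}$-graded $m$-theory enriched in $\CCAT$. Using Remark~\ref{rem:deloop-forget-commutation} (the iterate of Lemma~\ref{lem:deloop-forget-commutation}, giving $\wasreteori_m\ddeloop^m\cat{A}\equivwith\ddeloop^m\wasreteori_0\cat{A}$) together with the commutation of $\wasreteori_m$ with pullback recorded in Section~\ref{sec:iterated-monoid}, I obtain $\wasreteori_m\cat{V}\equivwith\diag^*\ddeloop^m\wasreteori_0\cat{A}$, the ($m+1$)-theory of the previous paragraph. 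Proposition~\ref{prop:graded-theory-as-lift}, applied with this $\cat{V}$, then identifies our datum with a lift with strata of colours up to dimension $m-1$ of $F$ to $\wasreteori_m\gyzen_*\cat{V}$.

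It then remains to compute the target $\wasreteori_m\gyzen_*\cat{V}$. The construction $\gyzen_*$ acts by postcomposition with $\ddeloop^m\gy$ on the enriching variable alone, so it commutes both with the (combinatorial) delooping $\ddeloop$ and with the (grading) pullback $\diag^*$; hence $\gyzen_*\cat{V}=\gyzen_*\diag^*\ddeloop^m\cat{A}\equivwith\diag^*\ddeloop^m\gyzen_*\cat{A}$. Combining this with the commutation of $\wasreteori_m$ with $\diag^*$ and once more with Lemma~\ref{lem:deloop-forget-commutation} yields
\[
\wasreteori_m\gyzen_*\cat{V}\equivwith\diag^*\wasreteori_m\ddeloop^m\gyzen_*\cat{A}\equivwith\diag^*\ddeloop^m\wasreteori_0\gyzen_*\cat{A},
\]
which is exactly the target claimed in the statement, completing the identification. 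The main obstacle is precisely this last commutation $\gyzen_*\diag^*\ddeloop^m\equivwith\diag^*\ddeloop^m\gyzen_*$: one must check that $\gyzen_*$ genuinely slides past both delooping and pullback. This is immediate from the definitions, since the three operations act on mutually independent portions of the defining data---the enriching target, the combinatorial arities, and the grading/source respectively---but it is the one point requiring care; everything else is bookkeeping with equivalences already established.
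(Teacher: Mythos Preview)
Your proposal is correct and follows essentially the same route as the paper: apply Proposition~\ref{prop:graded-theory-as-lift} with $\cat{V}=\diag^*\ddeloop^m\cat{A}$, then reduce the target via the commutation $\gyzen_*\ddeloop^m\cat{A}\equivwith\ddeloop^m\gyzen_*\cat{A}$. The paper compresses your steps~1--2 into the single observation that enrichment in $\cat{A}$ is precisely the case $\cat{V}=\diag^*\ddeloop^m\cat{A}$ of the Proposition, and records only the key $\gyzen_*$--$\ddeloop^m$ commutation (leaving the $\diag^*$ and $\wasreteori_m$ commutations implicit), but the content is the same.
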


\begin{proof}
This is the case $\cat{V}=\diag^*\ddeloop^m\cat{A}$ of Proposition
since we have an equivalence
\[
\gyzen_*\ddeloop^m\cat{A}=\ddeloop^m\gyzen_*\cat{A}
\]
of ($m+1$)-theories enriched in $\CAT$, lying over $\ddeloop^m\Gpd$.
\end{proof}

\subsection{The right adjoint of the restriction of degrees}
\label{sec:right-adjoint-of-restriction}

\subsubsection{}
\label{sec:right-push-begin}

In Section \ref{sec:iterated-monoid}, we have considered the
left adjoint to the functor `restricting' the degrees for graded
theories.
We would like to consider the \emph{right} adjoint in the following
situation.

Let
\begin{itemize}
\item $n\ge 0$ be an integer, and $\cat{U}$ be an $n$-theory enriched
  in groupoids,
\item $m\ge 0$ be an integer, and $\cat{V}$ be an
  $\cat{U}$-graded $m$-theory enriched in groupoids,
\item $\cat{X}$ be an unenriched $\cat{V}$-graded $m$-theory.
\end{itemize}
Denote by $P$ the projection functor
$\diag_!\wasreteori^N_m\cat{V}\to\wasreteori^N_n\cat{U}$, where $N\ge
m,n$, and $\diag\colon\cat{U}\to\unity^n_\Com$.
Then it turns out that we can construct a $\cat{U}$-graded
\emph{($m+1$)-theory} $P_*\cat{X}$ having an appropriate universal
property.

We shall do this construction in two steps.
The key observation is that we can reduce the situation above into two
simpler situations according to the factorization of $P$ as
\[
\diag_!\wasreteori^N_m\cat{V}\xlongrightarrow{R}\diag_!\wasreteori^N_m\cat{T}\xlongrightarrow{Q}\wasreteori^N_n\cat{U},
\]
where $\cat{T}$ denotes the $\cat{U}$-graded $m$-theory enriched in
groupoids which is
terminal on the system of colours of $\cat{V}$, so $\cat{V}$ can be
identified with an uncoloured $\cat{T}$-graded $m$-theory.

The two cases which we shall treat separately below, will imply
respectively
that we obtain a $\cat{T}$-graded ($m+1$)-theory $R_*\cat{X}$, and
that we obtain from this, a $\cat{U}$-graded ($m+1$)-theory
$Q_*R_*\cat{X}$.
Moreover, it will follow that the construction $P_*:=Q_*R_*$ has a
universal property desired of the right push-forward.

\begin{remark}
As we shall see in the construction, $Q_*$ will produce an
($m+1$)-theory in which the groupoids of ($m+1$)-multimaps may not
necessarily be small, even if the groupoids of $m$-multimaps are all
small in $\cat{X}$.
\end{remark}

Let us see the constructions $Q_*$ and $R_*$.

\subsubsection{}

In order to construct $R_*$ above, it suffices to construct a
$\cat{U}$-graded ($m+1$)-theory $P_*\cat{X}$ in the spacial case of
the original situation where $\cat{V}$ is an \emph{uncoloured}
$\cat{U}$-graded $m$-theory.
This can be done with the following construction at the universal
level.

Recall from Section \ref{sec:iterate-theorize-algebra} and Corollary
\ref{cor:graded-theory-as-lift} that $\cat{V}$ corresponds to a
functor
$F_\cat{V}\colon\wasreteori^N_n\cat{U}\to\wasreteori^N_{m+1}\ddeloop^m\wasreteori_0\Gpd$,
where $N\ge n,m+1$, and $\cat{X}$ corresponds then to a functor
$F_\cat{X}\colon\cat{J}\to{F_\cat{V}}^*\ddeloop^m\wasreteori_0\gyzen_*\Gpd$
of unenriched $\cat{U}$-graded ($m+1$)-theories, where $\cat{J}$ is
terminal on the system of colours of $\cat{X}$.

Let $\CCocor$ denote the symmetric monoidal $2$-category of groupoids
and cocorrespondences.
Thus, its object is a groupoid, and for groupoids $X,Y$, the category
$\Map_\CCocor(X,Y)$ is the category formed naturally by the
diagrams of the form
\[
X\longto M\longfrom Y
\]
in $\Gpd$, where the groupoid $M$ is allowed to vary arbitrarily.
Composition is done by the obvious push-out operation.
The symmetric monoidal structure is induced from the Cartesian product
in $\Gpd$.

Then there is a symmetric monoidal lax functor
$\Gamma\colon\gyzen_*\Gpd\to\CCocor$ which sends
\begin{itemize}
\item an object of $\gyzen_*\Gpd$ given by $X\in\Gpd$ and a functor
  $F\colon X\to\Gpd$, to the groupoid $\lim_XF$,
\item a map $(X,F)\to(Y,G)$ in $\gyzen_*\Gpd$ given by a map $f\colon X\to
  Y$ and a map $\lift{f}\colon F\to f^*G$, to the map in $\CCocor$
  corresponding to the diagram
  \[
  \lim_XF\xlongrightarrow{\lift{f}}\lim_Xf^*G\xlongleftarrow{f^*}\lim_YG
  \]
  in $\Gpd$,
\end{itemize}
and extends these data naturally.
From this, we obtain the induced functor
$\Gamma\colon\wasreteori^2_0\gyzen_*\Gpd\to\wasreteori^2_0\CCocor$ of
$2$-theories, and hence
\[
\ddeloop^m\Gamma\colon\wasreteori_{m+1}\ddeloop^m\wasreteori_0\gyzen_*\Gpd=\ddeloop^m\wasreteori^2_0\gyzen_*\Gpd\longto\ddeloop^m\wasreteori^2_0\CCocor.
\]

Denote by $\CCocor_*$ the symmetic monoidal $2$-category of
co-correspondences in the category $\Gpd_*$ of pointed groupoids.
The forgetful functor $\Gpd_*\to\Gpd$ (which preserves push-outs and
direct products) induces a symmetric monoidal functor
$\CCocor_*\to\CCocor$.
In particular, we can consider the ($m+2$)-theory
$\wasreteori_{m+1}\ddeloop^m\wasreteori_0\CCocor_*$ as graded by
$\wasreteori_{m+1}\ddeloop^m\wasreteori_0\CCocor=\ddeloop^m\wasreteori^2_0\CCocor$.
This $\wasreteori_{m+1}\ddeloop^m\wasreteori_0\CCocor$-graded
($m+2$)-theory is in fact representable by a ($m+1$)-theory.
This will be an instance of the following.

\begin{lemma}
Let $\cat{C}$ and $\cat{D}$ be $n$-theories enriched in $\CAT$, and
suppose given a functor $P\colon\cat{D}\to\cat{C}$ of categorified
$n$-theories.
Then the $\wasreteori_n\cat{C}$-graded ($n+1$)-theory corresponding to
the induced functor
$P\colon\wasreteori_n\cat{D}\to\wasreteori_n\cat{C}$ of symmetric
($n+1$)-theories, is representable by a categorified
$\wasreteori_n\cat{C}$-graded $n$-theory if the functors induced by
$P$ on the categories of $n$-multimaps, are all op-fibrations.

Moreover, the representing $\wasreteori_n\cat{C}$-graded $n$-theory in
this situation, is enriched in fact in groupoids if and only if the
op-fibrations are all fibred in groupoids.
\end{lemma}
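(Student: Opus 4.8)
The plan is to reduce the representability question to a \emph{fibrewise} corepresentability statement and then to recognize that statement as the existence of coCartesian lifts. First I would use the overlying description of graded theories (Proposition~\ref{prop:graded-is-overlying}) to regard the $\wasreteori_n\cat{C}$-graded $(n+1)$-theory in question as nothing but the symmetric $(n+1)$-theory $\wasreteori_n\cat{D}$ together with its projection $P$ to $\wasreteori_n\cat{C}$; and, appealing to Theorem~\ref{thm:graded-theory-is-monoid} (in its categorified form) together with Proposition~\ref{prop:graded-is-overlying}, I would regard a candidate representing object, in overlying form, as a categorified symmetric $n$-theory lying over $\cat{C}$. The systems of colours up to dimension $n$ on the two sides agree with the corresponding data of $\cat{D}$, so the entire content of the assertion is concentrated in the top stratum.

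Next I would invoke the characterization of the essential image of $\wasreteori_n$ recalled in Section~\ref{sec:forget-to-theorization}: a graded $(n+1)$-theory lies in this image exactly when the bimodules virtually giving the composition of $n$-multimaps are corepresentable, with invertible associativity. The point I would stress — and the place where care is needed — is that corepresentability must here be taken relative to the grading, i.e.\ fibrewise over $\wasreteori_n\cat{C}$, and this is strictly stronger than the corepresentability of the same bimodules in $\wasreteori_n\cat{D}$ itself, which is automatic since $\wasreteori_n\cat{D}$ is representable by $\cat{D}$. Concretely, an $(n+1)$-multimap of $\wasreteori_n\cat{D}$ of a prescribed degree $v$ is a map $g\colon m^\cat{D}_1(\tilde a)\to\tilde b$ in the category of $n$-multimaps of $\cat{D}$ with $Pg=v$; fixing the source nerve $\tilde a$ and letting $\tilde b$ range over the $P$-fibre of the target of $v$, the requirement is that the presheaf $\tilde b\mapsto\{\,g\colon Pg=v\,\}$ be corepresentable. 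I would then observe that a corepresenting object is precisely a coCartesian lift of $v$ based at $m^\cat{D}_1(\tilde a)$: the universal property of the coCartesian lift, specialized to morphisms of $\wasreteori_n\cat{C}$ lying over the identity of the target of $v$, is verbatim the corepresentability just demanded. Since every $n$-multimap of $\cat{D}$ occurs as such a composite (take the unary nerve), asking for all these corepresentations amounts to asking for coCartesian lifts at every object, that is, that each functor induced by $P$ on the categories of $n$-multimaps be an op-fibration; closure of coCartesian morphisms under composition then supplies compatibility with the composition coherence for free. Thus, when the op-fibration hypothesis holds, the coCartesian lifts corepresent the fibrewise composition bimodules and assemble into the composition structure of a representing graded $n$-theory, which is the asserted conclusion. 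This is the higher-theoretic form of the correspondence between theorizations over a base and categorifications recorded in Example~\ref{ex:graded-by-category}.

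For the moreover clause I would argue through the straightening of these op-fibrations. When the criterion holds, the representing $\wasreteori_n\cat{C}$-graded $n$-theory has, as its $n$-multimaps of a given degree $w$ (an $n$-multimap of $\cat{C}$), exactly the fibre of the op-fibration over $w$; equivalently, the top stratum of the representing theory is the value at $w$ of the $\CAT$-valued functor straightening $P$ on the categories of $n$-multimaps. Hence the representing graded $n$-theory is enriched in groupoids — meaning its top-dimensional multimap objects are groupoids — if and only if all these fibres are groupoids, which is precisely the condition that the op-fibrations be fibred in groupoids. I do not expect to need to write out the coherence data: the associativity isomorphisms and their higher coherence transport along the equivalences of Section~\ref{sec:forget-to-theorization} and Theorem~\ref{thm:functor-of-theorization}, so that the only genuinely new verification is the object-level identification of fibrewise corepresentability with coCartesianness sketched above, which is the main obstacle and also the heart of the matter.
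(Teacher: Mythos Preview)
Your approach is correct and rests on the same underlying idea as the paper's proof --- namely, that the op-fibration hypothesis lets one straighten $P$ on $n$-multimaps into a functor whose values (the fibres) serve as the categories of $n$-multimaps in the representing graded $n$-theory, with the action of $(n+1)$-multimaps of $\wasreteori_n\cat{C}$ given by coCartesian push-forward. The paper's argument is simply the direct construction: it declares the category of $n$-multimaps of type $y$ and degree $x$ to be the fibre over $x$, observes that an $(n+1)$-multimap in $\wasreteori_n\cat{C}$ is a morphism of $n$-multimaps in $\cat{C}$ and hence acts on fibres via the op-fibration, and leaves the verification that this represents $\wasreteori_n\cat{D}$ as straightforward.

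Your route is the same construction viewed through the lens of the representability criterion of Section~\ref{sec:forget-to-theorization}: you recast ``representable'' as ``composition bimodules fibrewise corepresentable'' and then identify the corepresenting object with the target of a coCartesian lift. This buys you a transparent explanation of \emph{why} op-fibrations are the right hypothesis, and in fact yields necessity as a byproduct --- which the paper addresses separately in the Remark following the Lemma. The cost is some additional scaffolding (the appeals to Proposition~\ref{prop:graded-is-overlying}, Theorem~\ref{thm:graded-theory-is-monoid}, and Theorem~\ref{thm:functor-of-theorization}) that the paper's bare-hands construction avoids. Your treatment of the ``moreover'' clause matches the paper's exactly.
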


\begin{proof}
Let $y$ be the type of an $n$-multimap in $\cat{D}$, and $x$ be an
$n$-multimap in $\cat{C}$ of type $Py$.
Then the category of $n$-multimaps in the representing $n$-theory, of
type $y$ of degree $x$, will be the fibre of the op-fibration over $x$
in the category of $n$-multimaps of type $y$ in $\cat{D}$.
We would like to let ($n+1$)-multimaps in $\wasreteori_n\cat{C}$ act
on these categories, but an ($n+1$)-multimap in $\wasreteori_n\cat{C}$
is a morphism in the category of appropriate $n$-multimaps in
$\cat{C}$, which acts on the fibres of the op-fibrations.
It is straightforward to check that this extends to the structure of a
categorified $\wasreteori_n\cat{C}$-graded $n$-theory which represents
the $\wasreteori_n\cat{C}$-graded ($n+1$)-theory
$\wasreteori_n\cat{D}$.

The construction also shows the second statement.
\end{proof}

\begin{remark}
The condition is also necessary.

To see this, note that the category of $n$-multimaps in $\cat{C}$ of a
given arity can be recovered from $\wasreteori_n\cat{C}$, as the
category formed by $n$-multimaps in $\wasreteori_n\cat{C}$ of the same
arity, under the unary ($n+1$)-multimaps between them in
$\wasreteori_n\cat{C}$.
Moreover, we obtain a similar description of the category of
$n$-multimaps in $\cat{D}$ of a given arity, as a category \emph{lying
  over} the corresponding category for $\cat{C}$, in view of
the noted description of the latter category.
On the other hand, for a categorified $\wasreteori_n\cat{C}$-graded
$n$-theory $\cat{E}$, the category formed by $n$-multimaps in
$\diag_!\wasreteori_n\cat{E}$ of a given arity (where
$\diag\colon\wasreteori_n\cat{C}\to\unity^{n+1}_\Com$) under the unary
($n+1$)-multimaps between them in $\diag_!\wasreteori_n\cat{E}$, can
be seen to be lying over the corresponding category in
$\cat{F}:=\wasreteori_n\cat{C}$, as the op-fibration corresponding to
the action of the unary ($n+1$)-multimaps in $\cat{F}$ between those
$n$-multimaps, on the categories of $n$-multimaps in $\cat{E}$ of the
same arity of appropriate degrees.
Therefore, if $\wasreteori_n\cat{D}$ corresponds to the
$\cat{F}$-graded ($n+1$)-theory $\wasreteori_n\cat{E}$, then we
conclude that the assumption of Proposition is satisfied by
$\cat{D}$.
\end{remark}

We can indeed apply Lemma to the forgetful functor
$\ddeloop^m\wasreteori_0\CCocor_*\to\ddeloop^m\wasreteori_0\CCocor$ of
categorified ($m+1$)-theories, since the forgetful functor
$\CCocor_*\to\CCocor$ is such that the functors induced on the
categories of $1$-morphisms are easily seen to be op-fibrations fibred
in groupoids.

Let us identify $\ddeloop^m\wasreteori^2_0\CCocor_*$ with the
$\ddeloop^m\wasreteori^2_0\CCocor$-graded ($m+1$)-theory enriched in
groupiods representing it.
From this, we obtain a simply coloured $\cat{J}$-graded
\emph{($m+1$)-theory}
${F_\cat{X}}^*(\ddeloop^m\Gamma)^*(\ddeloop^m\wasreteori^2_0\CCocor_*)$.

\begin{definition}
Let $\cat{U}$, $\cat{X}$, $P$, $\cat{J}$, $F_\cat{X}$ be as above.
Denote by $Q$ the projection
$\diag_!\wasreteori^N_{m+1}\cat{J}\to\wasreteori^N_n\cat{U}$, where
$N:=\max\{m+1,n\}$, and $\diag\colon\cat{U}\to\unity^n_\Com$.

Then we define a $\cat{U}$-graded ($m+1$)-theory $P_*\cat{X}$ as
$Q_!{F_\cat{X}}^*(\ddeloop^m\Gamma)^*(\ddeloop^m\wasreteori^2_0\CCocor_*)$.
\end{definition}

\begin{example}\label{ex:right-push-by-identity}
In the case where $\cat{V}$ is the terminal unenriched uncoloured
$\cat{U}$-graded $m$-theory $\unity^m_\cat{U}$, $\cat{X}$ can be
identified with a $\cat{U}$-graded $m$-theory, and it follows from Proposition
\ref{prop:universal-theorization} that $P_*\cat{X}=\wasreteori_m\cat{X}$.
\end{example}

\subsubsection{}

In order to do the other construction, let us introduce a notation.

Suppose given a collection $\Lambda$, and a family
$X=(X_\lambda)_{\lambda\in\Lambda}$ of groupoids parametrized by
$\Lambda$.
Then by $\prod_\Lambda X=\prod_{\lambda\in\Lambda}X_\lambda$, we
denote the not necessarily small groupoid whose truncated $n$-type is
$\prod_\Lambda X^{\le n}$ naturally formed by associations $\sigma$ to
every member $\lambda\in\Lambda$, of an object $\sigma(\lambda)\in
X_\lambda^{\le n}$, where $X_\lambda^{\le n}$ denotes the truncated
$n$-type of $X_\lambda$.

Note that $\prod_\Lambda X^{\le n}$ may not be small, but is
well-defined as a homotopy $n$-type by induction on $n$.
For example, we define $\sigma,\tau\in\prod_\Lambda X^{\le 0}$ to be
\emph{equal} if and only if $\sigma(\lambda)=\tau(\lambda)$ for every
$\lambda\in\Lambda$, and then this equality relation is an equivalence
relation, so the members of $\prod_\Lambda X^{\le 0}$ form a possibly
large $0$-type under this relation of equality.

\subsubsection{}

In order to construct $Q_*$ of Section \ref{sec:right-push-begin}, it
suffices to construct a $\cat{U}$-graded ($m+1$)-theory $P_*\cat{X}$
in the original situation \emph{modified} as follows.
\begin{itemize}
\item $\cat{V}$ is terminal on the system of colours for a
  $\cat{U}$-graded $m$-theory.
\item $\cat{X}$ is a $\cat{V}$-graded \emph{($m+1$)-theory}.
\end{itemize}
The construction is as follows. 
If necessary, Theorem \ref{thm:graded-theory-is-monoid} allows us to
replace $\cat{U}$ by $\wasreteori^N_n\cat{U}$ for any $N>n$, so we
shall assume without loss of generality,
that the dimension of $\cat{U}$ is sufficiently high.

For an object $u\in\Ob\cat{U}$, we let an object
$\sigma\in\Ob_uP_*\cat{X}$ of $P_*\cat{X}$ of degree $u$ be an
association to every $v\in\Ob_u\cat{V}$ of an object
$\sigma(v)\in\Ob_v\cat{X}$.

Let $k$ be an integer such that $1\le k\le m-1$.
Then the collections of $k$-multimaps in $P_*\cat{X}$ will inductively
be as follows.
Suppose given a $k$-multimap $u^k$ in $\cat{U}$ of arity and type
given respectively as $(I;\pi,\phi)$ and $u^{\le
  k-1}=(u^\nu)_{0\le\nu\le k-1}$, and the type
$\sigma=(\sigma^\nu)_{0\le\nu\le k-1}$ of a $k$-multimap in
$P_*\cat{X}$ of the same arity of
degree $u^{\le k-1}$ (see Definition \ref{def:graded}).
Then, we let a $k$-multimap $\tau\in\Mul^\pi_{P_*\cat{X},u^k}[\sigma]$
be an association to every $k$-multimap $v^k$ in $\cat{V}$ of arity
$(I;\pi,\phi)$ and degree $u^k$, of a $k$-multimap
$\tau(v^k)\in\Mul^\pi_{\cat{X},v^k}[\sigma(v^{\le k-1})]$, where
$v^{\le k-1}=(v^\nu)_{0\le\nu\le k-1}$ is the type of
$v^k$ (of degree $u^{\le k-1}$), and $\sigma\bigl(v^{\le
  k-1}\bigr):=\bigl(\sigma^\nu(v^\nu)\bigr)_{0\le\nu\le k-1}$ (where
$\sigma^\nu\bigl(v^\nu\bigr):=\bigl(\sigma^\nu_i(v^\nu_i)\bigr)_{i\in[I^{\nu+1}_0]}$
if $\nu\le k-2$, etc.) is by induction, the type of a
$k$-multimap in $\cat{X}$ of the same arity of degree $v^{\le k-1}$.

The collections of $m$-multimaps in $P_*\cat{X}$ will be as follows.
Suppose given an input datum similar to above, but with $k$ replaced
by $m$.
Then we let an $m$-multimap $\tau\in\Mul^\pi_{P_*\cat{X},u^m}[\sigma]$
be an association to every one of the types $v$ of $m$-multimaps in
$\cat{V}$ of arity $(I;\pi,\phi)$ and degree $u^{\le m-1}$, of an
$m$-multimap $\tau[v]\in\Mul^\pi_{P_!\cat{X},u^m}[\sigma(v)]$.

The groupoids of ($m+1$)-multimaps in $P_*\cat{X}$ will be as
follows.
Suppose given a ($m+1$)-multimap $u^{m+1}$ in $\cat{U}$ of arity and
type given respectively as $(I;\pi,\phi)$ and $u^{\le m}$, and the
type $\sigma$ of a ($m+1$)-multimap in $P_*\cat{X}$ of the same
arity of degree $u^{\le m}$.
Then we let
\[
\Mul^\pi_{P_*\cat{X},u^{m+1}}[\sigma]=\prod_v\Mul^\pi_{P_!\cat{X},u^{m+1}}[\sigma^{\le
  m-1}(v)](\sigma^m_0[v];\sigma^m_1[\pi_!v]),
\]
where $v$ runs through all the types of $\phi^{m-1}$-nerves of
$m$-multimaps in $\cat{V}$ of degree $u^{\le m-1}$.

The action of ($m+2$)-multimaps of $\cat{U}$ on the groupoids of
($m+1$)-multimaps in $P_*\cat{X}$, will be as follows.
Suppose given a ($m+2$)-multimap $u^{m+2}$ in $\cat{U}$ of arity and
type given respectively as $(I;\pi,\phi)$ and $u^{\le m+1}$, and the
type $\sigma$ of a $\phi^m$-nerve of ($m+1$)-multimaps in
$P_*\cat{X}$ of degree $u^{\le m}$.
Then we let the action
$\Mul^{\phi^m}_{P_*\cat{X},u^{m+1}_0}[\sigma]\to\Mul^{\pi_!\phi^m}_{P_*\cat{X},u^{m+1}_1}[\pi_!\sigma]$
of $u^{m+2}$ be given by composing the following two maps, namely,
\begin{itemize}
\item the map
  \begin{multline*}
  \prod_{i\in
    I^{m+1}_0}\prod_{w_i}\Mul^{\phi^m_i}_{P_!\cat{X},u^{m+1}_{0i}}[\sigma^{\le
    m-2}(w^{\le m-2}_i)]\\
  \bigl[\bigl((\maekara{\phi^m}{i-1})_!\sigma^{m-1}\bigr)\bigl(w^{m-1}_i\bigr)\bigr]\bigl(\sigma^m_{i-1}[w_i];\sigma^m_i[(\phi^m_i)_!w_i]\bigr)\\
  \\
  \longto\prod_v\prod_{i\in
    I^{m+1}_0}\Mul^{\phi^m_i}_{P_!\cat{X},u^{m+1}_{0i}}\bigl[\sigma^{\le
    m-2}(v^{\le
    m-2})\bigr]\bigl[\bigl(\maekara{\phi^m}{i-1}\bigr)_!\bigl(\sigma^{m-1}(v^{m-1})\bigr)\bigr]\\
  \bigl(\sigma^m_{i-1}[(\maekara{\phi^m}{i-1})_!v^{m-1}];\sigma^m_i[(\maekara{\phi^m}{i})_!v^{m-1}]\bigr),
  \end{multline*}
  where
  \begin{itemize}
  \item the source is simply the result of expanding the factors of
    the product
    $\Mul^{\phi^m}_{P_*\cat{X},u^{m+1}_0}[\sigma]=\prod_{i\in
      I^{m+1}_0}\Mul^{\phi^m_i}_{P_*\cat{X},u^{m+1}_{0i}}[\sigma\resto{i}]$,
  \item $v$ runs through all the types of
    $\phi^{m-1}$-nerves of $m$-multimaps in $\cat{V}$ of degree
    $u^{\le m-1}$,
  \item the map is induced from the correspondence $v\mapsto
    w_i=(\maekara{\phi^m}{i-1})_!v$, and
  \end{itemize}
\item the map
  \begin{multline*}
  \prod_v\Mul^{\phi^m}_{P_!\cat{X},u^{m+1}_0}\bigl[\sigma^{\le
    m-1}(v)\bigr]\bigl[\bigl(\sigma^m_i[(\maekara{\phi^m}{i})_!v^{m-1}]\bigr)_{i\in[I^m+1_0]}\bigr]\\
  \longto\prod_v\Mul^{\pi_!\phi^m}_{P_!\cat{X},u^{m+1}_1}[\sigma^{\le
    m-1}(v)](\sigma^m_0[v];\sigma^m_1[\pi_!v]),
  \end{multline*}
  where $v$ runs through the same range, and the map is given by the
  action of $u^{m+2}$ in $P_!\cat{X}$.
\end{itemize}

It is straightforward to extend these data naturally to a full datum
for a $\cat{U}$-graded ($m+1$)-theory $P_*\cat{X}$.

\subsubsection{}

From these constructions, we in particular have obtained the
constructions $R_*$ and $Q_*$ in the situation of Section
\ref{sec:right-push-begin}.
Therefore, we can extend the previous constructions to this situation
by defining $P_*:=Q_*R_*$.
Note Example \ref{ex:right-push-by-identity}.
$P_*$ will have the following universal property.

\begin{proposition}\label{prop:right-adjoint-to-pull-back}
Let
\begin{itemize}
\item $n\ge 0$ be an integer, and $\cat{U}$ be an $n$-theory enriched
  in groupoids,
\item $m\ge 0$ be an integer, and $\cat{V}$ be a
  $\cat{U}$-graded $m$-theory enriched in groupoids,
\item $\cat{X}$ be an unenriched $\cat{V}$-graded $m$-theory.
\end{itemize}
Denote by $P$ the projection functor
$\diag_!\wasreteori^N_m\cat{V}\to\wasreteori^N_n\cat{U}$, where $N\ge
m,n$, and $\diag\colon\cat{U}\to\unity^n_\Com$.

Then, for a $\cat{U}$-graded ($m+1$)-theory $\cat{Z}$, a functor
$\cat{Z}\to P_*\cat{X}$ of $\cat{U}$-graded ($m+1$)-theories is
naturally equivalent as a datum to a functor
$P^*\cat{Z}\to\wasreteori_m\cat{X}$ of $\cat{V}$-graded
($m+1$)-theories.
\end{proposition}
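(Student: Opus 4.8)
The plan is to exploit the factorization $P=Q\compose R$ used to define $P_*=Q_*R_*$, to establish the universal property separately for the two factors, and then to compose. Since pull-back is contravariant and push-forward covariant, $P^*=R^*\compose Q^*$ and $P_*=Q_*\compose R_*$. So suppose I have proved the universal property of $Q_*$, namely that for a $\cat{T}$-graded $(m+1)$-theory $\cat{Y}$ a functor $\cat{Z}\to Q_*\cat{Y}$ is equivalent as a datum to a functor $Q^*\cat{Z}\to\cat{Y}$, together with the universal property of $R_*$, namely that a functor $\cat{W}\to R_*\cat{X}$ of $\cat{T}$-graded $(m+1)$-theories is equivalent to a functor $R^*\cat{W}\to\wasreteori_m\cat{X}$ of $\cat{V}$-graded $(m+1)$-theories. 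Here $\Map(\blank,\blank)$ denotes the category of functors of graded higher theories. Applying the first with $\cat{Y}=R_*\cat{X}$ and the second with $\cat{W}=Q^*\cat{Z}$ then gives
\[
\Map(\cat{Z},P_*\cat{X})\simeq\Map(Q^*\cat{Z},R_*\cat{X})\simeq\Map(R^*Q^*\cat{Z},\wasreteori_m\cat{X})=\Map(P^*\cat{Z},\wasreteori_m\cat{X}),
\]
which is the assertion. Example \ref{ex:right-push-by-identity} provides a base-case consistency check: when $\cat{V}=\unity^m_\cat{U}$ the map $P$ is an equivalence and the statement collapses to the identity equivalence for $P_*\cat{X}=\wasreteori_m\cat{X}$.

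First I would dispatch $Q_*$, the easier factor, where $\cat{V}=\cat{T}$ is terminal on its system of colours and $R_*\cat{X}$ is a genuine $\cat{T}$-graded $(m+1)$-theory, so that $Q_*$ is dimension-preserving. Its construction sets the objects and the multimaps of $Q_*\cat{Y}$ below the top dimension to be \emph{associations} $\sigma\mapsto\sigma(v)$ indexed by the degrees $v$ of $\cat{T}$, and the top $(m+1)$-multimaps to be products $\prod_v\Mul^\pi_{\cdots}[\cdots]$ over those degrees. By the universal property of the product $\prod_\Lambda$ recorded just before the construction, a functor $\cat{Z}\to Q_*\cat{Y}$ is then exactly the datum of a rule assigning to each degree $v$ a compatible choice of image multimaps, and reassembling this $v$-indexed family is precisely a functor $Q^*\cat{Z}\to\cat{Y}$. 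That the coherence data match up is a routine induction along the datum forms $(0)$ through $(\infty)$, using that the composition in $Q_*\cat{Y}$ was defined degreewise.

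The hard part will be the universal property of $R_*$, which I expect to be the main obstacle. Here I would lean on the lift descriptions of Proposition \ref{prop:graded-theory-as-lift} and Corollary \ref{cor:graded-theory-as-lift}, which express an $\cat{X}$-graded theory---equivalently a functor into $\wasreteori_m\cat{X}$---as a lift of the defining functor $F_\cat{X}$ to $\wasreteori_m\gyzen_*(\blank)$. Since $R_*\cat{X}$ was itself built, through the representability lemma used above, as $Q_!F_\cat{X}^*(\ddeloop^m\Gamma)^*(\ddeloop^m\wasreteori^2_0\CCocor_*)$, the strategy is to translate functors into $R_*\cat{X}$ into the same language of lifts of $F_\cat{X}$ and to match the two. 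The lax functor $\Gamma$ sends $(X,F)$ to $\lim_XF$, and a limit is the right adjoint of the constant-diagram functor; the cocorrespondence structure together with the pointed refinement $\CCocor_*\to\CCocor$---which, as in Proposition \ref{prop:universal-theorization} for $\Gpd_*\to\Gpd$, plays the role of the universal-algebra classifier---should encode exactly the counit of the adjunction. The crux is to verify, dimension by dimension, that this matching is an equivalence; at the level of a single family of top multimaps it is the standard equivalence $\Map\bigl(Z,\lim_vX_v\bigr)\simeq\lim_v\Map(Z,X_v)$, but propagating it through the op-fibration hypothesis of the lemma and the coherences of the $\ddeloop^m$-iterated structures is where the care is needed. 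Finally I would keep track of size, since (as flagged in the remark preceding the constructions) the groupoids of $(m+1)$-multimaps produced by $Q_*$ need not be small, so the mapping categories above must be taken in the very large setting $\CCAT$.
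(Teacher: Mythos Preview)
Your proposal is correct and follows exactly the paper's approach: the paper's proof is the single sentence ``this is an immediate consequence of the similar universal properties of the constructions $R_*$ and $Q_*$, which can be verified easily from the constructions,'' and your factorization-and-compose argument is precisely that, with the verification of each factor's adjunction spelled out rather than asserted. Your identification of the relevant tools --- products for $Q_*$, the lift description of Corollary~\ref{cor:graded-theory-as-lift} together with the $\Gamma$/$\CCocor_*$ machinery for $R_*$ --- is what the paper has in mind but leaves to the reader.
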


Indeed, this is an immediate consequence of the similar universal
properties of the constructions $R_*$ and $Q_*$, which can be verified
easily from the constructions.

\begin{example}\label{ex:algebra-in-right-push}
Let $\cat{U}$, $\cat{V}$ be as in Proposition, but suppose moreover
that $\cat{V}$ is uncoloured.
Then a system of colours for a $\cat{V}$-graded higher theory is the
same as a system of colours for a $\cat{U}$-graded higher theory.
Let $\ell\ge 0$ be an integer, and suppose given such a system of
colours up to dimension $\ell-1$.
Let $\cat{T}$ denote the terminal unenriched $\cat{V}$-graded
($\ell+1$)-theory on this system of colours.
We would like to consider for an integer $L\ge\ell+1, m$, Proposition
for an unenriched $\cat{V}$-graded $L$-theory $\cat{X}$ and the
$\cat{U}$-graded ($L+1$)-theory
$\cat{Z}:=P_!\wasreteori^{L+1}_{\ell+1}\cat{T}$, where $P$ is
as in Proposition.

We obtain that a functor $\cat{Z}\to P_*\cat{X}$ is equivalent as a
datum to a functor
$\wasreteori^L_{\ell+1}\cat{T}=P^*P_!\wasreteori^L_{\ell+1}\cat{T}\to\cat{X}$,
which simply defines the structure of an $\ell$-theory in $\cat{X}$,
on the considered system of colours.
\end{example}

\begin{corollary}\label{cor:base-change-right}
In addition to $\cat{U}$, $\cat{V}$, $\cat{X}$ of Proposition, suppose
given a $\cat{U}$-graded $m$-theory $\cat{W}$ enriched in groupoids.
Let $\lift{P}\colon P_!P^*\cat{W}\to\cat{W}$ be the counit for the
adjunction, and
$Q\colon\diag_!\wasreteori^N_{m+1}\cat{W}\to\wasreteori^N_n\cat{U}$
and $\lift{Q}\colon P_!P^*\cat{W}\to\cat{V}$ be respective projections.

Then there is a natural equivalence
$Q^*P_*\cat{X}\equivwith\lift{P}_*\lift{Q}^*\cat{X}$ of
$\cat{W}$-graded ($m+1$)-theories.
\end{corollary}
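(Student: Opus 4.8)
The plan is to obtain the equivalence from the universal properties of the two right push-forwards together with a Beck--Chevalley identity for the \emph{left} push-forwards. I treat $P$, $Q$, $\lift{P}$, $\lift{Q}$ as functors of the associated symmetric higher theories, so that each has a left adjoint $(\;)_!$ to its pull-back $(\;)^*$ (Section~\ref{sec:iterated-monoid}), while $P$ and $\lift{P}$ carry in addition the right adjoints of Proposition~\ref{prop:right-adjoint-to-pull-back}; here Proposition~\ref{prop:right-adjoint-to-pull-back} is applied to $\lift{P}$ with ``base'' $\cat{W}$ (itself $\cat{U}$-graded), the $\cat{W}$-graded $m$-theory $P_!P^*\cat{W}$ in the role of $\cat{V}$, and the $(P_!P^*\cat{W})$-graded $m$-theory $\lift{Q}^*\cat{X}$ in the role of $\cat{X}$ (cf.\ Proposition~\ref{prop:graded-projection}). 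The organising datum is the square
\[
\begin{tikzcd}
P_!P^*\cat{W}\arrow{r}{\lift{Q}}\arrow{d}[swap]{\lift{P}}&\cat{V}\arrow{d}{P}\\
\cat{W}\arrow{r}{Q}&\cat{U},
\end{tikzcd}
\]
which I will argue is cartesian once one passes to associated symmetric theories.

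For the reduction I test both sides against an arbitrary $\cat{W}$-graded $(m+1)$-theory $\cat{Z}$. On the left, $Q_!\zuihan Q^*$ gives a natural equivalence of mapping spaces with $\Map(Q_!\cat{Z},P_*\cat{X})$ among $\cat{U}$-graded $(m+1)$-theories, and Proposition~\ref{prop:right-adjoint-to-pull-back} rewrites this as $\Map(P^*Q_!\cat{Z},\wasreteori_m\cat{X})$ among $\cat{V}$-graded $(m+1)$-theories. On the right, the universal property of $\lift{P}_*$ identifies $\Map(\cat{Z},\lift{P}_*\lift{Q}^*\cat{X})$ with $\Map(\lift{P}^*\cat{Z},\wasreteori_m(\lift{Q}^*\cat{X}))$ among $(P_!P^*\cat{W})$-graded $(m+1)$-theories; the commutation of pull-back with theorization, $\maeoki{^{\cat{V}}}{\wasreteori}^N_m(F^*\cat{X})=F^*(\maeoki{^{\cat{U}}}{\wasreteori}^N_m\cat{X})$ of Section~\ref{sec:iterated-monoid}, replaces the target by $\lift{Q}^*\wasreteori_m\cat{X}$, and then $\lift{Q}_!\zuihan\lift{Q}^*$ rewrites the whole space as $\Map(\lift{Q}_!\lift{P}^*\cat{Z},\wasreteori_m\cat{X})$, again among $\cat{V}$-graded $(m+1)$-theories. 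Thus the two sides agree naturally in $\cat{Z}$ provided there is a natural equivalence $P^*Q_!\equivwith\lift{Q}_!\lift{P}^*$, and the asserted equivalence $Q^*P_*\cat{X}\equivwith\lift{P}_*\lift{Q}^*\cat{X}$ of $\cat{W}$-graded $(m+1)$-theories then follows by the Yoneda lemma.

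It remains to establish the base-change equivalence $P^*Q_!\equivwith\lift{Q}_!\lift{P}^*$. For this I would use the explicit description of the left push-forward: under $\diag_!\wasreteori^N_{*}$ it is a higher-theoretic left Kan extension along the relevant projection, equivalently the composition of the corresponding op-fibrations (Example~\ref{ex:left-push-kan-extension} and the construction of $P_!$ in Section~\ref{sec:iterated-monoid}). From this description the total symmetric theory of $P_!P^*\cat{W}$ is precisely the fibre product of those of $\cat{V}$ and $\cat{W}$ over that of $\cat{U}$, with $\lift{Q}$ and $\lift{P}$ the two projections; this is exactly the cartesianness of the square above. Tracing $\cat{Z}$ through both composites then shows each of $P^*Q_!\cat{Z}$ and $\lift{Q}_!\lift{P}^*\cat{Z}$ has the same total space (the base change of $\cat{Z}$ along the projection of $\cat{V}$), i.e.\ the standard compatibility of restriction with left Kan extension, checked degreewise on multimaps of each dimension.

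The main obstacle I anticipate lies not in the formal adjunction bookkeeping but in the honest verification of the cartesianness together with the base-change identity at the level of the full higher-theory structure: one must check the fibre-product description of $P_!P^*\cat{W}$ compatibly in every multimap dimension and under all the degree-shifting functors $\wasreteori^N_{*}$, and confirm that the chain of adjunction equivalences is natural as an equivalence of $(m+1)$-theories rather than merely of underlying spaces, so that Yoneda applies inside the appropriate $2$-category of $\cat{W}$-graded $(m+1)$-theories. A secondary caveat to keep in view is the non-smallness remark following the construction of $Q_*$: since the right push-forward may enlarge the groupoids of top-dimensional multimaps, the mapping-space comparisons above should be read in the larger universe in which $P_*\cat{X}$ and $\lift{P}_*\lift{Q}^*\cat{X}$ live.
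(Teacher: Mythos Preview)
Your approach is essentially identical to the paper's: the paper's proof reads ``This follows immediately from Proposition and the following lemma,'' where the lemma is exactly the Beck--Chevalley identity $P^*Q_!\cat{Y}\equivwith\lift{Q}_!\lift{P}^*\cat{Y}$ that you isolate, and its proof is declared ``Straightforward from the definitions.'' You have simply spelled out the Yoneda reduction that the paper leaves implicit in the phrase ``follows immediately from Proposition,'' and your discussion of the cartesianness of the square and the degreewise verification is what the paper compresses into ``straightforward.''
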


\begin{proof}
This follows immediately from Proposition and the following lemma.
\end{proof}

\begin{lemma}
Let
\begin{itemize}
\item $n\ge 0$ be an integer, and $\cat{U}$ be an $n$-theory enriched
  in groupoids,
\item $m\ge 0$ be an integer, and $\cat{V}$ and
  $\cat{W}$ be $\cat{U}$-graded $m$-theories enriched in groupoids.
\end{itemize}
Let $P$, $\lift{P}$, $Q$, $\lift{Q}$ be similar to those in
Proposition and Corollary \ref{cor:base-change-right}.

Then, for a $\cat{W}$-graded $m$-theory $\cat{Y}$, there is a natural
equivalence $P^*Q_!\cat{Y}\equivwith\lift{Q}_!\lift{P}^*\cat{Y}$ of
$\cat{V}$-graded $m$-theories.
\end{lemma}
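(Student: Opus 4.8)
The plan is to prove this base-change (Beck--Chevalley) identity by comparing the two $\cat{V}$-graded $m$-theories directly, through the explicit formulas for the pull-back $(\cdot)^*$ and the left push-forward $(\cdot)_!$ recorded in Section \ref{sec:iterated-monoid}. First I would record that $P_!P^*\cat{W}$ is the fibre product $\cat{V}\times_{\cat{U}}\cat{W}$: unwinding the push-forward formula, a degree-$u$ object is a pair $(v,w)$ with $v\in\Ob_u\cat{V}$ and $w\in\Ob_u\cat{W}$ (the second entry arising from $\Ob_vP^*\cat{W}=\Ob_u\cat{W}$), and $\lift{Q}$, $\lift{P}$ are the two projections $(v,w)\mapsto v$, $(v,w)\mapsto w$. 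Hence the square with corners $P_!P^*\cat{W}$, $\cat{W}$, $\cat{V}$, $\cat{U}$ is cartesian, and the asserted equivalence is exactly the base-change statement for this square, generalizing the Beck--Chevalley isomorphism for op-fibrations and left Kan extensions observed in Example \ref{ex:left-push-kan-extension}.

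I would reduce to $m\le n$ without loss of generality and treat the principal case $m\le n-1$ by evaluating both sides level by level. Since all four theories carry the same systems of colours threaded through their gradings, the arities and types match identically, so it suffices to match the collections of multimaps. At each level $k\le m-1$ both sides yield the same nested pairs: on the left, pulling $Q_!\cat{Y}$ back along $P$ leaves the degree-$u$ datum $(w,y)$ unchanged and merely re-indexes it by a $\cat{V}$-degree; on the right, $\lift{P}^*\cat{Y}$ transports the $\cat{Y}$-data along $(v,w)\mapsto w$ while $\lift{Q}_!$ records the fibre-product datum $(v,w)$ together with $y$, producing the same pair $(w,y)$.

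The crux is the top ($m$-multimap) level, where both push-forwards are honest colimits. On the left, for a $\cat{V}$-multimap $v^m$ of $\cat{U}$-degree $u^m$, one has $\Mul^\pi_{P^*Q_!\cat{Y},v^m}[\cdots]=\Mul^\pi_{Q_!\cat{Y},u^m}[\cdots]=\colim_{z\in\Mul^\pi_{\cat{W},u^m}[w]}\Mul^\pi_{\cat{Y},z}[y]$. On the right the indexing category is $\Mul^\pi_{\cat{V}\times_{\cat{U}}\cat{W},v^m}[\cdots]$; here I would use that fixing the $\lift{Q}$-degree to $v^m$ selects exactly the summand $v^m$ of the colimit defining the $m$-multimaps of the fibre product, which equals $\Mul^\pi_{\cat{W},u^m}[w]$, and that under this identification $\lift{P}$ carries $\tilde{z}$ to the corresponding $z$, so that the integrand $\Mul^\pi_{\lift{P}^*\cat{Y},\tilde{z}}[\cdots]$ is $\Mul^\pi_{\cat{Y},z}[y]$. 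Both colimits then have the same indexing category and integrand, and both composition structures are induced from the composition in $\cat{Y}$, giving the equivalence together with its naturality. The hard part is precisely this fibre identification — checking that the $\lift{Q}$-fibre over $v^m$ of the fibre-product theory coincides, as a diagram compatibly with the $\cat{Y}$-valued integrand and with composition, with the $\cat{W}$-multimaps over $u^m$ — which is exactly where cartesianness of the square enters. Finally, the case $m=n$ is handled directly, since there $P_!$ preserves the underlying symmetric $n$-theory and base change becomes the corresponding statement for op-fibrations, and the case $m\ge n+1$ reduces to $m\le n-1$ after replacing $\cat{U}$ by $\wasreteori^m_n\cat{U}$, via Corollary \ref{cor:forget-to-theorization}.
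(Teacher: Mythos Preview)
Your proposal is correct and is precisely the unwinding that the paper summarizes in a single line: the paper's entire proof is ``Straightforward from the definitions.'' Your identification of $P_!P^*\cat{W}$ with the fibre product $\cat{V}\times_\cat{U}\cat{W}$ and the level-by-level comparison of the colimit formulas for $(\cdot)_!$ is exactly the intended verification; no additional idea is required.
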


\begin{proof}
Straightforward from the definitions.
\end{proof}

\subsection{Convolution for higher theories}
\label{sec:convolution}

In Example \ref{ex:algebra-in-right-push}, if $\cat{X}$ is of the form
$P^*\cat{Y}$ for a $\cat{U}$-graded $L$-theory $\cat{Y}$, then we
obtain that a $\cat{V}$-graded $\ell$-theory in $\cat{Y}$ can
equivalently be written as an $\ell$-theory in the $\cat{U}$-graded
($L+1$)-theory $P_*P^*\cat{Y}$.

There is also a coloured generalization of this.
We change the notations, and consider the following situation.

Let
\begin{itemize}
\item $n\ge 0$ be an integer, and $\cat{U}$ be an $n$-theory enriched
  in groupoids,
\item $m\ge 0$ be an integer, and $\cat{T}$ be a
  $\cat{U}$-graded ($m+1$)-theory enriched in groupoids, which is
  terminal on a system of colours up to dimension $m-1$,
\item $\cat{X}$ be an uncoloured $\cat{T}$-monoid, so together with
  $\cat{T}$, this is defining a $\cat{U}$-graded $m$-theory,
\item $\ell\ge 0$ and $L\ge\ell+1,m$ be integers, and $\cat{Y}$ be an
  unenriched $\cat{U}$-graded $L$-theory.
\end{itemize}
For these, let
$P\colon\diag_!\wasreteori^N_{m+1}\cat{T}\to\wasreteori^N_n\cat{U}$
(where $N\ge m+1,n$, and $\diag\colon\cat{U}\to\unity^n_\Com$) and
$R\colon P_!\wasreteori_m\cat{X}\to\cat{T}$ be the respective
projections.

Then the $\cat{T}$-graded ($L+1$)-theory $R_*R^*P^*\cat{Y}$ is such
that an $\cat{X}$-graded $\ell$-theory in $\cat{Y}$ is equivalent as
a datum to an $\ell$-theory in $R_*R^*P^*\cat{Y}$.
Therefore, $R_*R^*P^*\cat{Y}$, considered as a construction between
the $\cat{U}$-graded $m$-theories $\cat{X}$ and $\cat{Y}$, is in a way
analogous to the Day convolution for monoidal categories \cite{day}.

\begin{remark}
In the case $\ell=0$, we obtain that the datum of an $\cat{X}$-graded
$0$-theory in $\cat{Y}$ is further
equivalent to the datum of a $0$-theory in $(PR)_*(PR)^*\cat{Y}$.
This may be closer to the conventional contexts for the Day
convolution.
\end{remark}

The purpose of this section is to obtain an enriched generalization of
this for the case $\ell=m-1$, where enrichment is in a symmetric
monoidal higher category.

Let us start with the following.

\begin{definition}
Suppose given
\begin{itemize}
\item an integer $d\ge 0$, and an $d$-theory $\cat{T}$ enriched in
  groupoids,
\item an integer $k$ such that $0\le k\le d$, and a symmetric
  monoidal $k$-category $\cat{A}$,
\item functors
  $F,G\colon\cat{T}\to\ddeloop^{d-k}\wasreteori^k_0\cat{A}$, or
  equivalently,
  $(F,G)\colon\cat{T}\to\ddeloop^{d-k}\wasreteori^k_0(\cat{A}\times\cat{A})$.
\end{itemize}
Then we define a $\cat{T}$-graded $d$-theory $\Fun(F,G)$ as
$(F,G)^*\ddeloop^{d-k}\Fun_\cat{A}$, where
$\Fun_\cat{A}$ denotes $\wasreteori^k_0\Fun([1],\cat{A})$ (where the
$k$-category $\Fun([1],\cat{A})$ is given the object-wise symmetric
monoidal structure) considered as a
$\wasreteori^k_0(\cat{A}\times\cat{A})$-graded $k$-theory via
the symmetric monoidal functor
\[
(d_1,d_0)\colon\Fun([1],\cat{A})\longto\cat{A}\times\cat{A}.
\]
induced from the simplicial coface operators
$d^1,d^0\colon[0]\to[1]$.
\end{definition}

We would like to generalize the construction ``$\Fun$'' for
\emph{coloured} theories.
The following definition includes this.

\begin{definition}
Let
\begin{itemize}
\item $n\ge 0$ be an integer, and $\cat{U}$ be an $n$-theory enriched
  in groupoids with the unique functor
  $\diag\colon\cat{U}\to\unity^n_\Com$,
\item $m\ge 0$ be an integer, and $\cat{T},\cat{J}$ be
  $\cat{U}$-graded ($m+1$)-theory enriched in groupoids,
\item $k$ be an integer such that $0\le k\le m+1$, and $\cat{A}$ be a
  symmetric monoidal $k$-category,
\item
  $F\colon\cat{T}\to\diag^*\ddeloop^{m+1-k}\wasreteori^k_0\cat{A}$
  and
  $G\colon\cat{J}\to\diag^*\ddeloop^{m-k+1}\wasreteori^k_0\cat{A}$
  be functors of $\cat{U}$-graded ($m+1$)-theories.
\end{itemize}
For these, let
$P\colon\diag_!\wasreteori^N_{m+1}\cat{T}\to\wasreteori^N_n\cat{U}$
(where $N:=\max\{m+1,n\}$) and
$\lift{Q}\colon P_!P^*\cat{J}\to\cat{T}$ be the respective
projections, and $\lift{P}\colon P_!P^*\cat{J}\to\cat{J}$ be the
counit for the adjunction.

Then we define the $\cat{T}$-graded ($m+1$)-theory
$\Fun\bigl((\cat{T},F),(\cat{J},G)\bigr)$ as
$\lift{Q}_!\Fun(F\lift{Q},G\lift{P})$, where
\[
F\lift{Q},G\lift{P}\colon\diag_!\wasreteori^N_{m+1}P_!P^*\cat{J}\longto\wasreteori^N_{m-k+1}\ddeloop^{m-k+1}\cat{A}
\]
are the indicated functors of symmetric $N$-theories.
\end{definition}

This indeed gives the desired enriched generalization of the previous
construction done using the right push-forward construction.
Namely, we obtain the following in the special case where $k=1$ and
$\cat{A}=\Gpd$ of the construction here.

\begin{proposition}
Let
\begin{itemize}
\item $n\ge 0$ be an integer, and $\cat{U}$ be an $n$-theory enriched
  in groupoids with the unique functor
  $\diag\colon\cat{U}\to\unity^n_\Com$,
\item $m\ge 0$ be an integer, and $\cat{T}$ and $\cat{J}$ be
  $\cat{U}$-graded ($m+1$)-theories enriched in groupoids, each of
  which is terminal on a system of colours up to dimension $m-1$,
\item $F\colon\cat{T}\to\diag^*\ddeloop^m\wasreteori_0\Gpd$ and
  $G\colon\cat{J}\to\diag^*\ddeloop^m\wasreteori_0\Gpd$ be functors
  of $\cat{U}$-graded ($m+1$)-theories.
\end{itemize}
Denote by $\cat{X}$ and $\cat{Y}$, the $\cat{U}$-graded $m$-theories
defined by $F$ and $G$.

Then
$\Fun\bigl(\cat{X},\cat{Y}\bigr):=\Fun\bigl((\cat{T},F),(\cat{J},G)\bigr)$
is equivalent to $R_*R^*P^*\cat{Y}$, where
$R\colon\wasreteori_m\cat{X}\to\cat{T}$ and
$P\colon\diag_!\wasreteori^N_{m+1}\cat{T}\to\wasreteori^N_n\cat{U}$
(where $N\ge m+1,n$) are the respective projections.
\end{proposition}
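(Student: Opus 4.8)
The plan is to prove the equivalence by directly comparing the two $\cat{T}$-graded $(m+1)$-theories on a common system of colours, matching their objects of top multimaps together with the composition and coherence data, and to organise this comparison around an identification of ``internal homs'' in $\Gpd$. As guiding checks I would use the two classification statements already available: on the right-push-forward side, the convolution identity established just above says that an $\cat{X}$-graded $\ell$-theory in $\cat{Y}$ is the same datum as an $\ell$-theory in $R_*R^*P^*\cat{Y}$; on the $\Fun$ side, unwinding $\Fun((\cat{T},F),(\cat{J},G))=\lift{Q}_!\Fun(F\lift{Q},G\lift{P})$ should exhibit its $\ell$-theories as the same $\cat{X}$-graded $\ell$-theories in $\cat{Y}$. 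Agreement of these classified data, together with the direct match of defining data below, will give the equivalence.

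First I would reduce to the heart of the matter, the uncoloured case $m=0$, $k=1$, $\cat{A}=\Gpd$. Here $\Fun_\Gpd=\wasreteori_0\Fun([1],\Gpd)$, graded over $\wasreteori_0(\Gpd\times\Gpd)$ through $(d_1,d_0)$, is nothing but the internal-hom bimodule of $\Gpd$: the fibre of $\Fun([1],\Gpd)\to\Gpd\times\Gpd$ over a pair $(X,Y)$ is the mapping groupoid $\Map_\Gpd(X,Y)$. The passage to general $m$ is then handled by the operator $\ddeloop^m$ together with Lemma \ref{lem:deloop-forget-commutation} (the commutation $\ddeloop\wasreteori_n\equivwith\wasreteori_{n+1}\ddeloop$), while the passage to the coloured situation is handled by the left push-forward $\lift{Q}_!$ along the projection from $P_!P^*\cat{J}$, whose interaction with $R_*$ and $R^*$ I would control using Corollary \ref{cor:base-change-right} and the base-change lemma following it.

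The core step is to match this internal-hom description with the explicit co-correspondence construction underlying $R_*$. Recall that $R_*$ is built from the lax functor $\Gamma\colon\gyzen_*\Gpd\to\CCocor$ sending $(X,F)$ to $\lim_X F$, together with the pointed refinement $\CCocor_*$ coming from $\Gpd_*\to\Gpd$. I would observe that the groupoids of top multimaps produced by $R_*$ are the products over degrees $\prod_v\Mul^\pi_{\cat{Y},\pi_!v}[\cdots]$, and that such a product over the degree variable computes exactly the groupoid of compatible families, i.e. the mapping object of the relevant fibres; the pointing by $\CCocor_*$ supplies precisely the section datum that turns this limit into the arrow-groupoid $\Map_\Gpd(X,Y)$ rather than an abstract limit, in the same way that $\Gpd_*\to\Gpd$ produced the universal monoid in Proposition \ref{prop:universal-theorization}. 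Under the equivalences of the previous paragraph this is exactly the fibre of $\Fun([1],\Gpd)\to\Gpd\times\Gpd$, so the two gradings over $\cat{T}$ agree on top multimaps, and their composition and higher-coherence data agree because both are induced from composition in $\Gpd$.

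The main obstacle I anticipate is not the fibre-level identification, which is conceptually transparent, but the bookkeeping needed to keep the various gradings and theory-dimensions aligned: one must track the $\cat{U}$-grading carried inside $\cat{T}$ and $\cat{J}$, the diagonal $P_!P^*\cat{J}$ appearing in the $\Fun$-definition against the projection $R\colon P_!\wasreteori_m\cat{X}\to\cat{T}$ appearing in the convolution, and the interplay of $\wasreteori^N_{m+1}$ with $\ddeloop^m$ throughout. I would isolate all of this bookkeeping into the two reduction steps above, so that once the heart ($m=0$, $k=1$, $\cat{A}=\Gpd$) is settled, the general statement follows formally from the stated adjunction and base-change properties of $P_!,P^*,R_*,R^*$ together with Lemma \ref{lem:deloop-forget-commutation}.
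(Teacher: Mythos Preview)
The paper's entire proof is the single sentence ``straightforward by direct inspection of the constructions.'' Your proposal is consistent with this in spirit: you correctly isolate the heart of the matter, namely that the fibre of $\Fun([1],\Gpd)\to\Gpd\times\Gpd$ over $(X,Y)$ is $\Map_\Gpd(X,Y)$, and that this is exactly what the cocorrespondence-and-limit machinery behind $R_*$ produces. That identification \emph{is} the content of the proposition, and once one sees it the rest is bookkeeping.

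Where you diverge from the paper is in the organisation. You propose reducing to $m=0$ and then lifting back via $\ddeloop^m$, Lemma~\ref{lem:deloop-forget-commutation}, and the base-change Corollary~\ref{cor:base-change-right}. The paper does not do this: it simply compares the two $\cat{T}$-graded $(m+1)$-theories directly at the given $m$, because both sides are defined by explicit formulas in terms of $F$, $G$, and the groupoids of multimaps, and these formulas visibly agree. Your reduction route is not wrong, but it is more apparatus than needed, and the step ``passage to general $m$ is handled by $\ddeloop^m$'' is not as formal as you suggest: the projections $R$ and $P$ and the colour systems live genuinely at level $m+1$, and the statement at level $m$ is not literally $\ddeloop^m$ of the statement at level $0$. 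Making that reduction precise would likely cost you as much as, or more than, the direct comparison the paper has in mind. If you want to write out a proof, the shorter path is simply to expand both sides on a fixed type of $(m+1)$-multimap and observe that each yields the product over degrees $v$ of the mapping groupoids $\Map\bigl(\Mul_{\cat{X},v}[\cdots],\Mul_{\cat{Y},\pi_!v}[\cdots]\bigr)$, with matching composition data.
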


The proof is straightforward by direct inspection of the
constructions.

It follows that, if $m\ge 1$, an $\cat{X}$-algebra in $\cat{Y}$ is
equivalent as a datum to an ($m-1$)-theory in
$\Fun(\cat{X},\cat{Y})$.
Since the latter notion makes sense for any symmetric monoidal
$k$-category $\cat{A}$, where $0\le k\le m+1$, we can think of
it as the definition in such an enriched context, of
an $\cat{X}$-algebra in $\cat{Y}$.

\section{Higher theorization of symmetric monoidal functor}
\label{sec:generalization}

\subsection{The definition}
\label{sec:generalized-theory-definition}

\subsubsection{}

We have so far considered iterated theorizations of algebra over a
multicategory.
One might wonder whether there are iterative theorizations of
symmetric monoidal functor on a fixed symmetric
monoidal category $\cat{B}$, to a varying target symmetric monoidal
category.

This is actually a generalization of the previous case.
Indeed, the functor $\wasreteori_0$ from symmetric monoidal categories
to multicategories, has a left adjoint $L$, so, for a multicategory
$\cat{U}$, a symmetric monoidal functor on $L\cat{U}$ is the same
as a $\cat{U}$-algebra, which we have already theorized.

We would like to show here that, if a symmetric monoidal category
$\cat{B}$ admits a certain concrete form of description, then we
indeed obtain iterated theorizations of symmetric monoidal functor on
$\cat{B}$, by replacing in Definition \ref{def:theory} of a symmetric
higher theory, the coCartesian symmetric monoidal category $\Fin$ by
$\cat{B}$.

It turns out that $\cat{B}$ may more generally be a symmetric monoidal
higher category satisfying certain conditions.

\begin{remark}\label{rem:inversion-restriction}
When the dimension of the category $\cat{B}$ is at least $2$, then a
source of difficulty for having interesting symmetric monoidal
functors $\cat{B}\to\cat{A}$, where $\cat{A}$ is a symmetric monoidal
category, is that such a functor must invert all maps
in $\cat{B}$ in dimensions $\ge 2$.
However, this restriction will be discarded in one dimension at a
time as theorization (in particular, relaxation) of the notion is
iterated.
Indeed, in the $n$-dimensional theory which we define below, the
inversion of maps of $\cat{B}$ will be forced only in dimensions $\ge
n+2$.
\end{remark}

Let $\cat{B}$ be a symmetric monoidal higher category, and denote
the underlying higher category of $\cat{B}$ by $\cat{C}$, so
$\cat{B}$ is $\cat{C}$ equipped with a symmetric monoidal structure.
The conditions we would like to impose on $\cat{B}$ are ($0$) and
``($k$)'' below for all integers $k\ge 1$.

\begin{itemize}
\item[($0$)] The groupoid $\Ob\cat{C}$ of objects of $\cat{C}$ is free
  as a commutative monoid on a groupoid of generators.
\item[($k$)] Suppose given data of the forms ($k-1'$) through ($1'$)
  of ($k$) in Section \ref{sec:k}, or equivalently, the arity of a
  $k$-multimap in an $\Ini$-graded higher theory.
  Then the groupoid formed by all $k$-multimap of the specified arity
  in the $\Ini$-graded $k$-theory $\wasreteori^k_1\cat{C}$, is free on
  a groupoid as a commutative monoid under the symmetric monoidal
  structure of $\cat{B}$.
\end{itemize}

Note that the groupoid of free generators is then the full subgroupoid
consisting of \emph{indecomposable} objects (from which we exclude the
unit(s) by definition).

\begin{remark}\label{rem:free-generation-finite-dim}
If $\cat{B}$ is in fact a symmetric monoidal \emph{$d$-category} for a
finite value of $d$, then the groupoid of ($d+1$)-multimaps, and more
gerenerally, of $\phi^d$-nerves of ($d+1$)-multimaps in
$\wasreteori^{d+1}_1\cat{C}$ (see Definition \ref{def:theory}), will
be equivalent to the groupoid of $\phi^{d-1}$-nerves of $d$-morphisms
in $\wasreteori^d_1\cat{C}$.
In particular, the conditions ($k$) for all integers $k\ge 0$ hold in
this case if the conditions hold for all $k\le d$, and the groupoid
of $\phi^{d-1}$-nerves of $d$-morphisms is free as a commutative
monoid for every specification of the arity.
\end{remark}

In addition to $\cat{B}=\Fin$, the following are examples of $\cat{B}$
satisfying these conditions.
The symmetric monoidal structures are all given by the ``disjoint
union''.

\begin{itemize}
\item The $2$-category $\CCorr(\Fin)$ of correspondences of finite
  sets, and its underlying $1$-category $\Corr(\Fin)$.
\item The category $\Bord_1$ of compact $0$-dimensional manifolds and
  (the groupoids of) $1$-dimensional bordisms between them.
  Any choice of tangential structure on manifolds.
\item The category $\End^{d-1}_{\Bord_d}(\kara)$ of closed
  ($d-1$)-manifolds and (the groupoids of) $d$-dimensional bordisms.
  Any choice of tangential structure.
\item The \emph{fully extended cobordism $d$-category} $\Bord_d$ of
  bordisms up to dimension $d$.
  Any choice of tangential structure.
\item For an integer $k$ such that $2\le k\le d-1$, the $k$-category
  $\End^{d-k}_{\Bord_d}(\kara)$ of endomorphisms in $\Bord_d$ of the
  empty ($d-k-1$)-dimensional cobordism.
\item The ($d+1$)-category of $d$-th iterated cocorrespondences in
  $\Fin$, and its underlying
  $d$-category.
  See e.g., Lurie \cite[Section 3.2]{lurie-tft} for the idea, and
  Ben-Zvi and Nadler \cite[Remark 1.17]{bzn} for an explicit
  discussion of a definition which applies readily here.
  See also Calaque \cite{calaque}.
\end{itemize}

\begin{remark}
One can also define versions of the fully extended cobordism category
where each bordism is given a codimension $n$ embedding into the
Euclidean space.
These bordisms will form an $E_n$-monoidal $d$-category.

While our technique so far does not seem to apply directly for
theorizing the notion of topological field theory on such a category,
this kind of category seems close to satisfying an $E_n$ analogue of
the assumption required for our technique.
We hope to treat theorization of these topological field theories in a
sequel to this work.
\end{remark}

\begin{remark}
Some other non-embedded (symmetric monoidal) variants of the cobordism
category, such as discussed by Lurie in \cite[Section 4]{lurie-tft},
also satisfy the conditions of Remark
\ref{rem:free-generation-finite-dim}.
\end{remark}

\subsubsection{}

Let now $\cat{B}$ be a symmetric monoidal higher category
satisfying the conditions ($k$) above for every integer $k\ge 0$.
We shall obtain an $n$-th theorization of symmetric monoidal functor
(to a variable symmetric monoidal $1$-category) on $\cat{B}$.
(See Remark \ref{rem:inversion-restriction}.)
Our $n$-theorized objects will be called ``$\cat{B}$-graded
$n$-theories''.

A \kore{$\cat{B}$-graded $n$-theory} will consist of data similar to
the data ($k$) for $k\ge 0$, for a symmetric (i.e., ``$\Fin$-graded'')
$n$-theory (see Section \ref{sec:symmetric-theory}), but with
appropriate modifications applied as follows.

Firstly, the form of datum ($0$) (in the case $n\ge 1$) will be as
follows.
\begin{itemize}
\item[($0$)] For every indecomposable object $b$ of $\cat{B}$, a
  collection $\Ob_b\cat{U}$, whose member will be called an
  \kore{object} of $\cat{U}$ of \kore{degree} $b$.
\end{itemize}

We extend this for an arbitrary object $b\in\cat{B}$ as follows.
Namely, we let $\Ob_b\cat{U}$ be the collection of \emph{$b$-families}
of objects of $\cat{U}$, defined as follows.

\begin{definition}
Let $b$ be an object of $\cat{B}$.
Then a \kore{$b$-family} of objects of a $\cat{B}$-graded higher
theory $\cat{U}$, is a pair consisting of
\begin{itemize}
\item a decomposition $b\equivwith\bigtensor_Sc$, where $S$ is a
  finite set, and $c=(c_s)_{s\in S}$ is an $S$-family of
  indecomposable objects of $\cat{B}$, and
\item a \emph{$c$-family} $u\in\Ob_c\cat{U}$, by which we mean that
  $u$ is an $S$-family $(u_s)_{s\in S}$, where
  $u_s\in\Ob_{c_s}\cat{U}$.
\end{itemize}
\end{definition}

Let us next describe the type of a $1$-multimap.
This is where the true difference of a general $\cat{B}$-graded theory
from the case $\cat{B}=\Fin$ is seen.
Namely, for a general $\cat{B}$, a multimap in a $\cat{B}$-graded
theory will in general, not only accept multiple inputs, but also emit
multiple outputs.

Thus, the \emph{type of a multimap in a $\cat{B}$-graded higher
theory $\cat{U}$} consists of
\begin{itemize}
\item[($0'$)] a map $b^1\colon b^0_0\to b^0_1$ in $\cat{B}$ which is
  indecomposable with respect to the commutative monoid structure,
\item[($0''$)] a $b^0$-family $u$ of objects of $\cat{U}$, namely,
  $u=(u_i)_{i=0,1}$, where $u_i$ is a $b^0_i$-family of objects of
  $\cat{U}$.
\end{itemize}

In general, we modify Definition \ref{def:theory} of a symmetric
$n$-theory in the following two respects.
\begin{itemize}
\item We modify the form of datum ($k$) for every $k\ge 1$ as
  follows.
  We
  \begin{itemize}
  \item keep the forms of input data ($k-1'$) through ($1'$)
    unchanged,
  \item replace the nerve in $\Fin$ in ($0'$) with a $k$-multimap in
    $\wasreteori^k_1\cat{C}$ (where $\cat{C}$ denotes the higher
    category underlying $\cat{B}$ as
    before) of the specified arity, which is moreover, indecomposable
    with respect to the commutative monoid structure,
  \item modify the form of the rest of input data accordingly,
  \item let the similar datum as associated in ($k$) of Definition
    \ref{def:theory}, be associated to the input data of these
    modified forms.
  \end{itemize}
\item We generalize the process of extension of the datum ($k$) to a
  process of extension from indecomposable to arbitrary
  $k$-multimaps, using
  the free decomposition of $k$-multimaps just similarly to before.
\end{itemize}

\begin{example}\label{ex:properad}
For the symmetric monoidal $1$-category $\cat{B}=\Cocor(\Fin)$
underlying the symmetric monoidal $2$-category of cocorrespondences in
$\Fin$, the notion of $\cat{B}$-graded $1$-theory enriched in a
symmetric monoidal category $\cat{A}$, coincides with the notion of
coloured \emph{properad} of Vallette in $\cat{A}$ \cite{vallette}.
Thus, the notion of (coloured) properad is a theorization of the
notion of $\cat{B}$-algebra, and $\cat{B}$-graded higher theories give
further theorizations.
\end{example}

\subsection{Symmetric monoidal functors as algebras in a theory}
\label{sec:monoidal-functor-as-theory}

\subsubsection{}

We would like to see that, if $\cat{B}$ is a symmetric monoidal
$d$-category which satisfies our assumptions, then symmetric monoidal
functors $\cat{B}\to\cat{A}$ with $\cat{A}$ any symmetric
monoidal $d$-category, are indeed included in our framework.

\subsubsection{}

Let us start with the following.
Let $\cat{B}$ be a symmetric monoidal $d$-category which satisfies our
assumptions ($k$) for all integers $k\ge 0$.
Then we would like to construct a $\cat{B}$-graded $d$-theory from a
symmetric monoidal $d$-category $\cat{E}$ equipped with a symmetric
monoidal functor $P\colon\cat{E}\to\cat{B}$.
The $d$-theory, which we shall denote by $\cat{E}^\theta$, is as
follows.

For a indecomposable object $b\in\cat{B}$, an object of
$\cat{E}^\theta$ of degree $b$ is an object of $\cat{E}$ in the fibre
over $b$.

For every integer $k$ such that $1\le k\le d-1$, the datum ($k$) which
we specify for $\cat{E}^\theta$ is inductively as follows.
Denote by $\cat{C}$ the $d$-category, i.e., ($d-1$)-categorified
$\Ini$-graded $1$-theory, underlying $\cat{B}$.
Suppose given
\begin{itemize}
\item an indecomposable $k$-multimap $b^k$ in $\wasreteori^k_1\cat{C}$
  of arity specified by data of the forms ($k-1'$) through ($1'$) of
  ($k$) in Section \ref{sec:k}, and of type
  given as $b^{\le k-1}=(b^\nu)_{0\le\nu\le k-1}$,
\item a type $e=(e^\nu)_{0\le\nu\le k-1}$ of a $k$-multimap in the
  $\cat{B}$-graded theory $\cat{E}^\theta$ of the same arity of degree
  $b^{\le k-1}$.
\end{itemize}
By induction, $e^{k-1}_i$ (where $i=0,1$) will be a family consisting
of lifts in $\cat{E}$ of the factors\slash components (in the unique
decomposition in $\cat{B}$) of the (nerve of) ($k-1$)-multimaps (or
objects if $k=1$) $b^{k-1}_i$, so $\Tensor e^{k-1}_i$ in $\cat{E}$
lifts $b^{k-1}_i$, where $\Tensor$ indicates taking the monoidal
product of the members of the family (which is $e^{k-1}_i$ here) in
$\cat{E}$.
Moreover, if $k\ge 2$, then the ($k-1$)-morphisms $\pi_!\bigl(\Tensor
e^{k-1}_0\bigr)$ and $\Tensor e^{k-1}_1$ in $\cat{E}$ have common
source and target by induction.

Given these data, we define a $k$-multimap $e^{k-1}_0\to e^{k-1}_1$ in
$\cat{E}^\theta$ of degree $b^k$, to be a lift of $b^k$ to a
$k$-morphism $\pi_!\bigl(\Tensor e^{k-1}_0\bigr)\to\Tensor e^{k-1}_1$
(or $\Tensor e^0_0\to\Tensor e^0_1$ if $k=1$) in $\cat{E}$, completing
the induction.

Similarly, the groupoids of $n$-multimaps in $\cat{E}^\theta$ will be
the groupoids of similar lifts, and $n$-multimaps in $\cat{E}^\theta$
compose by the composition of $n$-multimaps in
$\wasreteori^n_0\cat{E}$.

Thus we have constructed a $\cat{B}$-graded $d$-theory
$\cat{E}^\theta$.

\begin{remark}\label{rem:right-localized}
This construction is not faithful in $\cat{E}$ (equipped with
$P\colon\cat{E}\to\cat{B}$).
Instead, the construction $\blank^\theta$ gives (non-trivial) right
localization functors of suitable categories.
\end{remark}

\subsubsection{}

Let us denote by $\unity^d_\cat{B}$, the terminal unenriched
uncoloured $\cat{B}$-graded $d$-theory.
Inspecting the construction above, it is easy to see that a $0$-theory
in $\cat{E}^\theta$, or a functor $\unity^d_\cat{B}\to\cat{E}^\theta$,
is equivalent as a datum to a section to the symmetric monoidal
functor
$P\colon\cat{E}\to\cat{B}$, which commutes with the symmetric monoidal
structures, but is ($d-1$)-lax as a functor.

Let now $\cat{A}$ be a symmetric monoidal $d$-category.
Then we have the projection functor $\cat{A}\times\cat{B}\to\cat{B}$,
which is symmetric monoidal.
It follows that a $0$-theory in $(\cat{A}\times\cat{B})^\theta$ is a
symmetric monoidal ($d-1$)-lax functor $\cat{B}\to\cat{A}$.

\begin{remark}
Even though the construction above has thus captured symmetric
monoidal functors $\cat{B}\to\cat{A}$ for every symmetric monoidal
\emph{$1$-category} $\cat{A}$, this is not the most interesting target
if $d\ge 2$.
However, in the case where $d\ge 2$ and the dimension of $\cat{A}$ is
also $d$, the reader may be unsatisfied for the laxness which has
crept in.
(In relation to Remark \ref{rem:right-localized}, this laxness is due
to the way how $\cat{B}$ as the terminal one among symmetric monoidal
$d$-categories lying over $\cat{B}$, can fail to be local with respect
to the right localization if $d\ge 2$.)
Compared with $d$-lax symmetric monoidal functors $\cat{B}\to\cat{A}$,
which could be captured in the framework of
$\wasreteori^d_0\cat{B}$-graded $d$-theories in the previous approach,
our new approach here has only eliminated the laxness with respect to
the symmetric monoidal structure.
However, in order to deal with the remaining laxness with a similar
technique, we would need to assume a finer version of the unique
decomposition, which would be more difficult to be satisfied.
\end{remark}

\subsection{An example of different nature}
\label{sec:example-different-nature}
In the case $\cat{B}=\Bord_1$, there is an example of a $1$-theory
which is associated to a category rather than a symmetric monoidal
category.
An algebra in it will appear very different from a $1$-dimensional
field theory in the usual sense.
Let us sketch these.
The tangential structure we consider is framing, or equivalently,
orientation.

Let $\cat{C}$ be a category.
Then we can use it to construct a $\Bord_1$-graded $1$-theory as
follows.

Firstly, we need to associate to every indecomposable object of
$\Bord_1$, a collection to be the collection of objects of that
degree.
To every $0$-dimensional manifold consisting of one point $\pt$ with
any framing of $\pt\times\R^1$, we associate the collection
$\Ob\cat{C}$.

Next, we need to associate to every indecomposable map in $\Bord_1$, a
groupoid to be the groupoid of $1$-multimaps of that degree.
\begin{itemize}
\item If the bordism is diffeomorphic to the interval as a manifold,
  then to every object $x\in\Ob\cat{C}$ at the incoming (relatively to
  the orientation) end point, and every object $y\in\Ob\cat{C}$ at the
  outgoing end point, we associate the groupoid $\Map_\cat{C}(x,y)$.
\item If the bordism is diffeormorphic to the circle, then, for
  simplicity, we associate the terminal groupoid (but note Remark
  \ref{rem:theory-with-hochschild} below).
\end{itemize}

Finally, we need to define the composition operations.
This can be given by the composition of maps in $\cat{C}$ (and its
associativity).

Thus, we have sketched a construction of a $\Bord_1$-graded
$1$-theory.
Let us denote this theory by $\cat{Z}_\cat{C}$.

Note that, in the case where $\cat{C}$ is the unit category $\unity$,
we obtain $\cat{Z}_\unity=\unity^1_{\Bord_1}$, the terminal
$\Bord_1$-graded $1$-theory.
It follows that, in general, any object $x\in\cat{C}$, or
equivalently, a functor $\unity\to\cat{C}$, induces a functor
$\cat{Z}_x\colon\unity^1_{\Bord_1}\to\cat{Z}_\cat{C}$, which, by
definition, is a \emph{field theory in $\cat{Z}_\cat{C}$}.

The contractibility of the diffeomorphism group of the framed interval
implies that $\cat{Z}_x$ for $x\in\cat{C}$ exhaust all field theories
in $\cat{Z}_\cat{C}$.

\begin{remark}\label{rem:theory-with-hochschild}
There is another version of $\cat{Z}_\cat{C}$, in which the groupoid
which we associate to the circle is the Hochschild homology
$\HH_\dot\cat{C}\equivwith\int^{x\in\cat{C}}\Map_\cat{C}(x,x)$.
All the claims above also hold for this version of $\cat{Z}_\cat{C}$,
as a result of the following observation (and simple computations).

The observation is as follows.
Let $\ptn$, $\ptp$ denote the two $1$-framed points of opposite
framings, and let $I$ denote the terminal category (i.e.,
$\Ini$-graded $1$-theory) on the two colours ``$\ptn$'' and
``$\ptp$'' (so, as a category, $I$ is a contractible groupoid).
Let $\Theory_I$ and $\Theory_\Bord$ respectively denote the categories
of $I$-graded and of $\Bord_1$-graded $1$-theories.
There is an obvious adjunction
$\diag_!\colon\Theory_I\rightleftarrows\Cat\noloc\diag^*$, where
$\diag\colon\{\ptn,\ptp\}\to\ten$.
The contractibility of the diffeomorphism group of the framed interval
implies that there is also a functor $\Theory_\Bord\to\Theory_I$ with
left adjoint $\close{\cat{Z}}$ satisfying
$\cat{Z}=\close{\cat{Z}}\compose\diag^*$.
\end{remark}

\appendix

\section{A comparison to the work of Beaz and Dolan}
\renewcommand*{\thesubsubsection}{\thesection.\arabic{subsubsection}}

\subsubsection{}

We have been informed of resemblance between our work and the
beautiful
pioneering work as early as about two decades ago, of Baez and Dolan
\cite{bd}.
Since some of our purposes overlap theirs, and the methods also has
great similarity, we think that a comparison of two works would be
worthwhile.

Specifically, Baez and Dolan introduce what they call the ``slice
operad'' construction for the purpose of defining the notion of
``opetopic set'', which they use to give a definition of an
$n$-category.
The slice operad construction is not just interesting and powerful,
but some of the ideas which they have developed for this construction,
are quite close to some of the ideas which we have used for our work.
There seems to be no doubt therefore, that our work was shaped by the
great influences from some ideas which go back to their work (or were
at least popularized quite possibly as we imagine,
through their work).

The ``slice'' construction constructs from a multicategory $O$, a new
multicategory $O^+$.
This looks close to our $\wasreteori O$, even though $\wasreteori O$
is a $2$-theory, rather than a $1$-theory.
Since Baez and Dolan constructed $O^+$ as a multicategory, they did
not need to introduce a new concept like our concept of higher
theory.
Moreover, iteration of their construction is automatic, unlike
iteration of the process of theorization, which was the first main
theme of our work.
We recognize this as a great advantage of their construction.

\subsubsection{}

This does not mean, however, that staying in the world of
multicategories is necessary or desirable in all respects.
We have succeeded after all, in defining all the higher notions of
theory, and the new framework accommodates simpler approaches to some
issues.
For example, the case $n=1$ of our Theorem
\ref{thm:graded-theory-is-monoid} implies that the $2$-theory
$\wasreteori O$ is such that an uncoloured $\wasreteori O$-algebra is
precisely the same as an $O^+$-algebra (described in the quotation
below).
Moreover, the construction of $\wasreteori O$ from $O$ was direct and
immediate, while the construction of $O^+$ is one of the major steps
in Baez and Dolan's work.

\begin{example}
In the case where $O$ is the associative operad, an $O^+$-algebra is
precisely an uncoloured planar operad, while a $\wasreteori
O$-algebra (which naturally has ``colours'' in general) is precisely
a planar multicategory, i.e., a coloured planar operad.
\end{example}

The notion of theorization also clarify the work of Baez and Dolan
conceptually.
Let us first hear the description of the slice operad in the
inventors' own words.
We shall quote from \cite{bd}.
In the context at hand, their term ``operad'' means coloured operad
(in sets, over which ``algebras'' are also considered in sets), and
``type'' means colour in our terminology.
\begin{quotation}
``We define the ``slice operad'' $O^+$ of an operad $O$ in such a way
that an algebra of $O^+$ is precisely an operad over $O$, i.e., an
operad with the same set of types as $O$, equipped with an operad
homomorphism to $O$. Syntactically, it turns out that:
\begin{enumerate}
\renewcommand{\labelenumi}{\theenumi.}
\item The types of $O^+$ are the operations of $O$.
\item The operations of $O^+$ are the reduction laws of $O$.
\item The reduction laws of $O^+$ are the ways of combining reduction
  laws of $O$ to give other reduction laws.
\end{enumerate}
This gets at the heart of the process of ``categorification,'' in
which laws are promoted to operations and these operations satisfy new
coherence laws of their own. Here the coherence laws arise simply from
the ways of combining the the old laws.''
[\textit{sic}]\\
\mbox{}\hfill(John C.~Baez and James Dolan \cite[Section 1]{bd})
\end{quotation}
In their work, they observe the points (1), (2), (3) from the actual
construction of $O^+$, but do not explicitly discuss the conceptual
reason for why $O^+$ had to be related to categorification.
The notion of theorization sheds light on this.
Indeed, ``an operad over $O$'' in their definition, is precisely an
(uncoloured) theorized $O$-monoid (or algebra ``enriched'' in sets),
as those authors may have known in some formulation.

\subsubsection{}

There are also other advantages in employing higher theories.
For example, recall that the ultimate goal of Baez and Dolan was to
give a definition of an $n$-category.
For this, they needed a few more steps after defining the slice operad.
On the other hand, a version of $n$-categories are already among the
$n$-theories.
To examine the difference closely, we generally consider an $n$-theory
formed not just by the $n$-multimaps, but with strata of colours
consisting of objects to ($n-1$)-multimaps (which is the usual
``colour'' in an operad in the case $n=1$), and in a special case, the
structure of an $n$-category is formed by these objects as objects,
and the unary higher multimaps as higher morphisms.
Contrary to this, Baez and Dolan do not consider an algebraic
structure having more than one layer of colours since they consider
only multicategories.
This is the reason why they needed to find another route which might
appear like a detour from the point of view of higher theories.
(However, some opetopic sets appear to be modeling a version of
initially graded higher theories, so their method merely does not
appear as direct as one can wish to make it.)

The flexibility coming from the rooms for strata of colours, is also
important for considering enrichment, since possibility for more
interesting enrichment requires more strata of colours.
Even though the purposes of Baez and Dolan did not motivate them to
consider a very general notion of enrichment, their framework as built
may not support a very interesting notion of enrichment, either.
Note also that, even if one intends to work only with mutlicategories,
enrichment of multicategories is most generally done along
$2$-theories.

\subsubsection{}

From the quotation of Baez and Dolan's words above, the idea expressed
in the final two sentences is remarkable.
Indeed, it is exactly the idea which we have described in Section
\ref{sec:definition}, and used in our definition of an $n$-theory,
except for two differences.

One difference is that we see the same, more generally at the heart of
theorization.
The other is that we have a simpler understanding of the ``new
coherence law'', in terms of the theorized form of the structure.

Now, our version of their idea has led to a process which keeps the
complexity of structures from increasing rapidly by instead raising
the theoretic order, and the resulting simplicity helped us enormously
with various constructions concerning higher theories.
(In those constructions, roles were also played by the flexibility
from the rooms for strata of colours.)

Our version of their idea also helped us with treating some systems of
operations with multiple inputs and multiple outputs.

\subsubsection{}

To summarize, our work has benefited from the fruits of the
developments which were initiated by such prominent works as Beaz and
Dolan's.

\oldsubsection*{Acknowledgement}

I found the first hint for this work in interesting discussions years
ago with Kevin Costello, who was then my adviser at Northwestern
University.
I am grateful to him for sharing his great wisdom with me for those
years.
I am grateful to John Baez and James Dolan for their seminal works.
I am grateful to Ross Street and Isamu Iwanari for their comments on
the manuscripts for what has become Section \ref{sec:introduction}.
I am grateful to the participants of my informal talks on this work
for many helpful comments and questions, especially to Sh\={o}
Sait\={o}, K\={o}tar\={o} Kawatani, Isamu Iwanari, Hiroyuki Minamoto,
Norihiko Minami.
I am grateful to Isamu Iwanari and Norihiko Minami also for their
encouragement on various occasions.

\renewcommand{\section}{\oldsection}


\begin{thebibliography}{99}
\setcounter{enumiv}{-1}
\bibitem{bd}Baez, John C.; Dolan, James. \emph{Higher-dimensional
  algebra. III: $n$-categories and the algebra of opetopes.}
Adv.~Math.~\textbf{135}, 1998, No.~2, 145--206.

\bibitem{benabou}B\'enabou, Jean. \emph{Introduction to bicategories.}
  Reports of the Midwest Category Seminar, 1--77. Springer Berlin
  Heidelberg, 1967.

\bibitem{bzn}Ben-Zvi, David; Nadler, David. \emph{Nonlinear traces.}
  arXiv:1305.7175.

\bibitem{homotopy-everything}Boardman, J.~M.; Vogt,
  R.~M. \emph{Homotopy-everything H-spaces.}
  Bull.~Am.~Math.~Soc.~\textbf{74}, 1968, 1117--1122.

\bibitem{homotopy-invariant}Boardman, J.~M.; Vogt,
  R.~M. \emph{Homotopy Invariant Algebraic Structures on Topological
    Spaces.} Lecture Notes in Mathematics
  \textbf{347}. Springer-Verlag, Berlin-Heidelberg-New York,
  1973. X$+$257 pp.

\bibitem{calaque}Calaque, Damien. \emph{Lagrangian structures on
    mapping stacks and semi-classical TFTs.} arXiv:1306.3235.

\bibitem{crane}Crane, Louis. \emph{Clock and category: Is quantum
    gravity algebraic?} J.~Math.~Phys.~\textbf{36}, 1995, No.~11,
  6180--6193.

\bibitem{crane-frenke}Crane, Louis; Frenkel, Igor
  B. \emph{Four-dimensional topological quantum field theory, Hopf
    categories, and the canonical bases.} J.~Math.~Phys.~\textbf{35},
  1994, No.~10, 5136--5154.

\bibitem{day}Day, Brian J. \emph{Construction of biclosed categories.}
  Thesis for Doctor of Philosophy, University of New South
  Wales, 1970.

\bibitem{fiedoro}Fiedorowicz, Z. \emph{The symmetric bar
    construction.} Preprint available at
  https:\slash\slash people.math.osu.edu\slash fiedorowicz.1\slash.

\bibitem{gro-semi-bour}Grothendieck, Alexander. \emph{Technique de
    descente et th\'eor\`emes d'existence en g\'eom\'etrie
    alg\'ebrique. I: G\'en\'eralit\'es. Descente par morphismes
    fid\`element plats.} S\'em.~Bourbaki \textbf{12} (1959/60),
  299--327, No.~190, 1960.

\bibitem{sga1-6}Grothendieck, Alexander. \emph{Cat́\'egories fibr\'ees
    et descente.} S\'eminaire de G\'eom\'etrie Alg\'ebrique du Bois
  Marie 1960/61 (SGA 1), \emph{Rev\^etements \'Etales et Groupe
    Fondamental.} Expos\'e VI. Lecture Notes in
  Mathematics. \textbf{224}. Springer-Verlag, Berlin-Heidelberg-New
  York, 1971.

\bibitem{joyal-street}Joyal, Andr\'e; Street, Ross. \emph{Braided
    tensor categories.} Adv.~Math.~\textbf{102}, 1993, No.~1, 20--78.

\bibitem{lambek}Lambek, J. \emph{Deductive systems and
    categories. II. Standard constructions and closed categories.}
  Category Theory, Homology Theory and Their Applications I,
  Proc.~Conf.~Seattle Res.~Center Battelle Mem.~Inst.~\textbf{1968},
  1969, 1, 76--122.

\bibitem{lawvere}Lawvere, F.~William. \emph{Functorial semantics of
    algebraic theories.} Ph.D. thesis, Columbia University, 1963.

\bibitem{lurie-tft}Lurie, Jacob. \emph{On the classification of
    topological field theories.} Current developments in mathematics,
  \textbf{2008}, 129--280, Int.~Press, Somerville, MA, 2009.

\bibitem{mac-algebra}MacLane, S. \emph{Categorical algebra.}
  Bull.~Am.~Math.~Soc.~\textbf{71}, 1965, 40--106.

\bibitem{vallette}Vallette, Bruno. \emph{A Koszul duality for props.}
  Trans.~Am.~Math.~Soc.~\textbf{359}, 2007, No.~10, 4865--4943.
\end{thebibliography}
\end{document}